\documentclass{amsart}

\usepackage[left=1.25in, right=1.25in, top=1.5in, bottom=1.5in]{geometry}
\usepackage{amsmath, amssymb, amsthm, tikz-cd}
\usepackage{mathrsfs}
\usetikzlibrary{fit}
\usepackage{mathtools}

\usepackage[colorlinks=true, linkcolor=black, citecolor=black, urlcolor=black]{hyperref}
\usepackage{cite}

\newcommand{\Dom}{\operatorname{Dom}}
\newcommand{\lgth}{\operatorname{lgth}}
\newcommand{\sigmaArt}{\sigma_{\mathrm{Art}}}
\newcommand{\Domgpd}{\mathfrak{Dom}}

\newtheorem{theorem}{Theorem}[section]
\newtheorem{proposition}[theorem]{Proposition}
\newtheorem{lemma}[theorem]{Lemma}
\newtheorem{corollary}[theorem]{Corollary}
\newtheorem{construction}[theorem]{Construction}

\newtheorem{thmletter}{Theorem}

\makeatletter
\let\c@equation\c@theorem

\makeatother

\theoremstyle{definition}
\newtheorem{definition}[theorem]{Definition}
\newtheorem{example}[theorem]{Example}

\theoremstyle{remark}
\newtheorem{remark}[theorem]{Remark}
\newtheorem*{remark*}{Remark}

\begin{document}

\title{Higher dimensional dominoes in de Rham--Witt cohomology}
\author{Yuanning Zhang}
\address{Department of Mathematics, Northwestern University}
\email{yuanningzhang2026@u.northwestern.edu}
\date{July 28, 2026}

\begin{abstract}
The de Rham--Witt cohomology of a smooth proper variety in characteristic \(p\)
contains a canonical piece called the \emph{domino}, which is not finitely generated
over the Witt vectors and carries the nonzero differentials of the slope spectral
sequence. Beyond dimension one, dominoes were unclassified. We
classify the two-dimensional ones and put a domino of any dimension into a normal
form. To each domino we attach two unipotent groups,
one formal and one perfect, and prove that their \emph{isogeny partitions} agree.
In degree two we recover the domino
of a Mazur--Ogus variety from its crystalline cohomology, compute it for two
families of supersingular abelian varieties, and bound the exponent of the
\(p\)-primary Brauer group in terms of the \(a\)-number, for every prime \(p\). This
answers a question of Grammatica--Skorobogatov--Yang.
\end{abstract}

\maketitle

\tableofcontents

\section{Introduction}

Over an algebraically closed field $k$ of characteristic $p>0$, a supersingular
K3 surface or a supersingular abelian surface has Brauer group $k$ and formal
Brauer group $\widehat{\mathbb G}_a$. Both invariants are induced by a single
\emph{domino} $U$. This is the part of the de Rham--Witt cohomology that carries
the nonvanishing differential $E_1^{0,2}\to E_1^{1,2}$ of the slope spectral
sequence. The formal Brauer group is induced by its degree-$0$ part $U^0$, and
the $p$-primary Brauer group by its degree-$1$ part $U^1$.

A domino has two terms
$U^0\xrightarrow{d}U^1$ and is killed by a power of $p$, but it is not finitely
generated over $W=W(k)$. Nevertheless, it does have a notion of dimension
$\dim(U)$ (Definition~\ref{def:domino-numerics}), and the one-dimensional
dominoes, also known as the elementary dominoes, are completely classified and
denoted by $U_j$ for some integer $j\in\mathbb Z$. In the two examples above the
domino is an elementary domino $U_j$, where $j$ is known as the \emph{Artin
invariant}. Here $j\in\{1,\ldots,10\}$ for a supersingular K3 surface and
$j\in\{1,2\}$ for a supersingular abelian surface. For a supersingular abelian
$g$-fold, the domino carrying $E_1^{0,2}\to E_1^{1,2}$ has dimension
$\binom{g}{2}$, so $g>2$ already forces higher-dimensional dominoes. Beyond the
elementary dominoes, no classification is known. We classify the two-dimensional
ones, put a domino of any dimension into a normal form, and use this to compute
both Brauer groups of natural supersingular families in every dimension.

Following Illusie--Raynaud~\cite{IR83} and Ekedahl~\cite{ekedahl1,ekedahl2,ekedahl3}, the Hodge--Witt groups $H^j(X,W\Omega_X^i)$ are studied as coherent modules over the Raynaud ring $R$, which encodes $F$, $V$, and the de Rham--Witt differential. Ekedahl's d\'evissage gives each a canonical filtration whose graded pieces are Dieudonn\'e modules and dominoes (Definition~\ref{def:coherent}). We recall this formalism in \S\ref{sec:recollections}.

The first part of the paper (\S\ref{sec:domino-structure}) develops the structure theory: a domino $U$ is built from the elementary $U_j$ by iterated extensions and is controlled by its type sequence $J(U)$ and the extension data. The two-dimensional case illustrates this extension phenomenon.

\begin{thmletter}[Two-dimensional dominoes; Theorem~\ref{thm:2d-classification}]
Every $2$-dimensional indecomposable domino $U$ fits into a short exact sequence
\[
  0 \longrightarrow U_{j_2} \longrightarrow U \longrightarrow U_{j_1} \longrightarrow 0,
\]
where $j_2 - j_1 \ge 2$. The isomorphism classes of indecomposable dominoes with type sequence $(j_1, j_2)$ are the orbits
\[
  [f(V)] \;\in\; \bigl(k_\sigma[V]_{\le j_2-j_1-2}\setminus\{0\}\bigr)\big/(k^\times\times k^\times),
\]
where the quotient is by the Frobenius-skew frame-change relation
\(f(V)\mapsto\sigma(b_0)f(V)c_0^{-1}\). In general this is not an ordinary projective space.
\end{thmletter}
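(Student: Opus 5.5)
The plan is to work with the explicit description of the elementary dominoes $U_j$ recalled in \S\ref{sec:recollections}, together with the structure theory of \S\ref{sec:domino-structure}, which gives every domino an ordered filtration by elementary dominoes with nondecreasing type sequence $J(U)$. For $\dim U=2$ this filtration is a short exact sequence $0\to U_{j_2}\to U\to U_{j_1}\to 0$ with $j_1\le j_2$ (or the reverse ordering, according to the paper's convention). Indecomposable $U$ then correspond to nonsplit classes in $\operatorname{Ext}^1_R(U_{j_1},U_{j_2})$, and isomorphism classes correspond to orbits of $\operatorname{Aut}(U_{j_2})\times\operatorname{Aut}(U_{j_1})$ acting on the nonzero extension classes. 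So the argument has three parts: compute this Ext group, compute the automorphism groups, and identify the action.

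\textbf{The Ext group.} I would present $U_{j_1}$ by a standard two-generator presentation over the Raynaud ring $R$: one generator in degree $0$, its differential in degree $1$, and relations of the form $F^{a}x=V^{b}x$ determined by $j_1$. An extension is then given by lifting each relation to an element of $U_{j_2}$, taken modulo the coboundaries that come from changing the lifts of the generators. Writing elements of $U_{j_2}$ in its $k$-basis indexed by words in $F,V$, the relation compatibilities with $d$ and with $FdV=d$ should cut the possible lifts down to a $k_\sigma[V]$-combination of a single basis vector. Dividing by coboundaries should leave the polynomials $f(V)$ of degree $\le j_2-j_1-2$. The same count should give $\operatorname{Ext}^1=0$ whenever $j_2-j_1\le 1$, and should show that the opposite ordering, with the higher type on top, always splits. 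Together these give the inequality $j_2-j_1\ge 2$ for every indecomposable $U$. I expect this computation to be the main obstacle, since it requires careful bookkeeping of degrees in the $F,V$-words and of the semilinearity. I would first treat small gaps, $j_2-j_1=2$ and $3$, by hand to pin down the shape of the coboundaries, and then induct on the gap.

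\textbf{Automorphisms and the action.} I would show that $\operatorname{End}_R(U_j)$ is $k$ modulo a nilpotent ideal, and that this nilpotent ideal acts trivially on the Ext classes in the given degree window. Only the scalars $b_0\in\operatorname{Aut}(U_{j_2})$ and $c_0\in\operatorname{Aut}(U_{j_1})$ would then act. Because $F$ is $\sigma$-linear, pushing out along $b_0$ twists it by $\sigma$, while pulling back along $c_0$ contributes $c_0^{-1}$, which gives the action $f(V)\mapsto\sigma(b_0)f(V)c_0^{-1}$. I would also need to check that the same $f$ does not arise from two different filtrations. This follows from the canonicity of the filtration, or equivalently from the uniqueness of the type sequence $J(U)$. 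Finally, since $V$ is $\sigma^{-1}$-linear in $k_\sigma[V]$, different coefficients of $f$ scale by different powers of Frobenius. A short remark on this point shows that the orbit space is not a weighted projective space in general.
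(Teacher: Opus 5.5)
Your overall route is the paper's. You take the canonical type filtration, compute $\operatorname{Ext}^1_R(U_{j_1},U_{j_2})$, use that the graded pieces have only scalar automorphisms, and pass to frame-change orbits. However, one step you plan to prove is false: extensions in the opposite order, $0\to U_{j_1}\to E\to U_{j_2}\to 0$ with the higher type as quotient, need not split. After a Nygaard modification, the row $j\le -2$ of Proposition~\ref{prop:ext-full} gives $\operatorname{Ext}^1_R(U_{j_2},U_{j_1})\simeq k^{\oplus(j_2-j_1-1)}\ne 0$ whenever $j_2-j_1\ge 2$.

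Concretely, let $u,u'$ generate the two copies of $U_1$. Sending the generator of $U_0$ to $(Vu,u')\in U_1\oplus U_1$ is well defined and injective, and its cokernel is $U_2$, generated by the class of $(u,0)$. Since $U_1\oplus U_1$ has type $(1,1)\ne(0,2)$, this extension of $U_2$ by $U_0$ is nonsplit.

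The claim is also unnecessary, because there is no convention to choose. $\mathrm{Fil}^{j_2}_{\mathrm{type}}(U)$ is the maximal submodule on which $\operatorname{coker}(dV^{j_2})=0$. So the canonical sequence is always $0\to U_{j_2}\to U\to U_{j_1}\to 0$ with $j_1\le j_2$, and every isomorphism preserves it. The bound $j_2-j_1\ge 2$ then follows from $\operatorname{Ext}^1_R(U_0,U_j)=0$ for $j\in\{0,1\}$ alone.

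The same example shows that non-splitness of \emph{some} filtration does not imply indecomposability. So ``nonsplit canonical class $\Rightarrow$ indecomposable'' needs its own argument. The paper's is one line: if $f\ne 0$ then $p\widetilde u_0=V[f](V)u_j\ne 0$, whereas a direct sum of two one-dimensional (hence elementary) dominoes is killed by $p$.

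Three smaller points.

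\emph{The presentation.} The presentation you describe does not present $U_{j_1}$. In fact $U_a=\widehat R/\widehat R(F,dV^{a-1})$ is cyclic over the completed ring; over $R$ itself the generator spans only finite sums of the $V^nu$ and $dV^nu$. Relations of the form $F^ax=V^bx$ present Dieudonn\'e modules, not dominoes. With the correct presentation, no small cases or induction on the gap are needed. By Lemma~\ref{lem:U0-resolution}, the kernel of $\widehat R\to U_0$ is generated by $F$ and $Fd$ with the single syzygy $dV\cdot F=V\cdot Fd$. So classes are pairs $(x,y)$ with $dVx=Vy$, modulo $(Fm,Fdm)$. Surjectivity of $Fd$ on $U_j$ lets you take $y=0$. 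Then $x\in\ker(dV)=\bigoplus_{0\le q\le j-2}kV^qu_j$, which is $k_\sigma[V]_{\le j-2}$, and is zero for $j\le 1$.

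\emph{Where $\sigma$ enters.} The $\sigma$ sits on the other side from where you put it. Pulling back along $c\in\operatorname{Aut}(U_{j_1})$ replaces the lift $\widetilde u_0$ by $c\,\widetilde u_0$, and $F(c\,\widetilde u_0)=\sigma(c)F\widetilde u_0$. Pushing out along $b\in\operatorname{Aut}(U_{j_2})$ instead multiplies $f$ on the right by $b$. Since both scalars range over $k^\times$, the orbit set is unchanged and your final formula stands.

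\emph{Endomorphisms.} $\operatorname{End}_R(U_j)=k$ exactly, so there is no nilpotent part to dispose of.
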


In arbitrary dimension we prove a presentation theorem: every domino of fixed
type is presented by a strictly upper triangular matrix of
\(\operatorname{Ext}^1\)-classes over the skew polynomial ring \(k_\sigma[V]\), uniquely
up to explicit upper triangular changes of generators
(Theorem~\ref{thm:fixed-type-groupoid}). In the simplest cases this
equivalence becomes a \(\sigma\)-twisted conjugacy problem, whose
untwisted model (\(\sigma=\mathrm{id}\)) is the \emph{wild} Kronecker problem. A full
list of indecomposables is not to be expected. We nevertheless settle
three cases completely: dimension two (Theorem~A), the two-block types
(\S\ref{sec:explicit-dominoes}), and the type sequences
\((j,j+2,\ldots,j+2n-2)\), where maximal \(p\)-exponent singles out a
\emph{distinguished} domino.

\begin{thmletter}[Distinguished dominoes; Theorem~\ref{thm:distinguished}]
For each $j\in\mathbb{Z}$ and $n\ge 1$, define
\[
  U_{j,\,j+2,\,\ldots,\,j+2n-2}
  \;:=\;
  \widehat{R}/\widehat{R}(F^n,\,dV^{j-1}).
\]
This is the unique domino, up to isomorphism, with type sequence
$(j,j+2,\ldots,j+2n-2)$ and maximal \(p\)-exponent \(n\). In particular, it is
indecomposable, and its dimension and \(p\)-exponent are both equal to \(n\).
\end{thmletter}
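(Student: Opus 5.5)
The plan is to show that the cyclic module $M:=\widehat{R}/\widehat{R}(F^n,dV^{j-1})$ is a domino of the stated type, and then that any domino of that type with $p$-exponent $n$ is generated by one element satisfying these relations.

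\emph{Step 1: structure of $M$.} We work in the completed Raynaud ring, with relations $FV=VF=p$, $FdV=d$, $d^2=0$, and $V$ nilpotent on dominoes modulo the $V$-adic topology. A $k$-basis of $M^0$ is given by $\{V^a F^b\cdot 1 : 0\le b\le n-1,\ a\ge 0\}$. A basis of $M^1$ is given by $\{F^bdV^{c}\cdot 1 : c\le j-2\}$ and their $V$-multiples, after imposing $dV^{j-1}=0$ and its consequences $F^b dV^{j-1}=0$. We then filter $M$ by $\widehat{R}F^{i}$ for $i=0,\dots,n$. Using $F^{i}dV^{j-1}=dV^{j-1-i}\cdot$(a unit) modulo higher terms, we check that $F^{i}M/F^{i+1}M\cong \widehat{R}/\widehat{R}(F,dV^{j+2i-1})\cong U_{j+2i}$. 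The shift by $2$ comes from commuting $d$ past $F$, since $Fd=pdF$ and $dV=pVd$ twist the indices. This exhibits $M$ as an iterated extension with sub-quotients $U_j,U_{j+2},\dots,U_{j+2n-2}$, in the order dictated by the convention for $J(U)$ in Definition~\ref{def:domino-numerics}. Hence $M$ is a domino (it is coherent and killed by $p^n$ because $p^n=V^nF^n$) of dimension $n$ and type $(j,\dots,j+2n-2)$. Its exponent is exactly $n$ because $p^{n-1}\cdot 1=V^{n-1}F^{n-1}\cdot 1\ne0$ by the basis description.

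\emph{Step 2: uniqueness.} Let $U$ have this type and $p$-exponent $n$. By the fixed-type presentation (Theorem~\ref{thm:fixed-type-groupoid}), $U$ is given by a strictly upper triangular matrix $(e_{ab})$ of $\operatorname{Ext}^1$-classes in $k_\sigma[V]$. The adjacent classes $e_{a,a+1}$ lie in $\operatorname{Ext}^1(U_{j+2a},U_{j+2a-2})$, where the degree bound $j_2-j_1-2=0$ from Theorem~A shows that each is a scalar in $k$. If some $e_{a,a+1}=0$, the exponent drops: the extension splits at that step, so $p$ kills across it, and an induction bounds the exponent by $n-1$. So all adjacent scalars are nonzero. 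Next, we use the allowed upper-triangular changes of generators, namely $\sigma$-twisted rescalings $b\mapsto\sigma(b)fc^{-1}$, to normalise every adjacent scalar to $1$. This requires solving $\sigma$-linear equations over the algebraically closed field $k$, which is possible.

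The non-adjacent entries $e_{ab}$ with $b\ge a+2$ are then killed by an induction on $b-a$, in which a unipotent change of generators absorbs each one using the nonzero adjacent entries. This is the main obstacle: showing that the coboundary map is surjective onto the relevant $\operatorname{Ext}^1$ groups once the superdiagonal is invertible. I would try first to realise this directly. Take a lift $x$ of the generator of the top quotient $U_j$, so that $F^n x=0$ and $dV^{j-1}x=0$ after modification. By the exponent condition $F^{n-1}x$ generates the bottom piece, so $x$ generates $U$. This gives a surjection $M\to U$, and it is an isomorphism because both have the same dimension and graded pieces.

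\emph{Step 3: indecomposability.} A nontrivial decomposition $U=U'\oplus U''$ would give a $p$-exponent equal to $\max(\dim U',\dim U'')<n$, since the exponent is at most the dimension by the filtration. This contradicts the fact that the exponent is $n$.
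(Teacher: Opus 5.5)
The direct argument at the end of your Step~2 works once the points in the second paragraph are repaired, and it is genuinely different from the paper's; in particular, the ``main obstacle'' you identify never has to be faced.

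\textbf{Comparison with the paper.} The paper proves uniqueness by induction on $n$. It identifies $M_2$ with the standard subdomino $T_2=\widehat RFu$ by the inductive hypothesis. It then shows that the extension of $B_1$ by $T_2$ is determined by its pushout to $B_2$. For this it runs a descending Ext computation, in which the connecting map from $\operatorname{Hom}_R(B_1,B_r)$ becomes $h\mapsto\sigma(h)$ on $k_\sigma[V]_{\le 2r-2}$, and concludes that $\operatorname{Ext}^1_R(B_1,T_r)\to\operatorname{Ext}^1_R(B_1,B_r)$ is an isomorphism. That computation is exactly the absorption of the non-adjacent entries. Your cyclic-generator argument bypasses it. Normalize to $j=0$.
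First, $F$ kills every $B_r^0$, so $F(M_r^0)\subseteq M_{r+1}^0$ and $F^nx=0$ holds for \emph{every} $x\in U^0$.
Second, $Fd=dV^{-1}$ is surjective on $M_2$ (all its types are $\ge 2$). So a lift of the top generator can be corrected by an element of $M_2^0$ to satisfy $Fdx=0$.
This gives $\phi\colon\widehat R/\widehat R(F^n,Fd)\to U$ with $1\mapsto x$, and $\phi$ carries $T_r=\widehat RF^{r-1}u$ into $M_r$. Suppose the image of $F^{r-1}x$ generates $B_r$ for every $r$. Then the induced maps $T_r/T_{r+1}\to M_r/M_{r+1}$ are nonzero maps $U_{2r-2}\to U_{2r-2}$, hence isomorphisms since $\operatorname{End}_R(U_{2r-2})=k$, and the five lemma finishes. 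Neither a dimension count nor normalizing the adjacent classes to $1$ is needed. The paper's route yields Ext isomorphisms that explain, inside the presentation of Theorem~\ref{thm:fixed-type-groupoid}, why the non-adjacent entries carry no information. Yours is shorter and uses only the cyclic presentation of $U_J$: a map $U_J\to U$ is the same as an $x\in U^0$ with $F^nx=dV^{j-1}x=0$.

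\textbf{Repairs needed.}
In Step~1 the quoted relations are reversed: $R$ has $dF=pFd$ and $Vd=pdV$, not $Fd=pdF$ and $dV=pVd$. The identity that produces the shift by two is $dV^{j+2i-1}F^i=V^i\,dV^{j-1}$ (for $j=0$, $dV^{2i-1}F^i=V^iFd$). It shows that $dV^{j+2i-1}$ kills $F^i\cdot 1$.
Your description of $M^1$ is also off. Since $F^s dV^{j-1}=dV^{j-1-s}$, all $dV^c\cdot1$ with $c\le j-1$ vanish. $M^1$ is topologically spanned by the $dV^cF^b\cdot1$ with $0\le b\le n-1$ and $c\ge j+2b$.
$M^0$ is a $W$-module rather than a $k$-vector space, so your ``basis'' must be read as a Teichm\"uller normal form.
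In Step~2 the adjacent classes lie in $\operatorname{Ext}^1_R(U_{j+2a-2},U_{j+2a})$, not the reverse.
Your reason for their nonvanishing (``the extension splits at that step'') is not valid. A zero adjacent class does not split the filtration, because non-adjacent entries can still link the blocks. The correct argument is the paper's, and it is also what makes ``$F^{n-1}x$ generates the bottom piece, so $x$ generates $U$'' work at every level:
\begin{itemize}
\item $F$ induces $B_r^0\to B_{r+1}^0$, $\bar e_r\mapsto a_r\bar e_{r+1}$, semilinearly and independently of the lift, since $F$ kills $B_{r+1}^0$.
\item Hence the image of $F^{r-1}x$ in $B_r$ is $\sigma^{r-2}(a_1)\sigma^{r-3}(a_2)\cdots a_{r-1}$ times a generator.
\item Since $p^{n-1}=V^{n-1}F^{n-1}$, maximal exponent gives $F^{n-1}U^0\ne0$, which forces every $a_r\ne0$.
\end{itemize}
Finally, no algebraic closure is needed. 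The theorem holds over any perfect $k$, because the normalization is just the recursion $c_{r+1}=\sigma(c_r)a_r$, and your direct route does not use it at all. In Step~3, the exponent of $U'\oplus U''$ is at most, not equal to, $\max(\dim U',\dim U'')$.
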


Here \(dV^{m}\) means \(F^{-m}d\) for \(m<0\), so that \(dV^{j-1}\) makes sense for every
\(j\in\mathbb Z\).

For the remainder of this introduction we take $k$ to be algebraically closed.
The bridge to the geometric applications comes from the following two realizations
of a domino, one for each of its degrees. By Cartier theory, its degree-zero term
\(U^0\) is the covariant
Cartier module of a formal unipotent group \(\widehat G(U)\). Its degree-one
term \(U^1\) gives a weight-one syntomic realization
\(G^{\mathrm{perf}}(U)\), represented by a perfect unipotent group. Both are
constructed in Appendix~\ref{app:unipotent}, the first from Cartier theory
\cite{Zin84} and the second from the \(F\)-fixed-point construction of
Illusie--Raynaud \cite[Lemma~IV.3.8(b)]{IR83}.
Over \(k=\bar k\), each of these groups is isogenous to a product of truncated
Witt groups,
\[
  \widehat G(U)\;\sim\;\prod_i\widehat W_{\lambda_i},
  \qquad
  G^{\mathrm{perf}}(U)\;\sim\;\prod_j W_{\mu_j}^{\mathrm{perf}},
\]
where \(\widehat W_m\) and \(W_m^{\mathrm{perf}}\) are the formal and perfect
\(m\)-truncated Witt group schemes. We call
\(\lambda=(\lambda_1\ge\lambda_2\ge\cdots)\) and
\(\mu=(\mu_1\ge\mu_2\ge\cdots)\) the \emph{isogeny partitions} of the two
groups.

\begin{thmletter}[Isogeny partition; Theorem~\ref{thm:isogeny-partition}]
Let \(U\) be a domino over \(k=\bar k\). The formal and syntomic realizations
\(\widehat G(U)\) and \(G^{\mathrm{perf}}(U)\) of
Appendix~\ref{app:unipotent} have the same isogeny partition,
\(\lambda=\mu\). Writing \(\lambda(U)\) for the common partition and
\(d_r(U)=\dim(p^rU)\), its conjugate is given by
\(\lambda(U)^\vee_r=d_{r-1}(U)-d_r(U)\). In particular,
\(\lambda_1(U)=p\text{-}\!\exp(U)\).
\end{thmletter}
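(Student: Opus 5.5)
The plan is to reduce both isogeny partitions to a single numerical invariant of \(U\), namely the sequence \(d_r(U)=\dim(p^rU)\), and to show that each partition is determined by this sequence in the same way.

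The starting observation concerns truncated Witt groups. For \(\widehat W_m\) (resp.\ \(W_m^{\mathrm{perf}}\)), multiplication by \(p^r\) has image isomorphic, up to isogeny, to \(\widehat W_{m-r}\) (resp.\ \(W^{\mathrm{perf}}_{m-r}\)), of dimension \(\max(m-r,0)\). Hence for a group \(G\sim\prod_i W_{\lambda_i}\) we get \(\dim p^rG=\sum_i\max(\lambda_i-r,0)\). The consecutive differences \(\dim p^{r-1}G-\dim p^rG\) therefore count the number of \(i\) with \(\lambda_i\ge r\), that is, \(\lambda^\vee_r\). Since dimension is an isogeny invariant and \(p^r\) commutes with isogenies, the partition of \(G\) is recovered from the function \(r\mapsto\dim p^rG\). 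So it suffices to prove
\[
\dim p^r\widehat G(U)=d_r(U)=\dim p^rG^{\mathrm{perf}}(U)\qquad\text{for all } r\ge0.
\]
The last claim then follows, since \(\lambda_1\) is the largest \(r\) with \(\lambda^\vee_r\neq0\), i.e.\ the least \(r\) with \(d_r=0\). That is \(p\)-\(\exp(U)\), because a domino with \(\dim=0\) vanishes (I expect this is a consequence of Definition~\ref{def:domino-numerics}).

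To prove these equalities, I will use exactness and functoriality of both realizations from Appendix~\ref{app:unipotent}. Since \(p^rU\) is again a domino, and \(U\mapsto\widehat G(U)\) and \(U\mapsto G^{\mathrm{perf}}(U)\) are exact functors from dominoes to unipotent groups up to isogeny, we get \(p^r\widehat G(U)\sim\widehat G(p^rU)\) and \(p^rG^{\mathrm{perf}}(U)\sim G^{\mathrm{perf}}(p^rU)\). This reduces everything to the case \(r=0\): \(\dim\widehat G(U)=\dim U=\dim G^{\mathrm{perf}}(U)\). Both sides are additive in short exact sequences. The structure theory of \S\ref{sec:domino-structure} shows that \(U\) is an iterated extension of elementary dominoes \(U_j\), so the check reduces to \(U=U_j\). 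There it is a direct computation: \(U_j^0\) is the Cartier module of a one-dimensional formal group isogenous to \(\widehat{\mathbb G}_a\), and the \(F\)-fixed points of \(U_j^1\) give a one-dimensional perfect group isogenous to \(\mathbb G_a^{\mathrm{perf}}\).

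The main obstacle is the compatibility \(p^r\widehat G(U)\sim\widehat G(p^rU)\) and its syntomic analogue. The image of \(p^r\) on the domino is computed inside \(U^0\) and \(U^1\), but exactness of the realization only directly gives images of morphisms of dominoes. Here this is fine, because multiplication by \(p^r\) is such a morphism. The subtle point is that \(p^rU\) must be a domino, i.e.\ have finite-dimensionality in both degrees, so that the realizations apply and dimension is well defined. If that fails, I would instead work with the Ekedahl filtration and verify the dimension count on the graded pieces \(p^rU/p^{r+1}U\) directly.
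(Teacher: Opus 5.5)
Your proposal is correct and follows essentially the paper's route: identify the image of \([p]^r\) on each realization with the realization of the domino \(p^rU\), read off its dimension \(d_r(U)\), and compare with \(\sum_i\max(\lambda_i-r,0)\) from the Witt-group decompositions. The only difference is that the paper gets \(\dim\widehat G(U)=\dim_k U^0/VU^0\) directly from Cartier theory rather than by d\'evissage to the \(U_j\). The point you flag is settled just before the theorem via Remark~\ref{rem:domino-intrinsic-characterization}: \(p^rU\) is coherent, two-term and profinite, with \(V^{-\infty}Z^0(p^rU)\subseteq V^{-\infty}Z^0(U)=0\) and \(F^\infty B^1(p^rU)=p^rU^1\), so it is a domino.
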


Every differential \(E_1^{i,j}\to E_1^{i+1,j}\) of the slope spectral sequence
carries a domino \(U_X^{i,j}\). The one in bidegree \((0,2)\) recovers two
unipotent Brauer-type invariants of \(X\). On one side is the formal Brauer group \(\widehat{\mathrm{Br}}_X\), whose unipotent part is the formal realization \(\widehat G(U_X^{0,2})\). On the other is the \emph{perfect Brauer group}\footnote{This terminology is not standard. We use it to emphasize the parallel with the formal Brauer group \(\widehat{\mathrm{Br}}_X\).} \(\mathrm{Br}^{\mathrm{perf}}_X\), the perfect quasi-algebraic group whose \(k\)-points give the nondivisible part of the \(p\)-primary Brauer group, with connected unipotent part the syntomic realization \(G^{\mathrm{perf}}(U_X^{0,2})\). Theorem~C says these two realizations share one
isogeny partition. They are two functors of the single object \(U_X^{0,2}\), which
records extension data that the shared partition
forgets, complementing the isogeny-level relation established by
Grammatica--Skorobogatov--Yang through an auxiliary Bragg--Olsson group
\cite{GSY25,BO21}. For a supersingular abelian variety \(A\) the picture simplifies:
both \(\widehat{\mathrm{Br}}_A\) and \(\mathrm{Br}^{\mathrm{perf}}_A\) are already
unipotent, equal to the two realizations of \(U_A^{0,2}\).

The second part of the paper (\S\ref{sec:applications}) applies the theory to geometry and
explicit computation by extending a reconstruction technique of Nygaard
\cite[Theorem~4.3]{Nygaard81}. Recovering the $E_1$-page of the slope spectral
sequence from the $F$-crystal means recovering its nonvanishing differentials,
and these are carried by the dominoes.
The object we reconstruct is slightly larger than the domino: the degree-two
\emph{diagonal slice} $H^2(X,W\Omega_X^\bullet)^{[0,0]}$, a two-term $R$-module
with domino part $U_X^{0,2}$, defined by a diagonal truncation
(Definition~\ref{def:diagonal-t-structure}). We introduce an exact category of \emph{Ekedahl modules} containing the slice.
An Ekedahl module with no nonzero finite-torsion subobject (nft) is determined
by the data of its \emph{Nygaard pair} $(K_0,K_{-1},\iota,\nu)$: the kernels
$K_0=\ker d$ and
$K_{-1}=\ker(dV)$, the inclusion $\iota$ of $K_{-1}$ in $K_0$, and the
restriction $\nu$ of $V$ to $K_{-1}$. In fact passing from an nft Ekedahl module to its
Nygaard pair is an equivalence of exact categories
(Theorem~\ref{thm:two-kernel-presentation}). The Ekedahl modules are moreover exactly the
intersection of Ekedahl's three hearts on $D^b_c(R)$ (Postnikov, diagonal, and
$F$-gauge), a characterization we establish in
Appendix~\ref{app:ekedahl-three-hearts}. A smooth proper variety is
\emph{Mazur--Ogus} if its crystalline cohomology is torsion-free and its
Hodge--de Rham spectral sequence degenerates at $E_1$. For such a variety the
diagonal slice is an nft Ekedahl module. We show that the $F$-crystal
$H^2_{\mathrm{crys}}(X/W)$ determines its Nygaard pair, and with it the whole
integral slice.

\begin{thmletter}[Reconstruction of $H^2(X,W\Omega_X^\bullet)^{[0,0]}$; Theorem~\ref{thm:geometric-ekedahl-pair-partial-v}]
Let \(X/k\) be a smooth proper Mazur--Ogus variety of dimension \(d\ge2\). Then
the \(F\)-crystal \(H^2_{\mathrm{crys}}(X/W)\) functorially reconstructs the
Nygaard pair \((K_0,K_{-1},\iota,\nu)\) attached to
\(H^2(X,W\Omega_X^\bullet)^{[0,0]}\). Consequently it reconstructs the
coherent $R$-module \(H^2(X,W\Omega_X^\bullet)^{[0,0]}\).
\end{thmletter}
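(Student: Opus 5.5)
Our plan is to reduce the statement to Theorem~\ref{thm:two-kernel-presentation}. Once the Nygaard pair \((K_0,K_{-1},\iota,\nu)\) of the nft Ekedahl module \(M:=H^2(X,W\Omega_X^\bullet)^{[0,0]}\) is known functorially, the equivalence of exact categories between nft Ekedahl modules and Nygaard pairs returns \(M\) up to canonical isomorphism, and functoriality is preserved. So the whole task is to write \(K_0\), \(K_{-1}\), \(\iota\) and \(\nu\) in terms of the \(F\)-crystal \((H,\phi)\), where \(H=H^2_{\mathrm{crys}}(X/W)\). First we check that \(M\) is indeed an nft Ekedahl module. This uses the Mazur--Ogus hypothesis: torsion-freeness of \(H\) and \(E_1\)-degeneration of Hodge--de Rham, which for \(d\ge 2\) give \(H^2(X,W\Omega^\bullet)\cong H\) with no finite torsion in the relevant slice. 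We take this as the input stated before the theorem.

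\textbf{Identifying the kernels.} The slice has degree-zero term a quotient of \(H^2(W\mathcal O_X)\) and degree-one term \(H^1(W\Omega^1)\), suitably truncated. By definition \(K_0=\ker d\) on the degree-zero piece. We identify \(K_0\) with the slope-\([0,1)\) part of \(H\) cut out by the Nygaard filtration, namely
\[
K_0\;\cong\;H/\{x\in H:\phi(x)\in p\,H\},
\]
twisted appropriately. Here we follow Nygaard~\cite[Theorem~4.3]{Nygaard81}, who recovers \(H^2(W\mathcal O_X)\) and \(\ker d\) from the filtration \(N^iH=\phi^{-1}(p^iH)\). The Mazur--Ogus condition makes \(N^\bullet\) match the Hodge filtration mod \(p\); this is exactly what his argument needs, and it is where \(d\ge2\) and degeneration enter. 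Next, \(K_{-1}=\ker(dV)\) sits one step further down the filtration: it is the subobject of \(K_0\) on which \(V\) lands in \(\ker d\). We describe it as the image of \(\{x: \phi(x)\in p^2H\}\) modulo the same relation, and we take \(\iota\) to be the map induced by the inclusion \(N^2\subset N^1\). Finally, \(\nu\) is induced by \(V=p\phi^{-1}\) on the crystal, so it is the map \(x\mapsto p\phi^{-1}(x)\), well defined on these pieces precisely because of the \(N^2\) condition.

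\textbf{Checking the description.} Each identification is proved by comparing the slope spectral sequence with the conjugate/Nygaard filtration, using the canonical comparison \(W\Omega^\bullet\simeq\) crystalline complex and the formula \(\phi=p^iF\) on \(W\Omega^i\). Functoriality is then automatic, since every construction above is made from \((H,\phi)\) by kernels, images and inverses of \(\phi\) in \(H[1/p]\).

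\textbf{Main obstacle.} We expect the hardest step to be \(K_{-1}\) together with \(\nu\). Nygaard only handled the degree-zero kernel, and showing that \(\ker(dV)\) is exactly cut out by the second Nygaard step requires controlling the domino part \(U_X^{0,2}\), where \(d\) is not injective modulo torsion. We would first try to prove the equality after inverting nothing: compute on the \(V\)-adic filtration of \(H^2(W\mathcal O_X)\), show that \(dVx=0\) iff \(F\)-twisted \(x\) lies in \(N^2\), and use the nft hypothesis to rule out a finite-length discrepancy. That last step is where the absence of finite torsion subobjects is essential.
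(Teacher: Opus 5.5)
Your overall frame is the paper's: reduce to Theorem~\ref{thm:two-kernel-presentation}, so that everything comes down to expressing \((K_0,K_{-1},\iota,\nu)\) through \((H,\phi)\). But the lattices you propose are wrong, and they are the whole content of the theorem.

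You set \(K_0=H/\phi^{-1}(pH)\). Since \(pH\subseteq\phi^{-1}(pH)\), this is always killed by \(p\). By Mazur's theorem it is \(H^2(X,\mathcal O_X)\) up to twist. In the language of Lemma~\ref{lem:kernel-ladder} it is exactly the first quotient \(K_0/\nu(K_{-1})=M^0/VM^0\) (one checks \(V_K(V_K^{-1}H)=\phi^{-1}(pH)\) for \(V_K=p\phi^{-1}\)), not \(K_0\). But \(\ker d\) contains the finite free core \(C_X^{0,2}\). Even when the core vanishes, \(\ker d\) need not be killed by \(p\): for a supergeneral abelian \(g\)-fold with \(g\ge3\) it has \(p\)-exponent \(g-1\).

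Concretely, take the cyclic supergeneral surface, \(\boldsymbol\varepsilon=(0,0,1,1)\). Its block \(B^{(1)}\) has \(\phi f_1=f_2\), \(\phi f_2=pf_3\), \(\phi f_3=p^2f_4\), \(\phi f_4=pf_1\) up to units. There \(\ker d=k\bar f_1\oplus k\bar f_4\) has length \(2\) (indeed \(U_A^{0,2}=U_2\)). By contrast, \(B^{(1)}/\phi^{-1}(pB^{(1)})\) is spanned by the class of \(f_1\) and has length \(1\); the block \(B^{(2)}\) contributes to neither.

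The missing idea is that \(\ker d=E_2^{0,2}=E_\infty^{0,2}\), because no higher differential enters or leaves bidegree \((0,2)\) (Proposition~\ref{prop:ekedahl-duality}). Hence \(K_0=H/L\) with \(L=\mathrm{Fil}^1_{\mathrm{HW}}H=\bigcap_{r\ge0}\phi^{-r}(p^rH)\) by Ekedahl's formula (Proposition~\ref{prop:HW-filtration}). This uses all iterates of \(\phi\), not one step of the Nygaard filtration.

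Your \(K_{-1}\) fails for a related reason. Since \(\phi^{-1}(p^2H)\subseteq\phi^{-1}(pH)\), its image in your \(K_0\) is zero. So your recipe always yields \(K_{-1}=0\) with \(pK_0=0\). By Proposition~\ref{prop:ekedahl-pair-weighs-degree} and Lemma~\ref{lem:pure-type-split}, that Nygaard pair reconstructs \(U_1^{\oplus h^{0,2}}\) for every \(X\). This is right for superspecial abelian varieties and wrong for the surface above. Even modulo the correct \(L\), \(\phi^{-1}(p^2B^{(1)})=p^2Wf_1\oplus pWf_2\oplus Wf_3\oplus pWf_4\) maps to \(0\), whereas \(\ker(dV)=k\bar f_1\).

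Your \(\nu\) also has a problem. What makes \(p\phi^{-1}(x)\) lie in \(H\) is \(px\in\phi(H)\), a condition on \(x\) lying in the image of \(\phi\). The condition \(\phi(x)\in p^2H\) does not control this: take a primitive \(x\) with \(\phi x=p^2x\).

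The right candidate is \(V_K^{-1}H=\{m\in H:p\phi^{-1}(m)\in H\}\), here \(Wf_1\oplus Wf_2\oplus Wf_3\oplus pWf_4\). Then \(K_{-1}=V_K^{-1}H/(L\cap V_K^{-1}H)\), with \(\iota\) induced by inclusion and \(\nu\) by \(V_K\).

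Identifying this with \(\ker(dV)\) is not a filtration comparison. One reruns the edge argument for the Nygaard-modified complex \(W\Omega_X^\bullet(-1)=[\sigma_*W\mathcal O_X\xrightarrow{dV}W\Omega_X^1\to\cdots]\), whose \((0,2)\)-edge is \(\ker(dV)\). This uses the chain maps \(\mathcal V=(V,1,1,\ldots)\) and \(\varphi=(1,F,pF,\ldots)\) to \(W\Omega_X^\bullet\), which satisfy \(\mathcal V=V_K\circ\varphi\). The key step is \(\varphi(\mathbb H^2(X,W\Omega_X^\bullet(-1)))=V_K^{-1}H\), and only here does Mazur's theorem enter. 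For \(m\in V_K^{-1}H\), \(u=V_K(m)\) lies in \(\phi^{-1}(pH)\), which reduces into \(\mathrm{Fil}^1_{\mathrm H}\). So \(u\) dies in \(H^2(X,\mathcal O_X)\) and lifts along \(0\to W\Omega_X^\bullet(-1)\xrightarrow{\mathcal V}W\Omega_X^\bullet\to\mathcal O_X\to0\).

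Your fallback plan cannot work: you propose to prove \(dVx=0\) iff \(x\in\phi^{-1}(p^2H)\) and absorb a finite discrepancy by nft. The equality is false, and nft constrains only the core, while the discrepancy lives in the domino.

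Finally, membership of the slice in \(\mathcal E^{\mathrm{nft}}\) does not follow from torsion-freeness alone. Surjectivity of \(dV\) is the positivity of \(U_X^{0,2}\), and freeness of the core is Ekedahl's Mazur--Ogus d\'evissage (Proposition~\ref{prop:diagonal-ordinary-degree-two-domino}, Theorem~\ref{thm:diagonal-MO-devissage}). Note also that the slice is \([H^2(X,W\mathcal O_X)\xrightarrow{d}F^\infty B_X^{1,2}]\), with \(F^\infty B_X^{1,2}\subseteq H^2(X,W\Omega_X^1)\), not a quotient of \(H^2(X,W\mathcal O_X)\) together with a piece of \(H^1(X,W\Omega_X^1)\).
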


For a supersingular abelian variety \(A\) whose Dieudonn\'e module is
\emph{cyclic} (Definition~\ref{def:cyclic-crystal}), the reconstruction reduces to an explicit combinatorial procedure:
extracting the Nygaard pair is a finite semilinear matrix calculation, and from the
resulting sequence of valuations a combinatorial algorithm computes the entire integral
domino \(U_A^{0,2}\), and with it its type sequence \(J(U_A^{0,2})\), its isogeny partition
\(\lambda(U_A^{0,2})\), and both the formal Brauer group \(\widehat{\mathrm{Br}}_A\) and the
perfect Brauer group \(\mathrm{Br}^{\mathrm{perf}}_A\) (\S\ref{sec:cyclic-matrices}). We apply this
procedure to the superspecial and cyclic supergeneral families.

\begin{thmletter}[Supersingular abelian varieties; Theorems~\ref{thm:supergeneral} and~\ref{thm:superspecial}]
Let \(A/k\) be a supersingular abelian \(g\)-fold.
\begin{enumerate}
\item If \(A\) belongs to the cyclic supergeneral family whose \(p\)-divisible group has
Dieudonn\'e module \(R^0/R^0(F^g-V^g)\), then
\[
  U_A^{0,2}
  \;\cong\;
  U_{2,4,\ldots,2g-2}
  \;\oplus\;
  U_{3,5,\ldots,2g-3}
  \;\oplus\;
  \cdots
  \;\oplus\;
  U_g.
\]
Consequently \(\widehat{\mathrm{Br}}_A\cong
\widehat{W}_{g-1}\oplus\widehat{W}_{g-2}\oplus\cdots\oplus\widehat{\mathbb{G}}_a\)
and \(\mathrm{Br}^{\mathrm{perf}}_A\cong
W_{g-1}^{\mathrm{perf}}\oplus W_{g-2}^{\mathrm{perf}}\oplus\cdots\oplus
\mathbb G_a^{\mathrm{perf}}\).

\item If \(A\) is superspecial, then \(U_A^{0,2}\cong U_1^{\oplus\binom{g}{2}}\). Consequently
\(\widehat{\mathrm{Br}}_A\cong\widehat{\mathbb{G}}_a^{\oplus\binom{g}{2}}\) and
\(\mathrm{Br}^{\mathrm{perf}}_A\cong
\bigl(\mathbb G_a^{\mathrm{perf}}\bigr)^{\oplus\binom{g}{2}}\).
\end{enumerate}
\end{thmletter}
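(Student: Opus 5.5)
The plan is to apply, in both cases, the cyclic-crystal procedure of \S\ref{sec:cyclic-matrices} to \(H^2_{\mathrm{crys}}(A/W)=\wedge^2 M\), where \(M\) is the Dieudonn\'e module of \(A[p^\infty]\). An abelian variety is Mazur--Ogus, so Theorem~D shows that the \(F\)-crystal \(\wedge^2M\) reconstructs the Nygaard pair \((K_0,K_{-1},\iota,\nu)\) of \(H^2(A,W\Omega_A^\bullet)^{[0,0]}\). By Theorem~\ref{thm:two-kernel-presentation}, this pair determines the slice, and hence its domino part \(U_A^{0,2}\).

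For (1), take \(M=R^0/R^0(F^g-V^g)\) with the cyclic basis \(e_i=F^{i}e\), \(i=0,\ldots,g-1\), together with its \(V\)-images. Then \(\wedge^2M\) has the basis \(e_a\wedge e_b\), and \(F\) acts on it by a monomial-type semilinear matrix. The key step is to group the basis vectors by the difference \(b-a=m\in\{1,\ldots,g-1\}\). I expect \(\wedge^2 M\) to split as a direct sum of \(F,V\)-stable pieces \(N_m\), up to pairing \(m\) with \(g-m\). On each piece I would compute \(K_0\) and \(K_{-1}\) explicitly: the \(\nu=V|_{K_{-1}}\) valuations should form an arithmetic progression. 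Feeding that sequence into the combinatorial algorithm should then produce, for each block, a domino with type sequence \((m+1,m+3,\ldots)\) of length \(g-m\), which gives \((2,4,\ldots,2g-2)\) for \(m=1\), down to \((g)\) for \(m=g-1\). Each block should also have \(p\)-exponent equal to its dimension. The uniqueness part of Theorem~B, applied to type \((j,j+2,\ldots,j+2n-2)\) with maximal \(p\)-exponent \(n\), then identifies each block with the distinguished domino \(U_{j,\ldots}\). I would verify the maximal exponent by checking that \(p^{n-1}\) does not kill the block, using the lowest generator from the \(K_0/K_{-1}\) computation. Theorem~C then gives \(\lambda(U_{j,\ldots,j+2n-2})=(n)\), since its conjugate has \(d_{r-1}-d_r=1\) for \(r\le n\). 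Hence each block realizes to \(\widehat W_n\) and \(W_n^{\mathrm{perf}}\) up to isogeny.

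To get isomorphisms rather than isogenies, I would use that a domino of dimension \(n\) and \(p\)-exponent \(n\) is cyclic. For example, \(\widehat R/\widehat R(F^n,dV^{j-1})\) has \(U^0\) generated by a single element killed by \(p^n\), so its Cartier module is exactly that of \(\widehat W_n\). The analogous statement holds for the \(F\)-fixed points in degree one. Because \(A\) is supersingular, both Brauer groups are unipotent and equal to these realizations, as stated in the introduction, so the direct sums follow.

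For (2), \(M\cong (R^0/R^0(F-V))^{\oplus g}\) after base change, so \(FM=VM\). Then \(\wedge^2M\) has \(F\) equal to \(p\) times a \(\sigma\)-linear automorphism on the wedges \(e_a\wedge e_b\), apart from the slope-one twist. The Nygaard computation makes every valuation jump minimal, so each of the \(\binom g2\) summands yields an elementary domino \(U_1\); this is consistent with Artin invariant \(1\) for \(g=2\). The realizations of \(U_1\) are \(\widehat{\mathbb G}_a\) and \(\mathbb G_a^{\mathrm{perf}}\).

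The main obstacle is the first step of (1): checking that \(\wedge^2M\) with the chosen basis really is cyclic in the sense of Definition~\ref{def:cyclic-crystal}, and that the blocks decouple. The pairing of \(m\) with \(g-m\) through \(F^g=V^g\) may glue blocks together. If they do not decouple, I would instead compute the type sequence and isogeny partition globally and invoke Theorem~\ref{thm:fixed-type-groupoid} to show that the off-diagonal \(\mathrm{Ext}^1\) entries can be cleared.
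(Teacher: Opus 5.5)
For (1) your route is the paper's: the cyclic basis $1,F,\ldots,F^{g-1},V^g,\ldots,V$ of $H^1_{\mathrm{crys}}(A/W)$, then Steps~1--3 of Theorem~\ref{thm:cyclic-matrix} on $\bigwedge^2$, then the uniqueness in Theorem~\ref{thm:distinguished}, and finally Corollary~\ref{cor:distinguished-props} with Lemma~\ref{lem:supersingular-brauer-unipotent}. However, the step you call the main obstacle is not an obstacle, and your indexing of it is wrong.

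In that basis Frobenius is a single $2g$-cycle with valuation word $(0^{g},1^{g})$, where exponents denote multiplicities. The relation $F^g=V^g$ is already absorbed in making these vectors one cycle ($F\cdot F^{g-1}=V^g$ and $F\cdot V=p$), so it cannot glue anything afterwards. Step~1 then splits $\bigwedge^2$ into $\Phi$-stable summands $B^{(s)}$. Here $s$ is the cyclic difference in $\mathbb Z/2g\mathbb Z$ up to $s\sim 2g-s$, so $s=1,\ldots,g$, not $m\le g-1$ paired with $g-m$. The extra block $B^{(g)}$ has $\mathbf v^{(g)}\equiv1$, hence zero height. Your fallback through Theorem~\ref{thm:fixed-type-groupoid} is unnecessary.

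What you leave as ``expected'' is the actual content, and it is short. One has $\mathbf v^{(s)}=(0^{g-s},1^{s},2^{g-s},1^{s})$. So, with $m=g-s$, the height function descends from $m$ to $0$, stays at $0$, climbs back to $m$, and stays there. Each $\{\mathbf h^{(s)}\ge r\}$ with $1\le r\le m$ is a single arc of length $2g-s-2r+1$. The arithmetic progression is in these arc lengths, not in $\nu$-valuations. Since $\ker d=\bigoplus_iW/p^{h_i}$ on the block, Lemma~\ref{lem:first-kernel-controls-exponent} gives $p$-exponent $m=\dim$, which is exactly the input Theorem~\ref{thm:distinguished} needs.

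Your general claim that dimension $n$ and $p$-exponent $n$ force the realizations $\widehat W_n$ and $W_n^{\mathrm{perf}}$ is false. The domino $U_{0,3;V}$ has both invariants equal to $2$, yet $\widehat G(U_{0,3;V})\simeq\mathcal F_1\not\simeq\widehat W_2$ by Theorem~\ref{thm:2d-unipotent-realizations}. You only need the claim for $U_J$, where Corollary~\ref{cor:distinguished-props} supplies it.

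Part (2) contains an actual error. Write $M=\bigoplus_i(Wx_i\oplus Wy_i)$ with $Fx_i=y_i$ and $Fy_i=px_i$. Then $\Phi(x_i\wedge x_j)=y_i\wedge y_j$ has valuation $0$, so $\Phi$ is \emph{not} $p$ times an automorphism of the lattice $N=\bigwedge^2M$. If it were, $\Phi(N)\subseteq pN$ would give $L=N$ by Proposition~\ref{prop:HW-filtration}. Then $\ker d=0$, and $U_A^{0,2}=0$ by Lemma~\ref{lem:first-kernel-controls-exponent}, contradicting $T^{0,2}(A)=\binom{g}{2}$.

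The correct bookkeeping, which is the paper's, has three kinds of blocks:
\begin{itemize}
\item $g$ blocks $x_i\wedge y_i$ with $\mathbf v=[1]$, of zero height;
\item $\binom{g}{2}$ blocks $\{x_i\wedge y_j,\,x_j\wedge y_i\}$ with $\mathbf v=[1,1]$, of zero height;
\item $\binom{g}{2}$ blocks $\{x_i\wedge x_j,\,y_i\wedge y_j\}$ with $\mathbf v=[0,2]$ and height $(1,0)$, each a single arc of length $1$, i.e.\ one copy of $U_1$.
\end{itemize}
Alternatively, you can avoid blocks altogether. Lemma~\ref{lem:superspecial-exterior-square-comparison} gives $N/L\simeq\bigwedge_k^2(M/FM)$, of length $\binom{g}{2}$. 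Proposition~\ref{prop:ekedahl-pair-weighs-degree} and Corollary~\ref{cor:domino-supersingular} then give $\deg=\dim=\binom{g}{2}$. Positivity forces every type to equal $1$, and Lemma~\ref{lem:pure-type-split} yields $U_1^{\oplus\binom{g}{2}}$.
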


Beyond these families, the reconstruction bounds the invariants of \(U_A^{0,2}\) for
\emph{every} supersingular abelian variety \(A\). Two are of interest: its \(p\)-exponent, the
smallest power of \(p\) annihilating the \(p\)-primary Brauer group, and its degree
\(\deg(U_A^{0,2})\), which we call the \emph{degree-two Artin invariant} \(\sigmaArt(A)\). For a
supersingular surface the latter recovers the classical Artin invariant. We bound both in terms
of the \(a\)-number.

\begin{thmletter}[Bounds on the \(p\)-exponent and \(\sigmaArt\); Lemma~\ref{lem:rank-estimate-lower-bound} and Theorems~\ref{thm:extremal-exponent}, \ref{thm:sigma-a-number-lower-bound}, and~\ref{thm:sigma-discriminant-bound}]
Let \(A/k\) be a supersingular abelian variety of dimension \(g\geq3\), and
set \(a=a(A)\) and \(e=p\text{-}\exp\!\left(U_A^{0,2}\right)\). Then
\[
   \left\lceil\frac{g-1}{a}\right\rceil
   \le
   e
   \le
   g-a+1.
\]
Moreover, \(a=1\) if and only if \(e=g-1\), and \(a=2\) implies \(e\leq g-2\).
For the degree-two Artin invariant \(\sigmaArt(A)=\deg(U_A^{0,2})\), with a principal
polarization assumed for the upper bound,
\[
  g(g-1)-\binom{a}{2}
  \;\le\;
  \sigmaArt(A)
  \;\le\;
  \left\lfloor\frac{e\,g(2g-1)}{2}\right\rfloor,
\]
so the upper bound for \(\sigmaArt\) inherits the \(a\)-dependence of \(e\):
\(\sigmaArt(A)\le\lfloor(g-a+1)\,g(2g-1)/2\rfloor\), with \(g-a+1\) improved to
\(g-2\) when \(a=2\), and \(\sigmaArt(A)\le(g-1)\,g(2g-1)/2\) unconditionally.
The results hold for every prime \(p\), including \(p=2\).
\end{thmletter}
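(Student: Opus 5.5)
Since this theorem only collects results proved later in the paper, the plan is to organize the argument around the reconstruction of Theorem~D. For a supersingular abelian variety \(A\) we have \(H^2_{\mathrm{crys}}(A/W)=\wedge^2 M\), where \(M\) is the Dieudonn\'e module of \(A[p^\infty]\). Theorem~D then computes the Nygaard pair \((K_0,K_{-1},\iota,\nu)\) of the slice \(H^2(A,W\Omega^\bullet)^{[0,0]}\) from \(\wedge^2M\) alone. The domino \(U=U_A^{0,2}\) is the finite-length-over-\(k_\sigma[V]\) part of this slice. By Theorem~C, \(e=\lambda_1(U)\) equals the largest \(r\) with \(d_{r-1}(U)>d_r(U)\), where \(d_r=\dim p^rU\). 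So all four inequalities reduce to estimating the ranks \(d_r(U)\) and the degree \(\deg U\). I will express these through the relative position of the lattices \(K_{-1}\subset K_0\) and \(\nu(K_{-1})\) inside \(\wedge^2 M\otimes K\). Concretely, \(\dim U\) should be the number of slope-one-crossing "defects", and \(p^rU\neq0\) should hold exactly when some elementary divisor of \(F\) on \(\wedge^2 M\), relative to \(p\cdot\wedge^2M\), exceeds \(r\). This is the content I would isolate as Lemma~\ref{lem:rank-estimate-lower-bound}.

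For the \(p\)-exponent I will use the \(a\)-number filtration: \(a=\dim_k M/(FM+VM)\). Write \(M\) as generated by \(a\) elements, so \(M\) is a quotient of \((R^0)^{a}\). Then \(\wedge^2M\) is generated over \(W[F,V]\) by products \(F^iV^jx_s\wedge F^{i'}V^{j'}x_t\).

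For the lower bound \(e\ge\lceil (g-1)/a\rceil\), I will count dimensions. The domino has dimension \(\binom g2\) for supersingular \(A\), as stated in the introduction. Each \(p\)-power layer \(d_{r-1}-d_r\) is bounded by the number of columns of \(\lambda^\vee\), which should be controlled by \(a\) times \(g\)-ish. More precisely, I will show \(d_{r-1}-d_r\le a(g-1)/2\)-type bounds, so that summing \(r\) layers reaching \(\binom g2\) forces \(e\ge(g-1)/a\).

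For the upper bound \(e\le g-a+1\), I will use that \(F^{g-a+1}M\subset pM\)-type nilpotence holds for supersingular modules with \(a\)-number \(a\). Indeed, the Ekedahl–Oort type forces \(F^{\,g-a+1}\) to land in \(V M\). Then \(p^{g-a+1}\) kills the cokernel defining \(U\).

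The equivalence \(a=1\iff e=g-1\) comes in two directions. If \(a=1\), \(M\) is cyclic (Definition~\ref{def:cyclic-crystal}), and the explicit algorithm of \S\ref{sec:cyclic-matrices} computes the valuation sequence. The longest valuation chain should give a distinguished summand of exponent \(g-1\), as in Theorem~E(1). Conversely, \(a\ge2\) gives \(e\le g-a+1\le g-1\), and equality must then be excluded separately. Together with the claim \(a=2\Rightarrow e\le g-2\), this is Theorem~\ref{thm:extremal-exponent}. I expect it to require a finer analysis of which products \(x_s\wedge x_t\) realize the top chain. I anticipate this extremal step to be the main obstacle, since the crude bound is off by one.

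For \(\sigmaArt\) the plan has two parts.

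The lower bound \(g(g-1)-\binom a2\) (Theorem~\ref{thm:sigma-a-number-lower-bound}) is obtained as follows. The degree is \(\sum_i (j_i)\) over the type sequence, which by the Nygaard pair equals a length \(\lgth(K_0/\nu K_{-1})\)-type index. I compare this with the superspecial value \(\binom g2\cdot 1\), adjusting by the contribution of the \(\binom a2\) wedges \(x_s\wedge x_t\) of \(\ker F\cap\ker V\) generators. The remaining \(\binom g2-\binom a2\) pairs contribute at least 2 each.

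For the upper bound under a principal polarization (Theorem~\ref{thm:sigma-discriminant-bound}), I use the polarization to make \(\wedge^2M\) self-dual up to twist. Then each type \(j_i\) is bounded by \(e\) times the discriminant contribution of the rank \(\binom{2g}{2}=g(2g-1)\) lattice, halved by duality. This gives \(\lfloor e\,g(2g-1)/2\rfloor\). Substituting the exponent bounds yields the stated corollaries. Finally, none of these steps uses \(p\neq2\), since the reconstruction and Theorem~C are characteristic-free.
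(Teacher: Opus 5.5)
Your proposal has genuine gaps at several points, starting with the exponent bounds.

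\emph{Lower bound on $e$.} No uniform bound on the layers $d_{r-1}-d_r$ can give it. For the cyclic supergeneral family of Theorem~\ref{thm:supergeneral} ($a=1$) one has $\lambda(U_A^{0,2})=(g-1,g-2,\ldots,1)$, so $d_0-d_1=g-1>a(g-1)/2$. In fact your bound would force $e\ge g/a=g$, contradicting $e=g-1$. Any layer bound valid there is at least $g-1$, so summing yields only $e\ge g/2$. The paper argues instead with the two smallest elementary divisors of $F^r$ on $M$. The quantity $\mu_{\min,N}(r)=\mu_1(r)+\mu_2(r)$ vanishes while $\operatorname{rank}\bar F^r\ge2$, which gives $e\ge r$. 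The rank drops by at most $a$ per step, since $\operatorname{im}\bar F^{r-1}\cap\ker\bar F\subseteq\operatorname{im}\bar F\cap\ker\bar F$ and the latter has dimension $a$.

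\emph{Upper bound on $e$.} The inclusion $F^{g-a+1}M\subseteq VM$ is true (it just says $\bar F^{g-a+2}=0$), but $e=\max_{r\ge0}\{r-\mu_{\min,N}(r)\}$ involves all iterates of $\Phi$. By Lemma~\ref{lem:exponent-depth} you need $\Phi^{m+s}N\subseteq p^sN$ for every $s\ge0$. From $F^mM\subseteq VM$ you get only $\Phi^{m+1}N\subseteq p^2N$ and no control afterwards. Also, $F^{g-a+1}M\subseteq pM$ is false: $F^g\cdot1=V^g$ is a basis vector of $R^0/R^0(F^g-V^g)$. The missing idea is Li's superspecial envelope. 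One has $\Phi^\delta N\subseteq\bigwedge^2F^\delta M^{\mathrm{sp}}$, a lattice of exponent $1$ because $F^\delta M^{\mathrm{sp}}$ is superspecial. Combine this with monotonicity of $L(-)$ and $\delta\le g-a$.

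\emph{The case $a=1$.} Your route to $a=1\Rightarrow e=g-1$ also fails. The condition $a=1$ only makes $M$ monogenic over $R^0$; it does not give a basis as in Definition~\ref{def:cyclic-crystal}. Those cyclic $F$-crystals are finitely many up to isomorphism, one per valuation word. But already for $g=2$ there are infinitely many $a=1$ modules: the modules $Wz+FS$ inside a superspecial $S$ correspond to non-$\mathbb{F}_{p^2}$-rational lines in $S/FS$ modulo the finite group $GL_2(\mathbb{F}_{p^2})$. The paper gets $a=1\Rightarrow e=g-1$ from the lower bound, which equals $g-1$ at $a=1$. It combines this with the sharpening $e\le\delta$ for $\delta\ge g-2$, proved via the two endpoint configurations of Lemma~\ref{lem:superspecial-exterior-square-comparison}. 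The same sharpening gives $a=2\Rightarrow e\le g-2$ and the converse $e=g-1\Rightarrow a=1$, both of which you leave open.

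\emph{Lower bound on $\sigmaArt$.} The length $\lgth_W(K_0/\nu K_{-1})=\dim_kU^0/VU^0=\binom g2$ is the dimension, not the degree. The degree is $\lgth_WK_0$ (Proposition~\ref{prop:ekedahl-pair-weighs-degree}), so $\sigmaArt(A)=\binom g2+\lgth_W\ker(dV)$. Your count (``$\binom a2$ pairs contribute $1$, the others at least $2$'') gives the right number, but nothing stands behind it, since types are not indexed by pairs of generators. The paper uses a surjection $\ker(dV)/p\ker(dV)\twoheadrightarrow\bigwedge^2\bar F\bigl(\ker\bigwedge^2\bar V\bigr)=\operatorname{im}\bar F^2\wedge\operatorname{im}\bar F$. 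It is built from $\ker(dV)\simeq\Lambda/(L\cap\Lambda)$ with $\Lambda=V_K^{-1}N$, and its target has dimension $\binom g2-\binom a2$.

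\emph{Upper bound on $\sigmaArt$.} Your outline (self-duality, discriminant of a rank-$g(2g-1)$ lattice, halving) matches the paper. However, what is bounded is the total length $\lgth_W(L^\vee/L)=2\sigmaArt(A)$, not each type $j_i$. The essential step you omit is $p^eL^\vee\subseteq L$. It uses that $\Theta=p^{-1}\Phi$ is an isometry, so that $L^\vee=\sum_{r\ge0}\Theta^{-r}N$ and $p^e\Theta^tN\subseteq N$ for all $t\in\mathbb{Z}$.
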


The Nygaard-pair reconstruction turns the $p$-exponent $p\text{-}\exp(U_A^{0,2})$ into the
lattice exponent of $H^2_{\mathrm{crys}}(A/W)$ modulo its Hodge--Witt filtration $L:=\mathrm{Fil}^1_{\mathrm{HW}}$, which
one then bounds by an elementary-divisor calculation for $F^r$ together with Li's superspecial
envelope (Lemma~\ref{lem:li-superspecial-envelope}). For $\sigmaArt$, a principal polarization
places $L$ in a self-dual chain whose discriminant group $L^\vee/L$ has
length $2\sigmaArt(A)$, so crystalline Poincar\'e duality bounds $\sigmaArt$ by half the
maximal discriminant length. We do not know how to derive these bounds without the Hodge--Witt filtration and the domino
formalism. In particular, these geometric results seem difficult to obtain from the Nygaard
filtration alone.

\medskip
\noindent\textbf{Relation to prior work.}
Since Illusie--Raynaud \cite{IR83} and Ekedahl
\cite{ekedahl1,ekedahl2,ekedahl3} introduced the coherent $R$-module formalism and its
d\'evissage, the domino has largely been approached through its dimension $T^{i,j}$, known as
the \emph{domino numbers}. Antieau--Bragg \cite[Theorem~5.29]{AB22} prove that the $T^{i,j}$ are derived
invariants of smooth proper varieties of dimension $\le 3$.

Using the classifying-stack approximation of Antieau--Bhatt--Mathew
\cite[Theorem~1.2]{ABM21}, Li--Yang
\cite[Theorem~1.1 and Corollary~5.12]{LY26} compute the Hodge--Witt cohomology of $B\alpha_p$ to produce an explicit counterexample to Hodge--Witt symmetry: a smooth proper fourfold with asymmetric Hodge--Witt numbers in total degree three, sharp in both dimension and total degree. Their work involves only the elementary dominoes $U_j$, the $1$-dimensional dominoes, whereas here we study the higher-dimensional ones.

Grammatica--Skorobogatov--Yang study the same $p$-primary Brauer groups through
the Nygaard filtration and pose the exponent problem in
\cite[Question~5.3]{GSY25}. Theorem~F answers it, proves that $a=1$ if and only if the $p$-exponent equals $g-1$, and sharpens their bound on both sides in terms of the $a$-number, for every prime $p$. Where they compute the isogeny type of the connected unipotent group, we determine the integral $R$-module $U_A^{0,2}$ that refines it.

\medskip
\noindent\textbf{Broader perspective.}
In the language of the syntomic stack \(k^{\mathrm{Syn}}\) \cite{BhattFgauges}, Ekedahl's derived equivalence identifies \(D^b_c(R)\) with the coherent derived category \(D^b_c(k^{\mathrm{Syn}})\) of \(F\)-gauges \cite[Theorem~II.5.3]{ekedahl3}. Through the stacky approach to prismatic cohomology \cite{BhattLurie22,Drinfeld20,BhattFgauges}, dominoes and diagonal dominoes correspond to torsion objects in \(D^b_c(k^{\mathrm{Syn}})\). Every Ekedahl module gives a coherent \(F\)-gauge with Hodge--Tate weights in \([0,2]\). Among nft Ekedahl modules, the torsion objects are exactly the positive dominoes (Appendix~\ref{app:ekedahl-three-hearts}). Thus positive dominoes form a concrete torsion family in Hodge--Tate weights \([0,2]\), beyond the weights \([0,1]\) of classical Dieudonn\'e theory. Ekedahl's diagonal dominoes give the analogues for general intervals \([m,n]\). In a sequel we prove that they admit a \(\mathbb{Q}\)-indexed Harder--Narasimhan filtration refining his half-integer-indexed type filtration.

\medskip
\noindent\textbf{Notation and conventions.}
We use \(X\) for a general smooth proper variety and \(A\) for an abelian variety.
Throughout, \(k\) is a perfect field of characteristic \(p>0\) with Witt vectors
\(W=W(k)\), fraction field \(K\), and Frobenius \(\sigma\). In
\S\ref{sec:supersingular} and Appendix~\ref{app:unipotent} we take \(k=\bar k\), as
stated there. We use triangulated categories throughout, although we believe
everything can be upgraded to \(\infty\)-categorical statements as in \cite{LY26}.
The remaining notation is collected here.

\smallskip
\begin{tabular}{@{}p{0.30\textwidth}@{\hspace{1em}}p{0.60\textwidth}@{}}
\(R=R^0\oplus R^1\), \(\widehat R\), \(\widehat R^0\)
  & the Raynaud ring, with \(R^0\) the Dieudonn\'e ring, and their completions
    (Definition~\ref{def:completed-raynaud})\\[2pt]
\(\mathrm{Mod}_c(R)\), \(\Delta\), \(\mathcal G\)
  & the hearts of Ekedahl's Postnikov, diagonal, and \(F\)-gauge \(t\)-structures
    on \(D^b_c(R)\)\\[2pt]
\(\mathcal E=\mathrm{Mod}_c(R)\cap\Delta\cap\mathcal G\)
  & the Ekedahl modules (Appendix~\ref{app:ekedahl-three-hearts})\\[2pt]
\(U_j:=\widehat R/\widehat R(F,dV^{j-1})\)
  & the elementary dominoes\\[2pt]
\(C(M)\), \(\Dom(M)\)
  & the core and the domino part of a coherent \(R\)-module \(M\)
    (\S\ref{sec:hearts-dominoes})\\[2pt]
\(J(U)\), \(\lambda(U)\), \(p\text{-}\!\exp(U)\)
  & the type sequence, isogeny partition, and \(p\)-exponent of a domino \(U\)\\[2pt]
\(\widehat G(U)\), \(G^{\mathrm{perf}}(U)\)
  & the formal and syntomic unipotent realizations of a domino \(U\)
    (Appendix~\ref{app:unipotent})\\[2pt]
\(U_X^{i,j}\)
  & the domino attached to
    \(d\colon H^j(X,W\Omega_X^i)\to H^j(X,W\Omega_X^{i+1})\)
    (\S\ref{sec:hearts-dominoes})\\[2pt]
\(a(A)\), \(\sigmaArt(A):=\deg(U_A^{0,2})\)
  & the \(a\)-number and the degree-two Artin invariant of a supersingular abelian
    variety (Definition~\ref{def:degree-two-artin-invariant})
\end{tabular}
\smallskip

We call \(C(M)\) the \emph{core} rather than the \emph{heart}, translating
Illusie--Raynaud's \emph{c\oe ur}, to avoid a clash with the hearts of
\(t\)-structures; likewise we speak of \emph{survival of the core} \cite{IR83}.

\medskip
\noindent\textbf{Acknowledgements.} The author thanks his advisor, Ben Antieau, for his guidance and for many helpful discussions throughout the preparation of this work. He is grateful to Sasha Petrov for suggesting this project, for pointing out several errors in an earlier draft, and for the appendix to \cite{SP25}. He thanks Alexei Skorobogatov for helpful correspondence about the results of \cite{GSY25}, and Shizhang Li and Yuan Yang for patiently answering his questions about \cite{LY26}. The author is supported by the Simons Collaboration on Perfection in Algebra, Geometry, and Topology.

\section{Recollections on de Rham--Witt cohomology}\label{sec:recollections}

The Illusie--Raynaud--Ekedahl formalism recalled below is organized by the fact
that the de Rham--Witt cohomology
\(R\Gamma(X,W\Omega_X^\bullet)\), with its operators \(F\), \(V\), and \(d\), is
naturally a single object of the derived category \(D^b_c(R)\) over the Raynaud ring
\(R\). Many properties of de Rham--Witt cohomology are then consequences of the rich
homological algebra of \(D^b_c(R)\).
We begin in \S\ref{sec:raynaud-ring} with the Raynaud ring, its completion, and the
finiteness condition defining coherent \(R\)-modules, of which the elementary
dominoes \(U_j\) are the basic examples that are not finitely generated over
\(W\).
It then presents Ekedahl's d\'evissage, which extracts from a
coherent module its Dieudonn\'e \emph{core} and its \emph{domino}, and records the
survival-of-the-core theorem that makes dominoes measure the failure of
\(E_1\)-degeneration.
\S\ref{sec:devissage} collects the type filtration and numerical invariants of a domino, the Poincar\'e-duality constraint locating dominoes in the slope spectral sequence, and the Nygaard modifications under which all these invariants are stable. Their finer structure theory is then developed in
\S\ref{sec:domino-structure}.

\subsection{Raynaud ring and coherent modules}\label{sec:raynaud-ring}\label{sec:hearts-dominoes}
For a smooth \(k\)-scheme \(X\), Illusie's de Rham--Witt complex
\(W\Omega_X^\bullet\) computes crystalline cohomology
\cite[Theorem~II.1.4]{Illusie79} and gives
the slope spectral sequence
\[
  E_1^{i,j}=H^j(X,W\Omega_X^i) \;\Rightarrow\;
  H^{i+j}_{\mathrm{crys}}(X/W).
\]
We call the terms \(H^j(X,W\Omega_X^i)\) the \emph{Hodge--Witt cohomology} of \(X\).
Assembled with \(F\), \(V\), and \(d\), they form the \emph{de Rham--Witt
cohomology} \(R\Gamma(X,W\Omega_X^\bullet)\). Neither is finitely generated over
\(W\) in general, whereas the abutment \(H^{i+j}_{\mathrm{crys}}(X/W)\) always
is.

After inverting \(p\), the spectral sequence degenerates
\cite[Theorem~II.3.2]{Illusie79} and identifies \(H^j(X,W\Omega_X^i)[1/p]\) with the slope-\([i,i+1)\) part \(H^{i+j}_{\mathrm{crys}}(X/W)[1/p]_{[i,i+1)}\) of crystalline cohomology. Integrally, its failure to degenerate is
measured by the domino part of the coherent \(R\)-module cohomology. By the
survival-of-the-core theorem~\cite{IR83} (Proposition~\ref{prop:survival-core}
below), the nonvanishing differentials detect precisely this contribution.

\begin{definition}
The \emph{Raynaud ring} is the graded associative \(\mathbb{Z}_p\)-algebra
\(R := W_\sigma\{F,V,d\}/{\sim}\), where \(F,V\) have degree \(0\), \(d\) has degree
\(1\), the subscript \(\sigma\) indicates Frobenius semilinearity \(\sigma(a)F=Fa\),
\(aV=V\sigma(a)\) for \(a\in W\), and \(\sim\) is the two-sided ideal generated by the
relations \(FV=VF=p\), \(d^2=0\), and \(FdV=d\).
\end{definition}

\begin{remark}
The ring \(R=R^0\oplus R^1\) is concentrated in degrees \(0\) and \(1\), the degree-\(0\)
piece \(R^0=W_\sigma[F,V]/(FV-p)\) being the classical Dieudonn\'e ring. From \(FdV=d\)
one deduces \(dF=pFd\) and \(Vd=pdV\): multiplying on the right by \(F\) gives
\(dF=FdVF=pFd\) since \(VF=p\), and on the left by \(V\) gives \(Vd=VFdV=pdV\).
\end{remark}

To relate the full de Rham--Witt complex to its truncated versions, one equips $R$ with a natural filtration.

\begin{definition}[Filtration, truncations, and completion]\label{def:completed-raynaud}
The right ideals \(V^nR+dV^nR\) form a decreasing filtration
\(R\supset VR+dVR\supset V^2R+dV^2R\supset\cdots\). For \(n\ge1\) the \(n\)-truncated
quotient \(R_n:=R/(V^nR+dV^nR)\) carries a natural \((W_n[d],R)\)-bimodule structure,
where \(W_n[d]\) is the graded \(W_n\)-algebra on a degree-\(1\) generator \(d\) with
\(d^2=0\). Finally, the \emph{completed Raynaud ring} is
\(\widehat{R}:=\varprojlim_n R_n\), with transition maps the quotient maps. Although
\(V^nR+dV^nR\) is not a two-sided ideal of \(R\), the multiplication extends by
continuity to \(\widehat{R}\), making it a graded topological ring whose degree-\(0\)
part \(\widehat{R}^0=W_\sigma[[V]][F]/(FV-p)\) is the \(V\)-completed Dieudonn\'e ring.
\end{definition}

\begin{remark}
Every element of $R$ can be written uniquely in the form of a finite sum
\[
  \sum_{n>0} a_{-n}V^n + \sum_{n\ge 0} a_nF^n + \sum_{n>0} b_{-n}\,dV^n + \sum_{n\ge 0} b_nF^n d,
\]
with coefficients $a_n,b_n\in W$. For an element of $\widehat{R}$, the sums involving the $V^n$ and $dV^n$ terms may be infinite.

Similarly, every element of the truncated quotient $R_n=R/(V^nR+dV^nR)$ can be written uniquely as a finite sum
\[
  \sum_{n>m>0} a_{-m}V^m + \sum_{m\ge 0} a_mF^m + \sum_{n>m>0} b_{-m}\,dV^m + \sum_{m\ge 0} b_mF^m d,
\]
where $a_{-m},b_{-m}\in W/p^{n-m}$ for $n>m>0$, and $a_m,b_m\in W/p^n$ for $m\ge 0$.
\end{remark}

\begin{definition}\label{def:raynaud}
Let $D_{\mathrm{gr}}(\mathbb{Z}_p)$ denote the derived category of graded $\mathbb{Z}_p$-modules. The Raynaud ring $R$ is an associative algebra over its center $\mathbb{Z}_p$. Note that $W$ is not central, since $Fa=\sigma(a)F$. We define
\[
  D(R) \;:=\; \mathrm{LMod}_R\!\bigl(D_{\mathrm{gr}}(\mathbb{Z}_p)\bigr),
\]
the category of left $R$-module objects in $D_{\mathrm{gr}}(\mathbb{Z}_p)$. Such a module has an underlying graded $W$-module through $W\subset R^0$. It carries the
Postnikov \(t\)-structure induced from the Postnikov \(t\)-structure on
\(D_{\mathrm{gr}}(\mathbb{Z}_p)\). Its heart is
\[
  \mathrm{Mod}(R) \;:=\; \mathrm{LMod}_R\!\bigl(\mathrm{Mod}_{\mathrm{gr}}(\mathbb{Z}_p)\bigr),
\]
the abelian category of graded left $R$-modules. An object of $\mathrm{Mod}(R)$ is called an \emph{\(R\)-module} (also known as a \emph{Raynaud module}). We write $D^b(R)$ for the full subcategory of bounded objects in $D(R)$.
\end{definition}

For a smooth variety $X$ over $k$, the Frobenius $F$, Verschiebung $V$, and differential $d$ on the de Rham--Witt complex $W\Omega_X^\bullet$ satisfy the defining relations of $R$, making it a sheaf of graded $R$-modules. Its derived global sections form an object
\(R\Gamma(X,W\Omega_X^\bullet)\in D(R)\). Its cohomology objects package the Hodge--Witt groups and the de Rham differential
\[
  H^j(X,W\Omega_X^0) \xrightarrow{d} H^j(X,W\Omega_X^1) \xrightarrow{d} H^j(X,W\Omega_X^2) \to \cdots
\]
together with their $F$- and $V$-structures. This is the $j$-th row of the $E_1$-page of the slope spectral sequence.

\begin{remark}[Totalization]\label{rem:simple-functor}
An object of \(D(R)\) may be represented by a double complex
\((M^{\bullet,\bullet},d,\partial)\), where \(d\) is the internal Raynaud
differential and \(\partial\) is the cohomological one. The derived category
localizes only in the \(\partial\)-direction. We write \(M(n)\) for the shift of the \emph{internal grading}, \((M(n))^i=M^{n+i}\), and \(M[n]\) for the shift of the \emph{cohomological grading}. Let \(\mathbf s:D(R)\to D(W)\) denote
Ekedahl's simple functor, which totalizes this double complex
\cite[Section~O.3]{ekedahl3}.
For a smooth variety $X/k$, one has a canonical identification
\[
  \mathbf s\,R\Gamma(X,W\Omega_X^\bullet)\simeq R\Gamma_{\mathrm{crys}}(X/W).
\]
\end{remark}

\begin{remark}[Truncation]\label{rem:truncation}
The $(W_n[d],R)$-bimodule structure on $R_n$ defines a truncation functor
\(R_n\otimes_R^{\mathbf L}(-):D(R)\to D(W_n[d])\).
For the de Rham--Witt complex this recovers the usual truncated complex:
\[
  R_n\otimes_R^{\mathbf L} R\Gamma(X,W\Omega_X^\bullet)
  \simeq
  R\Gamma(X,W_n\Omega_X^\bullet);
\]
in particular \(R_1\otimes_R^{\mathbf L} R\Gamma(X,W\Omega_X^\bullet)\simeq
R\Gamma(X,\Omega_{X/k}^\bullet)\), the algebraic de Rham complex with its Hodge
filtration.
\end{remark}

The coherence of $R$-modules, due to Illusie--Raynaud and Ekedahl, is not the ring-theoretic coherence of $R$. In fact $R$ itself is not coherent in this sense. The correct viewpoint, recalled in Appendix~\ref{app:ekedahl-three-hearts}, is that it is forced by coherence on the $F$-gauge side of Ekedahl's equivalence. Concretely, a coherent module is assembled from two elementary types, Dieudonn\'e modules and elementary dominoes. For simplicity we keep the Dieudonn\'e part as one type, whereas Ekedahl's finer d\'evissage \cite[Theorem~IV.3.3]{ekedahl1} splits it into four, the slope-$0$, positive-slope, semisimple-torsion, and nilpotent-torsion pieces.

\begin{definition}[Coherent $R$-modules]\label{def:coherent}
An \(R\)-module is \emph{coherent} if it admits a finite filtration whose successive quotients are shifts \(N(n)\), \(n\in\mathbb Z\), of modules of the following two types.

\smallskip
\noindent\emph{(a) Dieudonn\'e modules.} An \(R\)-module concentrated in degree \(0\) (an \(R^0\)-module), finitely generated over \(W\), on which \(V\) is topologically nilpotent (equivalently \(V^nM\to0\) in the \(p\)-adic topology). These are the classical Dieudonn\'e modules, not necessarily finite free.

\smallskip
\noindent\emph{(b) Elementary dominoes.} For \(j\in\mathbb Z\), the module \(U_j:=\widehat R/\widehat R(F,dV^{j-1})\) concentrated in degrees \(0\) and \(1\), with
\[
  (U_j)^0 = \prod^\infty_{n\ge 0} k\,V^n,\qquad (U_j)^1 = \prod^\infty_{n\ge j} k\,dV^n,
\]
where \(F,V,d\) act by left multiplication (with the convention \(dV^n = F^{-n}d\) for \(n<0\)). Here \(V\) is injective on \((U_j)^0\) and zero on \((U_j)^1\), while \(F\) is zero on \((U_j)^0\) and surjective on \((U_j)^1\).

\smallskip
In particular a coherent \(R\)-module has bounded degrees.
\end{definition}

\begin{remark}
The structure of $U_j$ depends on the sign of $j$ (cf.\ \cite[I, (2.14)]{IR83}).

\smallskip
\noindent\textit{Case $j \geq 0$.} The differential $d$ is zero on the first $j$ generators of $(U_j)^0$ and an isomorphism onto $(U_j)^1$ from the $j$-th generator onward:
\[
\begin{array}{ccccccccccc}
(U_j)^0\colon & k & \xrightarrow{\;V\;} & \cdots & \xrightarrow{\;V\;} & kV^{j} & \xrightarrow{\;V\;} & kV^{j+1} & \xrightarrow{\;V\;}\cdots \\[6pt]
 & & & & & \big\downarrow\scriptstyle{d} & & \big\downarrow\scriptstyle{d} & \\[4pt]
(U_j)^1\colon & & & & & k\,dV^j & \xleftarrow{\;F\;} & k\,dV^{j+1} & \xleftarrow{\;F\;}\cdots
\end{array}
\]
Hence $\ker(d)\cong k^j$ (spanned by $1, V, \ldots, V^{j-1}$) and $\operatorname{coker}(d)=0$.

\smallskip
\noindent\textit{Case $j \leq 0$.} The differential $d$ acts on every element of $(U_j)^0$, while $(U_j)^1$ has a left tail $k\,dV^j \xleftarrow{F} \cdots \xleftarrow{F} k\,dV^{-1}$ (where $dV^n = F^{-n}d$ for $n < 0$) with no incoming $d$ arrows:
\[
\begin{array}{ccccccccccc}
(U_j)^0\colon & & & k & \xrightarrow{\;V\;} & kV & \xrightarrow{\;V\;} & kV^2 & \xrightarrow{\;V\;}\cdots \\[6pt]
 & & & \big\downarrow\scriptstyle{d} & & \big\downarrow\scriptstyle{d} & & \big\downarrow\scriptstyle{d} & \\[4pt]
(U_j)^1\colon & k\,dV^{j} & \xleftarrow{\;F\;}\cdots\xleftarrow{\;F\;} & k\,d & \xleftarrow{\;F\;} & k\,dV & \xleftarrow{\;F\;} & k\,dV^2 & \xleftarrow{\;F\;}\cdots
\end{array}
\]
Hence $\ker(d)=0$ and $\operatorname{coker}(d)\cong k^{-j}$ (spanned by $dV^j,\ldots,dV^{-1}$).

\smallskip
We define the Euler characteristic of $U_j$ to be \(\dim_k\ker(d) - \dim_k\operatorname{coker}(d)\), and it is obviously equal to $j$.
\end{remark}

\begin{definition}[Domino]\label{def:domino-intrinsic}
A \emph{domino} is a two-term $R$-module $U=(d:U^0\to U^1)$, concentrated in degrees $0$ and $1$, that is a finite iterated extension of elementary dominoes $U_j$.
\end{definition}

We use the following notation for coherent objects:
\(\mathrm{Mod}_c(R)\subset \mathrm{Mod}(R)\)
denotes the full subcategory of coherent $R$-modules, $D_c(R)\subset D(R)$ the full subcategory of objects whose cohomology lies in $\mathrm{Mod}_c(R)$, and
\(D^b_c(R):=D^b(R)\cap D_c(R)\)
the bounded objects with coherent cohomology.

\begin{definition}[Completion]
For an $R$-module $M$, define its completion by \(\widehat{M}:=\varprojlim_n M/(V^nM+dV^nM)\). We say that \(M\) is \emph{complete} if the natural map \(M\to\widehat{M}\) is an isomorphism.
For a bounded object of $D(R)$, we use the same term when its cohomology $R$-modules are complete.
\end{definition}

\begin{proposition}[Ekedahl's criterion for coherence {\cite[Proposition~III.1.1]{ekedahl2}}]
\label{prop:coherence-criterion}
Let $M\in D^b(R)$. The following conditions are equivalent.
\begin{enumerate}
  \item $M\in D^b_c(R)$, i.e. $M$ has coherent cohomology.
  \item $M$ is complete and \(R_n \otimes_R^{\mathbf L} M \in D^b_c(W_n[d])\) for all $n\ge 1$, where $D^b_c(W_n[d])$ denotes the full subcategory of $D^b(W_n[d])$ consisting of complexes whose cohomology groups are finitely generated $W_n$-modules.
  \item $M$ is complete and \(R_1 \otimes_R^{\mathbf L} M \in D^b_c(k[d])\).
\end{enumerate}
\end{proposition}

It follows \cite[Chapter~0, p.~191]{ekedahl1} that \(D^b_c(R)\subset D(R)\) is
triangulated. In particular, coherent \(R\)-modules form an abelian subcategory
of \(\mathrm{Mod}(R)\) closed under extensions.

To isolate the domino contribution inside a coherent $R$-module $M$, one identifies the submodule on which $d$ is \emph{stably} killed by $V$ (i.e.\ $dV^n = 0$ for all $n \gg 0$) and the submodule generated by all $F$-iterated images of $d$.

Let $M$ be an $R$-module. For each $i\in\mathbb Z$ set $Z^i(M):=\ker\bigl(d:M^i\to M^{i+1}\bigr)$ and $B^i(M):=\mathrm{im}\bigl(d:M^{i-1}\to M^i\bigr)$.
The submodule $Z^i(M)$ is stable under $F$ (but not in general under $V$), whereas $B^i(M)$ is stable under $V$ (but not in general under $F$). We therefore pass to the following $R^0$-submodules of $M^i$:
\begin{equation*}
  V^{-\infty}Z^i(M):=\bigcap_{n\ge 0}\ker\bigl(dV^n:M^i\to M^{i+1}\bigr),\qquad
  F^{\infty}B^i(M):=\bigcup_{n\ge 0}\mathrm{im}\bigl(F^n d:M^{i-1}\to M^i\bigr).
\end{equation*}
One has inclusions $B^i(M)\subset F^{\infty}B^i(M)\subset V^{-\infty}Z^i(M)\subset Z^i(M)$.

\begin{definition}[Core]
The $R^0$-module
\[
  C^i(M):=V^{-\infty}Z^i(M)\big/ F^{\infty}B^i(M)
\]
is called the \emph{core} of \(M\) in degree \(i\).
\end{definition}

The differential $d:M^i\to M^{i+1}$ admits a canonical factorization
\[
  M^i \longrightarrow M^i/V^{-\infty}Z^i(M)
  \longrightarrow F^{\infty}B^{i+1}(M)
  \longrightarrow M^{i+1}.
\]

When $M$ is coherent, the factorization above separates the Dieudonn\'e part from the domino part, as in Definition~\ref{def:coherent}.

\begin{proposition}[D\'evissage of core and domino {\cite[Lemma~0.8]{ekedahl1}}]\label{prop:core-domino-devissage}
If $M$ is coherent, then in each degree $i$ the core $C^i(M)$ is a Dieudonn\'e module, finitely generated over $W$ with $V$ topologically nilpotent, while the complementary map $M^i/V^{-\infty}Z^i(M)\longrightarrow F^{\infty}B^{i+1}(M)$ is a shift $U(-i)$ of a domino $U$.
\end{proposition}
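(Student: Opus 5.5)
We have to show two things for a coherent $M$: the core $C^i(M)$ is a Dieudonn\'e module, and the map $\bar d\colon M^i/V^{-\infty}Z^i(M)\to F^{\infty}B^{i+1}(M)$ is a shifted domino. The approach is d\'evissage along a filtration as in Definition~\ref{def:coherent}. Both $V^{-\infty}Z^\bullet$ and $F^\infty B^\bullet$ are defined by kernels and images of $dV^n$ and $F^nd$, so they behave well on generators, and the plan is to reduce to them.

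\textbf{Step 1: the generators.} For a Dieudonn\'e module $N(n)$, the differential $d$ vanishes. Hence $V^{-\infty}Z=N$ and $F^\infty B=0$, so the core is $N$ itself and the domino part is zero. For $U_j(n)$, the explicit description in the preceding Remark gives the remaining data:
\begin{itemize}
\item in degree $0$, $V^{-\infty}Z=0$, because $V$ is injective and $d$ is eventually injective along the $V$-tower;
\item in degree $1$, $F^\infty B=(U_j)^1$, because $F$ is surjective;
\item therefore the core vanishes and the domino part is $U_j$.
\end{itemize}

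\textbf{Step 2: the induction.} I would argue by induction on the length of the filtration. Take a short exact sequence $0\to M'\to M\to M''\to 0$ of coherent modules in which the claim holds for $M'$ and $M''$. The aim is to show that the functors $M\mapsto V^{-\infty}Z^i(M)$, $M\mapsto F^\infty B^i(M)$, and hence $C^i$ and the domino part, are exact on coherent modules. If so, $C^i(M)$ is an extension of Dieudonn\'e modules, which is again finitely generated over $W$ with $V$ topologically nilpotent. Likewise the domino part is an extension of dominoes, which is again a finite iterated extension of $U_j$'s, i.e.\ a domino by Definition~\ref{def:domino-intrinsic}.

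\textbf{Main obstacle: exactness.} Left exactness of $V^{-\infty}Z$ and of the union $F^\infty B$ is formal. The difficulty is surjectivity: an element of $M''$ killed by every $dV^n$ need not lift to an element of $M$ with the same property. The obstruction lands in $M'^{i+1}$ through the connecting map. Since $M'$ is assumed to be already split into core and domino, I would first try to show that the obstruction class lies in $F^\infty B^{i+1}(M')$. The argument would use that on the domino part $F$ is surjective and $V$ is nilpotent on $U^1$. Then completeness of $M'$ (Proposition~\ref{prop:coherence-criterion}) lets one correct the lift by a convergent series in $V^n$.

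\textbf{Fallback.} If this gets messy, I would instead use the canonical factorization to cut out the domino piece directly. Concretely, I would show that
\[
  \bar d\colon M^i/V^{-\infty}Z^i(M)\longrightarrow F^\infty B^{i+1}(M)
\]
is an injective map of modules with $V$ injective in source and $F$ surjective in target. Then I would identify any such coherent two-term object as an iterated extension of $U_j$'s by peeling off a generator $x$ of minimal "$V$-order" and checking that the submodule it generates is a quotient of some $U_j$. This is the step I expect to cost the most work.
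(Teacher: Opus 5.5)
The paper quotes this result from Ekedahl without proof, so the question is whether your argument stands on its own. Step~1 is correct, but Step~2 aims at a false statement. On coherent modules neither $V^{-\infty}Z^i$ nor $F^\infty B^i$ is exact, and hence neither the core nor the domino part is. The reason is that the filtration of Definition~\ref{def:coherent} mixes Dieudonn\'e and domino constituents. Two sequences show this.

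Right multiplication by $V$ gives an exact sequence $0\to U_j\to U_{j+1}\to k\to0$, where $k$ is the Dieudonn\'e module with $F=V=0$. By your own Step~1, $V^{-\infty}Z^0(U_{j+1})=0$, so the generator of $k$ has no lift killed by every $dV^n$. Thus the obstruction in your ``main obstacle'' paragraph is genuinely nonzero, and no convergent $V$-series removes it.

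Dually, $1\mapsto1$ gives $0\to k(-1)\to U_j\to U_{j+1}\to0$, with kernel $k\,dV^j$ in degree one. Applying $F^\infty B^1$ yields $0\to0\to U_j^1\to U_{j+1}^1$, where the last map has kernel $k\,dV^j\neq0$. So $F^\infty B$ is right exact but \emph{not} left exact, contrary to what you call formal.

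On cores these two sequences give $0\to0\to k$ and $k\to0\to0$. On domino parts the maps $U_j\to U_{j+1}$ are not isomorphisms, although the remaining term has zero domino part. So neither ``$C^i(M)$ is an extension of Dieudonn\'e modules'' nor ``the domino part is an extension of dominoes'' follows from your induction.

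Part of the induction can be repaired. The sequence $C^i(M')\to C^i(M)\to C^i(M'')$ is exact in the middle. Indeed, if $x\in V^{-\infty}Z^i(M)$ maps to $F^md\bar z$, subtract $F^mdz$ for a lift $z$ of $\bar z$, and use $V^{-\infty}Z^i(M)\cap M'=V^{-\infty}Z^i(M')$. Hence $C^i(M)$ is an extension of a submodule of $C^i(M'')$ by a quotient of $C^i(M')$. This propagates finite generation over $W$ and topological nilpotence of $V$ along the filtration.

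For the domino part no such repair exists; the missing ingredient is finiteness. Put $U:=[M^i/V^{-\infty}Z^i(M)\to F^\infty B^{i+1}(M)]$. The formal facts are that $U$ has trivial core, $V$ is injective on $U^0$, and $F$ is surjective on $U^1$. Injectivity of $\bar d$, which your fallback claims, is false already for $U_j$ with $j\ge1$, where $\ker d\simeq k^j$.

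These formal facts do not single out dominoes: $\widehat R$ has all of them (see the footnote to Remark~\ref{rem:domino-intrinsic-characterization}). Nor do they make $U$ coherent, which your fallback presupposes. What you need is profiniteness. In degree $0$, $U^0/V^nU^0$ is a quotient of $M^i/(V^nM^i+dV^nM^{i-1})$, because $dV^nM^{i-1}\subseteq V^{-\infty}Z^i(M)$; so it has finite length by Proposition~\ref{prop:coherence-criterion}. In degree $1$ you must still bound $F^\infty B^{i+1}(M)$ modulo $V^nF^\infty B^{i+1}(M)+dV^nM^i$.

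Profiniteness together with trivial core then gives a domino by Remark~\ref{rem:domino-intrinsic-characterization}. Your peeling argument amounts to reproving that characterization, and without this finiteness input it has no reason to terminate.
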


For a coherent \(R\)-module \(M\), we denote the domino in degree \(i\) by
\[
  \Dom^i(M)
  :=
  \left[
    M^i/V^{-\infty}Z^i(M)
    \longrightarrow
    F^{\infty}B^{i+1}(M)
  \right],
\]
viewed after shifting internal degree by \(i\) as a domino concentrated in degrees
\(0\) and \(1\). For a smooth proper variety \(X/k\), if \(M_j:=H^j\!\left(R\Gamma(X,W\Omega_X^\bullet)\right)\),
we also write \(U_X^{i,j}:=\Dom^i(M_j)\)
for the domino attached to $d:H^j(X,W\Omega_X^i)\longrightarrow H^j(X,W\Omega_X^{i+1})$.

\begin{remark}\label{rem:domino-intrinsic-characterization}
By \cite[Definition~I.2.16 and Proposition~I.2.17]{IR83}, a two-term $R$-module $U$ is a domino if and only if it is profinite\footnote{This condition prevents $\widehat{R}$ from being a domino.}, i.e.\ $U/(V^nU+dV^nU)$ is of finite length over $W$ for all $n\ge 1$, and has trivial core, $V^{-\infty}Z^0(U)=0$ and $F^{\infty}B^1(U)=U^1$, so that $U$ coincides with its own domino part $\Dom^0(U)$.
\end{remark}

The core and the domino behave differently in the slope spectral sequence: the core survives to the $E_\infty$-page, while the domino, non-finitely generated on $E_1$, becomes finite-length torsion there. The next result makes the survival of the core precise.

\begin{proposition}[Survival of the core {\cite[Proposition~III.1.1(ii)]{ekedahl2}; \cite[Proposition~I.2.3(i)]{ekedahl3}}]\label{prop:survival-core}
Let $M\in D^b_c(R)$. Consider the slope spectral sequence \(E_1^{i,j}(M)=H^j(M^i) \Rightarrow H^{i+j}(\mathbf s(M))\).
Then for all $i,j$ one has
\[
  B^{i,j}_\infty \subset F^{\infty}B^i\bigl(H^j(M)\bigr) \subset V^{-\infty}Z^i\bigl(H^j(M)\bigr) \subset Z^{i,j}_\infty,
\]
and hence \(C^i\bigl(H^j(M)\bigr)\) survives as a subquotient of \(E^{i,j}_\infty\).
\end{proposition}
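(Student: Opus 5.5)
The plan is to prove the chain of inclusions directly from the filtered double-complex description of the slope spectral sequence. Represent \(M\) by a double complex \((M^{\bullet,\bullet},d,\partial)\) as in Remark~\ref{rem:simple-functor}, filtered by the internal degree \(i\); the spectral sequence of this filtration is the slope spectral sequence, with \(E_1^{i,j}=H^j(M^i)\) and \(d_1\) induced by \(d\). The middle inclusion \(F^\infty B^i\subset V^{-\infty}Z^i\) holds for any \(R\)-module and was noted before the definition of the core. The content lies in the outer inclusions \(B_\infty^{i,j}\subset F^\infty B^i(H^j(M))\) and \(V^{-\infty}Z^i(H^j(M))\subset Z_\infty^{i,j}\). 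Since \(C^i=V^{-\infty}Z^i/F^\infty B^i\), it is then a subquotient of \(Z_\infty^{i,j}/B_\infty^{i,j}=E_\infty^{i,j}\).

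To show \(V^{-\infty}Z^i\subset Z_\infty\), take a class \(x\in H^j(M^i)\) with \(dV^nx=0\) for all \(n\). I would use the relation \(d=FdV\), iterated as \(d=F^ndV^n\). Choose a cocycle lift \(\tilde x\) and write \(d\tilde x=\partial y_1\) with \(y_1\in M^{i+1,j-1}\); this uses \(d_1x=0\). The point is to pick the zig-zag witnesses compatibly with \(F\). Since \(dV^n\tilde x=\partial y^{(n)}\), we get \(d\tilde x=F^n\partial y^{(n)}=\partial F^ny^{(n)}\). The next obstruction \(dF^ny^{(n)}=p^nF^n d y^{(n)}\) uses \(dF=pFd\), so it is divisible by \(p^n\). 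This is where I expect the main difficulty. One has to show that the higher differentials \(d_r x\) vanish, knowing only that their representatives are arbitrarily \(p\)-divisible and \(F\)-divisible in \(E_r^{i+r,j-r+1}\). For \(r\ge2\) these targets are bounded in degree and coherent. I would then invoke the structure of \(H^{j'}(M^{i+r})\) given by Proposition~\ref{prop:core-domino-devissage}. Its Dieudonn\'e core is finitely generated over \(W\), and its domino part has the explicit shape of Definition~\ref{def:coherent}. An element lying in \(\bigcap_n F^n(\cdot)\) together with arbitrary \(p\)-divisibility must vanish modulo the relevant boundaries. This needs a Mittag-Leffler / completeness argument (Proposition~\ref{prop:coherence-criterion}), so that the infinite compatible choice of \(y^{(n)}\) can be made. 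I expect this to be the technical heart of the proof.

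The inclusion \(B_\infty\subset F^\infty B^i\) is dual. A class in \(B_r^{i,j}\) arises from a zig-zag starting in \(M^{i-r}\). Using \(d=F^ndV^n\) again, every such image can be rewritten as \(F^n\) applied to a \(d\)-image, up to elements killed in the limit. For \(r=1\) the boundary is already in \(B^i\subset F^\infty B^i\). For higher \(r\), the terms of the zig-zag in the intermediate columns are pushed through \(V^n\), and the relation \(Vd=pdV\) shows they can be absorbed. Here the required statement is that \(F^\infty B^i\) is closed in the appropriate sense, which again follows from coherence. Alternatively, and more economically, I would dualize: apply the duality on \(D^b_c(R)\) (Ekedahl's \(R\)-module duality, which exchanges \(F\) and \(V\) and reverses the spectral sequence). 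This turns the statement about \(B_\infty\) into the already proven statement about \(Z_\infty\), and I would try this route first. Finally, deduce the consequence on \(C^i\) as explained above.
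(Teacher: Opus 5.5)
The paper gives no proof of this proposition; it quotes Ekedahl. Your proposal therefore has to stand on its own, and it has genuine gaps in both outer inclusions.

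For \(V^{-\infty}Z^i\subset Z_\infty^{i,j}\), your \(d_2\) computation is right. Up to sign, \(d_2x=p^n[z_n]\), where \(z_n:=F^ndy^{(n)}\) is both \(d\)- and \(\partial\)-closed. What finishes this step is a fact you do not isolate. The group \(E_2^{i+2,j-1}=Z^{i+2}/B^{i+2}\) of the coherent module \(H^{j-1}(M)\) is a finitely generated \(W\)-module, hence \(p\)-adically separated. This follows by d\'evissage: for Dieudonn\'e modules the \(d\)-cohomology is the module itself, and for the \(U_j\) it has finite length. No compatible system of \(y^{(n)}\), no Mittag-Leffler argument, and no \(F\)-divisibility are needed.

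For \(r\ge3\), however, your claim that the representatives of \(d_rx\) stay arbitrarily \(p\)-divisible is unsupported. Knowing \(d_2x=0\) only gives \(p^n[z_n]=0\) in \(E_2\). The missing step is a bounded-torsion correction. Let \(p^c\) kill the torsion of \(E_2^{i+2,j-1}\); then \(p^cz_n=\partial u+dw\) with \(\partial w=0\). Replace \(y_1=F^ny^{(n)}\) by \(y_1-p^{n-c}w\) and set \(y_2=p^{n-c}u\). This gives \(d_3x=p^{n-c}[du]\), with \(du\) again \(d\)- and \(\partial\)-closed. Iterate over the finitely many nonzero columns, absorbing each \(B_r\)-relation into the zig-zag and losing a fixed constant each time. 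This gives \(d_rx\in\bigcap_np^{n-c_r}E_r=0\).

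For \(B_\infty^{i,j}\subset F^\infty B^i\), the proposal contains no argument. Ekedahl's duality cannot be used as a black box. It does not act termwise on the \(E_1\)-page, since dominoes and torsion Dieudonn\'e pieces change cohomological degree. You would first need a theorem identifying the dual slope spectral sequence with the reversed one and exchanging \(F^\infty B\) with \(V^{-\infty}Z\). Your direct sketch makes the right first move, applying \(V^n\) to the last zig-zag term and using \(Vd=pdV\). It leaves the absorption step unexplained, and \(F\)-divisibility alone cannot suffice: the core may have a slope-\(0\) part on which \(F\) is bijective.

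The missing idea is as follows. An element of \(B_\infty^{i,j}\) is \(b=[dw]\) with \(w\in M^{i-1,j}\) and \(\partial w=\pm dw'\), where \(\partial w'\in d(M^{i-3,j+2})\), possibly with \(w'=0\). Then \(dw=F^nd(V^nw)\) and \(\partial(V^nw)=\pm p^nc_n\), where \(c_n:=dV^nw'\) is \(\partial\)-closed because \(dV^nd=0\). Hence \([c_n]\) is torsion in \(H^{j+1}(M)^{i-1}\). The \(p\)-power torsion of a coherent module is killed by a fixed \(p^e\), by filtering as in Definition~\ref{def:coherent}. Write \(p^ec_n=\partial t_n\). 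Then \(V^nw\mp p^{n-e}t_n\) is a \(\partial\)-cocycle, so \(b\equiv\pm p^{n-e}F^n[dt_n]\) modulo \(F^nB^i\). Both \(b\) and \(F^n[dt_n]\) lie in \(V^{-\infty}Z^i\), again because \(dV^md=0\). So the image of \(b\) in the core \(C^i\), finitely generated over \(W\) by Proposition~\ref{prop:core-domino-devissage}, lies in \(\bigcap_np^{n-e}C^i=0\). That is exactly \(b\in F^\infty B^i\).
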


\subsection{Invariants of dominoes}\label{sec:devissage}\label{sec:poincare-domino}

Dominoes appeared in \S\ref{sec:hearts-dominoes} as the non-finitely-generated pieces complementary to the core. We record the invariants of a domino: Ekedahl's canonical type filtration and the numerical invariants it determines, the Poincar\'e-duality constraint locating dominoes in the slope spectral sequence, and the Nygaard modifications under which these invariants are stable.

Ekedahl shows that every domino carries a canonical, functorial filtration, so that its type is an isomorphism invariant, independent of any chosen presentation.

\begin{proposition}[Type filtration {\cite[Proposition~III.3.2]{ekedahl3}}]
\label{prop:type-filtration}
Every domino $U$ admits a unique finite decreasing filtration
\[
  0 \subset \cdots \subset \mathrm{Fil}_{\mathrm{type}}^j(U)
  \subset \mathrm{Fil}_{\mathrm{type}}^{j-1}(U) \subset \cdots \subset U,
\]
whose $j$-th associated graded piece \(\operatorname{gr}_{\mathrm{type}}^j(U)\) is a finite iterated extension of copies of \(U_j\). We write \(\operatorname{gr}_{\mathrm{type}}(U)=\bigoplus_j\operatorname{gr}_{\mathrm{type}}^j(U)\) for the total associated graded.
Here \(\mathrm{Fil}_{\mathrm{type}}^j(U)\) is the maximal sub-\(R\)-module on which \(\operatorname{coker}(dV^j)=0\).
\end{proposition}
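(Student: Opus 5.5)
The plan is to take the last sentence of the statement as the definition, $\mathrm{Fil}_{\mathrm{type}}^j(U):=$ the largest sub-$R$-module $U'\subset U$ that is a domino and on which $dV^j\colon U'^0\to U'^1$ is surjective. Uniqueness is then automatic once I show that any filtration with the stated graded pieces must be this one. Write $P_j$ for the property "$\operatorname{coker}(dV^j)=0$". The argument has three steps.

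\textbf{Step 1: test $P_j$ on elementary dominoes.} Using the explicit description of Definition~\ref{def:coherent}, $dV^j$ maps $(U_i)^0=\prod_{n\ge0}kV^n$ onto $\prod_{n\ge \max(i,j)}k\,dV^n$. Hence $U_i$ satisfies $P_j$ if and only if $i\ge j$.

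\textbf{Step 2: closure properties of $P_j$.} Since $\operatorname{coker}(dV^j)$ is a right exact functor of the two-term module, the snake lemma gives an exact sequence
\[
\operatorname{coker}(A)\to\operatorname{coker}(B)\to\operatorname{coker}(C)\to0
\]
for any short exact sequence $0\to A\to B\to C\to 0$. So $P_j$ is stable under quotients and extensions, and therefore under finite sums of submodules, since a sum is a quotient of a direct sum. Sums and images of subdominoes stay dominoes: they are coherent, because $\mathrm{Mod}_c(R)$ is abelian, and have trivial core, because images and extensions of trivial-core profinite objects keep Remark~\ref{rem:domino-intrinsic-characterization}'s conditions. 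The dimension of a domino (the number of elementary factors, read off from the length of $U/(VU+dVU)$) is finite and is monotone on subdominoes. A subdomino with $P_j$ of maximal dimension therefore contains every other one, so $\mathrm{Fil}^j$ exists. The filtration is decreasing in $j$ because $P_{j+1}\Rightarrow P_j$: indeed $dV^{j+1}=dV^j\circ V$, so surjectivity of $dV^{j+1}$ forces surjectivity of $dV^j$. It is exhaustive and separated by finiteness of the set of $i$ occurring among the factors of $U$.

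\textbf{Step 3 (the main obstacle): show $\operatorname{gr}^j$ is an iterated extension of copies of $U_j$.} I must show that a domino with $P_j$ has all elementary factors $U_i$ with $i\ge j$, and that the factors can be reordered so that larger $i$ sit in submodules. The approach is to prove
\[
\operatorname{Hom}(U_i,U_{i'})=0=\operatorname{Ext}^1_R(U_i,U_{i'})\quad\text{for } i>i'.
\]
I will compute these from the presentation $U_i=\widehat R/\widehat R(F,dV^{i-1})$, i.e.\ from the resolution $\widehat R\oplus\widehat R(-1)\to\widehat R\to U_i\to0$ together with its syzygy coming from the relation $F\cdot dV^{i-1}=dV^{i-2}\cdot F$-type identities (using $FdV=d$). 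These reduce the Hom and $\operatorname{Ext}^1$ groups to explicit kernels and cokernels of $F$ and $dV^{i-1}$ acting on $U_{i'}$. The required vanishing is consistent with the nontrivial extensions $0\to U_{j_2}\to U\to U_{j_1}\to0$ for $j_2>j_1$ appearing in Theorem~A, which go in the opposite direction.

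With this vanishing in hand, any composition series by elementary dominoes can be bubbled, by swapping adjacent factors, so that the indices decrease from bottom to top. The sub built from the factors with $i\ge j$ then satisfies $P_j$ by Step 2. It is maximal: its quotient has all factors with $i<j$, so its top factor violates $P_j$, and no subdomino of that quotient has $P_j$ (by the Hom vanishing). Hence this sub equals $\mathrm{Fil}^j$, and $\operatorname{gr}^j$ consists exactly of the factors $U_j$. The same description shows that any filtration with these graded pieces must coincide with $\mathrm{Fil}^j$, which gives uniqueness.
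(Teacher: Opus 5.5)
The proof breaks at the key lemma of Step~3: \(\operatorname{Ext}^1_R(U_i,U_{i'})\) does \emph{not} vanish for \(i>i'\). The paper's own table (Proposition~\ref{prop:ext-full}, row \(j\le-2\)) gives \(\underline{\operatorname{Ext}}^1_R(U_0,U_j)\simeq k^{\oplus(-j-1)}\), and Proposition~\ref{prop:RHom-U0} shows these classes are in degree zero. On \(U_j\) with \(j\le-2\) one has \(V=0\) on \(U_j^1\), \(F=0\) on \(U_j^0\), and \(dV\) injective on \(U_j^0\). So the admissible pairs are \((0,y)\) with \(y\in U_j^1\), taken modulo \(Fd(U_j^0)=\prod_{n\ge-1}k\,dV^n\), which leaves \(k\langle dV^j,\dots,dV^{-2}\rangle\). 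After Nygaard modification, \(\operatorname{Ext}^1_R(U_i,U_{i'})\neq0\) whenever \(i\ge i'+2\).

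Here is an explicit instance. In \(U_1a\oplus U_1b\) put \(u=Va+b\). Then \(Fu=pa=0\) and \(Fdu=da+F(db)=0\), so \(1\mapsto u\) defines a map \(U_0\to U_1^{\oplus2}\). This map is injective. Its cokernel is generated by \(\bar a\), with \(F\bar a=0\) and \(dV\bar a=\overline{du}=0\), and one checks it is exactly \(U_2\). The sequence \(0\to U_0\to U_1^{\oplus2}\to U_2\to0\) does not split, since \(dV\) is surjective on \(U_1^{\oplus2}\) but not on \(U_0\oplus U_2\).

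This cannot be fixed by a better computation. The example shows that the elementary factors of a composition series are not invariants of a domino. \(U_1^{\oplus2}\) has filtrations with factors \((U_1,U_1)\) and \((U_0,U_2)\), and the second cannot be reordered, because \(\operatorname{Hom}_R(U_2,U_1^{\oplus2})=0\). Types can ``merge'' across such extensions, so no bubbling of a composition series will produce the type filtration.

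The substance of the proposition, which the paper quotes from Ekedahl without proof, is that the intrinsically defined \(\mathrm{Fil}^j\) nevertheless has pure graded pieces. What has to be proved is that a maximal subdomino with \(dV^j\) surjective exists and that \(dV^{j-1}\) is injective on the quotient, so that \(dV^j\) is bijective on \(\operatorname{gr}^j\). Your outline has no argument for this.

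Your existence argument in Step~2 also fails as written, because dimension is not strictly monotone on subdominoes. The submodule \([VU_0^0\to U_0^1]\simeq U_{-1}\) of \(U_0\) is proper, has the same dimension, and, like \(U_0\), satisfies \(P_{-1}\). So a \(P_j\)-subdomino of maximal dimension need not contain the others.

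The maximality and uniqueness parts, by contrast, do not need Hom vanishing. \(dV^{j-1}\) is injective on any iterated extension of \(U_{i'}\)'s with \(i'<j\). If \(D\) is a subdomino of such an object with \(dV^j=dV^{j-1}V\) surjective, then \(D^0=VD^0\), hence \(D=0\) by \(V\)-adic separatedness.
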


\begin{definition}[Numerical invariants of a domino]\label{def:domino-numerics}\label{def:type-sequence}
Let \(U\) be a domino. We define its \emph{dimension} to be \(\dim(U):=\dim_k(U^0/VU^0)<\infty\),
equivalently, $\dim(U)=\dim_k(U^1[F])$
\cite[Proposition~I.2.18, (2.18.1)]{IR83}. The \emph{multiplicity} of $U_j$ in $U$ is $m_j(U):=\dim(\operatorname{gr}_{\mathrm{type}}^j(U))$, and the \emph{type sequence} $J(U)=(j_1,\ldots,j_n)$ is the nondecreasing sequence in which each $j\in\mathbb Z$ appears $m_j(U)$ times, so $n=\sum_{j\in\mathbb Z}m_j(U)=\dim(U)$. The last equality holds because \(\dim\) is additive in short exact sequences of dominoes: by \(\dim(U)=\dim_k(U^1[F])\) and the surjectivity of \(F\) on the degree-one term of a domino, taking \(F\)-kernels of \(0\to U'^1\to U^1\to U''^1\to0\) stays exact on the right. We define
\[
  \deg(U):=\sum_{i=1}^n j_i,\qquad
  \mu(U):=\frac{\deg(U)}{\dim(U)}.
\]
The degree also has an Euler-characteristic interpretation,
\[
  \deg(U)=\chi(U):=\lgth_W(H^0(U))-\lgth_W(H^1(U)),
\]
where $\lgth_W(-)$ denotes length over $W$ and $H^0(U)$, $H^1(U)$ are the cohomology of the two-term complex $U^0\xrightarrow{d}U^1$; this is proved in Remark~\ref{rem:kernel-lengths} below.
Finally, define the \emph{\(p\)-exponent} of \(U\) to be \(p\text{-}\!\exp(U):=\min\{n\in\mathbb{N}\mid p^nU^0=0\}\).
Equivalently, $p\text{-}\!\exp(U)$ is the smallest integer $n\ge 1$ such that $p^nU^1=0$
\cite[Proposition~I.2.18, (2.18.3)]{IR83}.
\end{definition}

For an elementary domino \(U_j\), one has \(J(U_j)=(j)\), hence
\(\dim(U_j)=1\), \(\deg(U_j)=\mu(U_j)=j\), and
\(p\text{-}\!\exp(U_j)=1\).

\begin{remark}[Kernel lengths]\label{rem:kernel-lengths}
For a domino \(U\) with multiplicities \(m_j=m_j(U)\) and any \(r\in\mathbb Z\),
\begin{equation}\label{eq:kernel-lengths}
  \lgth_W\ker(dV^r)=\sum_j m_j\max(0,j-r),
  \qquad
  \lgth_W\operatorname{coker}(dV^r)=\sum_j m_j\max(0,r-j),
\end{equation}
which for \(r=0\) is the equality \(\deg(U)=\chi(U)\) used above. On \(U_j\) the
two diagrams of \S\ref{sec:hearts-dominoes} give the two lengths as
\(\max(0,j-r)\) and \(\max(0,r-j)\), so \(dV^r\) is surjective there for
\(r\le j\) and injective for \(r\ge j\), and both properties pass to
extensions. Hence in the snake sequence of \(dV^r\) for
\(0\to\mathrm{Fil}_{\mathrm{type}}^{j+1}(U)\to\mathrm{Fil}_{\mathrm{type}}^{j}(U)
\to\operatorname{gr}_{\mathrm{type}}^{j}(U)\to0\) the connecting map vanishes,
the first cokernel being zero for \(r\le j+1\) and the last kernel for
\(r\ge j+2\). Both sides of \eqref{eq:kernel-lengths} are therefore additive
along the canonical type filtration, which reduces the claim to a single
\(U_j\).
\end{remark}

Finally, we turn to how dominoes appear in the cohomology of a smooth proper variety. Their dimensions are the domino numbers \(T^{i,j}\), and Ekedahl's Poincar\'e duality \cite[Corollary~IV.3.5.1]{ekedahl1} restricts the bidegrees at which they can occur. We use only this numerical constraint.

\begin{definition}\label{def:domino-numbers}
Let \(X/k\) be a smooth proper variety. The \emph{domino number} of \(X\) at position \((i,j)\) is \(T^{i,j}(X):=\dim U_X^{i,j}\), the dimension of the domino of \S\ref{sec:hearts-dominoes} attached to \(d:H^j(X,W\Omega_X^i)\to H^j(X,W\Omega_X^{i+1})\). We set \(T^{i,j}(X):=0\) for \(i<0\) or \(j<0\).
\end{definition}

The \emph{Nygaard modifications} of Definition~\ref{def:autoequivalence} serve two purposes. In \S\ref{sec:domino-structure} they shift the numerical types of the elementary dominoes, normalizing extension calculations to the single module \(U_0\). They also modify the slope spectral sequence, replacing the differential \(d\) by \(F^rd\) or \(dV^r\). The constraint applies to every such modification, and the reconstruction of \S\ref{Mazur-Ogus} relies on this generality.

\begin{definition}[Nygaard modifications]\label{def:autoequivalence}
Let \(\tau\colon\mathbb Z\to\mathbb Z\) have finite support, and set \(s_i(\tau):=\sum_{m<i}\tau(m)\) for each \(i\in\mathbb Z\). For an \(R\)-module \(M\), the \emph{Nygaard modification} \(M(\tau)\) is the graded module with \(M(\tau)^i:=\sigma_*^{s_i(\tau)}M^i\), where \(\sigma_*\) is the Frobenius twist of the \(W\)-action. Its \(F\)- and \(V\)-actions are induced from those on \(M\), and its differential \(d_\tau\colon M(\tau)^i\to M(\tau)^{i+1}\) is \(F^{\tau(i)}d\), with the convention \(F^{-r}d:=dV^r\) for \(r>0\). This map is \(\sigma^{\tau(i)}\)-semilinear, and since \(s_{i+1}(\tau)=s_i(\tau)+\tau(i)\) it is \(W\)-linear between the twisted pieces, so \(M(\tau)\) is again an \(R\)-module.

Each \(M\mapsto M(\tau)\) is an exact endofunctor of \(\mathrm{Mod}(R)\), and \((M(\tau_1))(\tau_2)=M(\tau_1+\tau_2)\) with \(M(0)=M\). Thus the group of finitely supported functions \(\mathbb Z\to\mathbb Z\) acts on \(\mathrm{Mod}(R)\) by exact autoequivalences, and degreewise on \(D(R)\). Writing \(\delta_0\) for the indicator of \(0\), we have \(U_j(r\delta_0)=U_{j+r}\), and more generally \(U_j(\tau)\simeq U_{j+\tau(0)}\), since \(\sigma_*U_j\simeq U_j\): the Frobenius twist affects only the coefficients of the topological basis of Definition~\ref{def:coherent}. As Dieudonn\'e modules twist to Dieudonn\'e modules, the modifications preserve \(\mathrm{Mod}_c(R)\) and \(D^b_c(R)\).
\end{definition}

By exactness and the uniqueness of the type filtration, modifications preserve the dimension of every domino and translate its type sequence by \(\tau(0)\): \(\dim(U(\tau))=\dim(U)\) and \(J(U(\tau))=J(U)+\tau(0)\). Ekedahl established the following for the slope spectral sequence itself. We need the version incorporating its Nygaard modifications.

\begin{proposition}[Domino range]\label{prop:ekedahl-duality}
Let \(X/k\) be a smooth proper variety of dimension \(n\), let \(\tau\) be finitely
supported, and consider the slope spectral sequence of the modification \(M(\tau)\) of
\(M:=R\Gamma(X,W\Omega_X^\bullet)\in D^b_c(R)\),
\[
  E_1^{i,j}(\tau) = \sigma_*^{s_i(\tau)}H^j(X,W\Omega_X^i)
  \;\Longrightarrow\; H^{i+j}\bigl(\mathbf s(M(\tau))\bigr),
  \qquad d_1 = F^{\tau(i)}d ,
\]
the case \(\tau=0\) being the slope spectral sequence itself.
\begin{enumerate}
  \item \(T^{i,j}(X) = T^{n-i-2,\,n-j+2}(X)\) for all \(i,j\). Consequently
  \(T^{i,j}(X) = 0\) unless \(0 \le i \le n-2\) and \(2 \le j \le n\)
  (the \emph{domino range}).
  \item If the differential
  \(d_r\colon E_r^{i,j}(\tau)\to E_r^{i+r,\,j-r+1}(\tau)\) is nonzero for some
  \(r\ge1\), then \(T^{i,j}(X)\neq0\) and \(T^{i+r-1,\,j-r+1}(X)\neq0\): every
  differential flows from one domino to another.
\end{enumerate}
\end{proposition}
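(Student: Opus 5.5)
Part (1) comes from Ekedahl's Poincaré duality, and the bound on the domino range then follows from vanishing of Hodge--Witt groups in a few bidegrees. Part (2) reduces to the case \(\tau=0\) by exactness of the Nygaard modifications, followed by Ekedahl's statement that differentials in the slope spectral sequence are carried by dominoes.

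For (1), I take the symmetry \(T^{i,j}=T^{n-i-2,n-j+2}\) directly from \cite[Corollary~IV.3.5.1]{ekedahl1}. That result identifies \(U_X^{i,j}\) with a suitable dual of \(U_X^{n-i-2,n-j+2}\), and the dual of a domino has the same dimension, since duality sends \(U_j\) to \(U_{-j}\) up to a twist. For the range I argue as follows.

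\begin{itemize}
\item The domino \(U_X^{i,j}\) requires both \(H^j(X,W\Omega^i)\) and \(H^j(X,W\Omega^{i+1})\) to be nonzero, so it vanishes unless \(0\le i\le n-1\) and \(0\le j\le n\).
\item For \(j=0\), the group \(H^0(X,W\Omega^i)\) is finitely generated over \(W\), by Illusie's result that \(H^0\) of de Rham--Witt sheaves is finite. A coherent \(R\)-module whose degree-\(i\) part is finitely generated over \(W\) has no domino starting in that degree, since \(U^0\) of a nonzero domino is not finitely generated. Hence \(T^{i,0}=0\).
\item For \(j=1\), the groups \(H^1(X,W\Omega^i)\) have no dominoes either, as in Illusie--Raynaud's treatment of \(H^1\) via the Picard scheme.
\item Applying the symmetry to \(j=0,1\) kills \(j=n+2,n+1\) and \(i\ge n-1\). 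Together these give \(0\le i\le n-2\) and \(2\le j\le n\).
\end{itemize}

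For (2), I would first treat \(\tau=0\). Ekedahl's result \cite[Proposition~III.1.1]{ekedahl2}, combined with Proposition~\ref{prop:survival-core}, shows two things. First, the image of every differential \(d_r\) with \(r\ge2\) lands in \(E_r^{i+r,j-r+1}\) modulo \(B_\infty\), and this image is a subquotient of the domino part. Second, its source comes from \(M^i/V^{-\infty}Z^i\), i.e.\ from the domino part.

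More precisely:

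\begin{itemize}
\item Outside dominoes, \(Z^{i,j}_\infty=Z^{i,j}_r\) and \(B^{i,j}_\infty=B^{i,j}_r\). So if \(T^{i,j}=0\), then \(d\) vanishes on row \(j\) at column \(i\), the kernel equals everything, and no \(d_r\) can start there.
\item If the source \(E_r^{i,j}\) lies in the core (or in the Dieudonné part), \(d_r\) kills it by survival of the core.
\item The target of a nonzero \(d_r\) is a quotient of \(Z_r\) by \(B_r\), so it must lie in \(Z^{i+r,j-r+1}\setminus V^{-\infty}Z\). This forces \(\operatorname{Dom}^{i+r-1}\) of that row to be nonzero, i.e.\ \(T^{i+r-1,j-r+1}\neq0\).
\end{itemize}

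For \(r=1\), the differential \(d_1=d\) is nonzero exactly when \(U_X^{i,j}\neq0\), since the core maps to zero; here both domino conditions say the same thing.

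For general \(\tau\), I note that \(M(\tau)\in D^b_c(R)\), and that the cohomology of \(M(\tau)\) is \(H^j(M)(\tau)\) because the modification is exact. The domino of \(H^j(M)(\tau)\) at \(i\) is \(\operatorname{Dom}^i(H^j(M))\) modified, since \(V^{-\infty}Z\) and \(F^\infty B\) are unchanged by replacing \(d\) with \(F^{\tau(i)}d\). That replacement preserves kernels of \(dV^n\) for \(n\gg0\) and unions of \(F^n d\). It also preserves dimension, so the modified domino numbers equal \(T^{i,j}(X)\), and the \(\tau=0\) argument applied to \(M(\tau)\) gives the claim.

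The main obstacle is the step \(r\ge2\), namely that a nonzero higher differential originates and terminates in domino parts. This needs the precise inclusions of Proposition~\ref{prop:survival-core} together with a check that \(Z_r\) and \(B_r\) are sandwiched between \(B\subset F^\infty B\) and \(V^{-\infty}Z\subset Z\). That check is where I would rely most heavily on Ekedahl.
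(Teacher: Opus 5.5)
Your overall strategy is the paper's. For (1) you use Ekedahl's symmetry together with trivial vanishing. For (2) you observe that \(F^{\tau(i)}d\) has the same stable kernel and image systems as \(d\), so the modified dominoes still have dimension \(T^{i,j}(X)\), and then apply survival of the core to \(M(\tau)\). Your source-side argument is correct. Since \(V^{-\infty}Z^i\subseteq Z_\infty\), the differential \(d_r\) factors through \(H^j(X,W\Omega^i)/V^{-\infty}Z^i\), the degree-zero term of \(U_X^{i,j}\).

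The target-side step, which is what produces the index \(i+r-1\), is wrong as written. The image of \(d_r\) consists of classes of elements of \(B_\infty^{i+r,j-r+1}\). Proposition~\ref{prop:survival-core} gives \(B_\infty^{i+r,j-r+1}\subseteq F^\infty B^{i+r}\subseteq V^{-\infty}Z^{i+r}\), so the image lies \emph{inside} \(V^{-\infty}Z\), not in \(Z\setminus V^{-\infty}Z\). Moreover, exhibiting an element outside \(V^{-\infty}Z^{i+r}\) would only show that the degree-zero term of \(\operatorname{Dom}^{i+r}\) is nonzero, i.e.\ \(T^{i+r,j-r+1}\neq0\), which is the wrong bidegree. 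The correct mechanism is this: \(F^\infty B^{i+r}(H^{j-r+1})\) is by definition the degree-one term of \(\operatorname{Dom}^{i+r-1}(H^{j-r+1})=U_X^{i+r-1,j-r+1}\). If \(T^{i+r-1,j-r+1}=0\), that term vanishes, hence \(B_\infty^{i+r,j-r+1}=0\) and no nonzero \(d_r\) can land there. Relatedly, the ``main obstacle'' you flag is not one. Since \(B_r\subseteq B_\infty\) and \(Z_\infty\subseteq Z_r\), the proposition already supplies the sandwich you need.

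In (1), your separate treatment of \(j=0,1\) is unnecessary. The \(j=0\) case is fine if one grants finiteness of \(H^0(X,W\Omega^i)\). The \(j=1\) justification, however, does not work: the Picard-scheme description concerns only \(H^1(X,W\mathcal O_X)\) and says nothing about \(H^1(X,W\Omega^i)\) for \(i\ge1\).

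Your sentence that applying the symmetry to \(j=0,1\) kills \(j=n+2,n+1\) and \(i\ge n-1\) runs the reflection the wrong way. The values \(j\in\{n+1,n+2\}\) are already excluded by Grothendieck vanishing, and \(i\ge n-1\) does not come from \(j=0,1\). The paper instead reflects the trivially vanishing region. If \(i>n-2\) then \(n-i-2<0\), and if \(j<2\) then \(n-j+2>n\). In either case \(T^{n-i-2,n-j+2}=0\), because \(W\Omega_X^{i'}=0\) for \(i'<0\) and \(H^{j'}=0\) for \(j'>n\). So the symmetry alone yields the full domino range, with no separate input about \(H^0\) or \(H^1\).
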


\begin{proof}
The duality in (1) is
\cite[Corollary~IV.3.5.1]{ekedahl1}. For the range: \(W\Omega_X^i=0\) for \(i<0\), and
\(H^j(X,W\Omega_X^i)=0\) for \(j\notin[0,n]\) by Grothendieck vanishing on the
\(n\)-dimensional space \(X\). Hence \(T^{i,j}(X)=0\) unless \(0\le i\) and
\(0\le j\le n\). If \(i>n-2\) then \(n-i-2<0\), and if \(j<2\) then \(n-j+2>n\). In
either case the dual domino number vanishes, which gives the range.

For (2), set \(m:=\tau(i)\). At \((i,j)\), the modification replaces \(d\) by
\(F^m d\), with \(F^{-r}d=dV^r\). The identities
\(d_\tau V^q=dV^{\,q-m}\) and \(F^qd_\tau=F^{\,q+m}d\), for \(q\) sufficiently
large, show that the stable kernel and image systems defining the domino are
unchanged up to reindexing. Thus the modified domino is \(\sigma_*^{s_i(\tau)}\bigl(U_X^{i,j}(m\delta_0)\bigr)\), and in particular has dimension \(T^{i,j}(X)\). By survival of the core
(Proposition~\ref{prop:survival-core}), \(d_r\) factors through the degree-\(0\)
component of the source domino and has image in the degree-\(1\) component of the
domino at \((i+r-1,j-r+1)\). Thus a nonzero \(d_r\) forces both modified dominoes
to be nonzero, and hence forces the two displayed domino numbers to be nonzero.
\end{proof}

For a threefold ($n = 3$) the domino range is the $2\times 2$ block $\{i \in \{0,1\},\; j \in \{2,3\}\}$, boxed below on the $E_1$-page, with the differentials permitted by the proposition drawn at the corresponding positions:
\[
\scalebox{0.9}{%
\begin{tikzpicture}[baseline=(current bounding box.center)]
  \foreach \i in {0,1,2,3} {
    \foreach \j in {0,1,2,3} {
      \node (n\i\j) at (\i*1.75, \j*1.12) {$E_1^{\i,\j}$};
    }
  }
  \draw[->] (n02) -- (n12);
  \draw[->] (n03) -- (n13);
  \draw[->] (n12) -- (n22);
  \draw[->] (n13) -- (n23);
  \draw[->, dashed] (n03) -- (n22);
  \node[draw, very thick, inner sep=5pt, fit=(n02)(n12)(n03)(n13)] {};
\end{tikzpicture}%
}
\]
The boxed nodes form the domino range, and $T^{i,j} = 0$ outside the box. By part (2), a nonzero $d_r$ requires its source, and the position one column to the left of its target, to lie in the box. The four horizontal arrows are the $d_1$ differentials with source in the range. Their targets may exit the box by one column. The dashed arrow \(d_2\colon E_2^{0,3}\to E_2^{2,2}\) is the only higher differential not ruled out.

In any dimension, the same constraint prevents a higher differential from touching bidegree \((0,2)\), so \(E_\infty^{0,2}=E_2^{0,2}\) for the slope spectral sequence and each of its Nygaard modifications.

\section{The structure of higher dimensional dominoes}\label{sec:domino-structure}
Throughout this section we work in the abelian category \(\mathrm{Mod}_c(R)\) of
coherent \(R\)-modules over the fixed field \(k\). We study dominoes as objects of this category,
reducing classification questions to extension calculations between the
elementary dominoes \(U_j\). The computational input is the Ext computation of
Proposition~\ref{prop:ext-full}, together with the presentation of extensions
by explicit pushouts in Construction~\ref{cons:Exy}. A domino written as a quotient
of \(\widehat R\) is regarded as a graded \(R\)-module by restriction of scalars.
Conversely, a coherent \(R\)-module \(N\) is complete and separated for
\(V^rN+dV^rN\) by Proposition~\ref{prop:coherence-criterion}, so its \(R\)-action
extends by continuity to a unique \(\widehat R\)-action.
We write
\(\underline{\operatorname{Ext}}^q_R(M,N):=\bigoplus_{n\in\mathbb Z}\operatorname{Ext}^q_{\mathrm{Mod}(R)}(M,N(n))\), a graded abelian group with the
shift \(n\) as its grading, and \(\operatorname{Ext}^q_R(M,N):=\underline{\operatorname{Ext}}^q_R(M,N)^0\) for its degree-zero part.

The section proceeds from dimension two upward. In \S\ref{sec:twodim-dominoes} we classify the \(2\)-dimensional indecomposable dominoes (Theorem~\ref{thm:2d-classification}). Such a domino is an extension of \(U_{j_1}\) by \(U_{j_2}\) with \(j_2-j_1\ge2\). Its class is a nonzero skew polynomial \(f(V)\in k_\sigma[V]_{\le j_2-j_1-2}\), and we denote the corresponding domino by \(U_{j_1,j_2;f}\). Two polynomials give isomorphic dominoes exactly when they differ by a frame change. In \S\ref{sec:domino-groupoids} we prove the analogue in arbitrary dimension: every domino with type sequence \(J\) is presented by a strictly upper triangular matrix \(f\) over \(k_\sigma[V]\), and isomorphisms are described by upper triangular changes of generators (Theorem~\ref{thm:fixed-type-groupoid}).

In \S\ref{sec:explicit-dominoes} we illustrate the presentation theorem with three
families: distinguished dominoes, two-block dominoes and their Kronecker model, and
three-dimensional dominoes with distinct types. For a type sequence
\(J=(j,j+2,\ldots,j+2n-2)\), the quotient
\(U_J=\widehat R/\widehat R(F^n,dV^{j-1})\) is the unique domino of type \(J\) with
maximal \(p\)-exponent \(n\) (Theorem~\ref{thm:distinguished}). These distinguished dominoes
reappear as the domino summands of the cyclic supergeneral family in
\S\ref{sec:supersingular}. The geometric applications in
\S\S\ref{sec:applications} and~\ref{sec:supersingular} use these results through
the two unipotent realizations of a domino \(U\), the formal realization
\(\widehat G(U)\) and the syntomic realization \(G^{\mathrm{perf}}(U)\), developed
in Appendix~\ref{app:unipotent}.

\subsection{$2$-dimensional dominoes}\label{sec:twodim-dominoes}

The first nontrivial extension case is dimension two: for type sequence
\((j_1,j_2)\), the associated graded object is \(U_{j_1}\oplus U_{j_2}\), so
classification amounts to computing extensions between elementary dominoes modulo the
residual automorphisms of the graded pieces. We compute \(\operatorname{Ext}^1\) as a
finite-dimensional \(k\)-vector space of Frobenius-skew polynomials in \(V\). The
nonzero classes are orbits under a Frobenius-skew frame-change action.

To compute the Ext groups we use the Illusie--Raynaud resolution of \(R_n\),
which makes the truncation functor \(R_n\otimes_R^{\mathbf L}(-)\) of
Remark~\ref{rem:truncation} explicit.

\begin{proposition}[Illusie--Raynaud {\cite[Proposition~I.3.2]{IR83}}]
\label{prop:IR-resolution-Rn}
\label{prop:Rn-free-resolution}
Let \(R_n:=R/(V^nR+dV^nR)\), regarded as a right graded \(R\)-module and as a left
\(W_n[d]\)-module. Then \(R_n\) admits the right \(R\)-free resolution
\[
  0\longrightarrow R(-1)\xrightarrow{\ \binom{F^n}{-F^nd}\ }R(-1)\oplus R
  \xrightarrow{\,(dV^n,\;V^n)\,}R\longrightarrow R_n\longrightarrow0,
\]
where both maps are left multiplication, applied to column vectors.
In particular, for any left graded \(R\)-module \(M\), the object
\(R_n\otimes_R^{\mathbf L}M\) is represented in \(D(W_n[d])\) by the complex
\[
  M(-1)\xrightarrow{\ \binom{F^n}{-F^nd}\ }M(-1)\oplus M\xrightarrow{\,(dV^n,\;V^n)\,}M,
\]
placed in cohomological degrees \(-2,-1,0\).
\end{proposition}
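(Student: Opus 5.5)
We have to verify that the displayed sequence of right \(R\)-modules is exact and that both maps are well defined. The second assertion, about \(R_n\otimes^{\mathbf L}_R M\), then follows formally by tensoring the resolution with \(M\), since each term is free.

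\emph{Well-definedness and complex.} The maps are left multiplication by elements of \(R\) on column vectors of right \(R\)-modules, so they are right \(R\)-linear. Their degrees match the internal shifts: \(dV^n\) has degree \(1\) on the \(R(-1)\) summand, and \(F^nd\) has degree \(1\). The composite is
\[
dV^n\cdot F^n - V^n\cdot F^nd .
\]
Using \(VF=p\) and \(FdV=d\), we get \(dV^nF^n=p^nd\) and \(V^nF^nd=p^nd\), so the composite vanishes. Surjectivity onto \(R_n\) and the identification of the image of \((dV^n,V^n)\) with \(V^nR+dV^nR\) hold by definition of \(R_n\).

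\emph{Exactness.} I would use the normal form from the Remark: every element of \(R\) is uniquely a finite \(W\)-combination of the words \(V^m\), \(F^m\), \(dV^m\) \((m>0)\) and \(F^md\) \((m\ge0)\). Right multiplication by \(V^n\), \(dV^n\), \(F^n\) and \(F^nd\) acts on these words by explicit index shifts with scalar factors that are powers of \(p\). I would compute the following.
\begin{itemize}
\item \(xV^n\): if \(x\) has no \(d\)-part, this shifts \(V\)/\(F\) exponents.
\item \(x\,dV^n\): the words \(V^m\,d\) become \(p^m dV^m\), using \(Vd=pdV\).
\item Products of \(d\)-containing words with \(d\), which vanish.
\end{itemize}
Injectivity of the left map \(x\mapsto(F^nx,-F^ndx)\) reduces, degree by degree, to checking that \(F^n\) acts injectively by right multiplication on normal forms. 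This holds because \(R\) is \(p\)-torsion free and the products \(V^mF^n=p^{\min}\cdot(\text{word})\) land on distinct words. Exactness in the middle is the main obstacle. Given \((a,b)\) with \(dV^n a + V^n b=0\), I must show \((a,b)=(F^nx,-F^ndx)\) for some \(x\).

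I would split by internal degree.
\begin{itemize}
\item In degree \(0\) of the target, \(a\in R(-1)^0=R^{-1}=0\)? No: \(R(-1)^i=R^{i-1}\), so in degree \(0\) we have \(a\in R^{-1}=0\) and \(b\in R^0\), with \(V^nb=0\). This forces \(b=0\), since \(V\) is a non-zero-divisor on \(R^0\), a domain-like ring.
\item In degree \(1\), \(a\in R^0\) and \(b\in R^1\), and \(x\in R^0\).
\item In degree \(2\), \(a\in R^1\), \(b\in R^2=0\), so \(dV^na=0\) must force \(a\in F^nR^1\). Writing \(a=\sum b_mF^md+\sum b_{-m}dV^m\), we have \(dV^n\cdot(\cdot)d=0\), and \(dV^n\cdot dV^m=0\) since \(d^2=0\). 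So every \(a\in R^1\) is killed, and I need \(R^1\subset F^n R^{0}d\)-type image. I would double-check the exact placement of the shifts here, since the left term is \(R(-1)\) and the degree \(2\) part of \(R(-1)\) is \(R^1\).
\end{itemize}
If direct bookkeeping becomes messy, I would instead reduce modulo \(p\). All modules are \(p\)-torsion free with \(p\)-adically compatible filtrations, so I would compare against the known \(W_n[d]\)-structure \(R_n\cong W_n\Omega\)-type basis and use a Nakayama-style argument on the graded pieces. Alternatively, I would cite the original argument of Illusie--Raynaud as the fallback.

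Once exactness holds, \(R_n\otimes^{\mathbf L}_R M\) is computed by applying \(-\otimes_R M\) to the free resolution. This uses \(R(-1)\otimes_RM=M(-1)\) and the fact that left multiplication on column vectors becomes the stated operators on \(M\), which gives the displayed complex in degrees \(-2,-1,0\). The left \(W_n[d]\)-structure is inherited from the bimodule structure, so the complex lives in \(D(W_n[d])\).
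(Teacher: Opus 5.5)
The overall strategy is the right one: check that the composite vanishes, then verify exactness one internal degree at a time using the \(W\)-basis of \(R\). The paper itself simply quotes Illusie--Raynaud. But your proposal stops exactly where the content is, so there is a genuine gap.

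\emph{Degree \(1\) is not done.} Only internal degrees \(0,1,2\) occur, and degree \(0\) is essentially the only one you complete. In degree \(1\) you name the spaces and then stop. The one nontrivial claim is never proved: that \(dV^na+V^nb=0\), with \(a\in R^0\) and \(b\in R^1\), forces \((a,b)=(F^nz,-F^ndz)\).

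\emph{The computations are on the wrong side.} The differentials are left multiplications \(a\mapsto dV^na\), \(b\mapsto V^nb\), \(z\mapsto F^nz\). The products you list, \(xV^n\), \(x\,dV^n\), \(V^mF^n\), are right multiplications. They describe the left-module quotient \(R/(RV^n+R\,dV^n)\), which is a different object.

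\emph{Degree \(2\) aims at a false target.} You correctly observe that every \(a\in R^1\) is a cycle, but you then aim for \(R^1\subseteq F^nR^0d\). This is false: \(R^0d\) is spanned by the \(F^md\) and \(V^md=p^mdV^m\), so \(dV\notin R^0d\). In that degree the source is \(R(-1)^2=R^1\) and \(-F^ndz\in R^2=0\). What is actually needed is that left multiplication by \(F^n\) is bijective on \(R^1\).

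\emph{The fallback does not help.} The mod-\(p\)/Nakayama argument does not avoid this work. The augmented complex ends in the \(p^n\)-torsion module \(R_n\), so not all modules are \(p\)-torsion free. Its terms are free \(W\)-modules of infinite rank that are not \(p\)-adically complete. You would have to do the same computation over \(k\) and then still rule out \(p\)-divisible homology.

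\emph{The missing input.} Use the \(W\)-basis \((\eta_k)_{k\in\mathbb Z}\) of \(R^1\), with \(\eta_k=dV^k\) for \(k\ge0\) and \(\eta_k=F^{-k}d\) for \(k<0\). From \(FdV=d\), \(Vd=pdV\), \(VF=p\), and \(dF=pFd\) one gets, up to invertible Frobenius twists of the coefficients:
\(F\eta_k=\eta_{k-1}\), \(V\eta_k=p\eta_{k+1}\), \(dV^n\cdot V^m=\eta_{n+m}\), and \(dV^n\cdot F^m=p^m\eta_{n-m}\) for all \(m\ge0\).

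Thus \(F\) is a bijective shift on \(R^1\), which settles degree \(2\) including injectivity there. Also \(V^n\) is injective on \(R^1\).

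For degree \(1\), write \(a=\sum_{m>0}\alpha_{-m}V^m+\sum_{m\ge0}\alpha_mF^m\). Comparing \(\eta_j\)-coefficients in \(dV^na=-V^nb\) gives \(p^n\mid\alpha_{-m}\) for \(m>0\) and \(p^{n-m}\mid\alpha_m\) for \(0\le m\le n\). Now \(F^nV^k=p^kF^{n-k}\) for \(k\le n\), \(F^nV^k=p^nV^{k-n}\) for \(k\ge n\), and \(F^n\cdot F^k=F^{n+k}\). So these divisibility conditions are exactly those defining \(F^nR^0\), hence \(a=F^nz\). Then \(V^n(b+F^ndz)=V^nb+dV^na=0\), and injectivity of \(V^n\) gives \(b=-F^ndz\).

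\emph{The ``in particular'' clause.} The \(W_n[d]\)-structure is not literally inherited. The map \(\operatorname{diag}(d,d)\) on \(R(-1)\oplus R\) does not commute with \((dV^n,V^n)\). You need an explicit chain lift of the left \(d\)-action, for example \(-d\), \((x,y)\mapsto(y,0)\), and \(d\) on the three terms, as in the proof of Proposition~\ref{prop:ext-full}.
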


\begin{proposition}[Ekedahl {\cite[Corollary~III.1.5.4(iii)]{ekedahl2}}]
\label{prop:ekedahl-comparison}
For \(N\in D^b_c(R)\) there is a natural isomorphism
\(\mathbf R\!\operatorname{Hom}_R(U_0,N)\simeq\bigl(R_1\otimes_R^{\mathbf L}N\bigr)[-2](1)\)
in \(D(k[d])\). In particular,
\(\underline{\operatorname{Ext}}^q_R(U_0,N)\simeq H^{q-2}\bigl(R_1\otimes_R^{\mathbf L}N\bigr)(1)\)
as graded \(k[d]\)-modules.
\end{proposition}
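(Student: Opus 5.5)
The plan is to compute \(\mathbf R\!\operatorname{Hom}_R(U_0,N)\) from an explicit three-term free resolution of \(U_0\), and then to recognise the resulting complex as a reindexed copy of the Illusie--Raynaud complex of Proposition~\ref{prop:IR-resolution-Rn} for \(n=1\).

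With the convention \(dV^{-1}=Fd\), the definition reads \(U_0=\widehat R/\widehat R(F,Fd)\). I would first establish a left resolution by completed free modules,
\[
  0\longrightarrow \widehat R\xrightarrow{\ (dV,\,-V)\ }\widehat R\oplus\widehat R
  \xrightarrow{\ \binom{F}{Fd}\ }\widehat R\longrightarrow U_0\longrightarrow 0,
\]
with all maps given by right multiplication and internal twists inserted so that the maps have degree zero. It is a complex because of the identity
\[
  dV\cdot F-V\cdot Fd=d\,VF-VF\,d=pd-pd=0 .
\]
Exactness would be checked on the unique normal form of elements of \(\widehat R\): the terms \(a_{-n}V^n\), \(a_nF^n\), \(b_{-n}dV^n\) and \(b_nF^nd\). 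The check has three parts.
\begin{enumerate}
\item The map \(x\mapsto(x\,dV,\,-xV)\) is injective, since right multiplication by \(V\) is injective on \(\widehat R\).
\item Every syzygy \(xF+yFd=0\) is a left multiple of \((dV,-V)\). Here I would argue degree by degree (internal degree \(0\) and \(1\)), using \(VF=FV=p\) and \(dF=pFd\).
\item The cokernel of \(\binom{F}{Fd}\) is \(U_0\). This holds by definition of \(U_0\).
\end{enumerate}
This is the left-module transpose of the Illusie--Raynaud resolution, obtained by interchanging the roles of \((V,dV)\) and \((F,Fd)\).

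Next, apply \(\operatorname{Hom}(-,N)\). Here I must justify computing \(\mathbf R\!\operatorname{Hom}_R\) through \(\widehat R\)-free modules. For \(N\) coherent, Proposition~\ref{prop:coherence-criterion} shows that \(N\) is complete and separated, so its \(R\)-action extends uniquely to an \(\widehat R\)-action. Consequently \(\operatorname{Hom}_R(\widehat R,N)=N\) by continuity, and \(\operatorname{Ext}^{>0}_R(\widehat R,N)=0\). For the vanishing I would use that \(\widehat R=\varprojlim R_n\) with surjective transitions, together with the Illusie--Raynaud resolutions of the \(R_n\) and a Mittag-Leffler argument on the coherent \(N\). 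Granting this, \(\mathbf R\!\operatorname{Hom}_R(U_0,N)\) is represented by
\[
  N\xrightarrow{(F,\;Fd)}N\oplus N\xrightarrow{\binom{dV}{-V}}N
\]
in cohomological degrees \(0,1,2\). The argument extends from coherent modules to \(N\in D^b_c(R)\) by dévissage along truncation triangles, since both sides of the claimed isomorphism are exact functors.

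Finally, compare this with the complex of Proposition~\ref{prop:IR-resolution-Rn} for \(n=1\),
\[
  N(-1)\xrightarrow{\binom{F}{-Fd}}N(-1)\oplus N\xrightarrow{(dV,\,V)}N,
\]
in degrees \(-2,-1,0\). The two complexes have the same terms and the same maps, up to signs on one summand. The shift \([-2]\) accounts for moving degrees \(-2,\dots,0\) to \(0,\dots,2\), and the internal twist \((1)\) absorbs the gradings. The isomorphism is a sign change on one summand, which is natural in \(N\) and commutes with the \(k[d]\)-structure, because left multiplication by \(d\) acts through \(N\) alone. Taking cohomology gives the stated formula for \(\underline{\operatorname{Ext}}^q_R(U_0,N)\).

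I expect the main obstacle to be the homological step. The middle exactness of the resolution over \(\widehat R\), where infinite \(V\)-tails must be controlled, is delicate. So is the acyclicity \(\operatorname{Ext}^{>0}_R(\widehat R,N)=0\) for coherent \(N\), since \(\widehat R\) is not \(R\)-free. If the direct approach to the latter fails, I would fall back on writing \(U_0\) as a limit of its truncations and use the completeness part of Proposition~\ref{prop:coherence-criterion} to pass to the limit.
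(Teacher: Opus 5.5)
Your overall route is the natural one. The paper itself only cites Ekedahl, and it records the two inputs you use: the completed resolution of Lemma~\ref{lem:U0-resolution} and the complex of Proposition~\ref{prop:IR-resolution-Rn}. Your matching of the underlying complexes by a sign on the \(Fd\)-summand is correct. Two steps, however, fail as written.

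\textbf{The \(k[d]\)-structure.} Your reason that the sign change is \(k[d]\)-linear, namely that ``left multiplication by \(d\) acts through \(N\) alone'', is false. Componentwise left multiplication by \(d\) is not a chain endomorphism of either complex. On the Illusie--Raynaud complex, \(d^2=0\) and \(Vd=p\,dV\) give \((dV,V)(da,db)=p\,dVb\), whereas \(d\,(dV,V)(a,b)=dVb\); for \(N=U_j\) only the first vanishes. Nor does postcomposition with \(d\) preserve \(R\)-linearity of maps \(U_0\to N\), again because \(Vd=p\,dV\).

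The action on \(\mathbf R\!\operatorname{Hom}_R(U_0,N)\) comes instead from right multiplication by \(d\) on \(U_0=\widehat R/\widehat R(F,Fd)\). The action on \(R_1\otimes^{\mathbf L}_RN\) comes from left multiplication by \(d\) on \(R_1\). Each must be lifted to its resolution:
\begin{itemize}
\item Lifting \(r\mapsto rd\) through your resolution gives \(\mu_1(a,b)=(0,a)\) and \(\mu_2(c)=-cd\). On \(N\to N\oplus N\to N\) this yields the operators \(d\), \((x,y)\mapsto(y,0)\), \(-d\).
\item The Illusie--Raynaud side carries \(-d\), \(\begin{psmallmatrix}0&1\\0&0\end{psmallmatrix}\), \(d\), as in the proof of Proposition~\ref{prop:ext-full}.
\end{itemize}
After your sign change these agree up to a global sign, which \((-1)^n\) on internal degree \(n\) removes. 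So the conclusion survives, but only through this computation. Note that the middle operator is not multiplication by \(d\) at all; it is what makes the end-term operators \(\pm d\) a chain map.

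\textbf{The homological step.} You flag this step, but it is the actual content of Ekedahl's corollary, and the tools you name for it do not apply. Since \(V^nR+dV^nR\) is only a right ideal (\(F\cdot dV^n=dV^{n-1}\)), the \(R_n\) are right \(R\)-modules and left \(W_n[d]\)-modules, not left \(R\)-modules. Hence \(\widehat R=\varprojlim R_n\) is not a limit of left \(R\)-modules. The Illusie--Raynaud resolutions are right-module resolutions and say nothing about \(\operatorname{Ext}_R(\widehat R,N)\) computed in left modules. Your fallback fails for the same reason: \(U_0/(V^nU_0+dV^nU_0)\) is not an \(R\)-module.

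What your approach needs is \(\operatorname{Hom}_R(\widehat R,N)=N\) together with \(\operatorname{Ext}^{>0}_R(\widehat R,N)=0\), i.e.\ \(\mathbf R\!\operatorname{Hom}_R(\widehat R/R,N)=0\) for coherent \(N\). This is a derived-completeness statement: \(V\) in degree \(0\), \(F\) in degree \(1\), and \(d\) all act bijectively on \(\widehat R/R\). You give no argument for it.

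A cleaner organization isolates exactly this input. Put \(U_0':=R/R(F,Fd)=[\bigoplus_nkV^n\to\bigoplus_mk\,dV^m]\).
\begin{itemize}
\item The same complex with \(R\) in place of \(\widehat R\) is an honest \(R\)-free resolution of \(U_0'\); your normal-form check works with finite sums.
\item Hence \(\mathbf R\!\operatorname{Hom}_R(U_0',N)\simeq(R_1\otimes^{\mathbf L}_RN)[-2](1)\) holds naturally for every \(N\in D(R)\), compatibly with the \(d\)-operators above.
\item This also repairs your d\'evissage, which as written presupposes a natural map on complexes whose terms need not be coherent.
\end{itemize}
The proposition then follows once \(\mathbf R\!\operatorname{Hom}_R(U_0/U_0',N)=0\) for \(N\in D^b_c(R)\). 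That vanishing reduces to coherent \(N\), and it is there that the completeness of Proposition~\ref{prop:coherence-criterion} must genuinely be used.
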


Applying the exact autoequivalence \((-j_1\delta_0)\) of
Definition~\ref{def:autoequivalence} gives
\[
  \underline{\operatorname{Ext}}^i_R(U_{j_1},U_{j_2})
  \simeq
  \underline{\operatorname{Ext}}^i_R(U_0,U_{j_2-j_1}).
\]
It therefore remains to compute the groups with first argument \(U_0\).
By Proposition~\ref{prop:ekedahl-comparison}, these are the cohomology groups
\[
  \underline{\operatorname{Ext}}^i_R(U_0,U_j)
  \simeq
  H^{i-2}\bigl(R_1\otimes_R^{\mathbf L}U_j\bigr)(1).
\]
Tensoring the right \(R\)-free resolution of \(R_1\) in
Proposition~\ref{prop:IR-resolution-Rn} with \(U_j\) gives the three-term
complex used below. Its \(k[d]\)-module structure is induced by the left
\(k[d]\)-action on the bimodule \(R_1\). We now compute its cohomology.
Cf.\ \cite[Corollary~I.3.7]{IR83} for the underlying graded \(k\)-vector
spaces.

\begin{proposition}[Ext groups between elementary dominoes]\label{prop:ext-full}
For every \(j\in\mathbb Z\), the graded \(k[d]\)-modules
\(\underline{\operatorname{Ext}}^i_R(U_0,U_j)\) vanish for \(i\notin\{0,1,2\}\), and for
\(i=0,1,2\) are as follows:
\[
\begin{array}{c|ccc}
 & \underline{\operatorname{Ext}}^0_R(U_0,U_j)
 & \underline{\operatorname{Ext}}^1_R(U_0,U_j)
 & \underline{\operatorname{Ext}}^2_R(U_0,U_j)\\
\hline
j\ge2 & k[d]\oplus k^{\oplus j} & k^{\oplus(j-1)} & k(1)\\
j=1 & k[d]\oplus k & 0 & k(1)\\
j=0 & k[d] & 0 & k[d](1)\\
j=-1 & k(-1) & 0 & k[d](1)\oplus k\\
j\le-2 & k(-1) & k^{\oplus(-j-1)} & k[d](1)\oplus k^{\oplus(-j)}
\end{array}
\]
\end{proposition}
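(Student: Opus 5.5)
The plan is to compute the right-hand side of Proposition~\ref{prop:ekedahl-comparison} directly. Take the resolution of Proposition~\ref{prop:IR-resolution-Rn} with \(n=1\) and tensor it with \(U_j\). This represents \(R_1\otimes^{\mathbf L}_RU_j\) by the three-term complex
\[
  U_j(-1)\xrightarrow{\ \binom{F}{-Fd}\ }U_j(-1)\oplus U_j\xrightarrow{\,(dV,\;V)\,}U_j
\]
in cohomological degrees \(-2,-1,0\). Then \(\underline{\operatorname{Ext}}^i_R(U_0,U_j)\) is \(H^{i-2}\) of this complex, shifted by \((1)\). Vanishing for \(i\notin\{0,1,2\}\) is immediate because the complex has length three. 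The computation splits by internal degree \(m\). Since \(U_j\) lives in degrees \(0,1\), only \(m\in\{0,1,2\}\) contribute, and in degree \(m\) the complex reads \(U_j^{m-1}\to U_j^{m-1}\oplus U_j^m\to U_j^m\).

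I would treat the three internal degrees separately, using only the explicit description of \(U_j\) in Definition~\ref{def:coherent}: \(V\) is injective on \(U_j^0\) and zero on \(U_j^1\), and \(F\) is zero on \(U_j^0\) and surjective on \(U_j^1\) with one-dimensional kernel.
\begin{itemize}
\item In degree \(m=0\) the complex is \(0\to U_j^0\xrightarrow{V}U_j^0\). It contributes only \(U_j^0/VU_j^0\cong k\) to \(H^0\).
\item In degree \(m=2\) it is \(U_j^1\xrightarrow{F}U_j^1\to0\). It contributes only \(U_j^1[F]\cong k\) to \(H^{-2}\).
\item In degree \(m=1\) it is \(U_j^0\to U_j^0\oplus U_j^1\to U_j^1\), with maps \(x\mapsto(0,-Fdx)\) and \((y,z)\mapsto dVy\), because \(F|_{U^0}=0\) and \(V|_{U^1}=0\). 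Its cohomology is
\[
H^{-2}=\ker(Fd)=\ker(dV^{-1}),\qquad H^{-1}=\ker(dV)\oplus\operatorname{coker}(dV^{-1}),\qquad H^0=\operatorname{coker}(dV).
\]
\end{itemize}
The lengths in degree \(m=1\) come from Remark~\ref{rem:kernel-lengths} applied to \(U_j\), with \(r=\pm1\):
\[
\dim H^{-2}=\max(0,j+1),\qquad \dim H^{-1}=\max(0,j-1)+\max(0,-1-j),\qquad \dim H^0=\max(0,1-j).
\]

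Adding the contributions of the three internal degrees reproduces the dimensions in the table row by row. For example, \(\operatorname{Ext}^0\) has total dimension \((j+1)+1\) for \(j\ge-1\) and \(1\) for \(j\le-1\). \(\operatorname{Ext}^1\) has dimension \(j-1\) for \(j\ge2\) and \(-j-1\) for \(j\le-2\). \(\operatorname{Ext}^2\) has dimension \(1+\max(0,1-j)\).

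The remaining step, which I expect to be the main obstacle, is to pin down the graded \(k[d]\)-module structure rather than just the dimensions. This means deciding which classes assemble into a free \(k[d]\) summand and which are \(k\)'s in a fixed internal degree, with the correct twists \((\pm1)\). The left action of \(d\in W_1[d]\) on the bimodule \(R_1\) raises internal degree by one. I would make it explicit on representatives as follows.
\begin{itemize}
\item For \(\operatorname{Ext}^0\) with \(j\ge0\): the class \(1\in\ker(Fd)\subset U_j^0\) in degree \(m=1\) should be sent by \(d\) to the class of \(d\cdot1\in U_j^1[F]\) in degree \(m=2\). This holds exactly when \(dV^{-1}\cdot\)-injectivity fails in the right way, namely for \(j\ge0\), and it produces a \(k[d]\) summand. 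The other kernel classes \(V,\dots,V^{j}\) are killed by \(d\) modulo the image, giving the \(k^{\oplus j}\).
\item For \(j\le-1\): \(\ker(Fd)=0\), so only the degree-two \(k\) survives, which after the shift \((1)\) sits in degree \(-1\).
\item Dually for \(\operatorname{Ext}^2\): the generator of \(U_j^0/VU_j^0\) in degree \(0\) maps under \(d\) to the class of \(d\) in \(\operatorname{coker}(dV)\). That class is nonzero precisely when \(j\le0\), giving \(k[d](1)\). The remaining cokernel classes \(dV^{j+1}\)-type give the extra \(k^{\oplus(-j)}\) (respectively \(k\) when \(j=-1\)).
\end{itemize}
Checking these \(d\)-actions case by case against the diagrams for \(j\ge0\) and \(j\le0\) in the remark after Definition~\ref{def:coherent} completes the table.
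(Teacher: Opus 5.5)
Your setup is the paper's own: the same Illusie--Raynaud complex for $R_1\otimes^{\mathbf L}_RU_j$ and the same kernels and cokernels of $F$, $Fd$, $dV$, $V$. Organizing by internal degree and reading off dimensions from Remark~\ref{rem:kernel-lengths} is fine and not circular. Your $\operatorname{Ext}^2$ pairing, of the class of $1$ in $U_j^0/VU_j^0$ with $d=dV^0$ in $\operatorname{coker}(dV)$, is exactly the paper's.

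The gap is in the $\operatorname{Ext}^0$ step, which you yourself single out as the crux. As stated it is false for $j\ge1$.

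In $U_j$ one has $d\cdot1=dV^0=F^{j-1}dV^{j-1}=0$, and more generally $dV^n=0$ for $0\le n<j$. So the class of $1\in\ker(Fd)$ is killed by $d$. Conversely, $d(V^j)=dV^j$ is the generator of $U_j^1[F]=k\,dV^j$, so $V^j$ is not killed. There is also no ``image'' to quotient by, since $H^{-2}$ is a bare kernel.

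Taken literally, your two claims make $d$ act by zero on $H^{-2}$ for every $j\ge1$. That would give $k^{\oplus(j+1)}\oplus k(-1)$, contradicting the table you are trying to prove.

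The correct pairing, which is the paper's, sends $V^j\mapsto \pm dV^j$. The classes $1,V,\ldots,V^{j-1}$ then span the $k^{\oplus j}$. Only for $j=0$ do $1$ and $V^j$ coincide, which is presumably what misled you. Since $1$ and $V^j$ lie in the same internal degree, the corrected computation does give the stated type $k[d]\oplus k^{\oplus j}$. So the repair is local, but the sentence about $dV^{-1}$-injectivity \emph{failing in the right way} must be replaced by this explicit calculation.

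Some smaller points:
\begin{itemize}
\item With the convention $(M(n))^i=M^{n+i}$, the internal-degree-$2$ class for $j\le-1$ lands in degree $1$ after the twist, not $-1$. Degree $1$ is what $k(-1)$ denotes.
\item The leftover classes in $\operatorname{coker}(dV)$ for $\operatorname{Ext}^2$ are $dV^j,\ldots,dV^{-1}$, which is $-j$ classes.
\item You should say why the $k[d]$-action on the complex is $\pm d$ on the two outer terms. That is, exhibit the lift of the left $d$-action on $R_1$ to the resolution, as the paper does with its vertical maps.
\end{itemize}
Once that lift is in place, the vanishing of the $d$-action on $\operatorname{Ext}^1$ follows from your own observation that $H^{-1}$ is concentrated in internal degree $1$.
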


\begin{proof}
Proposition~\ref{prop:IR-resolution-Rn} with \(n=1\) represents
\(C_j:=R_1\otimes_R^{\mathbf L}U_j\) in cohomological degrees \(-2,-1,0\) by
the rows of the commutative diagram
\[
\begin{tikzcd}[column sep=large, ampersand replacement=\&]
U_j(-1) \arrow[r,"{\binom{F}{-Fd}}"] \arrow[d,"-d"'] \&
U_j(-1)\oplus U_j \arrow[r,"{(dV,\,V)}"] \arrow[d,"{\begin{psmallmatrix}0&1\\0&0\end{psmallmatrix}}"] \&
U_j \arrow[d,"d"] \\
U_j(-1) \arrow[r,"{\binom{F}{-Fd}}"'] \&
U_j(-1)\oplus U_j \arrow[r,"{(dV,\,V)}"'] \&
U_j
\end{tikzcd}
\]
All maps act on columns from the left. The vertical maps represent the left
\(k[d]\)-action of the bimodule structure. They have internal degree one and
square to zero.

Forgetting the \(d\)-action, the complex splits, as a complex of graded
\(k\)-modules, into the four two-term pieces
\[
  [(U_j)^1(-1)\xrightarrow{\,F\,}(U_j)^1(-1)],\quad
  [(U_j)^0(-1)\xrightarrow{\,Fd\,}(U_j)^1],\quad
  [(U_j)^0(-1)\xrightarrow{\,dV\,}(U_j)^1],\quad
  [(U_j)^0\xrightarrow{\,V\,}(U_j)^0],
\]
the first two in degrees \((-2,-1)\), the last two in \((-1,0)\). In the
topological bases \(e_n=V^n\) and \(\eta_n=dV^n\) of
Definition~\ref{def:coherent}, the four operators send
\(\eta_n\mapsto\eta_{n-1}\), \(e_n\mapsto\eta_{n-1}\),
\(e_n\mapsto\eta_{n+1}\), and \(e_n\mapsto e_{n+1}\), a basis element mapping
to \(0\) when its target index lies outside the appropriate range
(\(n\ge j\) for \(\eta_n\), and \(n\ge0\) for \(e_n\)). Hence \(F\) is
surjective with kernel \(k\eta_j\); \(V\) is injective with cokernel
\(ke_0\); \(Fd\) has kernel \(k\langle e_0,\dots,e_j\rangle\) and cokernel
\(k\langle\eta_j,\dots,\eta_{-2}\rangle\); and \(dV\) has kernel
\(k\langle e_0,\dots,e_{j-2}\rangle\) and cokernel
\(k\langle\eta_j,\dots,\eta_0\rangle\), each span read as \(0\) when its
index range is empty.

On cohomology, \(H^{-2}=\ker(Fd)\oplus\ker(F)\),
\(H^{-1}=\ker(dV)\oplus\operatorname{coker}(Fd)\), and
\(H^{0}=\operatorname{coker}(V)\oplus\operatorname{coker}(dV)\). The
\(d\)-action pairs \(e_j\) with \(-\eta_j\) in \(H^{-2}\) and \(e_0\) with
\(\eta_0\) in \(H^{0}\), each pair spanning a free \(k[d]\)-module, and
annihilates every other basis class. The only case that is not immediate is a
representative \((0,\eta_m)\) of \(\operatorname{coker}(Fd)\), sent by the
middle vertical map to \((\eta_m,0)\), a boundary since \(F\) is surjective.
Twisting via
\(\underline{\operatorname{Ext}}^i_R(U_0,U_j)=H^{i-2}(C_j)(1)\) gives the
table. The remaining \(\underline{\operatorname{Ext}}^i\) vanish because
\(C_j\) is concentrated in degrees \(-2,-1,0\).
\end{proof}

To realize the preceding Ext classes by explicit pushouts, we now use a
completed left-module resolution of \(U_0\). This has a different role from
the right-module resolution of \(R_1\) used above to compute derived Ext.

\begin{lemma}[Ekedahl {\cite[Lemma~III.1.5]{ekedahl2}}]
\label{lem:U0-resolution}
\label{lem:U0-completed-resolution}
The elementary domino $U_0$ admits the following $3$-term free resolution
by left \(\widehat{R}\)-modules:
\[
  0 \to \widehat{R}(-1)
  \xrightarrow{\cdot(dV,\,-V)}
  \widehat{R} \oplus \widehat{R}(-1)
  \xrightarrow{\ \cdot\binom{F}{Fd}\ }
  \widehat{R}
  \to U_0 \to 0,
\]
where both maps are right multiplication.
\end{lemma}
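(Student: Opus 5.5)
The plan is to verify the resolution directly, using the normal form of elements of \(\widehat R\). The key fact is that \(\widehat R\) is \(p\)-torsion free, with a topological \(W\)-basis \(\{V^n,F^m,dV^n,F^md\}\).

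Exactness at \(U_0\) and at \(\widehat R\) is essentially definitional. The right-multiplication map \(\cdot\binom{F}{Fd}\) has image the left ideal \(\widehat R F+\widehat R Fd\). Since \(dV^{-1}=Fd\) by convention, \(U_0=\widehat R/\widehat R(F,dV^{-1})\) is exactly the cokernel.

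The composite is zero:
\[
  dV\cdot F-V\cdot Fd=d(VF)-(VF)d=pd-pd=0.
\]
The grading is consistent because \(Fd\) has internal degree one, which is why the second summand and the leftmost term carry the shift \((-1)\).

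I will then check exactness degree by degree in the internal grading. In internal degree \(i\), the middle term is \(\widehat R^i\oplus\widehat R^{i-1}\) and the left term is \(\widehat R^{i-1}\). In degree \(0\), the left term vanishes and the condition \(aF=0\) with \(a\in\widehat R^0\) forces \(a=0\), since right multiplication by \(F\) sends \(V^n\mapsto pV^{n-1}\) and \(F^m\mapsto F^{m+1}\). In degree \(2\), the middle term is \(0\oplus\widehat R^1\) and the relevant map is \(b\mapsto bFd\in\widehat R^2=0\), while the left term is \(\widehat R^1\) acting by \(x\mapsto(x\,dV,-xV)=(0,-xV)\). There I must show that right multiplication by \(V\) on \(\widehat R^1\) is an isomorphism. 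This holds because \(dV^n\cdot V=dV^{n+1}\) and \(F^md\cdot V=F^{m-1}d\) for \(m\ge1\), with \(d\cdot V=dV\), so it permutes the basis bijectively. Hence the degree-\(2\) kernel \(\widehat R^1\) equals the image. Injectivity of \(\cdot(dV,-V)\) in every degree follows from injectivity of \(\cdot V\), which is clear on the normal-form basis and uses \(p\)-torsion freeness for the \(F^m\) terms, where \(F^mV=pF^{m-1}\).

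The main obstacle is middle exactness in internal degree \(1\). There I must show that if \(a\in\widehat R^1\) and \(b\in\widehat R^0\) satisfy \(aF+bFd=0\), then \((a,b)=(x\,dV,-xV)\) for some \(x\in\widehat R^0\). I will compute both maps on the topological bases. Right multiplication by \(F\) on \(\widehat R^1\) sends \(dV^n\mapsto dV^{n-1}\) for \(n\ge1\), \(d\mapsto Fd\), and \(F^md\mapsto pF^{m+1}d\). Right multiplication by \(Fd\) on \(\widehat R^0\) sends \(V^n\mapsto p\,dV^{n-1}\) for \(n\ge1\) and \(F^m\mapsto F^{m+1}d\). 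Comparing coefficients term by term, I will solve for \(x\) with \(b=-xV\); this is possible because the \(b\)-coefficients of \(1\) must be divisible by \(p\), as forced by the relation. I will then verify that \(a=x\,dV\).

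The care needed is with the infinite \(V\)-adic tails. Since all the maps are continuous and respect the filtration \(V^n\widehat R+dV^n\widehat R\) up to a shift, I will either do the coefficient comparison directly on convergent series, or reduce to the truncations \(R_n\) and pass to the limit using Mittag-Leffler, since the finite-length transition maps are surjective. As a fallback, this degree-\(1\) exactness is exactly the content of Ekedahl's Lemma~III.1.5, which may simply be cited.
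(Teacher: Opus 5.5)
Your route, a degree-by-degree verification on normal forms, is a legitimate alternative to the paper. The paper gives no argument here and simply cites Ekedahl. Your checks of the composite, the cokernel, degrees \(0\) and \(2\), and the injectivity of \(\cdot(dV,-V)\) are all correct.

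The problem is the degree-\(1\) step, which is the only real content. The multiplication formulas you plan to compare coefficients with are wrong, because you used left-multiplication identities. One has \(F\cdot dV^n=dV^{n-1}\), but
\[
  dV^n\cdot F=dV^{n-1}(VF)=p\,dV^{n-1}\quad(n\ge1),\qquad
  d\cdot F=dF=pFd,\qquad
  F^md\cdot F=pF^{m+1}d .
\]
So on \(\widehat R^1\), writing \(dV^{-m}:=F^md\), right multiplication by \(F\) is \(p\) times the shift \(dV^m\mapsto dV^{m-1}\). Likewise \(Vd=pdV\) gives \(V^n\cdot Fd=pV^{n-1}d=p^n\,dV^{n-1}\), not \(p\,dV^{n-1}\).

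With your formulas the step genuinely fails. The pair \((a,b)=(d,-1)\) would satisfy \(aF+bFd=0\). Yet it is not of the form \((x\,dV,-xV)\), since every \(F^m\)-coefficient of \(xV\) lies in \(pW\). Equivalently, your formulas make the degree-one image of \(\cdot\binom{F}{Fd}\) all of \(\widehat R^1\), contradicting \((U_0)^1=\prod_{n\ge0}k\,dV^n\). In particular, the divisibility you invoke, that the coefficient of \(1\) in \(b\) lies in \(pW\), does not follow from the relation as you computed it.

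With the correct formulas the argument closes quickly, and no truncation or Mittag-Leffler argument is needed.

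\begin{enumerate}
\item Since \(\widehat R^1F\subseteq p\widehat R^1\), the relation forces \(bFd\in p\widehat R^1\).
\item The \(V^n\)-terms of \(b\) contribute \(p^n\,dV^{n-1}\in p\widehat R^1\). The \(F^{m+1}d\)-coefficient of \(bFd\) is the \(F^m\)-coefficient of \(b\). Hence every \(F^m\)-coefficient of \(b\) with \(m\ge0\), not only that of \(1\), lies in \(pW\).
\item Because \(V^n\cdot V=V^{n+1}\) and \(F^m\cdot V=pF^{m-1}\), this is exactly the condition \(b\in\widehat R^0V\). Choose \(x\in\widehat R^0\) with \(xV=-b\).
\item Then \(aF=-bFd=xVFd=p\,x\,d=x\,dV\cdot F\), so \((a-x\,dV)F=0\).
\item Right multiplication by \(F\) on \(\widehat R^1\) is \(p\) times a bijective shift, and \(W\) is \(p\)-torsion free. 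Hence \(a=x\,dV\).
\end{enumerate}

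With this correction your argument becomes a complete, self-contained verification of the lemma, which is what it adds over the paper's bare citation.
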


\begin{construction}[Pushout extensions of \(U_0\)]\label{cons:Exy}
Let \(N\) be a coherent \(R\)-module, regarded as an \(\widehat R\)-module. By
Lemma~\ref{lem:U0-resolution}, the kernel of \(\widehat R\to U_0\) is generated
by \(F\) and \(Fd\) subject to the single relation \(dV\cdot F=V\cdot Fd\), so
a degree-zero map \(\alpha:\ker(\widehat R\to U_0)\to N\) is a pair
\((\alpha(F),\alpha(Fd))=(x,y)\in N^0\oplus N^1\) with \(dVx=Vy\). Define the
extension \(N_\alpha\) by the pushout
\[
\begin{tikzcd}[column sep=large]
0 \arrow[r] & \ker(\widehat R\to U_0) \arrow[r] \arrow[d,"\alpha"'] & \widehat R \arrow[r] \arrow[d] & U_0 \arrow[r] \arrow[d,equal] & 0\\
0 \arrow[r] & N \arrow[r] & N_\alpha \arrow[r] & U_0 \arrow[r] & 0 ;
\end{tikzcd}
\]
concretely, writing \(\widetilde e\) for the image of \(1\in\widehat R\),
\[
  N_\alpha=
  \frac{N\oplus\widehat R\widetilde e}
       {\widehat R(F\widetilde e-x)+\widehat R(Fd\widetilde e-y)}.
\]
Any map \(\beta:\widehat R\to N\) is determined by \(m=\beta(1)\in N^0\), and
its restriction to \(\ker(\widehat R\to U_0)\) is the pair \((Fm,Fdm)\).
\end{construction}

\begin{proposition}[The extension groupoid of \(U_0\)]
\label{prop:RHom-U0}
\label{prop:completed-discrete-ext}
Let \(N\) be a coherent \(R\)-module. Every extension of \(U_0\) by \(N\) is
isomorphic to \(N_\alpha\) for some \(\alpha\), and
\(N_\alpha\simeq N_{\alpha'}\) as extensions if and only if
\(\alpha-\alpha'\) extends to a map \(\beta:\widehat R\to N\). In particular
\(\operatorname{Ext}^1_R(U_0,N)\) consists of the degree-zero pairs \((x,y)\)
with \(dVx=Vy\), modulo \(\{(Fm,Fdm):m\in N^0\}\), and the automorphisms of
an extension inducing the identity on \(N\) and \(U_0\) form
\(\operatorname{Hom}_R(U_0,N)\).
\end{proposition}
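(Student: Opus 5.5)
The plan is to derive everything from the projective resolution of Lemma~\ref{lem:U0-resolution} and Construction~\ref{cons:Exy}. An extension \(0\to N\to E\to U_0\to 0\) in \(\mathrm{Mod}_c(R)\) is regarded as a sequence of \(\widehat R\)-modules; this is legitimate because coherent modules are complete, so the \(R\)-action extends uniquely by continuity to an \(\widehat R\)-action. The resolution then says that \(\widehat R\) is projective in the relevant category of complete modules and that \(K:=\ker(\widehat R\to U_0)\) is generated by \(F\) and \(Fd\), with the single relation \(dV\cdot F=V\cdot Fd\).

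\emph{Realization as a pushout.} Given an extension \(E\), first choose a degree-zero lift \(\widetilde e\in E^0\) of the generator \(\bar 1\in U_0^0\). This gives a map \(\widehat R\to E\), \(r\mapsto r\widetilde e\), which is well defined by continuity because \(E\) is complete. Restricted to \(K\), this map lands in \(N\), since \(K\) maps to zero in \(U_0\), and so defines \(\alpha=(x,y)=(F\widetilde e,Fd\widetilde e)\). The relation \(dVx=Vy\) holds automatically, because \(dVF=d=VFd\) in \(\widehat R\). The universal property of the pushout then gives a morphism of extensions \(N_\alpha\to E\), and by the five lemma it is an isomorphism. The subtle point is to check that \(N_\alpha\), defined as a quotient, really is coherent and that \(N\to N_\alpha\) is injective. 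I would verify this by exhibiting \(N_\alpha\) directly as the pushout \(N\oplus_K\widehat R\); injectivity then follows from the injectivity of \(K\to\widehat R\), which the resolution provides. Coherence follows because \(\mathrm{Mod}_c(R)\) is closed under extensions.

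\emph{Equivalence of extensions.} If \(\alpha-\alpha'=\beta|_K\) with \(\beta(1)=m\), I would define \(N_{\alpha}\to N_{\alpha'}\) by the identity on \(N\) and \(\widetilde e\mapsto\widetilde e'+m\); the relations are preserved precisely because \(F m=x-x'\) and \(Fdm=y-y'\). Conversely, suppose \(\phi\colon N_\alpha\to N_{\alpha'}\) is an isomorphism of extensions. Then \(\phi(\widetilde e)-\widetilde e'\) maps to \(0\) in \(U_0\), so it is an element \(m\in N^0\), and applying \(F\) and \(Fd\) shows that \(\alpha-\alpha'=(Fm,Fdm)\). Together these give the description of \(\operatorname{Ext}^1\) as cocycles modulo coboundaries. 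This is consistent with the resolution being a projective resolution, so that \(\operatorname{Ext}^1\) is \(H^1\) of \(\operatorname{Hom}\) of the resolution into \(N\). One should also check degree-zero bookkeeping: \(\widehat R(-1)\) corresponds to \(y\in N^1\).

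\emph{Automorphisms.} An automorphism of \(N_\alpha\) that is the identity on \(N\) and on \(U_0\) has the form \(\mathrm{id}+\iota\circ g\circ\pi\) with \(g\in\operatorname{Hom}(U_0,N)\). Equivalently, it sends \(\widetilde e\mapsto\widetilde e+m\) with \((Fm,Fdm)=0\), that is, with \(m\) killed by \(K\), which is the same as a map \(U_0\to N\). The main obstacle I expect is the first step: making the continuity and extension of \(\widehat R\)-maps rigorous for possibly non-finitely-generated \(E\), and justifying that \(\operatorname{Ext}\) in \(\mathrm{Mod}(R)\) agrees with Yoneda extensions among complete \(\widehat R\)-modules. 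I would handle this by invoking Proposition~\ref{prop:coherence-criterion} for \(E\), which is coherent as an extension of coherent modules, to get completeness and separatedness.
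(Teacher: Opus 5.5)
Your proposal is correct and follows essentially the same route as the paper: lift the generator of \(U_0\) to \(\widetilde e\), restrict \(r\mapsto r\widetilde e\) to \(\ker(\widehat R\to U_0)\) to obtain \(\alpha\) with \(E\simeq N_\alpha\), and compare lifts \(\widetilde e\mapsto\widetilde e'+m\) to get both the criterion \((x-x',\,y-y')=(Fm,\,Fdm)\) and the automorphism group \(\operatorname{Hom}_R(U_0,N)\). Your extra checks, namely injectivity of \(N\to N_\alpha\) via the pushout and coherence by closure under extensions, fill in what the paper leaves implicit. One small slip: since \(VF=p\), one has \(dV\cdot F=V\cdot Fd=pd\), not \(d\); the relation \(dVx=Vy\) still holds exactly as you claim.
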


\begin{proof}
An extension \(E\) is coherent, hence an \(\widehat R\)-module. A degree-zero
lift \(\widetilde e\in E\) of the generator of \(U_0\) defines
\(\widehat R\to E\), \(1\mapsto\widetilde e\), which restricts on
\(\ker(\widehat R\to U_0)\) to a map \(\alpha\) with \(E\simeq N_\alpha\).
An isomorphism \(N_\alpha\to N_{\alpha'}\) of extensions sends
\(\widetilde e\) to \(\widetilde e'+m\) with \(m\in N^0\), and exists exactly
when \(\alpha-\alpha'\) is the restriction of \(\beta:1\mapsto m\), that is,
\((x-x',\,y-y')=(Fm,\,Fdm)\). For \(\alpha=\alpha'\) the condition reads
\(Fm=Fdm=0\), which says that \(m\) is a map \(U_0\to N\).
\end{proof}

\paragraph{Polynomial normal form.}
For \(j\ge2\), let \(u_j\) be the image of \(1\) in \(U_j^0\). Since \(F\)
vanishes on \(U_j^0\) and \(Fd:U_j^0\to U_j^1\) is surjective, every isomorphism
class of extensions of \(U_0\) by \(U_j\) has a unique representative
\(N_\alpha\) with \(\alpha(Fd)=0\). The relation then reads
\(dV\,\alpha(F)=0\), and
Proposition~\ref{prop:ext-full} identifies
\[
  \ker(dV:U_j^0\to U_j^1)
  =\bigoplus_{q=0}^{j-2}kV^qu_j
  \simeq k_\sigma[V]_{\le j-2},
\]
where \(k_\sigma[V]\) is the skew polynomial ring with \(Va=\sigma^{-1}(a)V\)
and \(k_\sigma[V]_{\le j-2}\) its polynomials of degree at most \(j-2\).
Thus \(\alpha(F)=[f](V)u_j\) for a unique \(f\in k_\sigma[V]_{\le j-2}\), where
\([f]\) denotes the coefficientwise Teichm\"uller lift, and the corresponding
extension is
\begin{equation}
\label{eq:polynomial-Yoneda-extension}
  U_{0,j;f}:=N_\alpha=
  \frac{U_j\oplus\widehat R\widetilde u_0}
       {\widehat R(F\widetilde u_0-[f](V)u_j)+
        \widehat R(Fd\widetilde u_0)}.
\end{equation}
For \(j_2-j_1=j\ge2\), define
\[
  U_{j_1,j_2;f}:=U_{0,j;f}(j_1\delta_0).
\]
Thus \(U_{j_1,j_2;f}\) has type \((j_1,j_2)\). When the type pair is fixed
or irrelevant, we abbreviate it to \(U_f\). The zero polynomial gives the
split extension. If \(f\ne0\), then
\(p\widetilde u_0=V[f](V)u_j\ne0\), while \(p^2U_{0,j;f}=0\), so
\(p\text{-}\!\exp(U_{0,j;f})=2\). Rescaling the quotient lift by
\(b_0\in k^\times\)
and the subobject generator by \(c_0\in k^\times\) sends
\[
  f(V)\longmapsto \sigma(b_0)\,f(V)\,c_0^{-1}.
\]

\begin{definition}[Indecomposable dominoes]\label{def:indecomposable-domino}
A domino \(U\) is \emph{indecomposable} if it is not isomorphic to a direct sum of two nonzero dominoes.
\end{definition}

\begin{remark*}
The category of coherent graded \(R\)-modules is abelian, hence idempotent
complete. Therefore a domino \(U\) is indecomposable if and only if
\(\operatorname{End}_R(U)\) has no idempotents other than \(0\) and \(1\). Moreover,
if \(U=U'\oplus U''\), functoriality of the canonical type filtration gives
\(\operatorname{gr}^{i}_{\mathrm{type}}U\simeq
\operatorname{gr}^{i}_{\mathrm{type}}U'\oplus
\operatorname{gr}^{i}_{\mathrm{type}}U''\) for every type \(i\). Thus a
direct-sum decomposition of \(U\) decomposes every type-graded block. When all
blocks have multiplicity one, it partitions them.
\end{remark*}

\begin{theorem}\label{thm:2d-classification}
Indecomposable \(2\)-dimensional dominoes over \(k\), up to isomorphism, are in
bijection with
\[
  \bigl\{\,(j_1,\,j_2,\,[f(V)])
  \;\big|\;
  j_1,j_2\in\mathbb Z,\;
  j_2-j_1\ge 2,\;
  [f(V)]\in\bigl(k_\sigma[V]_{\le j_2-j_1-2}\setminus\{0\}\bigr)\big/(k^\times\times k^\times)
  \,\bigr\},
\]
where \([f(V)]\) denotes the orbit under the frame-change action
\[
  f(V)\cdot(b_0,c_0)
  :=
  \sigma(b_0)\,f(V)\,c_0^{-1},
\]
computed in \(k_\sigma[V]\).
The representative \(f\) defines the normalized extension
\(U_{0,j_2-j_1;f}\). Transporting it back by the inverse Nygaard modification
gives \(U_{j_1,j_2;f}\).
Every domino in the list has \(p\text{-}\!\exp(U_{j_1,j_2;f})=2\).
\end{theorem}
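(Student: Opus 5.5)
We first reduce to type \((j_1,j_2)\) and then to the normalized pair \((0,j)\) with \(j=j_2-j_1\). Let \(U\) be a \(2\)-dimensional domino. Proposition~\ref{prop:type-filtration} gives two cases. If both multiplicity-one graded pieces have the same type, then \(U\) is an extension of \(U_{j}\) by \(U_j\), and \(\operatorname{Ext}^1_R(U_j,U_j)\simeq\operatorname{Ext}^1_R(U_0,U_0)=0\) by Proposition~\ref{prop:ext-full}, so \(U\) splits. Otherwise the type filtration gives \(0\to U_{j_2}\to U\to U_{j_1}\to0\) with \(j_1<j_2\), the larger type being the subobject since \(\mathrm{Fil}^{j}_{\mathrm{type}}\) decreases in \(j\). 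Applying the autoequivalence \((-j_1\delta_0)\) of Definition~\ref{def:autoequivalence}, which shifts types and preserves indecomposability and isomorphism classes, we reduce to extensions of \(U_0\) by \(U_j\) with \(j\ge1\). For \(j=1\), Proposition~\ref{prop:ext-full} gives \(\operatorname{Ext}^1=0\), so indecomposability forces \(j\ge2\). For \(j\ge2\), the polynomial normal form following Proposition~\ref{prop:completed-discrete-ext} shows that \(U\simeq U_{0,j;f}\) for some \(f\in k_\sigma[V]_{\le j-2}\), and that \(f\) is unique for the extension class. If \(f=0\) the extension splits. Conversely, if \(f\neq0\) then \(p\text{-}\exp(U_{0,j;f})=2\), whereas any decomposition into two nonzero dominoes would consist of two elementary dominoes and have \(p\)-exponent \(1\). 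So \(f\ne0\) is equivalent to indecomposability, and the \(p\)-exponent claim follows.

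The main step is showing that \(U_{0,j;f}\simeq U_{0,j;f'}\) as \(R\)-modules exactly when \(f'=\sigma(b_0)fc_0^{-1}\). The type filtration is functorial and its pieces \(U_j\) and \(U_0\) are distinguished by type. Hence every isomorphism \(\phi\) preserves the subobject \(U_j\) and induces automorphisms \(c\in\operatorname{Aut}(U_j)\) and \(b\in\operatorname{Aut}(U_0)\). Thus isomorphism classes of modules are the orbits of \(\operatorname{Aut}(U_0)\times\operatorname{Aut}(U_j)\) acting on \(\operatorname{Ext}^1_R(U_0,U_j)\) by pushout and pullback. We then compute these automorphism groups. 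By Proposition~\ref{prop:ext-full}, \(\operatorname{End}(U_0)=k\) in degree zero, acting by scalars on \(u_0\). For \(\operatorname{End}(U_j)\), the degree-zero part of \(k[d]\oplus k^{\oplus j}\) is \(k\oplus k^{\oplus j}\). We expect an endomorphism to send \(u_j\mapsto g(V)u_j\) with \(g\) in \(\ker F\), which is all of \(U_j^0\); explicitly, the conditions are \(F(g u_j)=0\) and \(dV^{j-1}(g u_j)=0\). Hence automorphisms are \(u_j\mapsto(c_0+c_1V+\cdots)u_j\) with \(c_0\ne0\).

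The key computation, and the one we expect to be most delicate, is the effect on the normal form. Pushing out along \(c\) replaces \(\alpha(F)=[f](V)u_j\) by \([f](V)\,c(V)\,u_j\). The higher terms \(c_iV^i\) with \(i\ge1\) multiply \(f\) by elements of \(V k_\sigma[V]\). We must check that the resulting pair is equivalent to one with the same normal form, modulo the coboundaries \((Fm,Fdm)\), with normalization \(\alpha(Fd)=0\). The plan is to show that \(V^{\,i}\)-multiples with \(i\ge1\) land, after truncation to degree \(\le j-2\), in a component that can be absorbed, or else that they act unipotently in a way compatible with the triangular structure. If they cannot be absorbed, one must verify that the orbit is still determined by leading scalars, as the statement asserts. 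Rescaling \(\widetilde u_0\) by \(b_0\) turns \(F\widetilde u_0\) into \(\sigma(b_0)F\widetilde u_0\), giving the left factor \(\sigma(b_0)\). Pulling back along higher components of \(\operatorname{Aut}(U_0)\) is not possible because \(\operatorname{End}(U_0)=k\).

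Finally, we transport back with \((j_1\delta_0)\) to obtain \(U_{j_1,j_2;f}\). The bijection is then given by sending \((j_1,j_2,[f])\) to \(U_{j_1,j_2;f}\). Different type pairs give nonisomorphic dominoes because the type sequence \(J(U)\) is an isomorphism invariant.
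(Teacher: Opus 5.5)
Everything before your ``main step'' matches the paper's proof. This includes same-type splitting, the Nygaard reduction to type $(0,j)$, vanishing for $j\le1$, the normalized representative $U_{0,j;f}$, and indecomposability via $p\text{-}\!\exp$. Reducing the isomorphism problem to $\operatorname{Aut}(U_0)\times\operatorname{Aut}(U_j)$-orbits on $\operatorname{Ext}^1_R(U_0,U_j)$ is also what the paper does.

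The gap is in the main step. You leave it unresolved, and it rests on a wrong computation. The group $k[d]\oplus k^{\oplus j}$ in Proposition~\ref{prop:ext-full} is $\underline{\operatorname{Ext}}^0_R(U_0,U_j)$, i.e.\ homomorphisms $U_0\to U_j$, not $\operatorname{End}_R(U_j)$. Since $(-j\delta_0)$ is an autoequivalence, $\operatorname{End}_R(U_j)\simeq\operatorname{End}_R(U_0)=k$, which is the fact you use for $U_0$ one sentence later. Your own criterion confirms this. For $g=\sum_ic_iV^i$ one gets $dV^{j-1}(g(V)u_j)=\sum_{i\ge1}\sigma^{-(j-1)}(c_i)\,dV^{j-1+i}u_j$. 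The elements $dV^nu_j$, $n\ge j$, form a topological basis of $U_j^1$, so all $c_i$ with $i\ge1$ vanish. Hence $\operatorname{Aut}_R(U_j)=k^\times$, and there are no unipotent automorphisms $c_0+c_1V+\cdots$.

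This is not a loose end that could be ``absorbed''. Normalized representatives are unique: a coboundary $(Fm,Fdm)=(0,Fdm)$ that preserves $\alpha(Fd)=0$ is zero, since $F$ kills $U_j^0$. So if $u_j\mapsto(1+V)u_j$ were an automorphism, pushout along it would give an isomorphism $U_{0,j;1}\simeq U_{0,j;1+V}$ for $j\ge3$. But $1$ and $1+V$ lie in different frame-change orbits, so this contradicts the statement you are proving; for $j=3$ these are two of the three classes of Remark~\ref{rem:no-projectivization}.

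The degree-$\le j$ polynomials you found make up $\operatorname{Hom}_R(U_0,U_j)$. They only change the lift $\widetilde u_0$ by $h(V)u_j$, and this leaves the normalized pair unchanged, because $F$ and $Fd$ both kill $h(V)u_j$ when $\deg h\le j$.

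With $\operatorname{Aut}(U_0)=\operatorname{Aut}(U_j)=k^\times$ the step closes at once, as in the paper. Replacing $\widetilde u_0$ by $b_0\widetilde u_0$ gives $F(b_0\widetilde u_0)=\sigma(b_0)[f](V)u_j$, with $Fd(b_0\widetilde u_0)$ still zero. Replacing $u_j$ by $c_0u_j$ turns $f$ into $fc_0^{-1}$. Together these give exactly the orbits $f\mapsto\sigma(b_0)fc_0^{-1}$.
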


\begin{proof}
If \(j_1=j_2\), any defining extension of \(U_{j_1}\) by itself splits by
Proposition~\ref{prop:ext-full}, so the domino is decomposable. We may
therefore assume \(j_1<j_2\). The canonical type filtration then gives an
extension \(0\to U_{j_2}\to U\to U_{j_1}\to0\), and every isomorphism
preserves it up to automorphisms of the two constituents. Put \(j=j_2-j_1\)
and apply the Nygaard modification carrying \(U_{j_1}\) to \(U_0\).

Proposition~\ref{prop:ext-full} and the polynomial normal form above show that
the extension group is zero for \(j\le1\), while for \(j\ge2\) its classes are
the polynomials \(f\in k_\sigma[V]_{\le j-2}\). The zero class is split. If
\(f\ne0\), then \(p\text{-}\!\exp(U_{j_1,j_2;f})=2\), so
\(U_{j_1,j_2;f}\) is indecomposable, since
a direct sum of two \(1\)-dimensional dominoes is killed by \(p\).

Finally, Proposition~\ref{prop:ext-full} gives
\(\operatorname{Aut}_R(U_j)=k^\times\), and the frame-change calculation above
identifies isomorphism classes with the stated orbits.
\end{proof}

\begin{example}[A nontrivial extension of $U_0$ by $U_2$]\label{ex:U0U2ext}
There is, up to isomorphism, a unique nontrivial extension of \(U_0\) by
\(U_2\): here the type gap is \(j=2\), so \(k_\sigma[V]_{\le0}=k\), and the frame change is
transitive on \(k^\times\), so \(f=1\) represents the only nonzero orbit. The
corresponding extension \eqref{eq:polynomial-Yoneda-extension} is
\(0\to U_2\to U_{0,2;1}\to U_0\to0\), where
\(U_{0,2;1}=\widehat{R}/\widehat{R}(F^2,Fd)\). Using the defining relations
and the standard normal form in \(\widehat R\) gives
\(U_{0,2;1}^0=W_{2,\sigma}[[V]]\oplus kF\) and
\(U_{0,2;1}^1=\prod_{i\ge1}W_2\,dV^i\oplus k\,d\), where \(W_{2,\sigma}[[V]]\) is the
skew power series ring over \(W_2\).
\end{example}

\begin{remark}[No ordinary projectivization]
\label{rem:no-projectivization}
Fix a pair \((j_1,j_2)\) and put \(j:=j_2-j_1\).
The orbit space of Theorem~\ref{thm:2d-classification} is in general \emph{not}
the projective space of the underlying \(k\)-vector space. On coefficients the frame-change relation is
\(a_i\mapsto\sigma(b_0)\,\sigma^{-i}(c_0^{-1})\,a_i\), so the \(i\)-th
coefficient has Frobenius weight \(\sigma^{-i}\). When
\(\sigma=\mathrm{id}\), for instance over \(\mathbf F_p\), all weights coincide
and the quotient is the ordinary \(\mathbb P^{j-2}(k)\).

Over an algebraically closed \(k\) the first discrepancy occurs at \(j=3\),
where \(f(V)=a_0+a_1V\) and the frame change sends \((a_0,a_1)\) to
\(\bigl(\sigma(b_0)a_0c_0^{-1},\,\sigma(b_0)a_1\sigma^{-1}(c_0^{-1})\bigr)\).
The vanishing of each coefficient is an isomorphism invariant, and each of the
three possibilities is a single orbit: a single nonzero coefficient is rescaled freely, and on
\(\{a_0a_1\ne0\}\) transitivity follows from the surjectivity of
\(u\mapsto\sigma^{-1}(u)/u\) on \(k^\times\). Thus
\(U_{j_1,j_1+3;1}\), \(U_{j_1,j_1+3;V}\), and
\(U_{j_1,j_1+3;1+V}\) are the three indecomposable dominoes of type
\((j_1,j_1+3)\): the
\(\mathbb P^1(k)\)-family of the untwisted problem collapses to three points.
\end{remark}

\begin{remark}[Formal and syntomic realizations]
\label{rem:two-unipotent-2d}
For this remark we take \(k=\bar k\) and use the two realization functors
constructed in Appendix~\ref{app:unipotent}. Every domino \(U\) has two unipotent realizations: a smooth
formal group \(\widehat G(U)\) attached to \(U^0\) by Cartier theory, and a
syntomic realization \(G^{\mathrm{perf}}(U)\), represented by the perfect
unipotent group of \(F\)-fixed points in \(U^1\). For a \(2\)-dimensional
domino \(U_{j_1,j_2;f}\), abbreviated here to \(U_f\), the two realizations
depend on \(f\) through the completion and localization of the parameter ring,
\[
  k_\sigma[[V]]
  \xleftarrow{\ \mathrm{completion}\ }
  k_\sigma[V]
  \xrightarrow{\ \mathrm{localization}\ }
  k_\sigma[V^{\pm1}],
\]
and they detect different quotients of the extension data
(Theorem~\ref{thm:2d-unipotent-realizations}). The formal side records only
\(\operatorname{ord}_V(f)\), and \(\widehat G(U_f)\simeq\widehat W_2\) if and
only if \(f(0)\neq0\). The syntomic side determines \(f\) up to a two-sided
unit orbit in \(k_\sigma[V^{\pm1}]\), and
\(G^{\mathrm{perf}}(U_f)\simeq W_2^{\mathrm{perf}}\) if and only if \(f\) is a
nonzero monomial. Neither realization alone recovers \(U_f\). Their common
invariant is the isogeny partition of Theorem~\ref{thm:isogeny-partition},
\((1,1)\) for the split class and \((2)\) for every nonsplit class of this
type.
\end{remark}

\subsection{Dominoes of fixed type}\label{sec:domino-groupoids}

Fix a type sequence \(J=(j_1\le\cdots\le j_n)\). Let \(\Domgpd_J\) be the
groupoid whose objects are the dominoes \(U\) with \(J(U)=J\), and whose
morphisms are the isomorphisms in \(\mathrm{Mod}_c(R)\). The classification
problem has two parts.
First, the type filtration fixes the graded pieces, so one must record how
these pieces are successively extended. Second, one must remove the choices of
generators of the graded pieces and their lifts. We encode the first datum by
one upper triangular matrix \(f=(f_{\alpha\beta})\), and the second by upper
triangular changes of generators. Theorem~\ref{thm:fixed-type-groupoid} makes
this description precise.

We first identify the graded pieces. Equal type factors introduce no
additional extension data:

\begin{lemma}[Pure type dominoes split]\label{lem:pure-type-split}
If all type factors of a domino \(U\) are equal to \(j\) and \(\dim U=m\), then
\(U\simeq U_j^{\oplus m}\) and \(\operatorname{Aut}_R(U_j^{\oplus m})\simeq GL_m(k)\).
\end{lemma}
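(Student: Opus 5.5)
We argue by induction on \(m=\dim U\), using that every self-extension of \(U_j\) splits. By Definition~\ref{def:autoequivalence}, the Nygaard modification \((-j\delta_0)\) is an exact autoequivalence of \(\mathrm{Mod}_c(R)\) sending \(U_j\) to \(U_0\). It preserves direct sums and automorphism groups, so we may assume \(j=0\).

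Proposition~\ref{prop:ext-full} in the row \(j=0\) gives \(\underline{\operatorname{Ext}}^1_R(U_0,U_0)=0\), and hence \(\operatorname{Ext}^1_R(U_0,U_0)=0\). Additivity of \(\operatorname{Ext}^1\) in each variable then gives \(\operatorname{Ext}^1_R(U_0,U_0^{\oplus r})=0\) for every \(r\).

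For the induction, let \(U\) be a domino whose type factors are all \(0\), with \(\dim U=m\). Such a \(U\) is an iterated extension of copies of \(U_0\): by Proposition~\ref{prop:type-filtration} it equals its single graded piece \(\operatorname{gr}^0_{\mathrm{type}}(U)\), which is a finite iterated extension of copies of \(U_0\), and by additivity of \(\dim\) (Definition~\ref{def:domino-numerics}) there are exactly \(m\) copies. Choose the last step of this iterated extension, a short exact sequence \(0\to U'\to U\to U_0\to0\) with \(U'\) of pure type \(0\) and dimension \(m-1\). By induction \(U'\simeq U_0^{\oplus(m-1)}\), so the sequence is classified by an element of \(\operatorname{Ext}^1_R(U_0,U_0^{\oplus(m-1)})=0\). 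It therefore splits, and \(U\simeq U_0^{\oplus m}\).

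One point needs care: the iterated extension should be arranged with \(U_0\) as the quotient at each step. This is harmless, since any finite filtration with graded pieces \(U_0\) has a last step of that form, and its first piece is again a pure type-\(0\) domino of smaller dimension.

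For the automorphisms, \(\operatorname{End}_R(U_0^{\oplus m})=M_m(\operatorname{End}_R(U_0))\). Taking the degree-zero part of \(\underline{\operatorname{Ext}}^0_R(U_0,U_0)=k[d]\) from Proposition~\ref{prop:ext-full}, and noting that \(d\) has internal degree \(1\), gives \(\operatorname{End}_R(U_0)=\operatorname{Hom}_R(U_0,U_0)=k\).

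This step is where the argument could go wrong, so we check the \(k\)-structure directly. An endomorphism is determined by the image of the generator \(1\in U_0^0\). That image must lie in \(U_0^0=\prod k V^n\) and be killed by \(F\) and \(Fd\). Since \(Fd\,V^n=dV^{n-1}\) is nonzero for \(n\ge1\), the element \(\sum a_nV^n\) must be a constant \(a_0\in k\). Multiplication by the Teichmüller lift of \(a_0\) commutes with \(F,V,d\) only up to \(\sigma\)-twists, but these are absorbed because the basis vectors are \(V^n\). Hence \(\operatorname{End}_R(U_0)\cong k\) as a ring, possibly up to an anti-isomorphism, which does not affect the group of units. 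Therefore \(\operatorname{Aut}_R(U_0^{\oplus m})=GL_m(k)\), and transporting back along the Nygaard modification gives the same conclusion for \(U_j\).
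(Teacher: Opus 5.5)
Your proof is correct and is essentially the paper's argument: you reduce to \(U_0\) by Nygaard modification, use the vanishing of \(\operatorname{Ext}^1_R(U_0,U_0)\) from Proposition~\ref{prop:ext-full} to split the last step \(0\to U'\to U\to U_0\to0\) of the iterated extension by induction, and read \(\operatorname{End}_R(U_0)=k\) off the degree-zero part of \(k[d]\). The only difference is cosmetic: the paper inducts directly on iterated extensions of copies of \(U_j\), which avoids your claim that \(U'\) again has pure type \(0\) (true, but not justified in your write-up).
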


\begin{proof}
Since \(j\) is the only type factor of \(U\), its canonical type filtration
has a single nonzero graded piece, namely
\(U\simeq\operatorname{gr}_{\mathrm{type}}^j(U)\). By
Proposition~\ref{prop:type-filtration}, this graded piece is a finite iterated
extension of \(m\) copies of \(U_j\).

We now show by induction on \(m\) that every such iterated extension splits.
For \(m>1\), its last step is an exact sequence
\(0\to U'\to U\to U_j\to0\), where \(U'\) is an iterated extension of
\(m-1\) copies of \(U_j\). By
induction, \(U'\simeq U_j^{\oplus(m-1)}\). The Nygaard modification carrying
\(U_j\) to \(U_0\), together with Proposition~\ref{prop:ext-full}, gives
\(\operatorname{Ext}^1_R(U_j,U')\simeq
\operatorname{Ext}^1_R(U_j,U_j)^{\oplus(m-1)}=0\), so the sequence splits.

Finally, the degree-zero endomorphisms in
Proposition~\ref{prop:ext-full} give \(\operatorname{End}_R(U_j)=k\). Hence
\(\operatorname{End}_R(U_j^{\oplus m})\simeq\operatorname{Mat}_m(k)\), whose
unit group is \(GL_m(k)\).
\end{proof}

To treat the remaining extensions between distinct type values, we add the
graded blocks one at a time and require coordinates for an extension whose
quotient may have any type. Proposition~\ref{prop:completed-discrete-ext}
provides them for \(U_0\). The following lemma is its Nygaard translate to
\(U_a\).

\begin{lemma}[Extension coordinates in arbitrary type]
\label{lem:arbitrary-type-extension-coordinates}
Let \(a\in\mathbb Z\) and let \(N\) be a coherent \(R\)-module. Extensions of
\(U_a\) by \(N\) are represented by pairs
\((x,y)\in N^0\oplus N^1\) satisfying \(dV^{a+1}x=Vy\), modulo
\((Fm,dV^{a-1}m)\) for \(m\in N^0\).
The pair \((x,y)\) gives the pushout presentation obtained by adjoining a
degree-zero lift \(\widetilde e_a\) with \(F\widetilde e_a=x\) and
\(dV^{a-1}\widetilde e_a=y\).
The same description applies componentwise to extensions of
\(U_a^{\oplus m}\).
\end{lemma}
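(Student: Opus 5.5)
The plan is to deduce the lemma from Proposition~\ref{prop:completed-discrete-ext} by transport along a Nygaard modification, rather than building a new resolution of \(U_a\) by hand. Put \(\tau:=-a\delta_0\). By Definition~\ref{def:autoequivalence}, \(M\mapsto M(\tau)\) is an exact autoequivalence of \(\mathrm{Mod}_c(R)\) with inverse \(M\mapsto M(a\delta_0)\), and \(U_a(\tau)\simeq U_0\). So it induces an equivalence of extension groupoids
\[
  \{0\to N\to E\to U_a\to0\}\;\simeq\;\{0\to N(\tau)\to E(\tau)\to U_0\to0\}.
\]
I will apply Proposition~\ref{prop:completed-discrete-ext} to the coherent module \(N(\tau)\) and then read off its data in terms of \(N\).

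The first step is to unwind \(N(\tau)\). Here \(s_0(\tau)=0\) and \(s_1(\tau)=-a\). Hence \(N(\tau)^0=N^0\), while \(N(\tau)^1=\sigma_*^{-a}N^1\) has the same underlying group as \(N^1\). The operators \(F\) and \(V\) are unchanged, and the differential is \(d_\tau=F^{-a}d\), which equals \(dV^{a}\) under the convention \(F^{-r}d=dV^r\) (for \(a\le0\) it is literally \(F^{|a|}d\)).

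Proposition~\ref{prop:completed-discrete-ext} describes extensions of \(U_0\) by \(N(\tau)\) as pairs \((x,y)\in N(\tau)^0\oplus N(\tau)^1\) with \(d_\tau Vx=Vy\), modulo pairs \((Fm,Fd_\tau m)\). I then substitute, using the identities \(d_\tau V^q=dV^{q+a}\) and \(F^qd_\tau=F^{q-a}d\) already used in the proof of Proposition~\ref{prop:ekedahl-duality}:
\[
  d_\tau V=dV^{a+1},\qquad Fd_\tau=F^{1-a}d=dV^{a-1}.
\]
With these, the cocycle condition becomes \(dV^{a+1}x=Vy\) and the coboundaries become \((Fm,dV^{a-1}m)\) for \(m\in N^0\), which is exactly the claim.

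For the pushout presentation, the defining relations \(F\widetilde e=x\) and \(Fd_\tau\widetilde e=y\) of Construction~\ref{cons:Exy} transport back under \(M\mapsto M(a\delta_0)\) to \(F\widetilde e_a=x\) and \(dV^{a-1}\widetilde e_a=y\). As a consistency check independent of the modification, the relation is the expected one. In \(\widehat R\) one has \(dV^{a+1}\cdot F=p\,dV^a=V\cdot dV^{a-1}\), using \(VF=p\) and \(Vd=p\,dV\). So the kernel of \(\widehat R\to U_a=\widehat R/\widehat R(F,dV^{a-1})\) is generated by \(F\) and \(dV^{a-1}\) subject to this relation, matching the Nygaard translate of Lemma~\ref{lem:U0-resolution}.

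The step needing care, and the main obstacle, is the Frobenius twists. One must check that the semilinear identification \(N(\tau)^1=\sigma_*^{-a}N^1\) does not alter the description. The conditions \(dV^{a+1}x=Vy\) and the coboundary classes are equalities of elements, and the modification is a bijection on underlying sets, so this is harmless. The only effect is that the resulting \(k\)-linear structure on the set of classes is \(\sigma\)-twisted, which is irrelevant for a description of the groupoid.

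Finally, the componentwise statement for \(U_a^{\oplus m}\) follows from additivity. We have \(\operatorname{Ext}^1_R(U_a^{\oplus m},N)=\operatorname{Ext}^1_R(U_a,N)^{\oplus m}\), and the pushout of a direct sum of presentations is obtained by adjoining \(m\) lifts \(\widetilde e_{a,1},\dots,\widetilde e_{a,m}\), each with its own pair \((x_i,y_i)\) and its own \(m_i\in N^0\).
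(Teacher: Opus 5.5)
Your proposal is correct and is essentially the paper's argument: both reduce to the \(U_0\) case by the Nygaard modification carrying \(U_a\) to \(U_0\). The only difference is bookkeeping. The paper transports the resolution of Lemma~\ref{lem:U0-completed-resolution} to present \(\ker(\widehat R\to U_a)=\widehat RF+\widehat R\,dV^{a-1}\) with the single relation \(dV^{a+1}\cdot F=V\cdot dV^{a-1}\) (your ``consistency check''), and then reruns Construction~\ref{cons:Exy} directly on \(N\). You instead transport the whole extension groupoid, apply Proposition~\ref{prop:completed-discrete-ext} to \(N(-a\delta_0)\), and unwind \(d_\tau V=dV^{a+1}\) and \(Fd_\tau=dV^{a-1}\).
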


\begin{proof}
The kernel of \(\widehat R\to U_a\) is \(\widehat RF+\widehat R\,dV^{a-1}\) by
definition, and transporting the resolution of
Lemma~\ref{lem:U0-completed-resolution} along the exact autoequivalence
\((a\delta_0)\), which carries \(U_0\) to \(U_a\), shows that its single
relation is \(dV^{a+1}\cdot F=V\cdot dV^{a-1}\). A degree-zero map from this
kernel to \(N\) is therefore the displayed pair, and, as in
Construction~\ref{cons:Exy}, changing the lift by \(m\in N^0\) adds
\((Fm,\,dV^{a-1}m)\).
\end{proof}

To iterate these extensions, let \(i_1<\cdots<i_s\) be the distinct types in \(J\), with
multiplicities \(m_\alpha\), and put
\(B_\alpha:=U_{i_\alpha}^{\oplus m_\alpha}\). By
Lemma~\ref{lem:pure-type-split}, the associated graded object is
\(\bigoplus_\alpha B_\alpha\). The skew polynomial ring \(k_\sigma[V]\)
from the two-dimensional calculation remains the natural coefficient ring:
powers of \(V\) record the possible maps and extensions between elementary
dominoes.

For each block, fix an ordered column of degree-zero generators
\(e_\alpha=(e_{\alpha,1},\ldots,e_{\alpha,m_\alpha})^{\mathsf t}\).
Proposition~\ref{prop:ext-full} gives
\[
  \operatorname{Hom}_R(B_\alpha,B_\beta)\simeq
  \operatorname{Mat}_{m_\alpha\times m_\beta}
  \bigl(k_\sigma[V]_{\le i_\beta-i_\alpha}\bigr)
\]
for all \(\alpha,\beta\), which vanishes when \(\alpha>\beta\). For
\(\alpha<\beta\) it further gives
\[
  \operatorname{Ext}^1_R(B_\alpha,B_\beta)\simeq
  \operatorname{Mat}_{m_\alpha\times m_\beta}
  \bigl(k_\sigma[V]_{\le i_\beta-i_\alpha-2}\bigr),
\]
and the degree-zero groups \(\operatorname{Ext}^q_R(B_\alpha,B_\beta)\) then
vanish for \(q\ge2\). Accordingly, only the Hom and \(\operatorname{Ext}^1\)
coordinates enter the presentation below.

A homomorphism \(B_\alpha\to B_\beta\) has a unique coordinate matrix
\(g_{\alpha\beta}\) characterized by
\(e_\alpha\mapsto g_{\alpha\beta}(V)e_\beta\). Under the second
identification, a matrix \(f_{\alpha\beta}\) represents the extension
obtained by adjoining lifts \(\widetilde e_\alpha\) subject to
\(F\widetilde e_\alpha=[f_{\alpha\beta}](V)e_\beta\) and
\(dV^{i_\alpha-1}\widetilde e_\alpha=0\). Here brackets denote
coefficientwise Teichm\"uller lift. With this column convention, composable
coordinate matrices occur in the same order as the maps. For
\(g=\sum_r g_rV^r\), write \(\sigma(g):=\sum_r\sigma(g_r)V^r\).

To assemble the blocks, we need only know how these matrices behave under
composition and under pushout or pullback of an extension. In the chosen
coordinates the rules take the following simple form.

\begin{remark}[Coordinate rules]\label{rem:skew-composition}
Let \(\alpha<\beta<\gamma\). In the coordinates above, composition, pushout,
and pullback are represented by
\(g_{\alpha\beta}g_{\beta\gamma}\),
\(f_{\alpha\beta}g_{\beta\gamma}\), and
\(\sigma(g_{\alpha\beta})f_{\beta\gamma}\), respectively. Changing the
generators by \(c_\alpha\in GL_{m_\alpha}(k)\) and
\(c_\beta\in GL_{m_\beta}(k)\) sends \(g_{\alpha\beta}\) to
\(c_\alpha g_{\alpha\beta}c_\beta^{-1}\) and \(f_{\alpha\beta}\) to
\(\sigma(c_\alpha)f_{\alpha\beta}c_\beta^{-1}\).

For the pullback formula, lift the source generators in an extension represented
by \(f_{\beta\gamma}\) to
\([g_{\alpha\beta}](V)\widetilde e_\beta\). Applying \(F\) gives
\([\sigma(g_{\alpha\beta})](V)[f_{\beta\gamma}](V)e_\gamma\), which is the
claimed coordinate. Any discrepancy from coefficientwise Teichm\"uller lift
is divisible by \(p\) and vanishes in \(B_\gamma\). The second coordinate is
normalized to zero using the surjectivity of
\(Fd:U_{i_\gamma-i_\alpha}^0\to U_{i_\gamma-i_\alpha}^1\). The other formulas
and all degree bounds follow directly from the definitions.
\end{remark}

Guided by the coordinate rules, we assemble the successive extensions into one
presentation by choosing generators \(\widetilde e_\alpha\) lifting the chosen
generators of the graded blocks and prescribing each \(F\widetilde e_\alpha\)
in terms of generators of larger type. The coefficients form a strictly upper
triangular block matrix \(f=(f_{\alpha\beta})\). Set
\[
  \mathfrak n_1:=
  \prod_{\alpha<\beta}
  \operatorname{Mat}_{m_\alpha\times m_\beta}
  \bigl(k_\sigma[V]_{\le i_\beta-i_\alpha-2}\bigr).
\]
An element \(f\in\mathfrak n_1\) is called a \emph{presentation matrix}. Its
\((\alpha,\beta)\)-block is necessarily zero when
\(i_\beta-i_\alpha\le1\).

Changing the generators of the graded blocks or their lifts replaces the
\(\widetilde e_\alpha\) by upper triangular combinations. The Hom coordinates
identify
\(\mathfrak n_0:=\operatorname{End}_R(\bigoplus_\alpha B_\alpha)\) with
the algebra of upper triangular block matrices \(g=(g_{\alpha\beta})\) whose
\((\alpha,\beta)\)-block lies in
\(\operatorname{Mat}_{m_\alpha\times m_\beta}
(k_\sigma[V]_{\le i_\beta-i_\alpha})\). A
\emph{change-of-generators matrix} is a unit
\(g\in\mathfrak n_0^\times\), equivalently one with
\(g_{\alpha\alpha}\in GL_{m_\alpha}(k)\). Its
diagonal blocks change the generators of the graded blocks, and its
off-diagonal blocks change their lifts. These are exactly the two choices that
should not affect the isomorphism class.

Using the convention \(dV^q:=F^{-q}d\) for \(q<0\), in particular
\(dV^{-1}=Fd\), we associate to \(f\in\mathfrak n_1\) the two-term graded
\(R\)-module
\begin{equation}\label{eq:MX-presentation}
  M_{f}:=
  \Bigl(\bigoplus_{\alpha=1}^s\widehat R^{\oplus m_\alpha}\widetilde e_\alpha\Bigr)
  \Big/
  \sum_{\alpha=1}^s\Bigl(\widehat R\bigl(F\widetilde e_\alpha
  -\textstyle\sum_{\beta>\alpha}[f_{\alpha\beta}](V)\,\widetilde e_\beta\bigr)
  +\widehat R\,dV^{i_\alpha-1}\widetilde e_\alpha\Bigr),
\end{equation}
where \(\widetilde e_\alpha\) denotes a column of \(m_\alpha\) degree-zero
generators and \([f_{\alpha\beta}]\) is the coefficientwise Teichm\"uller lift,
as in \eqref{eq:polynomial-Yoneda-extension}. For \(f=0\) this is
\(B_1\oplus\cdots\oplus B_s\).

For \(1\le r\le s\), let \(T_r\subseteq M_f\) be the submodule generated by
\(\widetilde e_r,\ldots,\widetilde e_s\), and set \(T_{s+1}=0\). The letter
\(T\) refers to the
triangular form of the presentation. After \(T_{r+1}\) has been constructed,
adjoining \(\widetilde e_r\) is a single extension step. This recursion gives
both the type filtration and the normal form needed below.

\begin{lemma}[Triangular normal form]\label{lem:kernel-normal-form}
Assume \(i_1\ge0\) and let \(f\in\mathfrak n_1\). Then, for every \(r\),
\(0\to T_{r+1}\to T_r\to B_r\to0\) is exact. Thus \(M_f\) is a domino of
type \(J\), with canonical type filtration given by the \(T_r\).

Every \(z\in M_f^0\) is uniquely
\(\sum_\alpha P_\alpha(V)\widetilde e_\alpha\), with
\(P_\alpha\in\operatorname{Mat}_{1\times m_\alpha}(k[[V]])\), and every
element of \(M_f^1\) is uniquely a convergent \(k\)-linear combination of the
entries of \(dV^n\widetilde e_\alpha\), for \(n\ge i_\alpha\). For
\(t\ge-1\), one has \(dV^tz=0\) exactly when
\(\deg P_\alpha\le i_\alpha-1-t\) for every \(\alpha\).
\end{lemma}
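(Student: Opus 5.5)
We argue by descending induction on \(r\), from \(r=s\) down to \(r=1\), and prove all three assertions together at each stage. For \(r=s\) the submodule \(T_s\) is generated by \(\widetilde e_s\) subject only to \(F\widetilde e_s=0\) and \(dV^{i_s-1}\widetilde e_s=0\). Hence \(T_s\simeq B_s=U_{i_s}^{\oplus m_s}\), and both the normal form and the kernel criterion can be read off from the explicit bases of \(U_j\) in Definition~\ref{def:coherent}. For the inductive step, suppose \(T_{r+1}\) is known to be a domino with the stated normal form. Compare the quotient \(T_{r+1}\oplus\widehat R^{\oplus m_r}\widetilde e_r\) by the relations \(F\widetilde e_r=\sum_{\beta>r}[f_{r\beta}](V)\widetilde e_\beta\) and \(dV^{i_r-1}\widetilde e_r=0\) with the pushout of Lemma~\ref{lem:arbitrary-type-extension-coordinates}, applied with \(a=i_r\), \(N=T_{r+1}\) and the pair \((x,y)=([f_{r\beta}](V)\widetilde e_\beta,\,0)\).

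To identify \(T_r\) with that pushout, we need to check that the pair is admissible: \(dV^{i_r+1}x=V\cdot0=0\) in \(T_{r+1}\). This is exactly where the kernel criterion for \(T_{r+1}\) comes in. Since \(i_r\ge i_1\ge0\), we have \(t=i_r+1\ge-1\), so the criterion applies. The \(\beta\)-coefficient of \(x\) has degree at most \(i_\beta-i_r-2=i_\beta-1-t\), so \(dV^{t}x=0\). The pushout is then an extension \(0\to T_{r+1}\to E\to B_r\to0\). The natural map \(E\to M_f\) has image \(T_r\), and in fact \(E\) is the quotient of \(\bigoplus_{\alpha\ge r}\) free modules by exactly the relations involving \(\widetilde e_\alpha\) for \(\alpha\ge r\). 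To conclude that \(E\to M_f\) is injective, we show that the relations for \(\widetilde e_\alpha\) with \(\alpha<r\) do not cut into this submodule. Running the same argument from the top down, \(M_f\) is itself an iterated pushout, so each \(T_{r+1}\to T_r\) is injective. This yields the exact sequences, shows that \(M_f\) is an iterated extension of dominoes (hence a domino), and gives type \(J\) as a multiset.

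The normal form for \(T_r\) follows from the extension. An element of \(T_r^0\) maps to \(B_r^0=\prod k V^n\), so its image is \(P_r(V)\) applied to the image of \(\widetilde e_r\), with \(P_r\) a row over \(k[[V]]\). Subtracting \([P_r](V)\widetilde e_r\) leaves an element of \(T_{r+1}^0\), to which the induction hypothesis applies. Uniqueness holds because any \(p\)-divisible discrepancy between Teichmüller lifts and the product structure is absorbed into lower terms. Here we use \(p\widetilde e_r=VF\widetilde e_r=V[f](V)\widetilde e_\beta\in T_{r+1}\), and this rewriting converges \(V\)-adically. The same argument handles degree one, using the bases \(dV^n\widetilde e_\alpha\) with \(n\ge i_\alpha\).

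The main obstacle is the kernel criterion for \(T_r\), because \(dV^t\widetilde e_r\) need not vanish purely within the \(r\)-th block. For \(t\ge i_r\), the element \(dV^t[P_r]\widetilde e_r\) may have components in \(T_{r+1}\) coming from the relation \(F\widetilde e_r=x\). We first handle these using \(dV^t=dV^{t+1}F\) together with the bound already established for \(x\). Concretely, \(dV^{n}\widetilde e_r=dV^{n+1}x\) whenever \(n\ge i_r\), and \(dV^{n+1}x=0\) because \(n+1\ge i_r+1\). So the \(r\)-th block contributes exactly the basis vectors \(dV^{n}\widetilde e_r\), and these are nonzero precisely when \(n\ge i_r\). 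Therefore \(dV^tz=0\) forces \(\deg P_r\le i_r-1-t\), and the remaining components are then governed by the induction hypothesis. Finally, the \(T_r\) form the canonical type filtration. By Proposition~\ref{prop:type-filtration}, \(\mathrm{Fil}^{j}_{\mathrm{type}}\) is the maximal submodule on which \(\operatorname{coker}(dV^j)=0\). The module \(T_r\) has this property for \(j=i_r\), since it is an iterated extension of \(U_{i}\) with \(i\ge i_r\). Maximality follows because the quotient \(M_f/T_r\) is filtered by \(B_\alpha\) with \(i_\alpha<i_r\), whose dominoes have nonzero cokernel for \(dV^{i_r}\) by Remark~\ref{rem:kernel-lengths}.
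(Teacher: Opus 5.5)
Your overall route matches the paper's: descending induction on \(r\), admissibility of \((x_r,0)\) from the inductive kernel criterion at \(t=i_r+1\), the pushout of Lemma~\ref{lem:arbitrary-type-extension-coordinates}, iterated pushouts reproducing \eqref{eq:MX-presentation}, and lifting and projecting elementary normal forms. Those steps are fine.

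The paragraph proving the kernel criterion, however, rests on two false claims.

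First, the identity \(dV^t=dV^{t+1}F\) is wrong. Since \(VF=p\), one has \(dV^{t+1}F=p\,dV^t\). The identity that moves \(F\) is \(dV^t=F\,dV^{t+1}\), with \(F\) on the left. So what you actually computed is \(p\,dV^n\widetilde e_r=dV^{n+1}x\).

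Second, \(dV^{n+1}x=0\) does not follow for \(n>i_r\). The kernels of \(dV^t\) shrink as \(t\) grows, and the bound \(\deg f_{r\beta}\le i_\beta-i_r-2\) matches the criterion \(\deg\le i_\beta-1-t\) only at \(t=i_r+1\).

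A concrete check is \(U_{0,2;1}\) (Example~\ref{ex:U0U2ext}), where \(x=u_2\). There \(p\,dV\widetilde u_0=dV^2u_2\ne0\). Your formula at \(n=0\) would give \(d\widetilde u_0=dVu_2=0\), although \(d\widetilde u_0\) maps to the nonzero basis vector \(d\) of \(U_0^1\). Taken together, your two claims would force \(dV^n\widetilde e_r=0\) for every \(n\). That contradicts your next sentence and the surjectivity of \(T_r\to B_r\) in degree one.

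The fix is short, and the ``obstacle'' you identify is not real. Write \(z=[P_r](V)\widetilde e_r+z'\) with \(z'\in T_{r+1}^0\).
\begin{itemize}
\item If \(\deg P_r>i_r-1-t\), the image of \(dV^tz\) in \(B_r^1\) is nonzero by the elementary normal form of \(U_{i_r}\). Equivalently, for \(n\ge i_r\) the elements \(dV^n\widetilde e_r\) are by definition distinct basis vectors of the degree-one normal form, so nothing needs to be pushed into \(T_{r+1}\).
\item If \(\deg P_r\le i_r-1-t\), every term involves some \(dV^m\widetilde e_r\) with \(-1\le m\le i_r-1\). For these, \(dV^m\widetilde e_r=F^{\,i_r-1-m}dV^{i_r-1}\widetilde e_r=0\) by the defining relation. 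Hence \(dV^tz=dV^tz'\), and the inductive hypothesis applies.
\end{itemize}

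One smaller point concerns maximality in the type-filtration step. Saying that \(M_f/T_r\) has nonzero cokernel for \(dV^{i_r}\) does not exclude a nonzero \emph{submodule} of \(M_f/T_r\) on which \(dV^{i_r}\) is surjective, so it does not yet prove maximality. It is quicker to note that \((T_r)\) is a filtration with graded pieces \(B_r=U_{i_r}^{\oplus m_r}\), and then invoke the uniqueness clause of Proposition~\ref{prop:type-filtration}.
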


\begin{proof}
Use descending induction on \(r\). The case \(T_s=B_s\) is elementary.
Given \(T_{r+1}\), put
\(x_r:=\sum_{\beta>r}[f_{r\beta}](V)\widetilde e_\beta\). The degree bound
and the inductive kernel criterion give \(dV^{i_r+1}x_r=0\), so \((x_r,0)\)
defines, by Lemma~\ref{lem:arbitrary-type-extension-coordinates}, an extension
of \(B_r\) by \(T_{r+1}\) whose pushout is \(T_r\).
Iterating these pushouts reproduces exactly the presentation
\eqref{eq:MX-presentation}, so no additional relations occur.
Lifting and projecting the elementary normal forms proves existence,
uniqueness, and the kernel criterion. The latter also characterizes \(T_r\)
as the maximal submodule with \(\operatorname{coker}(dV^{i_r})=0\), hence as
the corresponding step of the canonical type filtration by
Proposition~\ref{prop:type-filtration}.
\end{proof}

The assumption \(i_1\ge0\) in the normal-form argument only avoids negative
exponents. A common Nygaard modification shifts all types and commutes with
the constructions above, while leaving the differences
\(i_\beta-i_\alpha\) unchanged. We may therefore normalize
\(i_1=0\) and transport the result back. We can now classify the
presentations by their changes of generators.

\begin{theorem}[Fixed-type presentations]\label{thm:fixed-type-groupoid}
Let \(J\) be a type sequence with blocks \(B_1,\ldots,B_s\) as above.
The assignment \(f\mapsto M_f\) defines an equivalence of groupoids from the
following groupoid to \(\Domgpd_J\). Its objects are the matrices
\(f\in\mathfrak n_1\), and a morphism
\(f\to f'\) is a matrix \(g\in\mathfrak n_0^\times\) for which the
substitution
\(\widetilde e_\alpha\mapsto
\sum_{\beta\ge\alpha}[g_{\alpha\beta}](V)\widetilde e'_\beta\) respects the
defining \(F\)-relations. Equivalently,
\[
  \bigl([\sigma(g)][f']-[f][g]\bigr)\widetilde e'=0
  \qquad\text{in }M_{f'}.
\]
Composition is ordinary composition of these substitutions.
\end{theorem}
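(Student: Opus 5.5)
We prove that \(f\mapsto M_f\) is well defined on morphisms, essentially surjective, and fully faithful. After a common Nygaard modification we normalize \(i_1=0\), so Lemma~\ref{lem:kernel-normal-form} applies. That lemma already shows that each \(M_f\) is a domino of type \(J\) whose canonical type filtration is given by the submodules \(T_r\).

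\emph{Functoriality.} Given \(g\in\mathfrak n_0^\times\) satisfying the displayed relation, the substitution \(\widetilde e_\alpha\mapsto\sum_{\beta\ge\alpha}[g_{\alpha\beta}](V)\widetilde e'_\beta\) defines a map \(\bigoplus\widehat R\widetilde e_\alpha\to M_{f'}\). We check that it kills the relations.

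The \(F\)-relations are killed exactly because of the displayed condition: \(F[g](V)=[\sigma(g)](V)F\) up to terms divisible by \(p\), and those terms can be absorbed as in Remark~\ref{rem:skew-composition}.

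The relations \(dV^{i_\alpha-1}\widetilde e_\alpha\) are killed by the kernel criterion of Lemma~\ref{lem:kernel-normal-form}. Here \(\deg g_{\alpha\beta}\le i_\beta-i_\alpha\), so each \(P_\beta\) has degree at most \(i_\beta-1-(i_\alpha-1)\). The Teichm\"uller-lift discrepancies are handled the same way.

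Invertibility comes from the fact that \(g\) is block upper triangular with invertible diagonal blocks. Its inverse in \(\mathfrak n_0\) gives the inverse substitution, and compatibility with composition is formal.

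\emph{Essential surjectivity.} We induct down the type filtration. Let \(U\in\Domgpd_J\) with canonical filtration \(\mathrm{Fil}^{i_r}\). By Proposition~\ref{prop:type-filtration} and Lemma~\ref{lem:pure-type-split}, each graded piece is \(B_r\). Assume \(\mathrm{Fil}^{i_{r+1}}U\simeq T'_{r+1}\), a presentation module built on the last blocks.

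The extension \(0\to T'_{r+1}\to\mathrm{Fil}^{i_r}U\to B_r\to0\) is described by Lemma~\ref{lem:arbitrary-type-extension-coordinates}: it is a pair \((x,y)\) modulo \((Fm,dV^{i_r-1}m)\). Using the surjectivity of \(Fd\)-type maps on degree one, as in the polynomial normal form, we first normalize \(y=0\). The relation then forces \(dV^{i_r+1}x=0\). By the kernel criterion, \(x=\sum_{\beta>r}P_\beta(V)\widetilde e_\beta\) with \(\deg P_\beta\le i_\beta-i_r-2\). This gives a row \(f_{r\beta}\) with the degree bounds of \(\mathfrak n_1\).

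We then replace \(P_\beta\) by \([f_{r\beta}]\). The discrepancy is divisible by \(p\), so it can be absorbed into the lift \(m\). Carrying out this step for each \(r\) produces \(U\simeq M_f\).

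\emph{Fullness and faithfulness.} This is the main obstacle. Let \(\phi\colon M_f\to M_{f'}\) be an isomorphism. It preserves the canonical type filtration, so \(\phi(\widetilde e_\alpha)\in T'_\alpha\) and is degree zero. By Lemma~\ref{lem:kernel-normal-form} it can be written uniquely as \(\sum_{\beta\ge\alpha}P_{\alpha\beta}(V)\widetilde e'_\beta\).

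The relation \(dV^{i_\alpha-1}\widetilde e_\alpha=0\) and the kernel criterion give \(\deg P_{\alpha\beta}\le i_\beta-i_\alpha\). Since the coefficients lie in \(k\), we have \(P=[g]\) for a unique \(g\in\mathfrak n_0\). Because \(\phi\) induces isomorphisms on the graded pieces, Lemma~\ref{lem:pure-type-split} gives \(g_{\alpha\alpha}\in GL_{m_\alpha}(k)\). Preservation of the \(F\)-relation is then exactly the displayed identity. Uniqueness of the normal form gives faithfulness, and fullness is the construction just described.

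The delicate point is that elements of \(M^0\) are a priori only \(W\)-combinations, whereas the normal form has coefficients in \(k[[V]]\). One must check that \(p\)-multiples in \(\phi(\widetilde e_\alpha)\) are rewritten as \(V\cdot F\)-terms and so land back among the \(P_\beta\)-terms. The plan is to invoke the uniqueness statement of Lemma~\ref{lem:kernel-normal-form} directly, which is written over \(k[[V]]\) precisely to handle this.
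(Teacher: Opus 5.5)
Your overall route is the paper's: you normalize \(i_1=0\), prove essential surjectivity by descending induction through Lemma~\ref{lem:arbitrary-type-extension-coordinates} with \(y=0\) and the kernel criterion, and prove full faithfulness by reading off \(\varphi(\widetilde e_\alpha)\) through the kernel criterion.

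\textbf{The invertibility step fails.} You claim that the inverse of \(g\) in \(\mathfrak n_0\) gives the inverse substitution. This is false, because composition of substitutions on \(M_f\) is not multiplication in \(\mathfrak n_0\). Coefficientwise Teichm\"uller lifting is not additive, and the resulting \(p\)-multiples do not vanish in \(M_f\), since \(p\widetilde e_\beta=VF\widetilde e_\beta=V\sum_{\gamma>\beta}[f_{\beta\gamma}](V)\widetilde e_\gamma\). The matrix \(g^{-1}\) is correct only on the associated graded.

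For a concrete failure, take \(p=2\), \(J=(0,2,4)\) and \((f_{12},f_{23},f_{13})=(1,1,0)\), so that \(M_f\) is the distinguished domino \(U_{0,2,4}\). Let \(g\) have unit diagonal, \(g_{12}=g_{23}=1\) and \(g_{13}=0\). Its inverse in \(\mathfrak n_0\) has all three upper entries equal to \(1\), since \(-1=1\) in \(k\). One checks directly that both \(g\) and \(g^{-1}\) satisfy the \(F\)- and \(dV\)-relations. Yet the composite of the two substitutions, in either order, sends \(\widetilde e_1\mapsto\widetilde e_1+2\widetilde e_2+2\widetilde e_3=\widetilde e_1+V\widetilde e_3\ne\widetilde e_1\).

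The repair is the paper's argument. The substitution \(\varphi_g\) carries each \(T_r\) into \(T'_r\) and induces \(g_{rr}\in GL_{m_r}(k)\) on \(T_r/T_{r+1}\simeq B_r\simeq T'_r/T'_{r+1}\). Descending induction along this finite filtration and the five lemma then show that \(\varphi_g\) is an isomorphism. Composites should likewise be identified by reading their matrices off the normal form, not by multiplying in \(\mathfrak n_0\).

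The same misconception appears, harmlessly, in two other places. In the \(F\)-relation check there is no \(p\)-divisible discrepancy to absorb: \(F[g](V)=[\sigma(g)](V)F\) holds exactly in \(\widehat R\), because \(FV=VF\) and \(F[a]=[\sigma(a)]F\). In essential surjectivity, the kernel criterion already writes \(x\) with Teichm\"uller coefficients, so again no discrepancy arises. This matters, because such terms could not be discarded in \(M_{f'}\) as they can in Remark~\ref{rem:skew-composition}, where the target block is killed by \(p\). Nor could they be absorbed into the lift \(m\): changing the lift by \(m\) shifts \(y\) by \(dV^{i_r-1}m\) and would undo your normalization \(y=0\).
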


\begin{proof}
By the preceding Nygaard-modification argument, we may assume \(i_1=0\).
Lemma~\ref{lem:kernel-normal-form} shows that \(f\mapsto M_f\) is
well-defined on objects.

Let \(M\) be a domino of type \(J\), choose identifications of its type-graded
blocks with the \(B_r\), and write
\(M=M_1\supset\cdots\supset M_s\supset M_{s+1}=0\) for its canonical type
filtration. Suppose inductively that \(M_{r+1}\simeq T_{r+1}\). By
Lemma~\ref{lem:arbitrary-type-extension-coordinates}, the extension of \(B_r\)
by \(T_{r+1}\) is represented by a pair \((x,y)\). The degree-one normal form
allows us to change the lifts so that \(y=0\). The relation in
Lemma~\ref{lem:arbitrary-type-extension-coordinates} then gives
\(dV^{i_r+1}x=0\), and the kernel criterion writes \(x\) uniquely as the
\(r\)-th row of a matrix in \(\mathfrak n_1\).
Thus \(M_r\simeq T_r\), and descending induction gives \(M\simeq M_f\).

Now let \(\varphi:M_f\to M_{f'}\) be an isomorphism. It preserves the
canonical type filtration, and the kernel criterion writes its values on the
generators uniquely as an upper triangular matrix \(g\). The induced maps on
the graded blocks are invertible, so \(g\in\mathfrak n_0^\times\). Conversely,
the degree bounds on such a \(g\) ensure the \(dV\)-relations, while the
displayed equality in the theorem is precisely the remaining \(F\)-relation.
The resulting map preserves the finite filtrations defined by the \(T_r\) and
induces \(g_{\alpha\alpha}\) on the graded blocks. Since these diagonal blocks are
invertible, the map is an isomorphism. Hence these substitutions are exactly
the isomorphisms \(M_f\xrightarrow{\sim}M_{f'}\).

The normal form also assigns a unique matrix to a composite substitution, so
composition agrees with composition of module isomorphisms. This proves the
equivalence.
\end{proof}

\begin{remark}
For \(J=(j_1,j_2)\), a presentation matrix is the single polynomial
\(f=f_{12}\), and \(M_f=U_{j_1,j_2;f}\). In block form, the three matrices
entering the morphism condition are
\[
  (f_{\alpha\beta})=\begin{pmatrix}0&f\\0&0\end{pmatrix},\qquad
  (f'_{\alpha\beta})=\begin{pmatrix}0&f'\\0&0\end{pmatrix},\qquad
  g=\begin{pmatrix}c_1&h\\0&c_2\end{pmatrix},
\]
where \(c_1,c_2\in k^\times\) and
\(h\in k_\sigma[V]_{\le j_2-j_1}\). The morphism condition in the theorem
then reduces to
\[
  \bigl([\sigma(c_1)][f']-[f][c_2]\bigr)\widetilde e'_2=0
  \qquad\text{in }M_{f'}.
\]
The terminal block \(B_2=U_{j_2}\) is killed by \(p\), and the elements
\(V^q e'_2\), \(q\ge0\), are topologically independent. Thus the displayed
condition is equivalent to \(\sigma(c_1)f'=fc_2\) in \(k_\sigma[V]\), or
\[
  f'=\sigma(c_1)^{-1}fc_2.
\]
This is the frame-change formula of
Theorem~\ref{thm:2d-classification}, with \(b_0=c_1^{-1}\) and
\(c_0=c_2^{-1}\). The off-diagonal entry \(h\) does not enter the formula:
it changes only the lift of the first graded block, and records
\(\operatorname{Hom}_R(U_{j_1},U_{j_2})\). In higher dimension, analogous
off-diagonal terms can contribute through later blocks, which is why the
general condition is stated as an equality in \(M_{f'}\).
\end{remark}

\subsection{Examples of fixed-type dominoes}
\label{sec:explicit-dominoes}

We now illustrate Theorem~\ref{thm:fixed-type-groupoid} in three families. The
first isolates a rigid class of maximal \(p\)-exponent, the second explains the
complexity of the general fixed-type problem, and the third gives a complete
worked classification in the smallest dimension where indecomposability and
\(p\)-exponent can diverge.

\subsubsection*{\normalfont\bfseries Distinguished dominoes}

For the type sequence \(J=(j,j+2,\ldots,j+2n-2)\), maximal
\(p\)-exponent singles out one isomorphism class. Distinguished dominoes of this
form occur as summands in the cyclic supergeneral family of
Theorem~\ref{thm:supergeneral}.

\begin{theorem}[Distinguished dominoes]
\label{thm:distinguished}
For \(j\in\mathbb Z\), \(n\ge1\), and \(J=(j,j+2,\ldots,j+2n-2)\), the module
\(U_J:=\widehat R/\widehat R(F^n,dV^{j-1})\) is, up to isomorphism, the unique
domino of type \(J\) having \(p\)-exponent \(n\). It is indecomposable and
\(\dim U_J=n\). We call it the \emph{distinguished domino} of type \(J\).
\end{theorem}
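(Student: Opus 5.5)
The plan is to reduce to a normalized type, identify \(U_J\) with an explicit presentation \(M_f\) of Theorem~\ref{thm:fixed-type-groupoid}, and then use the change-of-generators groupoid to show that maximal \(p\)-exponent forces this presentation.

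First apply the Nygaard modification \((-j\delta_0)\) of Definition~\ref{def:autoequivalence}. It translates types by \(-j\), preserves dimension and \(p\)-exponent, and carries \(dV^{j-1}\) to \(dV^{-1}=Fd\). So we may assume \(J=(0,2,\ldots,2n-2)\) and \(U_J=\widehat R/\widehat R(F^n,Fd)\); for \(n=2\) this is Example~\ref{ex:U0U2ext}. The blocks are \(B_\alpha=U_{2\alpha-2}\), each of multiplicity one. Consider the presentation matrix \(f^{\circ}\in\mathfrak n_1\) with \(f^{\circ}_{\alpha,\alpha+1}=1\) (allowed, since the gap is \(2\) and \(k_\sigma[V]_{\le0}=k\)) and all other entries \(0\).

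I would construct an isomorphism \(M_{f^\circ}\to U_J\) by sending \(\widetilde e_\alpha\mapsto F^{\alpha-1}u\), where \(u\) is the image of \(1\).
\begin{itemize}
\item The \(F\)-relations \(F\widetilde e_\alpha=\widetilde e_{\alpha+1}\) and \(F\widetilde e_n=0\) hold by construction.
\item The \(dV\)-relations \(dV^{2\alpha-3}F^{\alpha-1}u=0\) have to be checked. I would rewrite \(dV^{2\alpha-3}F^{\alpha-1}\) in \(\widehat R\) using \(VF=p\), \(FdV=d\) and \(Vd=pdV\). This should reduce it to a left multiple of \(Fd\), possibly plus a multiple of \(F^n\).
\item These relations give a surjection \(M_{f^\circ}\to U_J\), since \(u=\widetilde e_1\).
\item For the inverse, send \(u\mapsto\widetilde e_1\). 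This kills \(F^n\) and \(Fd\) because \(F^n\widetilde e_1=F\widetilde e_n=0\) and \(Fd\widetilde e_1=dV^{-1}\widetilde e_1=0\).
\end{itemize}
Hence \(U_J\simeq M_{f^\circ}\). Lemma~\ref{lem:kernel-normal-form} then says \(U_J\) is a domino of type \(J\), so \(\dim U_J=n\). Its \(p\)-exponent is \(n\): iterating \(p\widetilde e_\alpha=VF\widetilde e_\alpha=V\widetilde e_{\alpha+1}\) gives \(p^{n-1}u=V^{n-1}\widetilde e_n\), which is nonzero by the normal form, while \(p^n u=V^nF^nu=0\).

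For uniqueness, let \(U\) have type \(J\) and \(p\)-exponent \(n\). By Theorem~\ref{thm:fixed-type-groupoid}, \(U\simeq M_f\) for some \(f\in\mathfrak n_1\). In any \(M_f\), \(p\widetilde e_\alpha=V\sum_{\beta>\alpha}[f_{\alpha\beta}]\widetilde e_\beta\) holds modulo terms divisible by \(p\) at higher indices. Iterating this, \(p^{n-1}\widetilde e_1\) is \(V^{n-1}\) times the product \(f_{12}f_{23}\cdots f_{n-1,n}\) (up to Frobenius twists) applied to \(\widetilde e_n\). The point is that any index path from \(1\) to \(n\) with \(n-1\) steps must use every superdiagonal entry. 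If some \(f_{\alpha,\alpha+1}=0\), I would show \(p^{n-1}M_f=0\), using \(T_{\alpha+1}\) and the quotient by it, both of which have shorter chains. So all superdiagonal constants are nonzero.

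Then I would normalize with \(g\in\mathfrak n_0^\times\) in two steps.
\begin{itemize}
\item Diagonal scalings act by \(f_{\alpha,\alpha+1}\mapsto\sigma(c_\alpha)f_{\alpha,\alpha+1}c_{\alpha+1}^{-1}\). Solving successively in \(\alpha\) makes every superdiagonal entry \(1\).
\item Then eliminate \(f_{\alpha\beta}\) for \(\beta\ge\alpha+2\), by induction on \(\beta-\alpha\) and, within fixed distance, descending in \(\beta\). Replacing \(\widetilde e_\alpha\) by \(\widetilde e_\alpha-[h](V)\widetilde e_{\beta-1}\) changes \(f_{\alpha\beta}\) by \(-\sigma(h)\). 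This is allowed because \(\deg h\le i_\beta-i_\alpha-2\le i_{\beta-1}-i_\alpha\). The side effects land only in entries of larger \(\beta\), or in entries that are killed in \(M_{f'}\) because they are divisible by \(p\) in the last block.
\end{itemize}
I expect this elimination, together with the \(p\)-exponent criterion, to be the main obstacle. The difficulty is controlling the non-Teichm\"uller correction terms in the morphism equation of Theorem~\ref{thm:fixed-type-groupoid}. I would handle it block by block, working modulo \(T_{\beta+1}\).

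For indecomposability, suppose \(U_J=U'\oplus U''\) with both summands nonzero. By the Remark after Definition~\ref{def:indecomposable-domino}, the multiplicity-one blocks are partitioned between the summands, so \(\dim U',\dim U''<n\). Every domino satisfies \(p\text{-}\!\exp\le\dim\), because its type filtration has \(\dim\) steps and each graded piece \(U_i\) is killed by \(p\). Then \(p^{n-1}U_J=0\), which contradicts \(p\text{-}\!\exp(U_J)=n\). Finally, transport everything back along \((j\delta_0)\).
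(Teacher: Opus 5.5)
Your argument is correct, but for uniqueness it follows a different route from the paper. Both proofs first show that the adjacent classes are nonzero and normalize them to $1$.

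\textbf{The paper's route.} It inducts on $n$. It observes that $M_2$ has $p$-exponent $n-1$ and identifies it with the standard subdomino $T_2$ by induction. It then proves that $q_{2,*}\colon\operatorname{Ext}^1_R(B_1,T_2)\to\operatorname{Ext}^1_R(B_1,B_2)\simeq k$ is an isomorphism. This comes from descending induction on the long exact sequences of $0\to T_{r+1}\to T_r\to B_r\to0$, whose connecting maps are $h\mapsto\sigma(h)$ on $k_\sigma[V]_{\le 2r-2}$.

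\textbf{Your route.} You normalize the whole presentation matrix at once inside the groupoid of Theorem~\ref{thm:fixed-type-groupoid}. The engine is the same numerical coincidence. Consecutive types differ by exactly $2$, so the degree bound $i_{\beta-1}-i_\alpha$ for $\operatorname{Hom}_R(B_\alpha,B_{\beta-1})$ equals the bound $i_\beta-i_\alpha-2$ for $\operatorname{Ext}^1_R(B_\alpha,B_\beta)$. Hence $h\mapsto\sigma(h)$ is bijective. Your elimination order does close up:
\begin{enumerate}
\item Changing $\widetilde e_\alpha$ alters row $\alpha$ only in columns $\ge\beta$.
\item It alters a row $\gamma<\alpha$ only in column $\beta-1$. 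That entry has the same distance only for $\gamma=\alpha-1$, and then it has smaller $\beta$, so it is still unprocessed.
\item The Teichm\"uller carries move to strictly larger distance and never touch the superdiagonal.
\end{enumerate}

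\textbf{What each buys.} The paper hides all carry bookkeeping in Remark~\ref{rem:skew-composition} and exactness. As by-products it gets that $M_2$ is itself distinguished and that $\operatorname{Ext}^q_R(B_1,T_r)=0$ for $q\ge2$. Your route is more explicit. It shows that the isomorphism class consists exactly of the presentations whose superdiagonal constants are all nonzero, and it gives an algorithmic normal form. Your mutually inverse maps between $M_{f^\circ}$ and $U_J$ are also a tidier way to get type and dimension than the paper's normal form in $\widehat R$.

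\textbf{Two small points.} First, the identity you left open is $dV^{2\alpha-3}F^{\alpha-1}=p^{\alpha-1}dV^{\alpha-2}=V^{\alpha-1}Fd$, the same one the paper uses. Second, $p\widetilde e_\alpha=V\sum_{\beta>\alpha}[f_{\alpha\beta}](V)\widetilde e_\beta$ holds exactly, not just modulo higher terms. Since $p$ is central, $p^{n-1}M_f^0=\widehat R^0\,p^{n-1}\widetilde e_1$, and $p^{n-1}\widetilde e_1=V[f_{12}]V[f_{23}]\cdots V[f_{n-1,n}]\widetilde e_n$. So your path observation settles the exponent criterion by itself. By contrast, the fallback through $T_{\alpha+1}$ and $M_f/T_{\alpha+1}$ gives only $p^nM_f=0$ unless you feed in $f_{\alpha,\alpha+1}=0$.
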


\begin{proof}
Nygaard modification reduces to \(j=0\). Put \(B_r=U_{2(r-1)}\),
\(U^{(n)}=\widehat R/\widehat R(F^n,Fd)\), and
\(T_r=\widehat R F^{r-1}u\) for \(1\le r\le n\), with \(T_{n+1}=0\), where
\(u\) is the image of \(1\). The identity
\(dV^{2r-3}F^{r-1}=p^{r-1}dV^{r-2}=V^{r-1}Fd\) and the normal form in
\(\widehat R\) give \(T_r/T_{r+1}\simeq B_r\), with adjacent class \(1\), and
\(p^nu=0\ne p^{n-1}u\). Thus \(U^{(n)}\) has the required type and exponent.
Write \(q_r:T_r\twoheadrightarrow B_r\) for the quotient.

For uniqueness, argue by induction on \(n\), the case \(n=1\) being clear. Choose
generators on the graded blocks of a maximal-exponent domino \(M\), and write
\(0=M_{n+1}\subset M_n\subset\cdots\subset M_1=M\), with
\(M_r/M_{r+1}\simeq B_r\), and let
\(a_r\in\operatorname{Ext}^1_R(B_r,B_{r+1})\simeq k\) be the adjacent classes.
Semilinearity shows that the successive component of \(F^{n-1}\) is
\(\sigma^{n-2}(a_1)\sigma^{n-3}(a_2)\cdots a_{n-1}\). Since the \(p\)-exponent
is defined on \(M^0\), maximal exponent gives \(p^{n-1}M^0\ne0\), hence
\(F^{n-1}M^0\ne0\) by \(p^{n-1}=V^{n-1}F^{n-1}\). As \(F\) raises the type
filtration by at least one step, this forces the displayed product, and therefore
every \(a_r\), to be nonzero. Changing generators by
scalars \(c_r\in k^\times\) sends
\(a_r\) to \(\sigma(c_r)a_rc_{r+1}^{-1}\). Taking \(c_1=1\) and
\(c_{r+1}=\sigma(c_r)a_r\) normalizes all adjacent classes to \(1\).

The same argument gives \(p^{n-2}M_2^0\ne0\), while its \(n-1\) elementary
quotients give \(p^{n-1}M_2=0\). Thus \(p\text{-}\!\exp(M_2)=n-1\), and
induction identifies \(M_2\) with the standard subdomino \(T_2\).
It remains only to determine the
extension of \(B_1\) by \(T_2\) from its pushout to \(B_2\).

For every \(2\le r\le n\),
\(q_{r,*}:\operatorname{Ext}^1_R(B_1,T_r)\to\operatorname{Ext}^1_R(B_1,B_r)\)
is an isomorphism and \(\operatorname{Ext}^q_R(B_1,T_r)=0\) for \(q\ge2\). This
follows by descending induction from Proposition~\ref{prop:ext-full}: in the long
exact sequence of \(0\to T_{r+1}\to T_r\to B_r\to0\), the connecting map becomes
\(h\mapsto\sigma(h)\) on \(k_\sigma[V]_{\le2r-2}\) by
Remark~\ref{rem:skew-composition}, hence is bijective because \(k\) is perfect.
The remaining higher-Ext terms vanish by Proposition~\ref{prop:ext-full} and
induction, so exactness gives both assertions.

For \(r=2\), the extension defining \(M\) maps to a nonzero class in
\(\operatorname{Ext}^1_R(B_1,B_2)\simeq k\). Rescaling the generator of \(B_1\)
identifies this class with the class \(1\) of \(U^{(n)}\). The isomorphism
\(q_{2,*}\) then gives \(M\simeq U^{(n)}\), completing the induction.

Shifting back gives \(U_J\). Refining the type filtration of any dimension-\(m\)
domino into elementary quotients shows that it is killed by \(p^m\).
In a nontrivial decomposition of \(U_J\), every summand would therefore have
\(p\)-exponent less than \(n\), contrary to \(p\text{-}\!\exp(U_J)=n\).
\end{proof}

The unipotent realizations of the distinguished dominoes are Witt groups:
\(\widehat G(U_J)\simeq\widehat W_n\) and
\(G^{\mathrm{perf}}(U_J)\simeq W_n^{\mathrm{perf}}\)
(Corollary~\ref{cor:distinguished-props}). This is the form used for the
supergeneral family in \S\ref{sec:supersingular}.

\subsubsection*{\normalfont\bfseries Kronecker dominoes}

Dominoes with two distinct types already exhibit the difficulty of the general
classification problem. Let \(j_1<j_2\), \(B_1:=U_{j_1}^{\oplus m}\),
\(B_2:=U_{j_2}^{\oplus n}\), and \(j:=j_2-j_1\). If \(j\le1\), every extension splits.
If \(j\ge2\), a choice of generators represents an extension by a matrix
\[
  f(V)=A_0+A_1V+\cdots+A_{j-2}V^{j-2}
  \in \operatorname{Mat}_{m\times n}\bigl(k_\sigma[V]_{\le j-2}\bigr),
\]
and changing the generators of \(B_1\) and \(B_2\) by \(c_1\in GL_m(k)\) and
\(c_2\in GL_n(k)\), respectively, sends
\[
  f\longmapsto \sigma(c_1)\,f\,c_2^{-1},
  \qquad
  A_i\longmapsto \sigma(c_1)\,A_i\,\sigma^{-i}(c_2^{-1}).
\]

If one formally sets \(\sigma=\mathrm{id}\), this becomes the ordinary
\((j-1)\)-Kronecker problem with dimension vector \((m,n)\). The one-arrow
case is finite, and the two-arrow case is the tame matrix-pencil problem. With
at least three arrows, the representations of dimension vector \((r,r)\) of
the form \((I,A,B,0,\ldots,0)\) already encode pairs of \(r\times r\) matrices
up to simultaneous similarity, so the problem is wild. Thus even the
untwisted model is wild once \(j\ge4\).
The Frobenius twist can change this behavior: the transformation of the
coefficient \(A_i\) involves \(\sigma^{-i}\). For dimension vector \((1,1)\) and
\(j=3\), the untwisted \(\mathbb P^1\)-family over \(\bar k\) collapses to the
three classes of Remark~\ref{rem:no-projectivization}. We do not pursue the skew
classification for \(j\ge4\) here.

\subsubsection*{\normalfont\bfseries Three-dimensional dominoes}

Our final example classifies the three-dimensional dominoes with three distinct
types. Here all multiplicities are one, and Theorem~\ref{thm:fixed-type-groupoid}
reduces the classification and indecomposability problem to the formulas below.

For \(a<b<c\), write a presentation of type \((a,b,c)\) as
\[
  f=(x_1,x_2,y)\in
  k_\sigma[V]_{\le b-a-2}\times k_\sigma[V]_{\le c-b-2}
  \times k_\sigma[V]_{\le c-a-2},
  \qquad
  x_1=f_{12},\quad x_2=f_{23},\quad y=f_{13},
\]
and write an upper triangular change of generators \(g\in\mathfrak n_0^\times\)
in terms of its diagonal entries \((c_1,c_2,c_3)\in(k^\times)^3\) and
above-diagonal entries \(h_1\in k_\sigma[V]_{\le b-a}\),
\(h_2\in k_\sigma[V]_{\le c-b}\), and \(h_3\in k_\sigma[V]_{\le c-a}\).
Substitution gives
\[
  x_1\mapsto\sigma(c_1)^{-1}x_1c_2,\qquad
  x_2\mapsto\sigma(c_2)^{-1}x_2c_3,\qquad
  y\mapsto\sigma(c_1)^{-1}\bigl(y\,c_3+x_1h_2-\sigma(h_1)\,\sigma(c_2)^{-1}x_2c_3\bigr),
\]
while \(h_3\) does not affect the triple. Put
\[
  Q(x_1,x_2):=
  \frac{k_\sigma[V]_{\le c-a-2}}
  {x_1k_\sigma[V]_{\le c-b}
  +\sigma(k_\sigma[V]_{\le b-a})x_2}.
\]

\begin{proposition}[Three-dimensional dominoes with distinct types]\label{prop:rank-three}
The isomorphism classes of dominoes of type \((a,b,c)\) are the orbits of
\((x_1,x_2,\bar y)\), where \(\bar y\in Q(x_1,x_2)\), under
\[
 (x_1,x_2,\bar y)\longmapsto
 \bigl(\sigma(c_1)^{-1}x_1c_2,\,
 \sigma(c_2)^{-1}x_2c_3,\,
 \sigma(c_1)^{-1}\bar y c_3\bigr).
\]
The corresponding domino \(M_f\) is indecomposable if and only if either both
\(x_1,x_2\) are nonzero, or exactly one is nonzero and \(\bar y\ne0\).
\end{proposition}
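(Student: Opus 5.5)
The plan is to read both assertions off Theorem~\ref{thm:fixed-type-groupoid} for $J=(a,b,c)$. Since all multiplicities are one, the groupoid has objects the triples $f=(x_1,x_2,y)$ and morphisms the upper triangular $g$ with diagonal $(c_1,c_2,c_3)\in(k^\times)^3$ and off-diagonal entries $(h_1,h_2,h_3)$. The displayed substitution formulas give the transformation rule. I would first confirm these formulas by expanding the morphism condition $([\sigma(g)][f']-[f][g])\widetilde e'=0$ in $M_{f'}$, working block by block.

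\textbf{Step 1: the transformation rule.} The $(1,2)$ and $(2,3)$ entries only involve adjacent blocks. As in the two-dimensional remark, their relations are read in $B_2$ and $B_3$, which are killed by $p$ and have topologically independent $V^q e$. So they reduce to the frame-change rules. The $(1,3)$ entry is the delicate one. Here $F$ applied to $[h_1]\widetilde e'_2$ produces $[\sigma(h_1)][f'_{23}]\widetilde e'_3$, by the pullback rule of Remark~\ref{rem:skew-composition}. Meanwhile $[f_{12}][h_2]\widetilde e'_3$ enters via pushout. Any Teichm\"uller discrepancies are divisible by $p$, and they die after one more application because $B_3$ is killed by $p$. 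I would check this using the normal form of Lemma~\ref{lem:kernel-normal-form}. The term $h_3$ contributes $F([h_3]\widetilde e'_3)=0$, so it does not appear. Solving for $y'$ gives the displayed formula.

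\textbf{Step 2: the quotient $Q$.} For fixed $(c_1,c_2,c_3)$, we let $h_1,h_2$ vary. This adds to $y$ exactly the subspace $x_1k_\sigma[V]_{\le c-b}+\sigma(k_\sigma[V]_{\le b-a})x_2$, twisted by units. We must check that it is stable under the diagonal action, i.e.\ that $\sigma(c_1)^{-1}(\cdot)c_3$ preserves it once $x_1,x_2$ are transformed too. Sending $h_2\mapsto h_2c_3$, etc., shows this, since scalars commute past the degree bounds (up to $\sigma^{-i}$ twists). One subtlety is that the images may have degree exceeding $c-a-2$. I would argue that any $g$ giving a valid morphism must land in degree $\le c-a-2$ by the kernel criterion. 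So the sum should be intersected with $k_\sigma[V]_{\le c-a-2}$; equivalently, only admissible $h$ count. The orbits of $(x_1,x_2,\bar y)$ under the torus then classify the isomorphism classes.

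\textbf{Step 3: indecomposability.} By the Remark after Definition~\ref{def:indecomposable-domino}, a decomposition partitions the three blocks $\{1,2,3\}$, and the three-way split lies inside either two-block split. For each partition, I would determine when $M_f$ is isomorphic to the corresponding sum.
\begin{itemize}
\item A split $\{1\}\sqcup\{2,3\}$ means that after some $g$ we have $x_1=0$ and $y=0$.
\item A split $\{3\}\sqcup\{1,2\}$ means $x_2=0$ and $y=0$.
\item A split $\{2\}\sqcup\{1,3\}$ means $x_1=x_2=0$.
\end{itemize}
Since the vanishing of $x_i$ is invariant, the cases are as follows.
\begin{itemize}
\item If $x_1,x_2\ne0$, no split is possible.
\item If $x_1=x_2=0$, then $\{2\}$ splits off.
\item If exactly one is nonzero, say $x_2=0\ne x_1$, the only candidate is $\{3\}\sqcup\{1,2\}$. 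That split exists iff $y$ can be killed, i.e.\ $\bar y=0$ in $Q$.
\end{itemize}

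I expect Step 1 to be the main obstacle, namely the carry/Teichm\"uller bookkeeping in the $(1,3)$ entry: the claim that $[\sigma(h_1)][x_2]$ and the $F$-relations in $T_2$ contribute only the displayed term modulo elements vanishing in $M_{f'}$. I would handle it by reducing everything to the normal form in $T_2$, where $p$ acts through $V[x_2]$ into $B_3$.
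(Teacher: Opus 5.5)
Your proposal is correct and takes essentially the paper's route. You read the transformation rule off Theorem~\ref{thm:fixed-type-groupoid}, with Teichm\"uller discrepancies dying in the last block. You absorb the \(h_1,h_2\)-contributions into the denominator of \(Q(x_1,x_2)\), and you test the three block partitions allowed by the remark after Definition~\ref{def:indecomposable-domino}.

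The only misstep is the degree ``subtlety'' in Step~2, which is vacuous. Indeed \(\deg(x_1h_2)\le(b-a-2)+(c-b)=c-a-2\) and \(\deg(\sigma(h_1)x_2)\le(b-a)+(c-b-2)=c-a-2\), so every \(g\in\mathfrak n_0^\times\) is admissible and no \(h\) should be excluded. Excluding any \(h\) would shrink the denominator and give the wrong quotient.
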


\begin{proof}
The presentation is Theorem~\ref{thm:fixed-type-groupoid}. Substitution in its
three \(F\)-relations gives the formulas above. Since there are three blocks of
multiplicity one, every \(p\)-divisible discrepancy from coefficientwise
Teichm\"uller addition lands in the final block and vanishes. Hence these formulas
are exactly the entries of \(\sigma(g)^{-1}fg\). The entries \(h_1,h_2\) change
\(y\) by the denominator defining \(Q(x_1,x_2)\), while the diagonal entries
act on \(\bar y\) by \(\sigma(c_1)^{-1}\bar y c_3\). This proves the orbit
classification.

By the criterion following Definition~\ref{def:indecomposable-domino}, a
decomposition of \(M_f\) partitions its three graded pieces. The partition
\(\{a\}\sqcup\{b,c\}\) forces
\(x_1=\bar y=0\), the partition \(\{a,b\}\sqcup\{c\}\) forces
\(x_2=\bar y=0\), and the partition \(\{b\}\sqcup\{a,c\}\) forces
\(x_1=x_2=0\). Conversely, in each of these cases
an upper triangular change of generators clears a row or column of \(f\),
exhibiting the corresponding splitting. The remaining cases are precisely
the indecomposable ones stated in the proposition.
\end{proof}

\begin{remark}
Two specializations clarify the role of the type gaps. In general,
Theorem~\ref{thm:distinguished} characterizes the distinguished domino as the
unique \emph{maximal-\(p\)-exponent} object of its type, but does not assert that it is
the unique indecomposable. For the type \((0,2,4)\),
Proposition~\ref{prop:rank-three}
gives the stronger conclusion that \(U_{0,2,4}\) is the unique indecomposable.
For type \((0,3,5)\), however,
\(Q(x_1,0)=k_\sigma[V]_{\le3}/x_1k_\sigma[V]_{\le2}\) is one-dimensional when
\(x_1\ne0\). Choosing \(\bar y\ne0\) gives an indecomposable with \(x_2=0\).
Here \(F^2M^0=0\) but \(pM^0\ne0\), so its \(p\)-exponent is \(2<3\).
Thus indecomposability need not imply maximal \(p\)-exponent.
\end{remark}

\section{Mazur--Ogus varieties and Ekedahl modules}
\label{sec:applications}
\label{sec:ekedahl-reconstruction}

For a Mazur--Ogus variety \(X/k\), the domino numbers are determined by Hodge and
Newton data, but the dominoes themselves encode finer type and extension
information. Our goal is to reconstruct from the \(F\)-crystal
\(H^2_{\mathrm{crys}}(X/W)\) the two-term $R$-module
\[
  H^2(X,W\Omega_X^\bullet)^{[0,0]}
  :=
  \bigl[H^2(X,W\mathcal O_X)\xrightarrow d F^\infty B_X^{1,2}\bigr],
\]
where the superscript \([0,0]\) denotes the \emph{diagonal truncation} of
Definition~\ref{def:diagonal-t-structure}. Its quotient by the Hodge--Witt core \(C_X^{0,2}\) is the domino
\(U_X^{0,2}\).

In total degree \(2\), Ekedahl's domino range
(Proposition~\ref{prop:ekedahl-duality}) and Mazur--Ogus d\'evissage
(Theorem~\ref{thm:diagonal-MO-devissage}) force
\(H^2(X,W\Omega_X^\bullet)^{[0,0]}\) to be a two-term coherent $R$-module for
which \(dV\) is surjective. We call such objects \emph{Ekedahl modules}. A general
diagonal truncation need not have this form.

Following Nygaard's reconstruction technique \cite[Theorem~4.3]{Nygaard81}, we encode this
module by a \emph{Nygaard pair}. Write \(K_0:=\ker d\),
\(K_{-1}:=\ker(dV)\), and \(\nu:=V|_{K_{-1}}\). The reconstruction has three
steps:
\[
  H^2_{\mathrm{crys}}(X/W)
  \longmapsto
  (K_0,K_{-1},\iota,\nu)
  \longmapsto
  H^2(X,W\Omega_X^\bullet)^{[0,0]}
  \longmapsto
  (C_X^{0,2},U_X^{0,2}).
\]

The first step extends Nygaard's surface calculation to every smooth proper
Mazur--Ogus variety (Theorem~\ref{thm:geometric-ekedahl-pair-partial-v}). The
second is the algebraic equivalence between no-finite-torsion (nft) Ekedahl modules and Nygaard pairs
(Theorem~\ref{thm:two-kernel-presentation}). The Mazur--Ogus condition guarantees
that the slice above is nft. The last step is the core--domino d\'evissage, with
finite-free core \(C_X^{0,2}\) and positive domino quotient \(U_X^{0,2}\).
Subsection~\ref{Mazur-Ogus} supplies the diagonal and geometric input, while
Subsection~\ref{sec:ekedahl-modules} develops the algebraic reconstruction.

\subsection{Diagonal \texorpdfstring{\(t\)-structure}{t-structure} and Mazur--Ogus objects}\label{Mazur-Ogus}

This subsection develops the diagonal formalism in which the Mazur--Ogus
condition naturally lives, and then identifies its degree-two domino quotient.

\begin{definition}[Diagonal \(t\)-structure and diagonal cohomology]
\label{def:diagonal-t-structure}
\label{def:diagonal-heart}
Let \(N\) be a coherent \(R\)-module. For each \(i\in\mathbb Z\), set
\[
\left\{
\begin{aligned}
  \widetilde\tau_{\le i}N
  &:=
  \begin{gathered}[t]
  \bigl[
    \cdots
    \longrightarrow
    N^{i-1}
    \xrightarrow{d}
    N^i
    \xrightarrow{d}
    F^\infty B^{i+1}(N)
    \longrightarrow
    0
  \bigr],
  \end{gathered}
\\
  \widetilde\tau_{\ge i+1}N
  &:=
  \begin{gathered}[t]
  \bigl[
    0
    \longrightarrow
    N^{i+1}/F^\infty B^{i+1}(N)
    \xrightarrow{d}
    N^{i+2}
    \xrightarrow{d}
    \cdots
  \bigr].
  \end{gathered}
\end{aligned}
\right.
\]
These constructions fit into a natural short exact sequence
\(0\to\widetilde\tau_{\le i}N\to N\to\widetilde\tau_{\ge i+1}N\to0\) of coherent
\(R\)-modules. Ekedahl's \emph{diagonal \(t\)-structure} on \(D^b_c(R)\)
\cite[Theorem~I.1.1 and Definition~I.1.2]{ekedahl3} is defined as
\[
\left\{
\begin{aligned}
  \widetilde D^{\le0}_c(R)
  &=
  \{M:\widetilde\tau_{\le -q}H^q(M)\simeq H^q(M)\ \text{for all }q\},
\\
  \widetilde D^{\ge0}_c(R)
  &=
  \{M:\widetilde\tau_{\ge -q}H^q(M)\simeq H^q(M)\ \text{for all }q\}.
\end{aligned}
\right.
\]
Its heart
\(\Delta:=\widetilde D^{\le0}_c(R)\cap \widetilde D^{\ge0}_c(R)\subset D^b_c(R)\)
is the \emph{diagonal heart}. Objects of \(\Delta\) are called
\emph{diagonal complexes}. We use the same symbols for the associated truncation
functors on \(D^b_c(R)\). On the Postnikov heart they agree with the explicit
constructions above. Following the notation of \cite{LY26}, set
\[
  M^{[i,i]}:=\widetilde\tau_{\le i}\widetilde\tau_{\ge i}M,
  \qquad
  \widetilde H^i(M):=M^{[i,i]}[i]\in\Delta .
\]
\end{definition}

We will use the commuting property, on \(D^b_c(R)\), between this diagonal
\(t\)-structure and the Postnikov \(t\)-structure on \(D(R)\).

\begin{remark}[Commuting \(t\)-structures]
\label{rem:ordinary-diagonal-commute}
Write \(\tau_{\le n},\tau_{\ge n}\) for the truncation functors of the
Postnikov \(t\)-structure on \(D^b_c(R)\), and
\(\widetilde\tau_{\le m},\widetilde\tau_{\ge m}\) for those of the diagonal
\(t\)-structure. The two \(t\)-structures commute: each composite of a Postnikov
truncation with a diagonal truncation has a canonical natural isomorphism with the
composite in the opposite order, e.g.
\(\tau_{\le n}\widetilde\tau_{\le m}\simeq\widetilde\tau_{\le m}\tau_{\le n}\), and
likewise for the three remaining combinations of \(\le,\ge\). This is due to Ekedahl
\cite[Definition~0.1.3, Theorem~0.1.4, and Section~I.1]{ekedahl3}.
\end{remark}

\begin{definition}[Hodge--Witt objects]
\label{def:HW-object}
\label{def:diagonal-HW-object}
Let \(M\in D^b_c(R)\). We say that \(M\) is \emph{Hodge--Witt} if, for each coherent
\(R\)-module \(H^q(M)\), the differential
\(d:H^q(M)^i\to H^q(M)^{i+1}\) is zero for all \(i,q\).
For a smooth proper variety \(X/k\), this recovers the usual terminology:
\(X\) is a Hodge--Witt variety exactly when
\(R\Gamma(X,W\Omega_X^\bullet)\) is Hodge--Witt in this sense.
For objects of the diagonal heart, this defines a full subcategory
\(\Delta_{\mathrm{HW}}\subset\Delta\).
\end{definition}

Ekedahl proves that \(\Delta_{\mathrm{HW}}\) is closed under subobjects,
quotients, and extensions, and that every \(Y\in\Delta\) has a maximal
Hodge--Witt subobject \(HW(Y)\subset Y\)
\cite[Section~0.5 and Proposition~I.2.1]{ekedahl3}.

\begin{lemma}[Hodge--Witt core]\label{lem:two-term-HW-core}
For \(N\in\operatorname{Mod}_c(R)\cap\Delta\) supported in internal degrees
\(0,1\), one has \(HW(N)=C^0(N)=V^{-\infty}Z^0(N)\subset N^0\).
\end{lemma}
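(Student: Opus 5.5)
The plan is to check directly that \(K:=V^{-\infty}Z^0(N)\), placed in internal degree \(0\), is a Hodge--Witt subobject of \(N\) in \(\Delta\), and then that every Hodge--Witt subobject of \(N\) in \(\Delta\) factors through \(K\).

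\emph{Identifying the core.} Since \(N\in\Delta\) is a module in Postnikov degree \(0\), the defining conditions of Definition~\ref{def:diagonal-t-structure} say \(\widetilde\tau_{\le0}N\simeq N\) and \(\widetilde\tau_{\ge0}N\simeq N\). The first condition gives \(N^1=F^\infty B^1(N)\). Because \(N^{-1}=0\), we have \(F^\infty B^0(N)=0\). Hence \(C^0(N)=V^{-\infty}Z^0(N)/0=K\), which is the second equality in the statement. We also note \(C^1(N)=0\).

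\emph{\(K\) is a Hodge--Witt subobject in \(\Delta\).} First, \(K\) is an \(R\)-submodule of \(N\): it is an \(R^0\)-submodule, and \(d\) kills it. Its differential is zero, so \(K\) is Hodge--Witt. The module \(K\) lies in \(\Delta\): it has \(K^1=0=F^\infty B^1(K)\), and \(K^0/F^\infty B^0(K)=K^0\), so both truncation conditions hold. The quotient \(N/K=[N^0/K\xrightarrow{d}N^1]\) is coherent and has the same image system in degree \(1\) as \(N\). Therefore \(F^\infty B^1(N/K)=N^1\), and \(N/K\in\Delta\) as well. A short exact sequence in \(\mathrm{Mod}_c(R)\) whose three terms lie in \(\Delta\) is a distinguished triangle with terms in the heart, hence a short exact sequence in \(\Delta\). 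So \(K\hookrightarrow N\) is a monomorphism in \(\Delta\).

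\emph{Maximality.} This is the step I expect to be the main obstacle, because a subobject \(Y\hookrightarrow N\) in \(\Delta\) need not lie in the Postnikov heart. Let \(Y\in\Delta_{\mathrm{HW}}\). Each \(H^q(Y)\) has zero differential. The diagonal conditions \(\widetilde\tau_{\le-q}H^q(Y)\simeq H^q(Y)\simeq\widetilde\tau_{\ge-q}H^q(Y)\) then force \(H^q(Y)\) to be concentrated in internal degree \(-q\): with \(d=0\), the term \(F^\infty B^{-q+1}\) vanishes, and so does \(F^\infty B^{-q}\). Since \(N\in D^{\ge0}\), the map \(Y\to N\) factors through \(\tau_{\ge0}Y\). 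The Postnikov filtration of \(\tau_{\ge0}Y\) has pieces \(H^q(Y)[-q]\) with \(q\ge0\), and \(\operatorname{Hom}(H^q(Y)[-q],N)=\operatorname{Ext}^{-q}(H^q(Y),N)=0\) for \(q>0\). Hence \(\operatorname{Hom}(Y,N)=\operatorname{Hom}_R(H^0(Y),N)\). Now \(H^0(Y)\) sits in internal degree \(0\) with \(d=0\) and is \(V\)-stable. So its image lies in \(N^0\), is killed by \(d\), and is therefore killed by every \(dV^n\); that is, it lies in \(K\). Thus \(Y\to N\) factors through \(K\hookrightarrow N\). Since \(Y\to N\) is a monomorphism in \(\Delta\), so is \(Y\to K\), which gives \(Y\subset K\).

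Combining the three steps, \(K\) is the maximal Hodge--Witt subobject, so \(HW(N)=K=C^0(N)\). If the factorization argument in the maximality step turns out to be delicate, the fallback is Ekedahl's construction of \(HW(-)\) in \cite[Proposition~I.2.1]{ekedahl3}: compare it directly with \(K\) using that \(\Delta_{\mathrm{HW}}\) is closed under quotients, together with the fact that \(N/K\) has no nonzero Hodge--Witt subobject. That last fact holds because any \(x\in N^0\) with \(dV^n x\in dV^n K=0\) for all \(n\) already lies in \(K\).
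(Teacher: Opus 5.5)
Your identification of the core and your check that \(K\hookrightarrow N\) is a monomorphism in \(\Delta\) are correct. The maximality step, however, rests on a sign error. For \(q>0\),
\[
\operatorname{Hom}(H^q(Y)[-q],N)=\operatorname{Hom}(H^q(Y),N[q])=\operatorname{Ext}^q_R(H^q(Y),N),
\]
not \(\operatorname{Ext}^{-q}\). The \(t\)-structure axioms only kill the Postnikov pieces with \(q<0\), which is why you may pass to \(\tau_{\ge0}Y\). The pieces with \(q>0\) contribute genuine higher Ext groups from an \(R\)-module in internal degree \(-q\) to \(N\). Because \(R\) has the degree-one generator \(d\), nothing you have said makes these vanish. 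Hence the restriction map \(\operatorname{Hom}(\tau_{\ge0}Y,N)\to\operatorname{Hom}_R(H^0(Y),N)\) is not an isomorphism for formal reasons. Knowing that every \(R\)-linear map \(H^0(Y)\to N\) lands in \(K\) therefore does not yet show that \(Y\to N\) factors through \(K\).

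The repair uses the hypothesis you only invoke at the end: \(Y\to N\) is a monomorphism in \(\Delta\). Let \(Q\in\Delta\) be its cokernel, so that \(Y\to N\to Q\to Y[1]\) is distinguished. Since \(N\) lies in the Postnikov heart, the long exact cohomology sequence gives three things:
\begin{enumerate}
\item \(H^q(Y)\simeq H^{q-1}(Q)\) for \(q\ge2\) and for \(q\le-1\);
\item a surjection \(H^0(Q)\twoheadrightarrow H^1(Y)\);
\item an exact sequence \(H^{-1}(Q)\to H^0(Y)\to N\).
\end{enumerate}
By your own computation, \(H^q(Y)\) is concentrated in internal degree \(-q\). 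The diagonal conditions put \(H^{q-1}(Q)\) in internal degrees \(\{-q+1,-q+2\}\). All these maps preserve the internal grading, so \(H^q(Y)=0\) for \(q\ne0\), and \(H^0(Y)\to N\) is injective. Thus \(Y\) is an honest \(R\)-submodule of \(N^0\) with \(d=0\), stable under \(V\), and your argument then places it inside \(K\).

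With this fix, your route is a self-contained alternative to the paper's. The paper instead uses that \(N/C^0(N)\) is a domino (Proposition~\ref{prop:core-domino-devissage}) and that a domino has no nonzero Hodge--Witt subobject. Your fallback justifies that last fact by \(V^{-\infty}Z^0(N/K)=0\). This likewise handles only subobjects lying in the Postnikov heart, and it needs the same long-exact-sequence argument to cover all subobjects in \(\Delta\).
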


\begin{proof}
Since \(F^\infty B^0(N)=0\), the core \(C^0(N)\) is
\(V^{-\infty}Z^0(N)\). It is
concentrated in degree \(0\), hence Hodge--Witt, while
Proposition~\ref{prop:core-domino-devissage} identifies the
quotient by it as a domino, which has no nonzero Hodge--Witt subobject. Hence the
core is maximal.
\end{proof}

\begin{definition}[Diagonal dominoes]\label{def:diagonal-domino}
A \emph{diagonal domino} is an object \(D\in\Delta\) satisfying:
\begin{enumerate}
  \item \(H^r(\mathbf s(D))\) is a torsion \(W\)-module for every \(r\), where
  \(\mathbf s\) is the
  simple functor of Remark~\ref{rem:simple-functor}.
  \item \(D\) is nft: it contains no nonzero subobject in \(\Delta\) of finite
  length over \(W\).
\end{enumerate}
We denote by \(\mathrm{Dom}_\Delta\subset\Delta\) the full subcategory of diagonal
dominoes, with exact sequences inherited from \(\Delta\)
\cite[Definition~III.2.1]{ekedahl3}.
Dominoes are precisely the diagonal dominoes that lie in the Postnikov heart
\(\operatorname{Mod}_c(R)\) \cite[Proposition~III.3.1]{ekedahl3}.
\end{definition}

Ekedahl showed that every diagonal domino \(D\) has a canonical finite decreasing
filtration by half-integer type
\cite[Theorem~III.2.3 and Definition~III.2.4]{ekedahl3}:
\[
  0\subset\cdots\subset
  \mathrm{Fil}_{\Delta}^{1/2}D\subset
  \mathrm{Fil}_{\Delta}^{0}D\subset
  \mathrm{Fil}_{\Delta}^{-1/2}D\subset\cdots\subset D.
\]
For a domino \(U\), we write its type filtration as
\(\mathrm{Fil}_{\mathrm{type}}^j(U)\), as in
Proposition~\ref{prop:type-filtration}. The type factors of \(U\) then have integer
types. Under the identification with diagonal dominoes in the Postnikov heart,
the two filtrations agree in integer degree
\cite[Proposition~III.3.2]{ekedahl3}: for each \(j\in\mathbb Z\),
\[
  \mathrm{Fil}_{\Delta}^{j}U
  =
  \mathrm{Fil}_{\Delta}^{j-1/2}U
  =
  \mathrm{Fil}_{\mathrm{type}}^j(U).
\]

\begin{definition}[Positive diagonal dominoes]\label{def:positive-diagonal-domino}
We denote by
\(\operatorname{Dom}^{+}_{\Delta}\subset\operatorname{Dom}_{\Delta}\) the full
subcategory of diagonal dominoes \(D\) such that
\(\mathrm{Fil}_{\Delta}^{1/2}D=D\), and put
\[
  \operatorname{Dom}^{+}
  :=
  \operatorname{Dom}^{+}_{\Delta}\cap\operatorname{Mod}_c(R).
\]
Equivalently, \(\operatorname{Dom}^{+}\) is the full subcategory of dominoes
whose type sequence has only positive entries.
\end{definition}

\begin{lemma}
\label{lem:positive-iff-dV-surjective}
\label{lem:positive-detector}
Let \(U\) be a domino. Then \(U\in\operatorname{Dom}^{+}\) if and only if
\(dV:U^0\to U^1\) is surjective.
\end{lemma}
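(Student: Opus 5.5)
The claim is \(U\in\operatorname{Dom}^{+}\) iff \(dV\colon U^0\to U^1\) is surjective; we read it directly off the cokernel-length formula of Remark~\ref{rem:kernel-lengths}. By Definition~\ref{def:positive-diagonal-domino}, \(U\in\operatorname{Dom}^{+}\) exactly when every entry of the type sequence \(J(U)\) is positive, that is, \(m_j(U)=0\) for all \(j\le0\). Equation \eqref{eq:kernel-lengths} with \(r=1\) gives
\[
  \lgth_W\operatorname{coker}(dV)=\sum_j m_j(U)\max(0,1-j)=\sum_{j\le0}m_j(U)\,(1-j).
\]
Each summand with \(j\le0\) has \(1-j\ge1\), so the sum vanishes iff \(m_j(U)=0\) for all \(j\le0\). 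Since a \(W\)-module of length zero is zero, this is exactly surjectivity of \(dV\). Both implications follow at once.

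The only point to check is that \eqref{eq:kernel-lengths} applies at \(r=1\), including the finiteness of the lengths. The Remark states it for every \(r\in\mathbb Z\), with the additivity argument along the canonical type filtration. Alternatively, we can argue directly. On \(U_j\), the diagrams of \S\ref{sec:hearts-dominoes} show that \(dV\) is surjective iff \(j\ge1\); its cokernel is spanned by \(dV^{j},\dots,dV^{0}\) when \(j\le0\), which matches the computation of \(\operatorname{coker}(dV)\) in the proof of Proposition~\ref{prop:ext-full}. For the forward direction, note that surjectivity of \(dV\) passes to extensions by the snake lemma, since cokernels are right exact; then induct along the type filtration, whose graded pieces are iterated extensions of \(U_j\) with \(j\ge1\). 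For the converse, \(\operatorname{coker}(dV)\) is right exact in \(U\), so surjectivity on \(U\) gives surjectivity on every quotient. We apply this to the quotient \(U/\mathrm{Fil}^{1}_{\mathrm{type}}(U)\), whose types are all \(\le0\). If it were nonzero, it would surject onto some \(U_j\) with \(j\le0\), namely the top graded piece split as in Lemma~\ref{lem:pure-type-split}, where \(dV\) is not surjective. Hence that quotient vanishes and all types are positive.

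I expect no real obstacle. The one subtlety is matching the two descriptions of \(\operatorname{Dom}^{+}\): the condition \(\mathrm{Fil}^{1/2}_\Delta U=U\) versus positivity of \(J(U)\). This is handled by the identification \(\mathrm{Fil}^{1}_{\mathrm{type}}=\mathrm{Fil}^{1/2}_{\Delta}\) quoted before Definition~\ref{def:positive-diagonal-domino}, which we take as given.
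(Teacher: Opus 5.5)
Your argument is correct, and your main route differs from the paper's. You read the lemma off the cokernel-length formula of Remark~\ref{rem:kernel-lengths} at \(r=1\): since \(\lgth_W\operatorname{coker}(dV)=\sum_{j\le0}m_j(U)(1-j)\), this settles both directions at once, and it even measures how far \(U\) is from positive.

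The paper instead argues qualitatively, in essentially the way of your fallback:
\begin{itemize}
\item \(dV\) is surjective on \(U_j\) if and only if \(j\ge1\).
\item Surjectivity of \(dV\) is preserved under extensions and passes to quotients, by right exactness of \(\operatorname{coker}(dV)\).
\item So it propagates along the type filtration, whose graded pieces are sums of \(U_j\) by Lemma~\ref{lem:pure-type-split}.
\item Conversely, surjectivity on \(U\) would survive to a nonzero pure-type quotient of \(U/\mathrm{Fil}^1_{\mathrm{type}}(U)\) of type \(\le0\), which is impossible.
\end{itemize}

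Your numerical route is a one-line consequence of the Remark, and there is no circularity, since the Remark does not use the lemma. However, it inherits the Remark's additivity argument, which requires checking that the connecting maps vanish in the snake sequences along the type filtration. At \(r=1\) this does hold: the cokernel on \(\mathrm{Fil}^{j+1}_{\mathrm{type}}\) vanishes for \(j\ge0\), and the kernel on \(\operatorname{gr}^j_{\mathrm{type}}\) vanishes for \(j\le1\). The paper's proof needs only right exactness of \(\operatorname{coker}(dV)\) and no length bookkeeping, so it is more self-contained. The Remark's additivity is itself built from these same qualitative facts.
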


\begin{proof}
For the elementary domino \(U_j\), \(\operatorname{coker}(dV)=0\) if and only if
\(j\geq1\).

In a short exact sequence of dominoes \(0\to U'\to U\to U''\to0\), all maps
commute with \(dV\). The snake lemma therefore shows that
surjectivity of \(dV\) is preserved under extensions and passes to quotients.

If \(U\in\operatorname{Dom}^{+}\), then every associated graded piece of the
type filtration is a sum of \(U_j\)'s with \(j\geq1\), by
Lemma~\ref{lem:pure-type-split}. Extension stability then gives surjectivity on
\(U\).

Conversely, set \(U_{\geq1}:=\mathrm{Fil}_{\mathrm{type}}^1(U)\) and
\(U_{\leq0}:=U/U_{\geq1}\). Then \(U_{\geq1}\in\operatorname{Dom}^{+}\), while all
type entries of \(U_{\leq0}\) are \(\leq0\). If \(dV\) is surjective on \(U\), then
so is \(dV\) on the quotient \(U_{\leq0}\). If this quotient were nonzero, one
of its pure type quotients would be a nonzero sum of \(U_j\)'s with \(j\leq0\),
on which \(dV\) is not surjective. This contradicts quotient stability. Hence
\(U_{\leq0}=0\), so
\(U=U_{\geq1}\in\operatorname{Dom}^{+}\).
\end{proof}

Having fixed the diagonal heart, Hodge--Witt subobjects, and
\(\operatorname{Dom}^{+}_{\Delta}\), we can state the abstract Mazur--Ogus formalism
used below. It isolates the hypotheses under which Ekedahl's two structural results
apply: diagonal decomposition (Theorem~\ref{thm:diagonal-MO-splitting}) and
Mazur--Ogus d\'evissage (Theorem~\ref{thm:diagonal-MO-devissage}).

\begin{definition}[Mazur--Ogus objects, {\cite[Definition~IV.1.1]{ekedahl3}}]
\label{def:diagonal-MO-object}
An object \(M\in D^b_c(R)\) is a \emph{Mazur--Ogus object} if the following
conditions hold:
\begin{itemize}
  \item the cohomology groups \(H^n(\mathbf s(M))\) are torsion-free \(W\)-modules.
  \item the analogue of the Hodge--de Rham spectral sequence
  \[
    E_1^{i,j}(M)
    =
    H^j\!\left(R_1\otimes_R^{\mathbf L}M\right)^i
    \Longrightarrow
    H^{i+j}\!\left(k\otimes_W^{\mathbf L}\mathbf s(M)\right),
  \]
  degenerates at \(E_1\).
\end{itemize}
We write \(\Delta_{\mathrm{MO}}\subset\Delta\) for the full subcategory of
Mazur--Ogus objects in the diagonal heart.

\label{MO}
A smooth proper variety \(X/k\) is \emph{Mazur--Ogus} (also called \emph{straight}
in \cite[Definition~1.4]{GSY25}) if \(R\Gamma(X,W\Omega_X^\bullet)\in D^b_c(R)\) is a
Mazur--Ogus object. Equivalently, its crystalline cohomology
\(H^*_{\mathrm{crys}}(X/W)\) is torsion-free and the Hodge--de Rham spectral
sequence for \(X\) degenerates at \(E_1\).
\end{definition}

\begin{theorem}[Diagonal decomposition for Mazur--Ogus objects
{\cite[Theorem~IV.1.2(i) and (iv)]{ekedahl3}}]
\label{thm:diagonal-MO-splitting}
Let \(M\in D^b_c(R)\) be a Mazur--Ogus object. Then there is an isomorphism
\[
  M\simeq\bigoplus_n\widetilde H^n(M)[-n]
\]
in \(D^b_c(R)\). Moreover \(\widetilde H^n(M)\in\Delta_{\mathrm{MO}}\) for every
\(n\).
\end{theorem}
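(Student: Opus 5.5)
The decomposition will come from the commuting Postnikov and diagonal \(t\)-structures of Remark~\ref{rem:ordinary-diagonal-commute}. The extensions joining the diagonal truncations will then be killed by the two Mazur--Ogus conditions, used through the simple functor \(\mathbf s\) and the reduction \(R_1\otimes^{\mathbf L}_R(-)\).

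\textbf{Step 1: a filtration by diagonal cohomology.} Let \(M\) be a Mazur--Ogus object. The diagonal \(t\)-structure is bounded on \(D^b_c(R)\), because coherent modules have bounded internal degrees. So \(M\) has a finite filtration by the truncations \(\widetilde\tau_{\le n}M\), whose graded pieces are \(\widetilde H^n(M)[-n]\). To split it, it suffices to show that every connecting morphism
\[
  \widetilde H^n(M)[-n]\longrightarrow \widetilde\tau_{\le n-1}M[1]
\]
vanishes, inductively on the length of the filtration.

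\textbf{Step 2: vanishing of the connecting maps.} This step uses Ekedahl's description of morphisms in the diagonal heart. Consider a morphism \(X[-n]\to Y[1]\) with \(X\in\Delta\) and \(Y\) in \(\widetilde D^{\le n-1}\). It is an \(\operatorname{Ext}^{\ge 2}\)-type class in the diagonal sense. I would split \(X\) along its Hodge--Witt part and its diagonal-domino part. For the domino part I would apply \(\mathbf s\): torsion-freeness of \(H^*(\mathbf s(M))\) forces the torsion diagonal dominoes (Definition~\ref{def:diagonal-domino}) to be nft, so they cannot support such a class. For the Hodge--Witt part I would use \(E_1\)-degeneration of \(H^*(R_1\otimes^{\mathbf L}_RM)\) to identify the class with a higher differential of that spectral sequence, which must therefore be zero.

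\textbf{Step 3: the pieces are Mazur--Ogus.} Once \(M\simeq\bigoplus_n\widetilde H^n(M)[-n]\), both Mazur--Ogus conditions pass to direct summands. Indeed, \(\mathbf s\) and \(R_1\otimes^{\mathbf L}_R(-)\) are additive, a summand of a torsion-free module is torsion-free, and a direct summand of an \(E_1\)-degenerate spectral sequence degenerates. Hence \(\widetilde H^n(M)\in\Delta_{\mathrm{MO}}\).

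\textbf{Main obstacle.} The hard part is Step 2, where the vanishing of the connecting maps has to be extracted from the two numerical conditions. Here I expect to rely on Ekedahl's analysis in \cite[Section~IV.1]{ekedahl3} rather than reprove it. This is consistent with the statement being quoted from \cite[Theorem~IV.1.2]{ekedahl3}, and in the paper it is taken as input from there.
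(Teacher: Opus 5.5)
Your outline stops where the paper does. The paper gives no proof of this statement and imports both parts~(i) and~(iv) from \cite[Theorem~IV.1.2]{ekedahl3}, and you hand the only substantive step, the vanishing of the connecting maps, to Ekedahl as well. Steps~1 and~3 are sound. Step~3 is even a small genuine gain: \(\mathbf s\) and \(R_1\otimes^{\mathbf L}_R(-)\) are additive, and the inclusion and projection of a summand are maps of spectral sequences. Hence torsion-freeness of \(H^*(\mathbf s(-))\) and \(E_1\)-degeneration pass to direct summands, so part~(iv) follows formally from part~(i), and only the splitting needs to be cited.

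The sketch you give for Step~2, however, has a genuine gap and would not work if you tried to make it self-contained.

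\emph{It is circular.} Decomposing \(X=\widetilde H^n(M)\) into a Hodge--Witt part and a diagonal-domino part is Theorem~\ref{thm:diagonal-MO-devissage}, which is stated for \(X\in\Delta_{\mathrm{MO}}\). You only obtain \(X\in\Delta_{\mathrm{MO}}\) in Step~3, from the splitting you are trying to prove.

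\emph{The domino argument does not work.} Diagonal dominoes are nft by Definition~\ref{def:diagonal-domino}, so ``torsion-freeness forces them to be nft'' adds nothing. Moreover, neither nft-ness nor positivity kills classes of \(\operatorname{Ext}\)-degree \(\ge2\). Proposition~\ref{prop:ext-full} gives \(\operatorname{Ext}^2_R(U_0,U_0)\simeq k\), the degree-zero part of \(k[d](1)\). The exact autoequivalence \((\delta_0)\) of Definition~\ref{def:autoequivalence} transports this to \(\operatorname{Hom}_{D^b_c(R)}(U_1,U_1[2])\simeq k\neq0\), a nonzero class of exactly the shape of your connecting maps between positive dominoes lying in \(\Delta\).

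So the vanishing cannot be read off from the nature of the pieces. It has to come from the Mazur--Ogus conditions on \(M\) as a whole: you must show that a nonzero connecting map forces torsion in some \(H^*(\mathbf s(M))\) or a nonzero differential in the spectral sequence of \(R_1\otimes^{\mathbf L}_RM\). Your identification of the class with such a differential is asserted, not derived, and that implication is precisely the content of \cite[Theorem~IV.1.2(i)]{ekedahl3}.

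As it stands, your proposal proves the statement only in the same sense as the paper: by citation, with Step~3 reducing what must be cited to part~(i).
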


\begin{theorem}[Diagonal Mazur--Ogus d\'evissage
{\cite[Theorem~III.4.6]{ekedahl3}}]
\label{thm:Eke-MO-quotient-side}
\label{thm:diagonal-MO-devissage}
Let \(M\in\Delta_{\mathrm{MO}}\). Then \(M\) has a maximal finite-free
Hodge--Witt subobject \(HW(M)\in\Delta_{\mathrm{HW}}\), fitting into a short
exact sequence
\[
  0\longrightarrow HW(M)\longrightarrow M\longrightarrow D_\Delta(M)\longrightarrow 0
\]
with quotient \(D_\Delta(M)\in\operatorname{Dom}^{+}_{\Delta}\).
\end{theorem}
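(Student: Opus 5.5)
This result is cited from Ekedahl \cite[Theorem~III.4.6]{ekedahl3}. Within the paper's framework, I would reconstruct it as follows.

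For \(M\in\Delta_{\mathrm{MO}}\), start from the maximal Hodge--Witt subobject \(HW(M)\subset M\). Its existence is recalled just before Lemma~\ref{lem:two-term-HW-core}, and \(\Delta_{\mathrm{HW}}\) is closed under subobjects, quotients and extensions. Put \(D_\Delta(M):=M/HW(M)\) in \(\Delta\). The proof then splits into three claims:
\begin{enumerate}
\item \(HW(M)\) is finite free;
\item \(D_\Delta(M)\) is a diagonal domino in the sense of Definition~\ref{def:diagonal-domino};
\item \(\mathrm{Fil}_\Delta^{1/2}D_\Delta(M)=D_\Delta(M)\).
\end{enumerate}

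For (1), note that a Hodge--Witt object has zero differentials, so its cohomology is a sum of shifted Dieudonn\'e modules. Any finite-length \(W\)-torsion in it would be a finite subobject of \(M\). So I would first show that Mazur--Ogus objects are nft. The argument: a finite-length subobject \(T\subset M\) in \(\Delta\) contributes torsion to \(H^\ast(\mathbf s(M))\) or to \(R_1\otimes^{\mathbf L}_R M\). In the latter case it creates a nonzero differential in the Hodge--de Rham analogue, contradicting \(E_1\)-degeneration. This is Ekedahl's standard use of the two Mazur--Ogus axioms, and it gives finite-freeness of \(HW(M)\).

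For (2), rationally \(\mathbf s\) of a Hodge--Witt object already carries all of \(H^\ast(\mathbf s(M))\otimes K\). This is the slope decomposition: after inverting \(p\) the slope spectral sequence degenerates, and each slope piece is realized by the core. Hence \(\mathbf s(D_\Delta(M))\) has torsion cohomology. The nft condition for the quotient is subtler. A finite subobject \(T\subset D_\Delta(M)\) pulls back to an extension of \(T\) by \(HW(M)\) inside \(M\). Maximality of \(HW(M)\) forces this extension to be non-Hodge--Witt, and the plan is to show that such a non-Hodge--Witt finite piece again contradicts \(E_1\)-degeneration via \(R_1\otimes^{\mathbf L}_R\).

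For (3), I would use the half-integer type filtration. The goal is to show that any nonzero quotient of \(D_\Delta(M)\) of type \(\le 0\) is impossible. By the analogue of Lemma~\ref{lem:positive-iff-dV-surjective}, non-positivity means a failure of surjectivity of \(dV\). Through the Illusie--Raynaud resolution (Proposition~\ref{prop:IR-resolution-Rn}), a cokernel of \(dV\) produces a class in \(R_1\otimes^{\mathbf L}_R M\) that is killed in the abutment. That again contradicts \(E_1\)-degeneration, or torsion-freeness of \(H^\ast(\mathbf s(M))\).

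The main obstacle is this last step, together with the nft part of (2): translating the Mazur--Ogus degeneration into vanishing of the type-\(\le 0\) part. Here I would try the comparison of Proposition~\ref{prop:ekedahl-comparison}. It computes \(\mathbf R\mathrm{Hom}(U_0,D)\) as a shift of \(R_1\otimes^{\mathbf L}_R D\), and the goal is to show that a surjection \(D\to U_j\) with \(j\le 0\) yields a length discrepancy in the Hodge--de Rham spectral sequence.
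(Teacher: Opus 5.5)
The paper gives no proof of this statement; it is imported from Ekedahl \cite[Theorem~III.4.6]{ekedahl3}. Judged as a self-contained reconstruction, your sketch has a genuine gap at step (3), and the mechanism you propose there would fail.

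You want a quotient of type \(\le0\) to produce a Hodge--de Rham class killed in the abutment, contradicting \(E_1\)-degeneration. But every elementary domino, positive or not, has a non-degenerate Hodge--de Rham spectral sequence on its own. By Proposition~\ref{prop:ext-full}, \(\dim_kH^*(R_1\otimes^{\mathbf L}_RU_j)=2j+2\), whereas \(\dim_kH^*(k\otimes^{\mathbf L}_W\mathbf s(U_j))=2j\) for \(j\ge1\). Positive dominoes nonetheless occur in Mazur--Ogus objects, for example \(U_A^{0,2}\) as a quotient of \(\widetilde H^2(A)\); their excess is cancelled through the gluing with \(HW(M)\). So degeneration is a property of \(M\) that is neither inherited by nor detectable on \(D_\Delta(M)\) alone, and a nonzero differential for the quotient says nothing about \(M\).

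What actually singles out types \(\le0\) is that such quotients are \(\mathbf s\)-\(1\)-torsion. For a domino with all types \(\le0\), Remark~\ref{rem:kernel-lengths} gives \(\ker d=0\). For instance \(\mathbf s(U_0)\simeq0\), so a surjection \(M\to U_0\) with kernel \(K\) gives \(\mathbf s(K)\simeq\mathbf s(M)\). Hence torsion-freeness of \(H^*(\mathbf s(M))\) is also blind to it. What must be shown is that \(M\) has no nonzero \(\mathbf s\)-\(1\)-torsion quotient in \(\Delta\), i.e.\ \(M\in\mathcal G\). Quotients then stay in \(\mathcal G\), and for dominoes in the Postnikov heart this is positivity by Theorem~\ref{thm:three-heart-criterion} and Lemma~\ref{lem:positive-iff-dV-surjective}. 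That is a global statement about how the domino part is glued to the Hodge--Witt part, which presumably requires Ekedahl's \(F\)-gauge theory (a form of Mazur's theorem); nothing in your sketch supplies it. Also, Lemma~\ref{lem:positive-iff-dV-surjective} concerns only dominoes in the Postnikov heart, so the ``analogue'' you invoke for diagonal dominoes with half-integer types is itself unproved.

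Step (1) has a smaller gap. That \(M\) is nft is easy, and for the first reason you give, with no degeneration needed:
\begin{itemize}
\item \(\mathbf s\) sends \(\Delta\) to complexes in degrees \([0,1]\), since each \(H^q\) of a diagonal object lives in internal degrees \(-q,-q+1\). Hence \(H^0\mathbf s\) is left exact on \(\Delta\).
\item A nonzero finite object of \(\Delta\) is Hodge--Witt, because finite length forces \(V\) nilpotent and then \(d=F^ndV^n=0\). Its \(\mathbf s\) is a nonzero finite module in degree \(0\), which would inject into the torsion-free \(H^0(\mathbf s(M))\).
\end{itemize}
But nft does not yield finite-freeness of \(HW(M)\). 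Its Postnikov cohomologies are Dieudonn\'e modules \(D_q\) in internal degree \(-q\), and torsion in \(D_q\) for a non-bottom \(q\) is only a subquotient of \(M\), not a subobject. What you do get is that \(H^0(\mathbf s(HW(M)))\subseteq H^0(\mathbf s(M))\) is torsion-free, with a filtration whose graded pieces are the \(D_q\). Torsion-freeness of those pieces is a strictness statement that needs a separate argument.

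By contrast, the nft half of (2) is immediate and needs no degeneration. A finite \(T\subset D_\Delta(M)\) is Hodge--Witt, so its preimage in \(M\) is an extension of Hodge--Witt objects. It is therefore Hodge--Witt, hence contained in \(HW(M)\), forcing \(T=0\). Maximality does not make that extension non-Hodge--Witt; closure under extensions makes it Hodge--Witt.
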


\begin{example}
Smooth projective curves of any genus are Mazur--Ogus. Abelian varieties and K3
surfaces are straight in the sense of \cite[Definition~1.4 and the following
paragraph]{GSY25}, hence Mazur--Ogus. We use only the abelian-variety case below.
In particular, all supersingular abelian varieties considered here are
Mazur--Ogus.
\end{example}

\begin{remark}[Geometric Mazur--Ogus notation]\label{rem:geometric-MO-object}
For a Mazur--Ogus variety \(X/k\), we write
\[
  \widetilde H^n(X):=
  \widetilde H^n\!\left(R\Gamma(X,W\Omega_X^\bullet)\right)\in\Delta,
  \qquad
  D_{\Delta,X}^n:=
  D_\Delta(\widetilde H^n(X)).
\]
By Remark~\ref{rem:simple-functor}, the simple realization of
\(R\Gamma(X,W\Omega_X^\bullet)\) is crystalline cohomology, and by
Remark~\ref{rem:truncation}, its level-one realization is the algebraic de Rham
complex.
\end{remark}

\begin{proposition}[Positivity of \(U_X^{0,2}\)]
\label{prop:diagonal-ordinary-degree-two-domino}
Let \(X/k\) be a smooth proper Mazur--Ogus variety. Then there is a canonical
isomorphism \(D^2_{\Delta,X}\xrightarrow{\sim}U_X^{0,2}\) in the diagonal heart.
Moreover,
\(H^0(HW(\widetilde H^2(X)))\simeq C_X^{0,2}[0]\).
In particular \(D^2_{\Delta,X}\) lies in the Postnikov heart, and every entry
of the type sequence \(J(U_X^{0,2})\) is positive.
\end{proposition}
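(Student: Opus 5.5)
We first identify \(\widetilde H^2(X)\) with the two-term module \(N:=H^2(X,W\Omega_X^\bullet)^{[0,0]}=[H^2(X,W\mathcal O_X)\xrightarrow{d}F^\infty B^{1,2}_X]\), placed in the Postnikov heart. Write \(M:=R\Gamma(X,W\Omega_X^\bullet)\). Since \(X\) is Mazur--Ogus, Theorem~\ref{thm:diagonal-MO-splitting} splits \(M\) into shifts of its diagonal cohomology objects. By Remark~\ref{rem:ordinary-diagonal-commute} we may compute \(\widetilde H^2(M)\) by applying diagonal truncations to the Postnikov cohomology objects \(H^q(M)\) and reassembling. On \(H^2(M)\), whose degree-\(i\) term is \(H^2(X,W\Omega^i_X)\), the truncation \(\widetilde\tau_{\le 0}\) is precisely \(N\) by the explicit formula of Definition~\ref{def:diagonal-t-structure}. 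The other contributions are the pieces \(\widetilde\tau_{\ge -q+2}\widetilde\tau_{\le -q+2}H^q(M)\) for \(q\neq 2\). We must show that these contribute nothing to the Postnikov \(H^1\) and \(H^{-1}\) of \(\widetilde H^2(X)\), or else only finite-free Hodge--Witt pieces. Here we use the domino range, Proposition~\ref{prop:ekedahl-duality}. It gives \(T^{i,j}=0\) for \(j<2\) and for \(i>n-2\). Hence \(H^0(M)\) and \(H^1(M)\) have no dominoes, so their diagonal slices are Dieudonné-type modules concentrated in a single internal degree. Modules concentrated in degree \(-q+2\) with \(q\neq 2\) do not meet the \([2,2]\)-slice in the wrong Postnikov degree, except through their Hodge--Witt cores, which are finite free.

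Next we apply Theorem~\ref{thm:diagonal-MO-devissage} to \(\widetilde H^2(X)\in\Delta_{\mathrm{MO}}\). This gives the exact sequence \(0\to HW\to\widetilde H^2(X)\to D^2_{\Delta,X}\to0\). The Postnikov cohomology long exact sequence then reduces the identification to the two-term module \(N\). Lemma~\ref{lem:two-term-HW-core} gives \(HW(N)=C^0(N)=V^{-\infty}Z^0(N)=C_X^{0,2}\). This uses the fact that \(V^{-\infty}Z^0\) of \(N\) agrees with that of \(H^2(M)\), because \(N^0=H^2(X,W\mathcal O_X)\) and \(F^\infty B^1\subset Z^1\). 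Proposition~\ref{prop:core-domino-devissage} identifies \(N/C^0(N)\) with \([N^0/V^{-\infty}Z^0\to F^\infty B^1]=\Dom^0(H^2(M))=U_X^{0,2}\). Comparing the two quotients by the same maximal Hodge--Witt subobject yields the canonical isomorphism \(D^2_{\Delta,X}\xrightarrow{\sim}U_X^{0,2}\). Canonicity holds because \(HW(-)\) is functorial and the Hodge--Witt part of \(\widetilde H^2(X)\) is maximal. It follows that \(H^0(HW(\widetilde H^2(X)))\simeq C_X^{0,2}[0]\).

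Positivity is then immediate. By Theorem~\ref{thm:diagonal-MO-devissage}, \(D^2_{\Delta,X}\) lies in \(\operatorname{Dom}^+_\Delta\). It is isomorphic to the genuine domino \(U_X^{0,2}\), so it lies in \(\operatorname{Dom}^+_\Delta\cap\operatorname{Mod}_c(R)=\operatorname{Dom}^+\). Definition~\ref{def:positive-diagonal-domino} then says every entry of \(J(U_X^{0,2})\) is positive. Equivalently, by Lemma~\ref{lem:positive-iff-dV-surjective}, \(dV\) is surjective on \(U_X^{0,2}\).

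The main obstacle is the first step: showing that \(\widetilde H^2(X)\) really lies in the Postnikov heart and equals \(N\). This requires ruling out stray contributions from \(H^1(M)\) and \(H^3(M)\) in adjacent Postnikov degrees. I would handle it by running the commuting-truncation bookkeeping degree by degree, as indicated in the first paragraph. The key inputs are the vanishing \(T^{i,j}=0\) for \(j\le1\), and the fact that \(\widetilde\tau_{\ge 0}\) applied to \(H^3(M)(\cdot)\) begins at \(N^{-1}/F^\infty B\), which vanishes in the relevant degree. If any such piece survives, it will be Hodge--Witt and finite free, so it is absorbed into \(HW\) and does not affect \(D^2_{\Delta,X}\).
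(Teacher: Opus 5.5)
\textbf{There is a genuine gap: your first step is false.} The object \(\widetilde H^2(X)\) does not lie in the Postnikov heart in general, so it cannot be identified with \(N=H^2(X,W\Omega_X^\bullet)^{[0,0]}\).

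Commuting the two \(t\)-structures, its Postnikov cohomology is:
\begin{itemize}
\item \(M_0=N\) in degree \(0\);
\item \(M_1=H^1(X,W\Omega_X^\bullet)^{[1,1]}\) in degree \(-1\);
\item \(M_2=H^0(X,W\Omega_X^\bullet)^{[2,2]}\) in degree \(-2\).
\end{itemize}
So the stray pieces come from \(H^1\) and \(H^0\) of \(R\Gamma(X,W\Omega_X^\bullet)\), and they sit in negative Postnikov degrees. \(H^3\) contributes nothing, since it would have to sit in internal degree \(-1\).

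These pieces do not vanish. \(M_1\) contains \(H^1(X,W\Omega^1_X)\), whose rationalization is the slope-\([1,2)\) part of \(H^2_{\mathrm{crys}}(X/W)\). For a supersingular abelian \(g\)-fold this has rank \(g(2g-1)\). This is exactly why the statement speaks of \(H^0(HW(\widetilde H^2(X)))\), and why it asserts that \(D^2_{\Delta,X}\), not \(\widetilde H^2(X)\), lies in the Postnikov heart. Consequently, your step ``comparing the two quotients by the same maximal Hodge--Witt subobject'' is unjustified as written. So is your deduction of \(H^0(HW(\widetilde H^2(X)))\simeq C_X^{0,2}[0]\).

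\textbf{What is missing.} Your fallback, that any surviving piece is Hodge--Witt and is absorbed into \(HW\), is the right idea. But it is the actual content of the proof, and you do not supply it. Finite-freeness plays no role. Put \(Y=\widetilde H^2(X)\), and let \(\pi\colon Y\to M_0\) be the quotient in \(\Delta\), with kernel \(\tau_{\le-1}Y\). The argument has two steps.
\begin{enumerate}
\item \(\tau_{\le-1}Y\) is Hodge--Witt. The vanishing \(T^{i,0}=T^{i,1}=0\) kills the domino parts of \(M_1\) and \(M_2\), so \(d=0\) on them. Hence \(M_1[1]\) and \(M_2[2]\) are Hodge--Witt, and so is the extension \(\tau_{\le-1}Y\).
\item \(HW(Y)=\pi^{-1}(HW(M_0))\). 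The preimage is an extension of \(HW(M_0)\) by \(\tau_{\le-1}Y\). By extension-closure of \(\Delta_{\mathrm{HW}}\) it is Hodge--Witt, so by maximality it lies in \(HW(Y)\). Conversely, \(\pi(HW(Y))\) is Hodge--Witt by closure under quotients, so it lies in \(HW(M_0)\).
\end{enumerate}
This gives the canonical isomorphism \(Y/HW(Y)\simeq M_0/HW(M_0)=U_X^{0,2}\). The Postnikov long exact sequence of \(0\to\tau_{\le-1}Y\to HW(Y)\to HW(M_0)\to0\) then gives \(H^0(HW(Y))\simeq HW(M_0)=C_X^{0,2}[0]\).

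With this argument replacing your first step, the rest of your proposal agrees with the paper. That includes the computation of \(HW(M_0)\) via Lemma~\ref{lem:two-term-HW-core} and core--domino d\'evissage, and the positivity deduction from Ekedahl's Mazur--Ogus d\'evissage.
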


\begin{proof}
Put \(Y=\widetilde H^2(X)\).
Since the Postnikov and diagonal \(t\)-structures commute
(Remark~\ref{rem:ordinary-diagonal-commute}), the Postnikov filtration of \(Y\)
in \(\Delta\) has graded pieces \(M_0,M_1[1],M_2[2]\), where
\(M_j=H^{2-j}(X,W\Omega_X^\bullet)^{[j,j]}\).

For \(q=0,1\), the domino-range theorem (Proposition~\ref{prop:ekedahl-duality})
gives \(T^{i,q}(X)=0\) for all \(i\). Thus the domino parts of \(M_1\) and \(M_2\)
vanish. By Proposition~\ref{prop:core-domino-devissage}, \(M_1[1]\) and
\(M_2[2]\), and therefore \(\tau_{\le -1}Y\), are Hodge--Witt.

Let \(\pi:Y\to M_0\) be the quotient map in \(\Delta\). We claim that
\(Y/HW(Y)\simeq M_0/HW(M_0)\). Since \(\tau_{\le -1}Y\) is Hodge--Witt, it is
contained in \(HW(Y)\). Put \(Q:=\pi^{-1}(HW(M_0))\). Then \(Q\) is an extension of
Hodge--Witt objects, hence is Hodge--Witt. By maximality \(Q\subset HW(Y)\).
Conversely, \(\pi(HW(Y))\) is
Hodge--Witt because \(\Delta_{\mathrm{HW}}\) is closed under quotients and subobjects
\cite[Proposition~I.2.1]{ekedahl3}, so \(\pi(HW(Y))\subset HW(M_0)\), i.e.
\(HW(Y)\subset Q\). Hence \(HW(Y)=Q\), and the claim follows.

Lemma~\ref{lem:two-term-HW-core} and
Proposition~\ref{prop:core-domino-devissage}, applied to
\(M_0=\bigl[H^2(X,W\mathcal O_X)\xrightarrow dF^\infty B_X^{1,2}\bigr]\) gives
\(HW(M_0)=V^{-\infty}Z_X^{0,2}[0]\) and
\(M_0/HW(M_0)=U_X^{0,2}\). Hence
\(D^2_{\Delta,X}=Y/HW(Y)\simeq M_0/HW(M_0)=U_X^{0,2}\).
Since \(\ker(\pi)=\tau_{\le-1}Y\), taking Postnikov cohomology also gives
\(H^0(HW(Y))\simeq HW(M_0)=C_X^{0,2}[0]\).
Finally, Theorem~\ref{thm:diagonal-MO-splitting} gives \(Y\in\Delta_{\mathrm{MO}}\),
so Theorem~\ref{thm:diagonal-MO-devissage} gives
\(U_X^{0,2}\in\operatorname{Dom}^{+}_{\Delta}\cap\operatorname{Mod}_c(R)
=\operatorname{Dom}^{+}\).
\end{proof}

The reconstruction below starts from the \(F\)-crystal
\(H^n_{\mathrm{crys}}(X/W)\), so we need the Hodge--Witt filtration without
referring back to the diagonal complex. Ekedahl's formula supplies exactly this
intrinsic \(F\)-crystal description.

\begin{proposition}[Hodge--Witt filtration formula
{\cite[Theorem~III.4.5]{ekedahl3}}]\label{prop:HW-filtration}
Let \(X/k\) be a Mazur--Ogus variety, and let
\(M=H^n_{\mathrm{crys}}(X/W)\) with \(F\)-crystal Frobenius \(\varphi\).
Let \(\mathrm{Fil}^i_{\mathrm{HW}}M\) be the abutment filtration on \(M\) of the
slope spectral sequence for \(W\Omega_X^\bullet\). Then
\[
  \mathrm{Fil}^i_{\mathrm{HW}}M
  =
  \bigcap_{r\ge 0}\varphi^{-r}\!\bigl(p^{ir}M\bigr).
\]
Since \(M\) is torsion-free, one checks by iteration that this is equivalently
the maximal sub-\(F\)-crystal \(L\subset M\) with
\(\varphi(L)\subset p^iL\).
\end{proposition}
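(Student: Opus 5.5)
The statement to prove is Proposition~\ref{prop:HW-filtration}, which is cited from Ekedahl; the only part that needs an argument here is the final equivalence. We must show that, for torsion-free \(M\), the intersection \(L_\infty:=\bigcap_{r\ge0}\varphi^{-r}(p^{ir}M)\) is the maximal sub-\(F\)-crystal \(L\subset M\) with \(\varphi(L)\subset p^iL\). The plan is to prove two things: \(L_\infty\) has this property, and every \(L\) with this property lies in \(L_\infty\).

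\textbf{Step 1: maximality.} Suppose \(\varphi(L)\subset p^iL\). By induction on \(r\) we get \(\varphi^r(L)\subset p^{ir}L\). Indeed, if \(\varphi^{r}(L)\subset p^{ir}L\), then
\[
  \varphi^{r+1}(L)\subset\varphi(p^{ir}L)=p^{ir}\varphi(L)\subset p^{i(r+1)}L,
\]
using that \(\varphi\) is \(\sigma\)-semilinear and \(\sigma(p)=p\). Hence \(L\subset\varphi^{-r}(p^{ir}M)\) for every \(r\), so \(L\subset L_\infty\).

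\textbf{Step 2: \(L_\infty\) has the property.} Take \(x\in L_\infty\). For every \(r\ge0\) we have \(\varphi^{r+1}(x)\in p^{i(r+1)}M\). This gives \(\varphi(x)=p^iy\) for a unique \(y\in M\); uniqueness is where torsion-freeness is used. Then
\[
  p^i\varphi^r(y)=\varphi^r(p^iy)=\varphi^{r+1}(x)\in p^{i(r+1)}M,
\]
so by torsion-freeness again \(\varphi^r(y)\in p^{ir}M\) for all \(r\). Thus \(y\in L_\infty\), and \(\varphi(L_\infty)\subset p^iL_\infty\).

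\textbf{Remaining checks.} \(L_\infty\) is a \(W\)-submodule because each \(\varphi^{-r}(p^{ir}M)\) is one: \(\varphi^r\) is additive and \(\sigma^r\)-semilinear. It is \(\varphi\)-stable by Step 2, since \(p^iL_\infty\subset L_\infty\). It is finitely generated because \(W\) is Noetherian. In the \(F\)-crystal sense, ``sub-\(F\)-crystal'' should just mean a \(\varphi\)-stable \(W\)-submodule. If saturation were required, note that \(L_\infty\) is in fact saturated: if \(px\in L_\infty\), then \(p\varphi^r(x)\in p^{ir}M\), which gives \(\varphi^r(x)\in p^{ir-1}M\). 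That is weaker than what we need, so saturation is not automatic, and I would not claim it. The precise convention is the one point I would double-check. Otherwise the argument is routine, and the main delicate point is the division by \(p^i\) in Step 2, which relies on torsion-freeness.

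The formula itself, identifying \(\mathrm{Fil}^i_{\mathrm{HW}}\) with \(L_\infty\), is taken from Ekedahl's Theorem~III.4.5 and is not reproved.
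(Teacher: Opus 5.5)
Your argument is correct and is the same as the paper's: the formula is cited from Ekedahl, and your Steps~1 and~2 carry out the ``iteration'' the paper alludes to, using \(\sigma(p)=p\) and torsion-freeness to divide by \(p^i\), to show that \(\bigcap_{r\ge0}\varphi^{-r}(p^{ir}M)\) is the largest \(\varphi\)-stable \(W\)-submodule \(L\) with \(\varphi(L)\subset p^iL\). Delete the aside asserting that \(L_\infty\) is saturated, since it is false in general: for a supersingular abelian variety of dimension \(g\ge2\), \(\mathrm{Fil}^1_{\mathrm{HW}}H^2_{\mathrm{crys}}(A/W)\) has full rank and a nonzero finite-length cokernel, so ``sub-\(F\)-crystal'' here can only mean a \(\varphi\)-stable \(W\)-submodule.
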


\subsection{Ekedahl modules and positive dominoes}\label{sec:ekedahl-modules}

In \cite{ekedahl3}, Ekedahl defines three \(t\)-structures on \(D_c^b(R)\): the
Postnikov, diagonal, and \(F\)-gauge \(t\)-structures, with hearts
\(\operatorname{Mod}_c(R)\), \(\Delta\), and \(\mathcal G\), respectively. The
intersection \(\operatorname{Mod}_c(R)\cap\Delta\) is central in Ekedahl's work, and
natural examples include \(H^j(X,W\Omega_X^\bullet)^{[i,i]}\). We observe that
\(H^2(X,W\Omega_X^\bullet)^{[0,0]}\) lies in the triple-heart intersection
\(\operatorname{Mod}_c(R)\cap\Delta\cap\mathcal G\). Ekedahl does not single out this
exact category as a separate object of study. In homage, we call its objects
\emph{Ekedahl modules}. In the main text we use only the concrete
description below. Appendix~\ref{app:ekedahl-three-hearts} recalls \(\mathcal G\), the
\(F\)-gauge interpretation, and the three-heart criterion.

\begin{definition}[Ekedahl modules]\label{def:ekedahl-module}
Define \(\mathcal E\) to be the full subcategory of \(\operatorname{Mod}_c(R)\)
spanned by coherent two-term $R$-modules \(M=[M^0\xrightarrow d M^1]\), supported
in internal degrees \(0,1\), such that \(dV:M^0\twoheadrightarrow M^1\) is surjective.
The exact structure is inherited from \(D_c^b(R)\).
\end{definition}

\begin{theorem}[D\'evissage for Ekedahl modules]
\label{thm:ekedahl-module-devissage}
Let \(M=[M^0\xrightarrow dM^1]\) be a coherent two-term $R$-module, and put
\[
  C(M):=V^{-\infty}Z^0(M)=\bigcap_{r\ge0}\ker(dV^r:M^0\to M^1),
  \qquad
  \Dom(M):=\bigl[M^0/C(M)\xrightarrow{\bar d}M^1\bigr].
\]
Then \(C(M)\) is a coherent Dieudonn\'e module. Moreover, the following are
equivalent.
\begin{enumerate}
\item \(M\in\mathcal E\).
\item \(\Dom(M)\in\operatorname{Dom}^{+}\).
\end{enumerate}
\end{theorem}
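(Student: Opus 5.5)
The plan is to reduce everything to the core--domino d\'evissage of Proposition~\ref{prop:core-domino-devissage} in degree \(0\), and then transfer surjectivity of \(dV\) between \(M\) and \(\Dom(M)\) using Lemma~\ref{lem:positive-iff-dV-surjective}.

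\emph{First step: the core and the domino.} Since \(M\) is coherent and concentrated in degrees \(0,1\), we have \(F^\infty B^0(M)=0\). Hence the degree-\(0\) core is exactly \(C^0(M)=V^{-\infty}Z^0(M)=C(M)\). Proposition~\ref{prop:core-domino-devissage} then shows that \(C(M)\) is a Dieudonn\'e module, finitely generated over \(W\) with \(V\) topologically nilpotent; this is the first assertion. The same proposition shows that the complementary map \(M^0/C(M)\to F^\infty B^1(M)\) is a domino \(\Dom^0(M)\).

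\emph{Main obstacle: the target of \(\Dom(M)\).} The statement uses \(M^1\) as the target, whereas Proposition~\ref{prop:core-domino-devissage} uses \(F^\infty B^1(M)\). So I must compare \(M^1\) with \(F^\infty B^1(M)\) in each direction, and this is the step I expect to need the most care.

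\emph{Direction (1)\(\Rightarrow\)(2).} Assume \(dV:M^0\to M^1\) is surjective. Then \(M^1=dVM^0=F(dV^2M^0)\). Iterating with \(dV=F\,dV^2\) (from \(FdV=d\) twisted), every \(y\in M^1\) lies in \(F^n d(M^0)\) for all \(n\). Concretely, \(dV^{m}=F^{n}dV^{m+n}\), so \(y=dVx=F^{n}\bigl(dV^{n+1}x\bigr)\in F^n B^1\) up to the identification \(dV^{n+1}x\in\operatorname{im}d\) via \(dV^{n+1}x=d(V^{n+1}x)\). In particular \(F^\infty B^1(M)=M^1\), so \(\Dom(M)=\Dom^0(M)\) is a domino. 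The map \(dV\) on \(\Dom(M)\) is the induced map from the quotient \(M^0/C(M)\), hence still surjective. By Lemma~\ref{lem:positive-iff-dV-surjective}, \(\Dom(M)\in\operatorname{Dom}^+\).

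\emph{Direction (2)\(\Rightarrow\)(1).} Assume \(\Dom(M)\in\operatorname{Dom}^+\). Here \(\Dom(M)\) has target \(M^1\), and being a domino, Remark~\ref{rem:domino-intrinsic-characterization} gives that its core is trivial, so \(F^\infty B^1=M^1\). Lemma~\ref{lem:positive-iff-dV-surjective} gives \(\bar d V:M^0/C(M)\to M^1\) surjective. Since \(dV\) on \(M^0\) factors through the quotient by \(C(M)\subset\ker(dV)\), it follows that \(dV:M^0\to M^1\) is surjective, so \(M\in\mathcal E\).

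In both directions, the only subtle point is the identification of the target \(M^1\) with \(F^\infty B^1(M)\): in (1)\(\Rightarrow\)(2) it follows from \(dV\)-surjectivity, and in (2)\(\Rightarrow\)(1) it comes from \(\Dom(M)\) being a domino.
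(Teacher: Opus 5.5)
Your proposal is correct and follows the paper's proof. You identify \(C(M)\) with the degree-\(0\) core via \(F^\infty B^0(M)=0\), show \(F^\infty B^1(M)=M^1\) so that \(\Dom(M)=\Dom^0(M)\), and transfer surjectivity of \(dV\) between \(M\) and \(\Dom(M)\) using Lemma~\ref{lem:positive-iff-dV-surjective}. The only difference is that your iteration \(dV^m=F^n dV^{m+n}\) is more than needed: its \(n=0\) case, \(dV=d\circ V\), already makes \(d\) surjective and hence gives \(F^\infty B^1(M)\supseteq B^1(M)=M^1\), which is the paper's argument; likewise your remark about \(F^\infty B^1\) in (2)\(\Rightarrow\)(1) is not used.
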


\begin{proof}
Since \(M\) is supported in internal degrees \(0,1\), one has
\(F^\infty B^0(M)=0\), hence \(C(M)=C^0(M)\). The finiteness
statement in \S\ref{sec:hearts-dominoes} makes this a coherent Dieudonn\'e module.

Assume \(M\in\mathcal E\). Since \(dV=d\circ V\), the surjectivity of \(dV\) implies
the surjectivity of \(d\), hence \(F^\infty B^1(M)=M^1\). Therefore \(\Dom(M)\) is
the domino \(\Dom^0(M)\) of Proposition~\ref{prop:core-domino-devissage}. Since
\(C(M)[0]\) has zero degree-\(1\) term, the surjectivity of \(dV\) on \(M\) passes to
\(\Dom(M)\). Lemma~\ref{lem:positive-iff-dV-surjective} gives
\(\Dom(M)\in\operatorname{Dom}^{+}\).

Conversely, assume (2). The sequence \(0\to C(M)[0]\to M\to \Dom(M)\to0\) is exact by
the definition of \(\Dom(M)\). Since \(dV\) is surjective on \(\Dom(M)\) by
Lemma~\ref{lem:positive-iff-dV-surjective}, and \(C(M)[0]\) has no degree-\(1\) term,
\(dV\) is also surjective on \(M\). Thus \(M\in\mathcal E\).
\end{proof}

For reconstruction, we restrict to nft Ekedahl modules, which admit a
description by Nygaard pairs (Definition~\ref{def:nygaard-pair}).

\begin{definition}[No finite torsion]
\label{def:nft-ekedahl}
We say that an object \(M\in\Delta\) has \emph{no finite torsion}, abbreviated
\emph{nft}, if it contains no nonzero finite-torsion subobject in \(\Delta\).
This is Ekedahl's condition ``without finite torsion.'' By the three-heart
criterion of Appendix~\ref{app:ekedahl-three-hearts}, \(\mathcal E\subset\Delta\).
Let \(\mathcal E^{\mathrm{nft}}\subset\mathcal E\) be the full subcategory of nft
Ekedahl modules, with the exact structure inherited from \(\mathcal E\).
Ekedahl proves that an object \(M\in\Delta\cap\operatorname{Mod}_c(R)\) is nft
if and only if \(C(M)=V^{-\infty}Z^0(M)\) is a finite free Dieudonn\'e module
\cite[Theorem~I.1.12(ii)]{ekedahl3}. In particular, an Ekedahl module \(M\) is
nft exactly when its core \(C(M)\) is finite free.
\end{definition}

Let \(\mathcal E^{\mathrm{tor}}\subset\mathcal E\) be the full subcategory of
\emph{torsion Ekedahl modules}, those whose core \(C(M)\) is \(W\)-torsion.
Every \(M\in\mathcal E\) fits into
\(0\to C(M)[0]\to M\to\Dom(M)\to0\), where
\(\Dom(M)\in\operatorname{Dom}^{+}\). The conditions
\(M\in\mathcal E^{\mathrm{nft}}\) and \(M\in\mathcal E^{\mathrm{tor}}\) say that
\(C(M)\) is finite free and torsion, respectively. Thus the two conditions
hold simultaneously exactly when \(C(M)=0\), giving the following diamond of
full inclusions:
\[
\begin{tikzcd}[row sep=.3em, column sep=3.5em]
& \mathcal E^{\mathrm{nft}} \arrow[dr, hook] & \\
\operatorname{Dom}^{+} \arrow[ur, hook] \arrow[dr, hook]
&& \mathcal E \\
& \mathcal E^{\mathrm{tor}} \arrow[ur, hook] &
\end{tikzcd}
\]

The reconstruction of an nft Ekedahl module will use only
\(K_0=\ker d\), \(K_{-1}=\ker(dV)\), their inclusion, and the restriction of
\(V\) to \(K_{-1}\). We first axiomatize the finite data carried by these two
kernels.

\begin{definition}[Nygaard pairs]\label{def:nygaard-pair}
A \emph{Nygaard pair} is the data \((K_0,K_{-1},\iota,\nu)\) of finitely
generated \(W\)-modules \(K_0\) and \(K_{-1}\), a \(W\)-linear injection
\(\iota:K_{-1}\hookrightarrow K_0\), through which we regard \(K_{-1}\) as a
submodule of \(K_0\), and a \(\sigma^{-1}\)-linear injection
\(\nu:K_{-1}\hookrightarrow K_0\), subject to two conditions:
\begin{enumerate}
  \item \(pK_0\subseteq K_{-1}\cap\nu(K_{-1})\).
  \item the decreasing sequence
  \[
    K_{-(r+1)}:=\{x\in K_{-1}:\nu(x)\in K_{-r}\}\qquad(r\ge1)
  \]
  stabilizes at a finite free \(W\)-module \(C\) on which \(\nu\) is
  topologically nilpotent, and \(K_0/C\) has finite \(W\)-length.
\end{enumerate}
A morphism of Nygaard pairs is a \(W\)-linear map \(K_0\to K'_0\) carrying
\(K_{-1}\) into \(K'_{-1}\) and commuting with \(\nu\). Write
\(\mathsf{NygPair}\) for the resulting category. A sequence in
\(\mathsf{NygPair}\) is
\emph{exact} if it is exact on \(K_0\) and on \(K_{-1}\). The equivalence below
shows that these sequences define an exact structure.
\end{definition}

On an Ekedahl module, these two kernels extend in both directions through the
modified differentials \(F^nd\) of \S\ref{sec:devissage}.

\begin{definition}[Kernel ladder]\label{def:kernel-ladder}
For \(M\in\mathcal E\), use the convention \(F^{-r}d=dV^r\). The natural
inclusions between the kernels of \(F^nd:M^0\to M^1\), together with the
restrictions of \(V\), form the \emph{kernel ladder}
\[
\begin{tikzcd}[column sep=2.1em]
\cdots
  \arrow[r,shift left=.55ex,hook]
  \arrow[r,shift right=.55ex,swap,"V"]
& \ker(dV^2)
  \arrow[r,shift left=.55ex,hook]
  \arrow[r,shift right=.55ex,swap,"V"]
& \ker(dV)
  \arrow[r,shift left=.55ex,hook]
  \arrow[r,shift right=.55ex,swap,"V"]
& \ker d
  \arrow[r,shift left=.55ex,hook]
  \arrow[r,shift right=.55ex,swap,"V"]
& \ker(Fd)
  \arrow[r,shift left=.55ex,hook]
  \arrow[r,shift right=.55ex,swap,"V"]
& \cdots .
\end{tikzcd}
\]
The upper arrows are \(W\)-linear and the lower arrows are
\(\sigma^{-1}\)-linear. These kernels form an increasing chain of
\(W\)-submodules of \(M^0\), and
\(\bigcap_{r\ge0}\ker(dV^r)=C(M)\). Define
\[
  \mathbf K(M):=\bigl(\ker d,\ \ker(dV),\ \iota,\ V|_{\ker(dV)}\bigr),
\]
where \(\iota\) is the inclusion \(\ker(dV)\subseteq\ker d\) given by \(d=FdV\).
\end{definition}

The reconstruction from the central pair is \(V\)-adic: \(\ker d\) must
generate \(M^0\) modulo \(V\), while the kernels \(\ker(dV^r)\) must record its
intersections with the \(V\)-adic filtration. The next lemma shows that these
conditions also characterize the degree-zero module reconstructed from a
Nygaard pair.

\begin{lemma}[Degree-zero realization of Nygaard pairs]\label{lem:kernel-ladder}
Let \(M\in\mathcal E^{\mathrm{nft}}\). Then \(V\) is injective,
\(\mathbf K(M)\) is a Nygaard pair, and
\[
  M^0=\ker d+VM^0,
  \qquad
  \ker d\cap V^nM^0=V^n\ker(dV^n)\quad(n\ge0).
\]
Conversely, every Nygaard pair determines functorially a \(V\)-adically
complete \(W\)-module \(M^0\supseteq K_0\), with injective
\(\sigma^{-1}\)-linear \(V\) extending \(\nu\), finite-length quotients
\(M^0/V^nM^0\), with \(M^0=K_0+VM^0\) and
\(K_0\cap V^nM^0=V^nK_{-n}\). For
\(\mathbf K(M)\), where \(K_{-n}=\ker(dV^n)\), this construction recovers
\(M^0\) naturally.
\end{lemma}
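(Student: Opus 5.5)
The plan has two halves: the forward direction, from an nft Ekedahl module to its kernel data, and the converse construction of \(M^0\) from a Nygaard pair.

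\emph{Forward direction.} I first record the basic structure. By Theorem~\ref{thm:ekedahl-module-devissage} and Definition~\ref{def:nft-ekedahl}, \(M\) sits in \(0\to C(M)[0]\to M\to \Dom(M)\to 0\), with \(C(M)\) finite free and \(\Dom(M)\in\operatorname{Dom}^{+}\).

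\begin{itemize}
\item \emph{Injectivity of \(V\).} On a finite free Dieudonn\'e module, \(V\) is injective because \(FV=p\) and there is no \(p\)-torsion. On \(\Dom(M)^0\), \(V\) is injective because it is injective on each \(U_j^0\) and injectivity passes to extensions. The snake lemma then gives injectivity of \(V\) on \(M^0\).
\item \emph{The identity \(M^0=\ker d+VM^0\).} Given \(z\in M^0\), surjectivity of \(dV\) gives \(dz=dVw\), so \(z-Vw\in\ker d\).
\item \emph{The identity \(\ker d\cap V^nM^0=V^n\ker(dV^n)\).} If \(V^nw\in\ker d\), then \(dV^nw=0\). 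Conversely, \(dV^n\ker(dV^n)=0\) says \(V^n\ker(dV^n)\subset\ker d\). Injectivity of \(V\) makes this a bijective correspondence.
\end{itemize}

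Next I verify the Nygaard-pair axioms for \(\mathbf K(M)\).

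\begin{itemize}
\item \emph{Finite generation of \(K_0\).} \(K_0\) is an extension of \(\ker(\bar d)\) on \(\Dom(M)\) by \(C(M)\). The first is finite length by Remark~\ref{rem:kernel-lengths}, since all types are positive and \(\lgth\ker d=\deg\) of the domino. The second is finite free.
\item \emph{Axiom (1).} For \(x\in K_0\), \(dV(px)=dVFVx\). Since \(dVF=F\cdot dV\cdot V\)-type identities give \(dV(Fy)=Vd\,y\cdot\) (more directly \(dVp=pdV\) and \(FdV=d\)), we get \(dV(px)=p\,dVx=V d x\cdot\)…; cleanly, \(V\cdot d\,(Fx')\) reduces to \(Vdx=0\). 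This gives \(px\in K_{-1}\). Also \(px=V(Fx)\), and \(Fx\in\ker(dV)\) because \(dVF=FdV\cdot\)… reduces to \(dx\)-terms via \(FdV=d\). I will check these identities carefully in \(R\); they are routine.
\item \emph{Axiom (2).} By the ladder, \(K_{-r}=\ker(dV^r)\). These stabilize at \(C(M)\) because, on the domino part, \(\ker(dV^r)\) has length \(\sum m_j\max(0,j-r)\), which vanishes for \(r\ge\max J\). Topological nilpotence of \(V\) on \(C(M)\) holds because \(C(M)\) is Dieudonn\'e. Finally, \(K_0/C\) has finite length.
\end{itemize}

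\emph{Converse direction.} Given a Nygaard pair, I define \(M^0\) as the \(V\)-adic completion of a colimit: the \(W\)-module generated by formal symbols \(V^n x\) for \(x\in K_0\), modulo the identifications \(V^n(\nu y)=V^{n+1}y\) for \(y\in K_{-1}\) and \(V^n\iota(y)\sim V^n y\). Concretely, put \(M^0:=\varprojlim_n M^0_{(n)}\), where \(M^0_{(n)}\) is built inductively by
\[
  M^0_{(n+1)}=\bigl(K_0\oplus \sigma^{-1}_*M^0_{(n)}\bigr)\big/\{(\iota y,-\nu' y)\}.
\]
This is a Nygaard-style pushout. The properties \(M^0=K_0+VM^0\) and \(K_0\cap V^nM^0=V^nK_{-n}\) are then checked by induction on \(n\), using axiom (2) to identify \(K_{-n}\) as the preimage data. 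Finite length of \(M^0/V^nM^0\) comes from finite generation of \(K_0\) together with axiom (1), which forces \(p\)-power torsion in the quotient \(K_0/V K_{-1}\) up to a free part, where \(V\) is topologically nilpotent. Functoriality is clear from the universal description.

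\emph{Recovery and main obstacle.} For recovery, the inclusion \(\ker d\subset M^0\) and \(V\) induce a map from the constructed module to \(M^0\). It is surjective by \(M^0=\ker d+VM^0\) together with completeness. It is injective because the intersections \(\ker d\cap V^nM^0\) match \(V^nK_{-n}\) on both sides, so the associated gradeds agree. I expect this converse step to be the main obstacle: showing the construction introduces no extra kernel, i.e.\ that \(K_0\cap V^nM^0\) is not larger than \(V^nK_{-n}\). The plan is to use the stabilization at the free \(C\) to split off \(C\), with \(\nu\) topologically nilpotent there, and to treat the finite-length quotient \(K_0/C\) by a finite induction.
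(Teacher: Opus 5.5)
Your forward half is sound, and in two places it takes a valid alternative route. You prove injectivity of \(V\) by the snake lemma on \(0\to C(M)\to M^0\to\Dom(M)^0\to0\), where the paper observes directly that \(\ker V\subseteq C(M)[p]=0\). You prove stabilization of the ladder by the kernel-length formula of Remark~\ref{rem:kernel-lengths}. Your axiom (1) computation is garbled; what it needs is, for \(x\in\ker d\), the identities \(dV(px)=Vdx=0\), \(dV(Fx)=d(VFx)=p\,dx=0\) and \(px=V(Fx)\).

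The genuine gap is in the converse, exactly where you locate the main obstacle. The substantive facts are asserted (``checked by induction on \(n\)'') but not proved:
\begin{itemize}
\item that \(K_0\to M^0\) is injective;
\item that \(V\) is injective on the constructed module;
\item that \(K_0\cap V^nM^0=V^nK_{-n}\).
\end{itemize}
Your recovery step depends on all three. Your concrete recursion also has \(\iota\) and \(\nu\) interchanged. The relation \((\iota y,-\nu'y)\) identifies \(\iota y\) with \(V\nu y\), whereas the presentation you describe in words requires killing \((\nu y,-[\iota y])\), i.e.\ \(\nu y=Vy\). Your proposed remedy of splitting off \(C\) fails in general, because the core need not be a direct summand. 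For example, Lemma~\ref{lem:arbitrary-type-extension-coordinates} gives \(\operatorname{Ext}^1_R(U_1,D[0])\simeq D/FD\simeq k\) for \(D=R^0/R^0(F-V)\). The remedy is also unnecessary.

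The missing idea is to write the level-\(n\) quotient as an explicit cokernel and eliminate triangularly. Let \(Q_n\) be the cokernel of \(\delta_n\colon\bigoplus_{r=0}^{n-1}\sigma_*^{r+1}K_{-1}\to\bigoplus_{r=0}^{n-1}\sigma_*^rK_0\). Its level-\(0\) component is \(\nu x_0\), and its level-\(r\) component is \(\nu x_r-\iota x_{r-1}\) for \(1\le r\le n-1\). Here \([a]_r\) stands for \(V^ra\), and the relation \([\nu x]_{n-1}=0\) encodes \(V^nx=0\).

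Because \(\nu\) is injective, solving from level \(0\) upward gives two conclusions. First, \(\delta_n\) is injective. Second, \([a]_0=0\) in \(Q_n\) if and only if \(a=\nu x_0\) and \(x_{r-1}=\nu x_r\) with all \(x_r\in K_{-1}\). By the recursion defining \(K_{-n}\), this says exactly \(a\in\nu^nK_{-n}\). The same computation shows that the shift \([a]_r\mapsto[a]_{r+1}\) is injective \(Q_n\to Q_{n+1}\). It also identifies \(Q_n\) with \(M^0/V^nM^0\) for \(M^0:=\varprojlim_nQ_n\), and gives \(M^0=K_0+VM^0\).

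None of these finite-level steps uses axiom (2), contrary to your sketch. That axiom enters only in the limit: \(\ker(K_0\to M^0)=\bigcap_n\nu^nK_{-n}=\bigcap_n\nu^nC=0\) by topological nilpotence of \(\nu\) on \(C\). Finite length is also immediate. If \(pa=\nu x\), then \(p[a]_r=[x]_{r+1}\), so \(p^{n-r}[a]_r=0\). Axiom (1) in fact makes \(K_0/\nu K_{-1}\) killed by \(p\) outright, with no ``free part''. Once these facts are established, either your associated-graded comparison or the paper's map \([a]_r\mapsto V^ra\) recovers \(M^0\).
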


\begin{proof}
\emph{Step 1: extracting the Nygaard pair.}
The core \(C(M)\) is finite free because \(M\) is nft. If \(Vx=0\), the Raynaud
relations give \(px=FVx=0\). They also give \(dx=FdVx=0\), while
\(dV^rx=0\) is immediate for \(r\ge1\). Thus
\(x\in\bigcap_{r\ge0}\ker(dV^r)=C(M)\), and \(px=0\) forces \(x=0\) because
\(C(M)\) is finite free. Hence \(V\) is injective on \(M^0\). Given
\(x\in M^0\), surjectivity of \(dV\) provides
\(y\in M^0\) with \(dVy=dx\). Hence \(x-Vy\in\ker d\), proving
\(M^0=\ker d+VM^0\). Moreover, \(V^ny\in\ker d\) exactly when
\(y\in\ker(dV^n)\), so
\(\ker d\cap V^nM^0=V^n\ker(dV^n)\).

For \(x\in\ker d\), the relations \(Vd=pdV\) and \(VF=p\) give
\(px,Fx\in\ker(dV)\) and \(px=VFx\). Hence
\(p\ker d\subseteq\ker(dV)\cap V(\ker(dV))\). The recursively defined terms of
\(\mathbf K(M)\) are
\(K_{-r}=\ker(dV^r)\), so their intersection is \(C(M)\). Since
\(\ker d/C(M)=\ker(\bar d:\Dom(M)^0\to\Dom(M)^1)\) has finite length, the
sequence stabilizes at the finite free Dieudonn\'e module \(C(M)\), on which
\(\nu=V|_{C(M)}\) is topologically nilpotent.
Functoriality is immediate.

\emph{Step 2: the finite quotients \(Q_n\).} Let
\((K_0,K_{-1},\iota,\nu)\) be a Nygaard pair. Using the Frobenius twist
\(\sigma_*\) of Definition~\ref{def:autoequivalence}, define
\[
  \delta_n:\bigoplus_{r=0}^{n-1}\sigma_*^{r+1}K_{-1}
  \longrightarrow\bigoplus_{r=0}^{n-1}\sigma_*^rK_0,
  \qquad
  x\in\sigma_*^{r+1}K_{-1}\longmapsto[\nu x]_r-[\iota x]_{r+1},
\]
where \([\iota x]_{r+1}\) is omitted for \(r=n-1\), and put
\(Q_n:=\operatorname{coker}(\delta_n)\). Here \([a]_r\) denotes the class of
\(a\in\sigma_*^rK_0\). Triangular elimination shows that \(\delta_n\) is
injective. It also shows that \([a]_0=0\) precisely when
\(a=\nu x_0\) and \(x_{r-1}=\nu x_r\) for \(1\le r\le n-1\) in a chain
\(x_0,\ldots,x_{n-1}\in K_{-1}\). By the ladder condition, this is equivalent
to \(a\in\nu^nK_{-n}\), so \(\ker(K_0\to Q_n)=\nu^nK_{-n}\).
Moreover, \(Q_n\) has finite length: if \(pa=\nu(x)\), then
\(p[a]_r=[x]_{r+1}\), so \(p^{n-r}[a]_r=0\).

\emph{Step 3: the \(V\)-adic realization.}
Dropping the last component gives surjections \(Q_{n+1}\twoheadrightarrow Q_n\).
On \(M^0:=\varprojlim_nQ_n\), the shifts \([a]_r\mapsto[a]_{r+1}\) induce a
\(\sigma^{-1}\)-linear \(V\). The same elimination makes \(V\) injective,
identifies \(M^0/V^nM^0\) with \(Q_n\), and yields
\(M^0=K_0+VM^0\) and \(K_0\cap V^nM^0=V^nK_{-n}\).
Here \(K_0\hookrightarrow M^0\) because \(K_{-n}=C\) eventually and \(\nu\) is
topologically nilpotent on \(C\), while \([x]_1=[\nu x]_0\) gives
\(V|_{K_{-1}}=\nu\).

\emph{Step 4: recovering \(M^0\).}
The construction is functorial at each finite level. Applied to
\(\mathbf K(M)\), the identifications \(K_0=\ker d\) and
\(K_{-n}=\ker(dV^n)\) show that the maps \([a]_r\mapsto V^ra\) identify \(Q_n\)
with \(M^0/V^nM^0\). Completeness
(Proposition~\ref{prop:coherence-criterion}) then recovers \(M^0\).
\end{proof}

\begin{theorem}
\label{thm:two-kernel-presentation}
The functor \(\mathbf K:\mathcal E^{\mathrm{nft}}\to\mathsf{NygPair}\) is an
exact equivalence of categories.
\end{theorem}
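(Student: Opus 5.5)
By Lemma~\ref{lem:kernel-ladder}, \(\mathbf K\) is already a well-defined functor to \(\mathsf{NygPair}\). It then remains to prove full faithfulness, essential surjectivity, and exactness in both directions. The organizing idea is that an nft Ekedahl module is determined by its degree-zero part \(M^0\) together with \(V\) and the kernel filtration \(\ker(dV^n)\). Lemma~\ref{lem:kernel-ladder} reconstructs exactly these data. The degree-one part is then forced by surjectivity of \(dV\):
\[
  M^1\cong M^0/\ker(dV),
\]
with \(d\) and \(F\) determined by the Raynaud relations.

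For full faithfulness, take a morphism \(\phi:\mathbf K(M)\to\mathbf K(M')\). Lemma~\ref{lem:kernel-ladder} extends it uniquely and functorially to a \(V\)-equivariant map \(\phi^0:M^0\to M'^0\). This map preserves every \(\ker(dV^n)\), because those submodules are built recursively from \((K_0,K_{-1},\nu)\). Hence \(\phi^0\) descends to a map
\[
  \phi^1:M^0/\ker(dV)\to M'^0/\ker(dV').
\]
Next we check compatibility with the remaining operators. For \(d\), write \(dx=dV y\) with \(x-Vy\in\ker d\), using \(M^0=\ker d+VM^0\). For \(F\) on \(M^0\), use \(Fx\in\ker(dV)\)-type identities together with \(pFx=FVFx\); more directly, since \(V\) is injective, \(Fx\) is determined by \(VFx=px\). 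On \(M^1\), \(F\) is forced by \(F\,dVy=dy\). Faithfulness is clear, since \(\phi^0\) is determined on \(K_0\) and \(V\)-adic continuity pins it down on all of \(M^0\).

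For essential surjectivity, start from a Nygaard pair and build \(M^0\) as in Lemma~\ref{lem:kernel-ladder}, writing \(K_{-n}\) for the ladder. Set \(M^1:=M^0/K'\), where \(K'\) is the closure of \(\bigcup_n V^{n-1}(\ldots)\). More concretely, define \(K'_{-1}\subset M^0\) as the \(V\)-adic closure of \(K_{-1}+V(\text{preimage data})\). Equivalently, declare \(x\in\ker(dV)\) iff \(Vx\in \overline{K_0}\), where \(\overline{K_0}\) is the intended \(\ker d\); this has the right shape because \(\ker d\cap VM^0=V\ker(dV)\). With this, let \(dV:M^0\to M^1\) be the projection. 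Define
\[
  d(x):=dV(y)\quad\text{for } x=k+Vy,\ k\in K_0,
\]
and \(V=0\) on \(M^1\). Define \(F\) on \(M^0\) by \(Fx:=V^{-1}(px)\); this requires \(pM^0\subset VM^0\), which follows from \(pK_0\subseteq\nu(K_{-1})\), the first condition in Definition~\ref{def:nygaard-pair}. Define \(F\) on \(M^1\) by \(F[y]:=[x]\) whenever \(y=Vx\) modulo \(\ker\), so that \(FdV=d\).

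I expect the main obstacle to be the remaining checks. Well-definedness of \(d\) needs \(K_0\cap VM^0=VK_{-1}\), which comes from the lemma. We must also verify the relations \(FV=VF=p\) and \(FdV=d\), that \(\ker d=K_0\) and \(\ker dV=K_{-1}\) on the nose, and that the resulting module is coherent. For coherence, apply Proposition~\ref{prop:coherence-criterion}(3). Completeness comes from the construction of \(M^0\). For the mod-\(R_1\) part, the complex of Proposition~\ref{prop:IR-resolution-Rn} with \(n=1\) has finite-dimensional cohomology: \(M^0/VM^0=Q_1=K_0/\nu K_{-1}\) is finite length, and \(\ker F\) on \(M^1\) is controlled by the same finite length data. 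Finally, the core equals the stable term \(C\), which is finite free by the second condition in Definition~\ref{def:nygaard-pair}, so the module is nft.

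Exactness in both directions then follows. Kernels are left exact, so \(\mathbf K\) sends short exact sequences to sequences exact on the left. Right exactness on \(K_0\) and \(K_{-1}\) comes from the snake lemma for \(d\) and for \(dV\). The cokernel terms vanish because \(\operatorname{coker}(dV)=0\) on every Ekedahl module, while \(\operatorname{coker} d\) is zero as well, since \(d\) is surjective when \(dV\) is. Conversely, the inverse construction is exact levelwise on the finite quotients \(Q_n\), since \(\delta_n\) is injective and the snake lemma applies, and exactness survives the inverse limit by Mittag-Leffler, as all terms have finite length.
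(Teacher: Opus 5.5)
Your outline follows the paper's proof closely: realize \(M^0\) by Lemma~\ref{lem:kernel-ladder}, take \(M^1\) as a quotient of \(M^0\), set \(F:=V^{-1}p\) on \(M^0\), check the relations, coherence and nft, and get exactness from the snake lemma for \(d\), \(dV\) and the injective presentations \(\delta_n\). However, your essential-surjectivity construction contains a concrete error: you put \(V=0\) on \(M^1\).

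In any \(R\)-module, \(V(dx)=VFdVx=p\,dVx\). Since \(d\) is surjective on an Ekedahl module, this forces \(V(M^1)=pM^1\). With \(V=0\), the relations \(FV=VF=p\) on \(M^1\), which you list as still to be checked, fail as soon as \(pM^1\neq0\). That happens whenever the domino quotient has \(p\)-exponent at least \(2\). The Nygaard pair of the distinguished positive domino \(U_{1,3}\) of Theorem~\ref{thm:distinguished} is already an example: for it your construction does not produce an \(R\)-module at all, so essential surjectivity is not established.

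The correct definition in your coordinates is \(V[y]:=p[Vy]\). Here your coordinates are \(M^1=M^0/K_{-1}\), with \(dV\) the projection and \(F[x]=[y]\) for \(x\equiv Vy\bmod K_0\). This \(V\) is well defined because \(y\in K_{-1}\) gives \(Vy=\nu y\in K_0\), and \(pK_0\subseteq K_{-1}\). The same inclusion gives \(VF[x]=p[Vy]=p[x]\), while \(FV[y]=F[V(py)]=p[y]\) is immediate. So the first axiom of Definition~\ref{def:nygaard-pair} is needed here as well, not only for \(pM^0\subseteq VM^0\).

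There are also three lesser issues.

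\emph{The description of \(\ker(dV)\).} Your description of \(\ker(dV)\) as a \(V\)-adic closure of ``preimage data'' is confused and unnecessary. The set \(\{x\in M^0:Vx\in K_0\}\) is exactly \(K_{-1}\), by \(K_0\cap VM^0=VK_{-1}\) and the injectivity of \(V\).

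\emph{The Frobenius twist.} Since \(d\) is \(W\)-linear, \(dV\) is \(\sigma^{-1}\)-linear, so \(M^0/K_{-1}\) must carry a Frobenius-twisted \(W\)-structure for the projection to serve as \(dV\). The paper avoids this by taking \(M^1:=M^0/K_0\) with \(d\) the projection, \(F(dVx):=dx\) and \(V(dx):=p\,dVx\). This is isomorphic to your version via \([y]\mapsto[Vy]\).

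\emph{Coherence.} Proving coherence through Proposition~\ref{prop:coherence-criterion}(3) is a valid alternative to the paper's route. The paper instead shows that \(C[0]\) is a finite free Dieudonn\'e module and that \(M/C[0]\) is profinite with trivial core, hence a domino by Remark~\ref{rem:domino-intrinsic-characterization}. As written, though, you only treat \(M^0/VM^0\) and \(\ker F|_{M^1}\).

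The term that actually needs an argument is \(H^{-1}\) in internal degree \(1\): pairs \((a,b)\) with \(dVa+Vb=0\), modulo \((Fc,-Fdc)\). Since \(Fd(k+Vy)=dy\) is surjective with kernel \(K_0+VK_0\), you may normalize to \(b=0\). The group then becomes \(K_{-1}/(FK_0+pK_0)\), which is finite. The \(H^{-2}\) terms are \((K_0+VK_0)[p]\) and \(\ker(F|_{M^1})=dV(K_0)\), and both are finite as well.
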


\begin{proof}
\emph{Step 1: the \(R\)-module attached to a Nygaard pair.} Let
\((K_0,K_{-1},\iota,\nu)\) be a Nygaard pair, and let \(M^0\) be its \(V\)-adic
realization from Lemma~\ref{lem:kernel-ladder}. Put \(M^1:=M^0/K_0\), with
quotient map \(d\). Since \(M^0/VM^0\simeq K_0/\nu(K_{-1})\) is killed by \(p\)
and \(V\) is injective, set \(F:=V^{-1}p\) on \(M^0\). On \(M^1\), set
\(F(dVx):=dx\) and \(V(dx):=p\,dVx\). The equalities
\(M^0=K_0+VM^0\) and \(K_0\cap VM^0=VK_{-1}\), together with the
Nygaard-pair inclusions, make these definitions well defined and give
\(F(K_0)\subseteq K_{-1}\). They yield \(FV=VF=p\), \(FdV=d\), and \(Vd=pdV\).
Writing \(x=a+Vy\), with \(a\in K_0\), gives \(dFx=p\,dy=pFdx\). Thus
\(M:=[M^0\xrightarrow dM^1]\) is an \(R\)-module with \(dV\) surjective.

\emph{Step 2: coherence and the quasi-inverse comparison.}
For \(r\ge0\), the equality \(K_0\cap V^rM^0=V^rK_{-r}\) gives
\[
  \ker(dV^r:M^0\to M^1)=\{x:V^rx\in K_0\cap V^rM^0\}=K_{-r};
\]
in particular \(\ker d=K_0\), \(\ker(dV)=K_{-1}\),
\(V|_{K_{-1}}=\nu\), and the core is \(C=\bigcap_rK_{-r}\). The equalities
\(\ker(dV^r)=K_{-r}\) make \(C[0]\subseteq M\) a finite free Dieudonn\'e module.
In particular,
\(F(C)\subseteq C\) follows from \(d(Fc)=pFdc=0\) and
\(dV^{r+1}(Fc)=dV^r(pc)=0\). The quotient
\(U:=M/C[0]\) is profinite, since \(U^0/V^nU^0\) and
\(U^1/dV^nU^0\simeq M^0/(K_0+V^nM^0)\) are quotients of the finite-length
module \(M^0/V^nM^0\). It has trivial core and surjective differential. By
Remark~\ref{rem:domino-intrinsic-characterization}, \(U\) is a domino, so the extension
\(0\to C[0]\to M\to U\to0\) exhibits \(M\) as coherent
(Definition~\ref{def:coherent}). Hence \(M\in\mathcal E\), and \(M\) is nft
because its core \(C\) is finite free. The functoriality in
Lemma~\ref{lem:kernel-ladder} defines a functor \(\mathbf E\), and
\(\mathbf K\mathbf E\simeq\mathrm{id}\) because
\(\ker d=K_0\), \(\ker(dV)=K_{-1}\), and \(V|_{K_{-1}}=\nu\).
For \(M\in\mathcal E^{\mathrm{nft}}\), the recovery isomorphism of
Lemma~\ref{lem:kernel-ladder} identifies \((\mathbf E\mathbf K(M))^0\) with
\(M^0\), compatibly with \(\ker d\) and \(V\). Since \(M^1=M^0/\ker d\), it
also identifies the degree-one terms, giving
\(\mathbf E\mathbf K\simeq\mathrm{id}\).

\emph{Step 3: exactness of \(\mathbf K\) and \(\mathbf E\).} For an admissible sequence in
\(\mathcal E^{\mathrm{nft}}\), the snake lemma applied to the surjections \(d\)
and \(dV\) makes both \(\ker d\) and \(\ker(dV)\) exact. Hence \(\mathbf K\) is
exact.
Conversely, the snake lemma applied to the injective presentations in the
proof of Lemma~\ref{lem:kernel-ladder} sends an exact sequence of pairs to short
exact sequences of the \(Q_n\). Surjective transition maps preserve exactness
under inverse limits, and quotienting by \(K_0\) gives exactness in degree one.
Thus \(\mathbf E\) is exact.
\end{proof}

The equivalence above reduces the geometric reconstruction problem to
recovering \(\ker d\), \(\ker(dV)\), their inclusion, and
\(V|_{\ker(dV)}\). The next theorem extracts these data from the degree-two
\(F\)-crystal of a Mazur--Ogus variety.

\begin{theorem}
\label{thm:geometric-ekedahl-pair-partial-v}
Let \(X/k\) be a smooth proper Mazur--Ogus variety. Then
\(H^2(X,W\Omega_X^\bullet)^{[0,0]}\) is an nft Ekedahl module. The \(F\)-crystal
\(H^2_{\mathrm{crys}}(X/W)\) functorially determines the Nygaard pair, and
hence the full coherent \(R\)-module structure, of this diagonal slice.
\end{theorem}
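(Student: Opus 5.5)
The plan is to prove the two assertions separately: first that \(N:=H^2(X,W\Omega_X^\bullet)^{[0,0]}\) is an nft Ekedahl module, then that each entry of \(\mathbf K(N)\) can be read off \(H:=H^2_{\mathrm{crys}}(X/W)\). Theorem~\ref{thm:two-kernel-presentation} then upgrades the Nygaard pair to the full coherent \(R\)-module.

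\emph{Step 1: \(N\) is an nft Ekedahl module.} By construction \(N=[H^2(X,W\mathcal O_X)\xrightarrow d F^\infty B_X^{1,2}]\) is a coherent two-term module. Its core is \(C(N)=V^{-\infty}Z^{0,2}_X\), and its domino part is \(\Dom(N)=U_X^{0,2}\). Proposition~\ref{prop:diagonal-ordinary-degree-two-domino} shows \(U_X^{0,2}\in\operatorname{Dom}^{+}\), so Theorem~\ref{thm:ekedahl-module-devissage} gives \(N\in\mathcal E\). For nft, the same proposition identifies \(C(N)[0]\) with \(H^0(HW(\widetilde H^2(X)))\). Theorem~\ref{thm:diagonal-MO-devissage}, applied to \(\widetilde H^2(X)\in\Delta_{\mathrm{MO}}\), says \(HW\) is finite free. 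Hence \(C(N)\) is finite free, and Definition~\ref{def:nft-ekedahl} gives nft.

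\emph{Step 2: \(K_0\) from the crystal.} In the slope spectral sequence, \(E_1^{0,2}=H^2(X,W\mathcal O_X)\) receives no differentials. By the final remark of \S\ref{sec:devissage} no higher differential leaves \((0,2)\). Therefore
\[
  \ker d=E_2^{0,2}=E_\infty^{0,2}=H/\mathrm{Fil}^1_{\mathrm{HW}}H .
\]
Proposition~\ref{prop:HW-filtration} expresses \(\mathrm{Fil}^1_{\mathrm{HW}}H\) purely in terms of \((H,\varphi)\). So \(K_0=H/L\) with \(L=\bigcap_r\varphi^{-r}(p^rH)\), and this is functorial in the \(F\)-crystal.

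\emph{Step 3: \(K_{-1}\), \(\iota\) and \(\nu\).} I would run the same argument for the Nygaard modification \(\tau=-\delta_0\), in which \(d_1=dV\) in column \(0\). Proposition~\ref{prop:ekedahl-duality} covers every \(\tau\), so again \(\ker(dV)=E_\infty^{0,2}(\tau)\) is a quotient of \(H':=H^2(\mathbf s(M(\tau)))\). To identify \(H'\) with data from \(H\), I would use two maps of double complexes:
\begin{itemize}
\item \(M(\tau)\to M\), given by \(V\) in internal degree \(0\) and the identity in degrees \(\ge1\); this commutes with the differentials since \(d\circ V=d_\tau\);
\item \(M\to M(\tau)\), given by the identity in degree \(0\) and \(F\) in degrees \(\ge1\); this uses \(FdV=d\).
\end{itemize}
Their composites are multiplication by \(p\) in suitable degrees, so after inverting \(p\) they identify \(H'[1/p]\) with \(H[1/p]\). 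The aim is to show that, in the Mazur--Ogus case, \(H'\) is torsion-free and its image in \(H[1/p]\) is an explicit \(\varphi\)-stable lattice. The candidate is the lattice \(\{x\in H[1/p]: \text{its slope-}[0,1)\text{ part lies in }V H\text{-side}\}\), built from \(L\) and \(\varphi^{-1}\), roughly \(H'\simeq L+\varphi^{-1}(pH)\) up to twist.

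Once \(H'\) is identified, the abutment filtration on it is again given by Proposition~\ref{prop:HW-filtration} for the transported Frobenius. Then \(K_{-1}=H'/\mathrm{Fil}^1H'\). The maps \(\iota\) and \(\nu\) are the maps on \(E_\infty^{0,2}\) induced by the two comparison maps above: the inclusion \(\ker(dV)\subset\ker d\) comes from the second map, using \(d=FdV\), and \(\nu=V|\) comes from the first. Functoriality is automatic, because every ingredient is a functorial operation on \((H,\varphi)\) together with the torsion-freeness of crystalline cohomology.

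\emph{Main obstacle.} The hardest step is Step 3: computing \(H^2(\mathbf s(M(\tau)))\) and showing it is torsion-free, so that it embeds as a lattice determined by \(H\). My first approach would be to use the diagonal decomposition, Theorem~\ref{thm:diagonal-MO-splitting}, to reduce to the single diagonal object \(\widetilde H^2(X)\). I would then split it by Theorem~\ref{thm:diagonal-MO-devissage} into its finite-free Hodge--Witt part and the positive domino \(U_X^{0,2}\). On the Hodge--Witt part the modification is a simple Frobenius twist of a Dieudonn\'e module. On the positive domino, the fact that \(dV\) is surjective (Lemma~\ref{lem:positive-iff-dV-surjective}) should make \(\mathbf s\) of the modified domino have no \(H^2\)-torsion. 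This would also show that the answer is the lattice predicted by Nygaard's surface computation.
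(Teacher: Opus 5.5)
Your Steps~1 and~2 match the paper's argument. The skeleton of Step~3 is also the paper's: the \((-\delta_0)\)-modification, a \(V\)-type and an \(F\)-type comparison map, and reading off \(\iota,\nu\) on \(E_\infty^{0,2}\). The gap is the step you flag yourself: identifying \(H'=\mathbb H^2(X,W\Omega_X^\bullet(-1))\) with a lattice determined by \((H,\Phi)\), where I write \(\Phi\) for the crystalline Frobenius (your \(\varphi\)).

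First, a correction. Both comparison maps go \(M(\tau)\to M\). The \(F\)-type map is \(\psi=(1,F,pF,p^2F,\ldots)\); you need \(pF\) in degree \(2\) because \(dF=pFd\). As a map \(M\to M(\tau)\) it would require \(Fd=dV\), which is false. So there is no composite equal to \(p\). The correct relation is \(\Phi\circ\mathcal V=p\psi\), i.e.\ \(\mathcal V=V_K\circ\psi\) with \(V_K=p\Phi^{-1}\).

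Second, your proposed mechanism does not work. The diagonal d\'evissage is only an extension, not a splitting. Moreover, the modified positive domino sits in Postnikov degree \(2\) and contributes \(\ker(dV)\) to \(H^2\) of the simple complex. This has length \(\sum_j m_j(j-1)\), which is nonzero as soon as some type is \(\ge2\). Surjectivity of \(dV\) only kills the \(H^3\) contribution, not this \(H^2\) torsion.

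More fundamentally, even a torsion-free \(H'\) inside \(H[1/p]\) is not thereby identified. The lattice \(\mathcal V(H')\) is the kernel of \(H\to H^2(X,\mathcal O_X)\), i.e.\ the preimage of the Hodge filtration. That is Hodge data, not a priori a function of \((H,\Phi)\).

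The missing ideas are the paper's:
\begin{enumerate}
\item The exact sequence \(0\to W\Omega_X^\bullet(-1)\xrightarrow{\mathcal V}W\Omega_X^\bullet\to\mathcal O_X[0]\to0\) identifies \(\mathcal V(H')\) with \(\ker(H\to H^2(X,\mathcal O_X))\).
\item Mazur's theorem in Berthelot--Ogus form says that, for a Mazur--Ogus variety, \(\Phi^{-1}(pH)\) reduces onto \(\mathrm{Fil}^1_{\mathrm H}H^2_{\mathrm{dR}}(X/k)\). Together with \(\mathcal V(H')\subseteq\Phi^{-1}(pH)\), which follows from \(\Phi\mathcal V=p\psi\), this gives \(\mathcal V(H')=\Phi^{-1}(pH)\). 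Equivalently, \(\psi(H')=\{m\in H:V_K(m)\in H\}\).
\item For the filtration, do not appeal to Proposition~\ref{prop:HW-filtration} for a transported Frobenius. That proposition is stated for Mazur--Ogus varieties, and you have not shown the modified complex is one. Instead, injectivity of the two edge isomorphisms and functoriality of \(\psi\) give \(\psi(n)\in L\iff n\in\mathrm{Fil}^1_{\mathrm{HW}}H'\). Hence \(\ker(dV)\simeq H'/\mathrm{Fil}^1_{\mathrm{HW}}H'\simeq\psi(H')/(L\cap\psi(H'))\).
\end{enumerate}

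Your candidate \(L+\Phi^{-1}(pH)\) equals \(\Phi^{-1}(pH)\), since \(L\subseteq\Phi^{-1}(pH)\), so you guessed the right lattice. But nothing in your plan can prove it: point~(2) is exactly where the \(F\)-crystal has to control Hodge data. The paper's route also never needs torsion-freeness of \(H'\).
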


\begin{proof}
Put \(M_0=H^0(\widetilde H^2(X))\).
Proposition~\ref{prop:diagonal-ordinary-degree-two-domino} identifies
\(M_0=H^2(X,W\Omega_X^\bullet)^{[0,0]}\),
\(\Dom(M_0)=U_X^{0,2}\), and
\(C(M_0)=V^{-\infty}Z_X^{0,2}=C_X^{0,2}\). Since
\(\Dom(M_0)\in\operatorname{Dom}^{+}\),
Theorem~\ref{thm:ekedahl-module-devissage} gives \(M_0\in\mathcal E\).
The same proposition identifies \(C_X^{0,2}[0]\) with
\(H^0(HW(\widetilde H^2(X)))\), which is finite free by
Theorem~\ref{thm:diagonal-MO-devissage}. Thus
\(M_0\in\mathcal E^{\mathrm{nft}}\) by Definition~\ref{def:nft-ekedahl}.

\emph{Step 1: recovering \(K_0=\ker d\).}
Put \(M:=H^2_{\mathrm{crys}}(X/W)\), with \(\sigma\)-linear \(F\)-crystal Frobenius
\(\Phi\), and set
\(L:=\mathrm{Fil}^1_{\mathrm{HW}}M=\bigcap_{r\ge0}\Phi^{-r}(p^rM)\). By
Proposition~\ref{prop:HW-filtration}, \(L\) is functorially determined by the
\(F\)-crystal. No higher differential enters \((0,2)\) for degree reasons, and
Proposition~\ref{prop:ekedahl-duality} rules out those leaving it. By the
definition of the abutment filtration,
\(M/L\simeq E_\infty^{0,2}=E_2^{0,2}=\ker d\). Hence \(K_0=M/L\).

\emph{Step 2: constructing \(K_{-1}\) from crystalline Frobenius.}
Let \(V_K:=p\Phi^{-1}:M[1/p]\to\sigma_*^{-1}M[1/p]\) be the rational
Verschiebung. Put
\(V_K^{-1}M:=\{m\in M:V_K(m)\in\sigma_*^{-1}M\}\) and
\(K_{-1}:=V_K^{-1}M/(L\cap V_K^{-1}M)\). If \(x\in L\cap V_K^{-1}M\) and
\(y=V_K(x)\), then \(y\in\sigma_*^{-1}M\) and, for \(r\geq1\),
\(\Phi^r(y)=p\Phi^{r-1}(x)\in p^rM\), so
\(y\in\sigma_*^{-1}L\). Thus \([m]\mapsto[m]\) and
\([m]\mapsto[V_K(m)]\) define maps
\(\iota:K_{-1}\to K_0\) and
\(\nu:K_{-1}\to\sigma_*^{-1}K_0\). The map \(\iota\) is injective
by definition of the quotient. If
\(V_K(m)\in\sigma_*^{-1}L\), then \(pm=\Phi(V_K(m))\in pL\), hence \(m\in L\).
Thus \(\nu\) is also injective, and all these data are functorial in the
\(F\)-crystal.

\emph{Step 3: identifying \(K_{-1}\) with \(\ker(dV)\).}
We suppress Frobenius twists in this step. Let
\(W\Omega_X^\bullet(-1)\) be the complex
\[
  0\longrightarrow \sigma_*W\mathcal O_X
    \xrightarrow{\ dV\ } W\Omega_X^1
    \xrightarrow{\ d\ } W\Omega_X^2
    \xrightarrow{\ d\ }\cdots ,
\]
the Nygaard modification of \(W\Omega_X^\bullet\) by \((-\delta_0)\)
of Definition~\ref{def:autoequivalence}, up to a global Frobenius twist. The
relations \(FdV=d\) and \(dF=pFd\) make
\(\mathcal V:=(V,1,1,\ldots)\) and
\(\varphi:=(1,F,pF,p^2F,\ldots)\) chain maps to
\(W\Omega_X^\bullet\). Since \(V\) is injective on \(W\mathcal O_X\) with
cokernel \(\mathcal O_X\), the map \(\mathcal V\) identifies
\(W\Omega_X^\bullet(-1)\) with the first term
\([VW\mathcal O_X\to W\Omega_X^1\to W\Omega_X^2\to\cdots]\) of the Nygaard
filtration \cite[Definition~8.1]{BMS19} (cf.\ \cite[Section~1.3]{GSY25}) and
fits into
\[
  0\longrightarrow W\Omega_X^\bullet(-1)\xrightarrow{\ \mathcal V\ }
  W\Omega_X^\bullet\longrightarrow\mathcal O_X[0]\longrightarrow0 .
\]
Set \(N:=\mathbb H^2(X,W\Omega_X^\bullet(-1))\) and, by abuse of notation,
also write \(\mathcal V,\varphi:N\to M\) for the induced maps. The chain-level
identity \((F,pF,p^2F,\ldots)\circ\mathcal V=p\varphi\) gives
\(\Phi\circ\mathcal V=p\varphi\), hence
\(\mathcal V=V_K\circ\varphi\) after
inverting \(p\).

Put \(L':=\mathrm{Fil}^1_{\mathrm{HW}}N\). The edge argument of Step~1, applied to the
modification, gives isomorphisms
\(\epsilon_0:M/L\xrightarrow{\sim}\ker d\) and
\(\epsilon_{-1}:N/L'\xrightarrow{\sim}\ker(dV)\). Functoriality of the two
chain maps gives
\(\epsilon_0(\varphi(n)+L)=\epsilon_{-1}(n+L')\) in
\(\ker(dV)\subseteq\ker d\). On the edges,
\(\mathcal V\) induces \(V\) and \(\varphi\) the inclusion.

We claim that \(\varphi(N)=V_K^{-1}M\). If \(n\in N\), then
\(V_K(\varphi(n))=\mathcal V(n)\in M\), so
\(\varphi(N)\subseteq V_K^{-1}M\).
Conversely, let \(m\in V_K^{-1}M\) and \(u:=V_K(m)\in M\), so
\(\Phi(u)=pm\in pM\), i.e.\ \(u\) lies in the first term
\(\mathrm{Fil}^1_{\mathcal N}M:=\Phi^{-1}(pM)\) of the Nygaard filtration of
the \(F\)-crystal \(M\). The map \(M\to H^2(X,\mathcal O_X)\) in the long exact
sequence factors through the zeroth graded piece
\(H^2_{\mathrm{dR}}(X/k)/\mathrm{Fil}^1_{\mathrm{H}}\) of the Hodge filtration.
By the Berthelot--Ogus form of Mazur's theorem
\cite[Theorem~8.26(1)]{BO78}, the reduction map sends
\(\mathrm{Fil}^1_{\mathcal N}M\) onto the Hodge filtration
\(\mathrm{Fil}^1_{\mathrm{H}}\), up to the Frobenius twist suppressed above. Thus the reduction of \(u\) lies in
\(\mathrm{Fil}^1_{\mathrm{H}}\), so its image in \(H^2(X,\mathcal O_X)\)
vanishes. Hence \(u=\mathcal V(n)\) for some
\(n\in N\), and
\(V_K(\varphi(n))=u=V_K(m)\), so \(m=\varphi(n)\), \(V_K\) being
injective on \(M[1/p]\).

The equality
\(\epsilon_0(\varphi(n)+L)=\epsilon_{-1}(n+L')\) and the injectivity of
\(\epsilon_0,\epsilon_{-1}\) give \(\varphi(n)\in L\) if and only if
\(n\in L'\). Therefore \(\varphi\) induces
\[
  \ker(dV)\;\simeq\;N/L'\;\xrightarrow{\ \sim\ }\;
  V_K^{-1}M\big/(L\cap V_K^{-1}M)\;=\;K_{-1} .
\]
Under it, \(\iota\) becomes the inclusion \(\ker(dV)\subseteq\ker d\), and
\(\nu=[V_K]\) becomes \([\mathcal V]=V|_{\ker(dV)}\), by the edge comparison.
This identifies the crystalline candidate with the desired Nygaard pair.
Theorem~\ref{thm:two-kernel-presentation} now reconstructs \(M_0\) from this
pair.
\end{proof}

\section{Supersingular abelian varieties}\label{sec:supersingular}

Throughout this section \(k\) is algebraically closed, as the results below rest
on the classification of supersingular abelian varieties and their Dieudonn\'e modules.
We now specialize the reconstruction to a supersingular abelian \(g\)-fold \(A\).
In this case \(H^2_{\mathrm{crys}}(A/W)[1/p]\) has pure slope \(1\). The slope
spectral sequence identifies \(H^2(A,W\mathcal O_A)[1/p]\) with its
slope-\([0,1)\) part, which is zero. Since the core \(C_A^{0,2}\) is finite
free, it follows that \(C_A^{0,2}=0\). Proposition~\ref{prop:diagonal-ordinary-degree-two-domino}
then gives
\(H^2(A,W\Omega_A^\bullet)^{[0,0]}=U_A^{0,2}\in\operatorname{Dom}^{+}\).

\begin{definition}[Numerical invariants in degree two]\label{def:degree-two-artin-invariant}
Let \(A/k\) be a supersingular abelian \(g\)-fold, \(g\ge2\). We study the
\(p\)-exponent and the degree of its domino \(U_A^{0,2}\). We call the latter
the \emph{degree-two Artin invariant} of \(A\) and write
\(\sigmaArt(A):=\deg(U_A^{0,2})\).
\end{definition}

Subsection~\ref{sec:cyclic-matrices} treats the \emph{supersingular cyclic
\(F\)-crystals}: these are the \(F\)-crystals \(H^1_{\mathrm{crys}}(A/W)\) admitting a
basis that Frobenius permutes cyclically, multiplying successive basis vectors
by a unit or by \(p\) times a unit. The resulting combinatorial algorithm
recovers the type sequence \(J(U_A^{0,2})\) and isogeny partition
\(\lambda(U_A^{0,2})\). Subsection~\ref{subsec:supergeneral-superspecial}
applies the algorithm to the supergeneral and superspecial families. Finally,
Subsection~\ref{subsec:extremal-exponent} gives bounds for both
\(p\text{-}\exp(U_A^{0,2})\) and \(\sigmaArt(A)\), with a principal polarization
assumed for the upper bound on \(\sigmaArt(A)\). In particular, when \(g\ge3\),
the equality \(p\text{-}\exp(U_A^{0,2})=g-1\) characterizes the supergeneral
locus. Before giving the algorithm, we record the numerical input that
determines the dimension of the domino.

\begin{definition}[Slope and Hodge--Witt numbers {\cite[Definition~IV.3.1]{ekedahl3}}]\label{def:slope-hw-numbers}
Let \(X/k\) be a smooth projective variety of dimension \(d\). For each \(i,j\ge0\),
the \emph{slope number} is
\[
  m^{i,j}
  =
  \sum_{\lambda\in[i-1,\,i)}
    (\lambda-i+1)\,\dim_K H^{i+j}_{\mathrm{crys}}(X/W)_{[\lambda]}
  +
  \sum_{\lambda\in[i,\,i+1)}
    (i+1-\lambda)\,\dim_K H^{i+j}_{\mathrm{crys}}(X/W)_{[\lambda]},
\]
where \([\lambda]\) denotes the slope-\(\lambda\) part after inverting \(p\). The
\emph{Hodge--Witt number} is
\[
  h_W^{i,j}=m^{i,j}+T^{i,j}-2T^{i-1,j+1}+T^{i-2,j+2},
\]
where \(T^{i,j}=T^{i,j}(X)\) is the domino number of
Definition~\ref{def:domino-numbers}. By
Proposition~\ref{prop:ekedahl-duality}, these numbers vanish outside the
domino range \(0\le i\le d-2\), \(2\le j\le d\).
\end{definition}

\begin{proposition}[Ekedahl {\cite[Corollary~IV.3.3.1]{ekedahl3}}]\label{prop:domino-numbers}
For a Mazur--Ogus variety \(X/k\), one has \(h^{i,j}=h_W^{i,j}\), and hence
\[
  T^{i,j}=h^{i,j}-m^{i,j}+2T^{i-1,j+1}-T^{i-2,j+2}.
\]
In particular, \(T^{0,j}=h^{0,j}-m^{0,j}\).
\end{proposition}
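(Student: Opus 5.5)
Since the statement is cited from Ekedahl, the plan is to recover it from the general identity \(h^{i,j}=h_W^{i,j}\) for Mazur--Ogus objects and then rearrange. First I would recall how Ekedahl's formalism gives the Hodge--Witt numbers. The number \(h_W^{i,j}\) is defined as the sum of the slope number \(m^{i,j}\) and the correction \(T^{i,j}-2T^{i-1,j+1}+T^{i-2,j+2}\). This sum equals the ``\(W\)-rank plus domino'' Euler characteristic of \(H^j(X,W\Omega^i_X)\), computed by applying the level-one realization \(R_1\otimes^{\mathbf L}_R(-)\) to \(R\Gamma(X,W\Omega_X^\bullet)\). By Remark~\ref{rem:truncation}, this realization is the de Rham complex with its Hodge filtration.

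The key input is the Mazur--Ogus hypothesis. By Theorem~\ref{thm:diagonal-MO-splitting}, \(R\Gamma(X,W\Omega_X^\bullet)\) splits as \(\bigoplus_n\widetilde H^n[-n]\), with each \(\widetilde H^n\in\Delta_{\mathrm{MO}}\). The argument then reduces to one diagonal object at a time. For each such object, Theorem~\ref{thm:diagonal-MO-devissage} gives an extension of a finite free Hodge--Witt object by a positive diagonal domino. Both sides of \(h=h_W\) are additive in this extension, so it suffices to check two cases.

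\begin{enumerate}
\item For a finite free Hodge--Witt object, the Hodge numbers of \(R_1\otimes^{\mathbf L}\) equal the slope numbers. This is the classical Mazur/Katz comparison of Hodge and Newton data for Hodge--Witt \(F\)-crystals, and it uses the torsion-freeness of the \(H^n(\mathbf s(M))\).
\item For a positive diagonal domino, one computes directly. Proposition~\ref{prop:ext-full}, via Proposition~\ref{prop:ekedahl-comparison}, shows that \(R_1\otimes^{\mathbf L}U_j\) contributes exactly \(+1\) to position \((i,j)\), \(-2\) to the diagonal neighbour, and \(+1\) to the next one, with \(m=0\). E\(_1\)-degeneration of the Hodge--de Rham analogue is what lets these level-one dimensions be read off as Hodge numbers \(h^{i,j}\).
\end{enumerate}

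Once \(h^{i,j}=h_W^{i,j}\) is established, the displayed formula for \(T^{i,j}\) is a rearrangement of the definition of \(h_W^{i,j}\). The special case follows because \(T^{-1,j+1}=T^{-2,j+2}=0\) by Definition~\ref{def:domino-numbers}.

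The main obstacle is the bookkeeping in step (2): keeping track of which internal degree and which cohomological degree each \(k\)-dimension of \(R_1\otimes^{\mathbf L}U_j\) lands in, consistently with the diagonal shifts. I would settle this first for \(U_0\) using the explicit three-term complex from the proof of Proposition~\ref{prop:ext-full}, and then transport the result by Nygaard modification.
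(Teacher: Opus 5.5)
The rearrangement and the case \(i=0\) are fine, and they are all the paper does: it cites Ekedahl for \(h^{i,j}=h_W^{i,j}\), solves the definition of \(h_W^{i,j}\) for \(T^{i,j}\), and uses \(T^{-1,j+1}=T^{-2,j+2}=0\). Your sketch of \(h^{i,j}=h_W^{i,j}\) itself, however, has a genuine gap.

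Hodge numbers are dimensions \(\dim_kH^s(R_1\otimes^{\mathbf L}N)^t\). These are not additive along the triangle \(HW(Y)\to Y\to D_\Delta(Y)\to\). Also, \(h=h_W\) cannot hold on a domino alone, since a domino contributes \((T,-2T,T)\) to \(h_W\), which has a negative entry. In fact claim (2) is false. Take \(j\ge2\). The explicit complex in the proof of Proposition~\ref{prop:ext-full} shows that \(U_j(-i)[-q]\) has level-one cohomology of dimension:
\(1\) at \((i,q)\);
\(j-1\) at \((i+1,q-1)\);
\(1\) at \((i+2,q-2)\);
\(j+1\) at \((i+1,q-2)\), which lies in total degree \(i+q-1\).
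Moreover, the Hodge--de Rham-type spectral sequence of \(U_j\) itself does not degenerate, because the \(k[d]\)-action pairs \(e_j\) with \(-\eta_j\). So \(E_1\)-degeneration of \(Y\) cannot be applied piece by piece, and the obstacle is not the bookkeeping you anticipate.

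What is additive is the Euler characteristic \(\chi^t(N):=\sum_s(-1)^s\dim_kH^s(R_1\otimes^{\mathbf L}N)^t\). For \(U_j(-i)[-q]\) it equals \((-1)^q,\,2(-1)^q,\,(-1)^q\) in internal degrees \(i,i+1,i+2\). This matches the domino's contribution to \(\sum_s(-1)^sh_W^{t,s}\). A finite free Hodge--Witt piece \(N\) in internal degree \(a\) contributes \(\lgth_W(N/VN)\) to \(\chi^a\) and \(\lgth_W(N/FN)\) to \(\chi^{a+1}\), up to the sign of its total degree. Both lengths depend only on the slopes; this is a determinant count, not Mazur--Katz. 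Torsion cores contribute zero.

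Your method therefore proves only the alternating-sum (Crew) identity \(\sum_j(-1)^jh^{i,j}=\sum_j(-1)^jh_W^{i,j}\). The missing step is where the Mazur--Ogus hypothesis enters:
\begin{enumerate}
\item Each summand \(\widetilde H^n[-n]\) of Theorem~\ref{thm:diagonal-MO-splitting} inherits \(E_1\)-degeneration, because the spectral sequence of a direct sum is the direct sum.
\item Its simple realization is \(H^n(\mathbf s(M))[-n]\). For an object of \(\Delta\), the simple realization lives in degrees \(0,1\) with torsion \(H^1\). That \(H^1\) must vanish, since it is a summand of the torsion-free \(H^{n+1}(\mathbf s(M))\).
\item Hence \(R_1\otimes^{\mathbf L}\widetilde H^n[-n]\) is concentrated in total degree \(n\), so \(\chi^t(\widetilde H^n[-n])=(-1)^{n-t}h^{t,n-t}\).
\end{enumerate}
The dominoes \(U_X^{t,n-t},U_X^{t-1,n-t+1},U_X^{t-2,n-t+2}\) and the cores at \((t,n-t)\) and \((t-1,n-t+1)\) all lie in this one summand. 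Computing \(\chi^t(\widetilde H^n[-n])\) additively therefore gives \((-1)^{n-t}h_W^{t,n-t}\), which yields \(h^{t,n-t}=h_W^{t,n-t}\).
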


\begin{corollary}\label{cor:domino-supersingular}
For an abelian \(g\)-fold \(A/k\), the domino number \(T^{0,2}(A)\) depends only
on its Newton polygon. If \(A\) is supersingular, then
\(T^{0,2}(A)=\binom{g}{2}\).
\end{corollary}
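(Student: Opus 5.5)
The plan is to apply Proposition~\ref{prop:domino-numbers} with \(i=0\), \(j=2\). Abelian varieties are Mazur--Ogus, so that proposition gives
\[
  T^{0,2}(A)=h^{0,2}(A)-m^{0,2}(A).
\]
The proof then reduces to evaluating the two terms separately.

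For the Hodge number, \(h^{0,2}=\dim_k H^2(A,\mathcal O_A)=\binom{g}{2}\) for every abelian \(g\)-fold. This follows from \(H^\bullet(A,\mathcal O_A)\simeq\bigwedge^\bullet H^1(A,\mathcal O_A)\) with \(\dim H^1(A,\mathcal O_A)=g\). The point is that this term is independent of \(A\), so all variation in \(T^{0,2}\) comes from \(m^{0,2}\).

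For the slope number, take \(i=0\) in Definition~\ref{def:slope-hw-numbers}. The first sum runs over slopes in \([-1,0)\), and there are none, because the slopes of an \(F\)-crystal \(H^2_{\mathrm{crys}}\) are nonnegative. So
\[
  m^{0,2}=\sum_{\lambda\in[0,1)}(1-\lambda)\dim_K H^2_{\mathrm{crys}}(A/W)_{[\lambda]}.
\]
Next use \(H^2_{\mathrm{crys}}(A/W)\simeq\bigwedge^2 H^1_{\mathrm{crys}}(A/W)\) as \(F\)-crystals. Over \(K\), the Dieudonn\'e--Manin decomposition shows that the slopes of \(\bigwedge^2\) are the pairwise sums \(\lambda_a+\lambda_b\), \(a<b\), taken from the slope multiset of \(H^1\). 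That multiset is exactly the Newton polygon of \(A\), so \(m^{0,2}\) depends only on the Newton polygon. Together with the constancy of \(h^{0,2}\), this proves the first assertion.

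If \(A\) is supersingular, every slope of \(H^1\) equals \(1/2\), so \(H^2_{\mathrm{crys}}[1/p]\) is pure of slope \(1\), as already noted at the start of \S\ref{sec:supersingular}. Hence no slope lies in \([0,1)\), so \(m^{0,2}=0\) and \(T^{0,2}(A)=\binom{g}{2}\).

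The only step needing care is the identification of the slopes of \(\bigwedge^2 H^1\) with pairwise sums. I plan to handle it by passing to \(K\) and using the isoclinic decomposition together with the tensor compatibility of slopes. Everything else is direct substitution into the formula.
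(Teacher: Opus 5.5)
Your proposal is correct and matches the paper's proof: apply Ekedahl's formula \(T^{0,2}=h^{0,2}-m^{0,2}\), use \(h^{0,2}(A)=\binom{g}{2}\), note that \(m^{0,2}\) depends only on the slopes of \(H^2_{\mathrm{crys}}(A/W)[1/p]\), and observe that these all equal \(1\) in the supersingular case, so \(m^{0,2}=0\). Your extra step, identifying the slopes of \(\bigwedge^2 H^1_{\mathrm{crys}}\) with pairwise sums, spells out what the paper asserts in one line, namely that the slopes of \(H^2\) are determined by the Newton polygon.
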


\begin{proof}
Proposition~\ref{prop:domino-numbers} gives
\(T^{0,2}(A)=h^{0,2}(A)-m^{0,2}(A)\), with
\(h^{0,2}(A)=\binom{g}{2}\). The number \(m^{0,2}(A)\) depends only on the slopes
of \(H^2_{\mathrm{crys}}(A/W)[1/p]\), hence on the Newton polygon of \(A\). In
the supersingular case these slopes are all \(1\), so \(m^{0,2}(A)=0\).
\end{proof}

\subsection{\texorpdfstring{Supersingular cyclic \(F\)-crystals}{Supersingular cyclic F-crystals}}\label{sec:cyclic-matrices}

For this cyclic class, the Nygaard-pair reconstruction becomes combinatorial. A cyclic basis of
\(H^1_{\mathrm{crys}}(A/W)\) decomposes its exterior square into cyclic
Frobenius blocks. Supersingularity makes
\(H^2_{\mathrm{crys}}(A/W)\) isoclinic of slope \(1\). Hence
\(\mathrm{Fil}^1_{\mathrm{HW}}\) has full rank and
\(K_0=H^2_{\mathrm{crys}}(A/W)/\mathrm{Fil}^1_{\mathrm{HW}}\) has finite
\(W\)-length. On each block, the Frobenius valuations determine a normalized
height function \(\mathbf h=(h_1,\ldots,h_n)\). This function determines the
associated Nygaard pair, and thereby the type sequence and isogeny partition
of the corresponding domino.

\begin{definition}[Supersingular cyclic $F$-crystal]\label{def:cyclic-crystal}
Let \(A/k\) be a supersingular abelian \(g\)-fold. The \(F\)-crystal
\((H^1_{\mathrm{crys}}(A/W),\varphi)\) is a \emph{supersingular cyclic
\(F\)-crystal} if it admits a \(W\)-basis \(e_1,\ldots,e_{2g}\) such that
\[
  \varphi(e_i)=p^{\varepsilon_i}u_i e_{i+1},
  \qquad u_i\in W^\times,\quad \varepsilon_i\in\{0,1\},
\]
where the indices are cyclic, and \(\sum_i\varepsilon_i=g\). The cyclically
ordered binary tuple
\(\boldsymbol\varepsilon=(\varepsilon_1,\ldots,\varepsilon_{2g})\), considered
up to rotation, is its \emph{Frobenius valuation word}. Indeed,
\(\varphi^{2g}\) sends each basis vector to \(p^{\sum_i\varepsilon_i}\) times a
unit multiple of itself, so the last condition is equivalent to Newton slope
\(\frac12\).
\end{definition}

\begin{theorem}[Cyclic matrix algorithm]\label{thm:cyclic-matrix}
Let \(A/k\) be a supersingular abelian \(g\)-fold whose
\(H^1_{\mathrm{crys}}(A/W)\) is supersingular cyclic
(Definition~\ref{def:cyclic-crystal}), with cyclic basis \(e_1,\ldots,e_{2g}\)
and Frobenius valuation word
\(\boldsymbol\varepsilon=(\varepsilon_1,\ldots,\varepsilon_{2g})\). Then
\(\boldsymbol\varepsilon\) determines the positive domino \(U_A^{0,2}\) up to
isomorphism. Its Nygaard pair, type sequence, and isogeny partition are
obtained by the following combinatorial procedure.

\begin{enumerate}
\item[\textbf{Step 1.}] \textbf{(Cyclic Frobenius blocks.)}
Put \(\Phi=\bigwedge^2\varphi\). The cyclic basis gives
\[
  H^2_{\mathrm{crys}}(A/W)
  \cong\bigwedge^2H^1_{\mathrm{crys}}(A/W)
  =\bigoplus_{s=1}^g B^{(s)}.
\]
For \(1\le s\le g-1\), \(B^{(s)}\) is spanned by
\(f^{(s)}_j=e_j\wedge e_{j+s}\), \(j\in\mathbb Z/2g\mathbb Z\), and has block
valuation sequence \(\mathbf v^{(s)}=(v^{(s)}_j)_j\), where
\(v^{(s)}_j=\varepsilon_j+\varepsilon_{j+s}\). For \(s=g\), the orbit has
length \(g\): for \(j\in\mathbb Z/g\mathbb Z\), put
\(f^{(g)}_j=e_j\wedge e_{j+g}\) and
\(v^{(g)}_j=\varepsilon_j+\varepsilon_{j+g}\). After \(g\) shifts the basis
vector returns as \(e_{j+g}\wedge e_j=-f^{(g)}_j\). This sign is absorbed into
the unit coefficient.

\item[\textbf{Step 2.}] \textbf{(Nygaard pair of a block.)}
Apply the following construction to each cyclic block
\(B=\bigoplus_{i\in\mathbb Z/n\mathbb Z}Wf_i\). Write
\(\Phi(f_i)=u_ip^{v_i}f_{i+1}\) with \(u_i\in W^\times\). Supersingularity
gives \(\sum_i v_i=n\). Hence there is a unique function
\(\mathbf h\colon\mathbb Z/n\mathbb Z\to\mathbb Z_{\ge0}\), called the
\emph{normalized height function}, such that \(h_{i+1}-h_i=v_i-1\) and
\(\min_i h_i=0\).

The recurrence gives
\(\Phi(L_{\mathbf h})\subset pL_{\mathbf h}\) for
\(L_{\mathbf h}:=\bigoplus_ip^{h_i}Wf_i\). Conversely, since \(\Phi\)
cyclically permutes the lines \(Wf_i\), the intersection formula of
Proposition~\ref{prop:HW-filtration} is diagonal, say
\(\bigoplus_ip^{\ell_i}Wf_i\). Its exponents satisfy
\(\ell_{i+1}\le \ell_i+v_i-1\). Their sum is unchanged around the
cycle because \(\sum_i(v_i-1)=0\), so every inequality is an equality.
Thus \(\ell_i=h_i+c\) for all \(i\), and maximality together with
\(\min_i h_i=0\) forces \(c=0\). Hence
\(\mathrm{Fil}^1_{\mathrm{HW}}B=L_{\mathbf h}\), and, by
Theorem~\ref{thm:geometric-ekedahl-pair-partial-v},
\[
  K_0(B):=B/L_{\mathbf h}=\bigoplus_i(W/p^{h_i}W)\,\overline f_i,
  \qquad \overline f_i:=f_i\bmod L_{\mathbf h} .
\]
This is the contribution of \(B\) to \(K_0=\ker(d)\). The corresponding
summand \(K_{-1}(B)\subseteq K_0(B)\) is given
on each direct summand by
\(K_{-1}(B)\cap(W/p^{h_i}W)\overline f_i
=p^{(v_{i-1}-1)_+}(W/p^{h_i}W)\overline f_i\), and \(\nu\) acts there by
\[
  \nu(p^ra\overline f_i)
  =p^{\,r+1-v_{i-1}}\,\sigma^{-1}(u_{i-1}^{-1}a)\,\overline f_{i-1}.
\]
Lemma~\ref{lem:cyclic-unit-normalization} normalizes all \(u_i\) to \(1\).
Thus the normalized height function determines the block's Nygaard pair
\((K_0(B),K_{-1}(B),\iota,\nu)\), and hence its positive domino, up to
isomorphism. The direct sum over the blocks is the Nygaard pair of
\(U_A^{0,2}\).

\item[\textbf{Step 3.}] \textbf{(Type sequence and isogeny partition.)}
For \(q\ge1\), the superlevel set
\(\{\mathbf h^{(s)}\ge q\}:=\{i:h^{(s)}_i\ge q\}\) is a disjoint union of proper
cyclic arcs. An arc \(I\subseteq\{\mathbf h^{(s)}\ge q\}\) has length \(|I|\) and
height \(\operatorname{ht}(I):=\max_{i\in I}h^{(s)}_i-q+1\). Letting \(I\) range
over the arcs of every block and every superlevel set, one has
\[
  J(U_A^{0,2})=\{\!\{\,|I|\,\}\!\},
  \qquad
  \lambda(U_A^{0,2})^\vee_r=\#\{\,I:\operatorname{ht}(I)=r\,\}.
\]
In particular \(\deg(U_A^{0,2})=\sum_{s,i}h^{(s)}_i\) and
\(p\text{-}\exp(U_A^{0,2})=\max_{s,i}h^{(s)}_i\).
\end{enumerate}
\end{theorem}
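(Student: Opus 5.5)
The plan follows the three steps of the statement and reduces everything to one cyclic block. By Theorem~\ref{thm:geometric-ekedahl-pair-partial-v} and the vanishing of the core in the supersingular case, \(U_A^{0,2}\) is the nft Ekedahl module attached via Theorem~\ref{thm:two-kernel-presentation} to the Nygaard pair built from \(M=H^2_{\mathrm{crys}}(A/W)\). The constructions \(L=\mathrm{Fil}^1_{\mathrm{HW}}M\), \(V_K^{-1}M\) and \(\nu=[V_K]\) commute with direct sums of sub-\(F\)-crystals. The decomposition \(\bigwedge^2 H^1=\bigoplus_s B^{(s)}\) of Step~1 is \(\Phi\)-stable, since \(\Phi(e_j\wedge e_{j+s})\) is a unit times a power of \(p\) times \(e_{j+1}\wedge e_{j+s+1}\). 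Hence the Nygaard pair, and by exactness of the equivalence also the domino, splits blockwise. Step~1 itself is then just bookkeeping of valuations: for \(s=g\) the orbit closes after \(g\) steps with a sign, and that sign is absorbed into a unit.

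For Step~2 on a single block \(B\), I would first verify \(\mathrm{Fil}^1_{\mathrm{HW}}B=L_{\mathbf h}\) as sketched in the statement. The intersection \(\bigcap_r\Phi^{-r}(p^rB)\) is stable under the diagonal torus scaling, hence is a sum of lines \(p^{\ell_i}Wf_i\). Maximality forces \(\ell_{i+1}=\ell_i+v_i-1\) around the cycle, and the fact that \(\mathrm{Fil}^1\) contains \(p^NB\) for large \(N\) pins the minimum at \(0\).

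Next I would compute \(V_K^{-1}B\). Since \(V_K(f_i)=p^{1-v_{i-1}}\sigma^{-1}(u_{i-1}^{-1})f_{i-1}\), the element \(p^r a f_i\) lies in \(V_K^{-1}B\) exactly when \(r\ge (v_{i-1}-1)_+\). Reducing modulo \(L_{\mathbf h}\) then gives the stated \(K_{-1}(B)\) and \(\nu\).

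The normalization of the units \(u_i\) to \(1\) (the cited Lemma~\ref{lem:cyclic-unit-normalization}) I would prove by rescaling \(f_i\mapsto c_if_i\) with \(c_i\in W^\times\). This requires solving \(c_{i+1}=\sigma(c_i)u_i\) cyclically, i.e.\ a Lang-type equation \(\sigma^n(c)\,w=c\), which is solvable in \(W^\times\) over \(k=\bar k\) by successive approximation. Thus \(\mathbf h\) alone determines the pair, proving that \(\boldsymbol\varepsilon\) determines \(U_A^{0,2}\).

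Step~3 is where the real work lies, and I expect it to be the main obstacle. The plan is to decompose the normalized block Nygaard pair further, by a filtration indexed by superlevel arcs. With \(u_i=1\), the pair is ``monomial'': \(K_0=\bigoplus W/p^{h_i}\) with \(\nu\) shifting index \(i\mapsto i-1\) and multiplying by \(p^{1-v_{i-1}}=p^{h_{i-1}-h_i}\). So the monomial \(p^{r}\overline f_i\) is sent to \(p^{r+h_{i-1}-h_i}\overline f_{i-1}\), which preserves the ``absolute level'' \(q=h_i-r\) of the monomial.

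For each level \(q\ge1\), the monomials at level \(q\) (one for each \(i\) with \(h_i\ge q\)) form a \(k\)-subquotient on which \(\nu\) moves along arcs of \(\{\mathbf h\ge q\}\). An arc of length \(\ell\) behaves like the \(\ker(dV^r)\)-ladder of an elementary \(U_\ell\). The \(\ker(dV^r)\) length formula~\eqref{eq:kernel-lengths} gives \(\operatorname{lgth}\ker(dV^r)\) as a sum of \(\max(0,\ell-r)\) over arcs, and this is exactly how I would read off the type multiplicities \(m_j\). Concretely, I would compute \(\lgth_W K_{-r}\) directly from \(\mathbf h\) (as the count of pairs (level, position) whose backward chain stays inside the arc for \(r\) steps) and match it with \(\sum_j m_j\max(0,j-r)\) to get \(J=\{\!\{|I|\}\!\}\).

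For the isogeny partition I would use the formula \(\lambda^\vee_r=d_{r-1}-d_r\) of Theorem~\ref{thm:isogeny-partition}, which requires \(d_r=\dim(p^rU)\), the number of arcs of height \(>r\). Multiplication by \(p^r\) lowers the levels reachable, so \(p^rU\) corresponds to the superlevel sets of \(\mathbf h-r\). A maximal arc at level \(q\) survives in \(p^rU\) precisely when its height exceeds \(r\); this counting needs care with arcs nested across levels.

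The final formulas then follow directly. The degree is \(\sum_I|I|=\sum_{q}\#\{i:h_i\ge q\}=\sum_i h_i\), and the \(p\)-exponent is \(\lambda_1=\max h_i\).
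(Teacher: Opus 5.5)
Your Steps 1--2 and your derivation of the type sequence are essentially the paper's. Both use the blockwise splitting, the diagonal form of \(\mathrm{Fil}^1_{\mathrm{HW}}B\) and \(V_K^{-1}B\), and the identity \(\lgth_WK_{-t}=\sum_I\max(0,|I|-t)\) compared with \eqref{eq:kernel-lengths} by second differences. There are two small differences.

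\emph{Unit normalization.} You normalize the units on the crystal block itself. That needs the Lang-type equation \(c=\sigma^n(c)w\), hence \(k=\bar k\). The paper normalizes only the Nygaard pair, whose support \(\{\mathbf h\ge1\}\) is a union of proper arcs, so the rescalings are chosen recursively along each arc with no cyclic condition.

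\emph{Pinning the minimum.} \(\min_i\ell_i=0\) follows from \(\mathrm{Fil}^1_{\mathrm{HW}}B\subseteq B\) together with \(L_{\mathbf h}\subseteq\mathrm{Fil}^1_{\mathrm{HW}}B\) (maximality). It does not follow from \(p^NB\subseteq\mathrm{Fil}^1_{\mathrm{HW}}B\), which only bounds the \(\ell_i\) from above.

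The genuine gap is the isogeny partition. Your mechanism, ``\(p^rU\) corresponds to the superlevel sets of \(\mathbf h-r\)'', is false. It would make \(d_r\) the number of arcs at level \(>r\), not the number of arcs of height \(>r\), and these differ as soon as two arcs share a parent.

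Take \(\mathbf h=(0,1,2,1,2,1)\). Padded by twelve zeros, this is the block \(s=6\) of the valuation word \((1,1,0,1,0,0,1,1,0,1,0,0,0,0,1,0,1,1)\) with \(g=9\), so it occurs for an actual block. The arcs are \(\{2,\ldots,6\}\) at level \(1\) and \(\{3\},\{5\}\) at level \(2\). With units normalized, \(\nu\) is the single string \(\overline f_6\mapsto p\overline f_5\mapsto\overline f_4\mapsto p\overline f_3\mapsto\overline f_2\). Since \(U^0\) is the completion of \(\sum_mV^mK_0\), \(pU^0\) is the closure of \(\sum_mV^m\,pK_0\). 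Because \(pK_0=Wp\overline f_3+Wp\overline f_5\) and \(V^2(p\overline f_5)=p\overline f_3\), this gives \(pU^0=k_\sigma[[V]]\,p\overline f_5\). So \(\dim pU=1\), consistent with \(\lambda=(2,1)\), whereas \(\{\mathbf h\ge2\}\) has two arcs. Moreover \(\ker d\cap pU^0\) contains \(\overline f_2,\overline f_4\notin pK_0\), so \(pU\) is not the module of the pair \((pK_0,pK_{-1})\) either.

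Your other sentence, that an arc survives in \(p^rU\) exactly when its height exceeds \(r\), states the right answer, but nothing in the proposal proves it. What is missing is a computation of \(d_r=\dim p^rU\) that sees the branching of the arc forest. The paper does this by inverting \(V\):
\begin{itemize}
\item \(\dim\) is additive and equals the length of \(U^0[V^{-1}]\) over the skew discrete valuation ring \(W_\sigma((V))^\wedge_p\), so \(d_r=\lgth(p^rT)\) with \(T=U^0[V^{-1}]\).
\item The Nygaard subpairs \((K_0[p^q],K_{-1}[p^q])\) filter \(U\), and the localized graded pieces have one \(k_\sigma((V))\)-line \(x_I\) per arc.
\item Multiplication by \(p\) acts on these lines as the parent map \(x_I\mapsto(\mathrm{unit})\,x_{I'}\).
\end{itemize}
Hence \(d_r\) is the rank of the \(r\)-th iterate of the parent map, i.e.\ \(\#\{I:\operatorname{ht}(I)>r\}\) (Proposition~\ref{prop:cyclic-isogeny-partition}).

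You can recover the \(p\)-exponent without this: \(p\text{-}\!\exp(U)=p\text{-}\!\exp(\ker d)=\max_ih_i\) by Lemma~\ref{lem:first-kernel-controls-exponent}. The formula for \(\lambda^\vee\), however, genuinely needs the branching analysis.
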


It remains to justify the two blockwise claims used in the algorithm. Fix a
normalized height function
\(\mathbf h=(h_i)_{i\in\mathbb Z/n\mathbb Z}\), so \(\min_i h_i=0\) and
\(|h_{i+1}-h_i|\le1\), and put \(v_i=h_{i+1}-h_i+1\). Together with units
\(\mathbf u=(u_i)\in(W^\times)^n\), the formulas of Step~2 define a Nygaard
pair. Indeed, a componentwise check gives
\(pK_0\subseteq K_{-1}\cap\nu(K_{-1})\), while iterating \(\nu\) along
the finite interval components of \(\{\mathbf h\ge1\}\) verifies the ladder
condition. We call a Nygaard pair \emph{cyclic} if it is isomorphic to one obtained
in this way. We first remove the units and then read the two combinatorial
invariants from \(\mathbf h\).

\begin{lemma}[Normal form]\label{lem:cyclic-unit-normalization}
Every cyclic Nygaard pair presented by \((\mathbf h,\mathbf u)\) is isomorphic
to the one presented by \((\mathbf h,\mathbf 1)\), where
\(\mathbf1=(1,\ldots,1)\). Consequently the pair, and
hence the positive domino it reconstructs, is determined up to isomorphism by
\(\mathbf h\).
\end{lemma}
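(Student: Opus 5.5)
The isomorphism will be diagonal: rescale each basis vector \(\overline f_i\) of \(K_0(B)\) by a unit \(w_i\in W^\times\). Sending \(\overline f_i\mapsto w_i\overline f_i\) is \(W\)-linear and preserves the submodules \(K_0(B)\) and \(K_{-1}(B)\), since those are defined summandwise by \(p\)-powers. It therefore remains only to make it commute with \(\nu\).

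\emph{The condition on the units.} The source pair has \(\nu(p^ra\overline f_i)=p^{r+1-v_{i-1}}\sigma^{-1}(u_{i-1}^{-1}a)\overline f_{i-1}\); the target has the same formula with all \(u=1\). Write \(\psi(\overline f_i)=w_i\overline f_i\). Compatibility \(\psi\circ\nu=\nu'\circ\psi\) on \(p^ra\overline f_i\) becomes
\[
  \sigma^{-1}(u_{i-1}^{-1}a)\,w_{i-1}=\sigma^{-1}(w_ia),
\]
that is,
\[
  w_{i-1}=\sigma^{-1}(u_{i-1}w_i),\qquad\text{equivalently}\qquad w_i=\sigma(w_{i-1})\,u_{i-1}^{-1}
\]
for all \(i\in\mathbb Z/n\mathbb Z\). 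We only need this modulo \(p^{h}\) for \(h=\max_i h_i\), because everything lives in \(W/p^{h_i}W\). Still, we solve it in \(W^\times\).

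\emph{Solving the recursion.} Fix \(w_0\) and define \(w_1,\ldots,w_{n-1}\) by the recursion. The only constraint is closing up around the cycle, which reads
\[
  w_0=\sigma^n(w_0)\,c\qquad\text{for an explicit unit } c\in W^\times,
\]
where \(c\) is a product of Frobenius twists of the \(u_i^{-1}\). So we need \(w\in W^\times\) with \(\sigma^n(w)/w=c^{-1}\). This is the main step, and it is where \(k=\bar k\) (assumed throughout this section) enters. The equation is Lang's theorem for \(\mathbb G_m\) over \(W\) with Frobenius \(\sigma^n\), and we prove it by successive approximation. Modulo \(p\), we need \(x^{p^n-1}=\bar c^{-1}\), which is solvable in \(k^\times\) because \(k\) is algebraically closed. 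To lift a solution from \(W_m\) to \(W_{m+1}\), write \(w=w_m(1+p^my)\) with \(y\in k\); the correction equation is \(y^{p^n}-y=b\) for a given \(b\in k\), which is Artin--Schreier type and solvable in \(k\). Completeness of \(W\) then gives the exact solution. This is the step I expect to need the most care, mainly in keeping the semilinear bookkeeping straight, though it is standard.

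\emph{Conclusion.} The resulting \(\psi\) is an isomorphism of Nygaard pairs in the sense of Definition~\ref{def:nygaard-pair}. Its inverse is the rescaling by \(w_i^{-1}\), which is again compatible with \(\nu\). Applying the exact equivalence of Theorem~\ref{thm:two-kernel-presentation} then shows that the reconstructed positive dominoes are isomorphic. The data \((\mathbf h,\mathbf 1)\) depend only on \(\mathbf h\), since the valuations are \(v_i=h_{i+1}-h_i+1\). Hence the pair, and the positive domino it reconstructs, are determined by \(\mathbf h\) up to isomorphism.
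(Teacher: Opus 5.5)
Your proof is correct, but it takes a heavier route than the paper's. You impose \(\nu\)-compatibility, \(w_i=\sigma(w_{i-1})u_{i-1}^{-1}\), at every \(i\in\mathbb Z/n\mathbb Z\). This produces the closing equation \(\sigma^n(w_0)=c^{-1}w_0\), which you then solve by a Lang-type successive approximation. That is legitimate here, since this section assumes \(k=\bar k\), and your lifting step via \(y^{p^n}-y=b\) is sound.

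The paper never meets a closing equation. Because \(\min_i h_i=0\), some summand \(W/p^{h_i}W\) is zero, so the pair is supported on the interval components of \(\{\mathbf h\ge1\}\). Wherever \(\nu\) starts from or lands in a zero summand, the compatibility condition is vacuous. The units \(\omega_i\) can therefore be chosen recursively along each interval, with no cyclic constraint at all.

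The paper's route is elementary and works over any perfect \(k\). It also shows that the same zero of \(\mathbf h\) that makes the ladder condition hold removes the cyclic obstruction.

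Your route, read on the lattice \(B\) itself, proves something stronger: it normalizes all units of the whole cyclic \(\Phi\)-block as an \(F\)-crystal. That stronger statement genuinely needs \(k\) algebraically closed. For instance, if \(\sigma^n\) acts trivially on \(W\), then \(\sigma^n(w)=c^{-1}w\) has no unit solution unless \(c=1\). The lemma itself needs no such hypothesis.
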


\begin{proof}
The summands indexed by \(h_i=0\) vanish, so the support decomposes into the
interval components of \(\{\mathbf h\ge1\}\). Replacing \(e_i\) by
\(\omega_i e_i\), with
\(\omega_i\in W^\times\), multiplies the coefficient of
\(e_i\mapsto e_{i-1}\) by
\(\sigma^{-1}(\omega_i)\omega_{i-1}^{-1}\). On each interval the \(\omega_i\)
can therefore be chosen recursively to make every coefficient \(1\). There is
no cyclic compatibility condition. Applying the same diagonal rescaling to
\(K_{-1}\) and \(K_0\) preserves \(\iota\), and hence gives the required
isomorphism of Nygaard pairs.
\end{proof}

In the next two propositions, \(U_{\mathbf h}\) denotes the positive domino of
the normalized pair with height function \(\mathbf h\). By the lemma it is well
defined up to isomorphism.

\begin{proposition}[Type sequence]
\label{prop:cyclic-torsion-layers-type}
The type sequence of \(U_{\mathbf h}\) is the multiset of arc lengths:
\(J(U_{\mathbf h})=\{\!\{\,|I|:r\ge1,\ I\in\pi_0(\{\mathbf h\ge r\})\,\}\!\}\).
\end{proposition}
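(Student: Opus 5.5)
The plan is to read the type sequence off the kernel lengths of the modified differentials \(dV^r\), via Remark~\ref{rem:kernel-lengths}. By that remark, for every \(r\in\mathbb Z\),
\[
  \lgth_W\ker(dV^r)=\sum_j m_j\max(0,j-r).
\]
Taking second differences in \(r\) recovers the multiplicities:
\[
  m_j=\ell_{j-1}-2\ell_j+\ell_{j+1},
  \qquad
  \ell_r:=\lgth_W\ker(dV^r).
\]
So it suffices to compute \(\ell_r\) for \(r\ge 0\) from \(\mathbf h\) and to show that it equals \(\sum_I\max(0,|I|-r)\), the sum over all arcs \(I\) of all superlevel sets. Since \(U_{\mathbf h}\) is positive, all types satisfy \(j\ge1\), so the \(r\ge0\) values determine everything.

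By Lemma~\ref{lem:kernel-ladder} and Theorem~\ref{thm:two-kernel-presentation}, the core is zero and \(\ker(dV^r)=K_{-r}\), the recursively defined ladder term of the Nygaard pair. Using the normal form of Lemma~\ref{lem:cyclic-unit-normalization}, I will compute \(K_{-r}\) explicitly. The starting data are
\[
  K_0=\bigoplus_i (W/p^{h_i})\overline f_i,
  \qquad
  \nu(p^{t}\overline f_i)=p^{\,t+1-v_{i-1}}\,\overline f_{i-1},
  \qquad
  v_{i-1}-1=h_i-h_{i-1}.
\]
Thus \(\nu\) sends the level-\(t\) element \(p^t\overline f_i\) to \(p^{t-(h_i-h_{i-1})}\overline f_{i-1}\). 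In terms of "depth from the top", \(\nu\) is a shift \(i\mapsto i-1\) that preserves the quantity \(h_i-t\). Concretely, \(\lgth_W\) of \(K_0\) decomposes as a sum over pairs \((i,q)\) with \(1\le q\le h_i\): the \(q\)-th layer at position \(i\) is the class \(p^{h_i-q}\overline f_i\), spanning a copy of \(k\) up to filtration. The map \(\nu\) carries layer \(q\) at \(i\) to layer \(q\) at \(i-1\). It is defined (i.e.\ the element lies in \(K_{-1}\), which cuts out \(p^{(h_i-h_{i-1})_+}\)) exactly when \(h_{i-1}\ge q\).

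The ladder condition then says a layer-\(q\) element at \(i\) lies in \(K_{-r}\) iff \(i-1,\dots,i-r\) all lie in \(\{\mathbf h\ge q\}\). Since the arcs are proper, this happens iff \(i\) sits at distance \(\ge r\) from the left end of its arc \(I\). Counting these positions gives \(\max(0,|I|-r)\) per arc, which is the desired formula.

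The main obstacle is making the layer decomposition rigorous, because \(K_{-r}\) is a \(W\)-submodule and not literally a direct sum over layers. I will handle this by filtering \(K_0\) by \(p\)-adic level, i.e.\ by \(p^{h_i-q}\). The key fact is that all maps (\(\iota\), \(\nu\), and the membership conditions) are diagonal in \(i\) up to the shift and monomial in \(p\) after normalizing the units. Hence \(K_{-r}\) is a direct sum over \(i\) of submodules \(p^{c_{i,r}}(W/p^{h_i})\overline f_i\), with
\[
  c_{i,r}=h_i-\#\{q:\text{layer }q\text{ at }i\text{ survives }r\text{ steps}\}.
\]
I will verify this by induction on \(r\), checking that the surviving layers at a fixed \(i\) form an initial segment \(q\le m\). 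This follows because the survival condition is monotone in \(q\), as superlevel sets shrink with \(q\). Lengths then add, and the second-difference formula yields the stated multiset.
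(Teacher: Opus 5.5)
Your proposal is correct and is essentially the paper's argument: the paper also tracks each ``coheight'' (your layer \(q\)) under \(\nu\) and obtains \(\lgth_W K_{-t}=\sum_i\min\{h_i,h_{i-1},\ldots,h_{i-t}\}=\sum_{I}\max(0,|I|-t)\), then compares second differences with \eqref{eq:kernel-lengths}. The paper additionally exhibits the \(p\)-torsion filtration \((K_0[p^r],K_{-1}[p^r])\) by Nygaard subpairs with graded pieces \(\bigoplus_I U_{|I|}\); this is not needed for the type sequence and is reused for the isogeny partition. Your induction showing that \(K_{-r}\) is a direct sum of submodules \(p^{c_{i,r}}(W/p^{h_i})\overline f_i\) makes explicit a step the paper leaves implicit.
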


\begin{proof}
For \(r\ge1\), take \(K_0[p^r]\) and \(K_{-1}[p^r]\) as the two components of
the \(r\)-th filtration term. The map \(\iota\) carries the second into the
first, while \(\sigma(p)=p\) and the \(\sigma^{-1}\)-linearity of \(\nu\) give
\(p^r\nu(x)=\nu(p^rx)=0\) for \(x\in K_{-1}[p^r]\). Thus the restrictions of
\(\iota\) and \(\nu\) preserve these submodules. Condition~(1) of
Definition~\ref{def:nygaard-pair} is inherited as well: if
\(px=\nu(y)\) with \(x\in K_0[p^r]\), then the injectivity of \(\nu\) gives
\(p^ry=0\). Finally, the kernel ladder terminates because \(\nu\) moves along
the finite interval components of \(\{\mathbf h\ge1\}\). Hence each level is
a Nygaard subpair. The exact equivalence of
Theorem~\ref{thm:two-kernel-presentation} transports it to an admissible
filtration of \(U_{\mathbf h}\). In its \(r\)-th graded piece, the
\(K_0\)-part has basis \(p^{h_i-r}e_i\) for \(i\in\{\mathbf h\ge r\}\), and
\(\nu\) joins precisely the consecutive indices in each component of
\(\{\mathbf h\ge r\}\). Thus this graded piece is
\(\bigoplus_{I\in\pi_0(\{\mathbf h\ge r\})}U_{|I|}\).

We now read off the type sequence from the kernel lengths of
Remark~\ref{rem:kernel-lengths}, which is legitimate for any domino. By
Theorem~\ref{thm:two-kernel-presentation} the kernels of \(U_{\mathbf h}\) are
the terms \(K_{-t}\) of the ladder, so it suffices to compute
\(\lgth_WK_{-t}\). Call \(s\) the \emph{coheight} of a nonzero element
\(p^{h_i-s}a\overline f_i\) of \(K_0\), so that \(1\le s\le h_i\). Since
\(v_{i-1}=h_i-h_{i-1}+1\), the formula for \(\nu\) in Step~2 reads
\[
  \nu\bigl(p^{h_i-s}a\overline f_i\bigr)
  =p^{h_{i-1}-s}\,\sigma^{-1}(a)\,\overline f_{i-1} :
\]
it preserves the coheight and moves the index one step to the left, and the
result is nonzero exactly when \(s\le h_{i-1}\). Iterating, an element of
coheight \(s\) at \(i\) lies in \(K_{-t}\) if and only if
\(h_{i-1},\ldots,h_{i-t}\ge s\). Hence
\[
  \lgth_WK_{-t}
  =\sum_i\min\{h_i,h_{i-1},\ldots,h_{i-t}\}
  =\sum_{r\ge1}\ \sum_{I\in\pi_0(\{\mathbf h\ge r\})}\max(0,|I|-t),
\]
the second equality by counting, for each \(r\), the indices \(i\) whose
component of \(\{\mathbf h\ge r\}\) contains \(i-1,\ldots,i-t\) as well. This
is \eqref{eq:kernel-lengths} for the multiset of arc lengths. Since
\(\max(0,\ell-t)\) has second difference \(\delta_{t\ell}\) in \(t\), comparing
with \eqref{eq:kernel-lengths} for \(U_{\mathbf h}\) itself gives
\(m_t(U_{\mathbf h})=\#\{I:|I|=t\}\), which is the formula.
\end{proof}

\begin{proposition}[Isogeny partition]
\label{prop:cyclic-isogeny-partition}
For \(I\in\pi_0(\{\mathbf h\ge q\})\), put
\(\operatorname{ht}(I)=\max_{i\in I}h_i-q+1\). Then
\(\lambda(U_{\mathbf h})^\vee_r=\#\{\,I:\operatorname{ht}(I)=r\,\}\) for \(r\ge1\).
In particular \(p\text{-}\exp(U_{\mathbf h})=\max_i h_i\).
\end{proposition}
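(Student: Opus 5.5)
The plan is to use Theorem~\ref{thm:isogeny-partition}, which gives \(\lambda(U)^\vee_r=d_{r-1}(U)-d_r(U)\) with \(d_r(U)=\dim(p^rU)\). The problem then reduces to computing \(\dim(p^rU_{\mathbf h})\) for every \(r\ge0\) and showing that
\[
  d_r(U_{\mathbf h})=\#\{\,I:\operatorname{ht}(I)\ge r+1\,\}
  =\sum_{q\ge1}\#\bigl\{I\in\pi_0(\{\mathbf h\ge q\}):\max_{i\in I}h_i\ge q+r\bigr\}.
\]
Taking the difference at \(r-1\) and \(r\) then gives the count of arcs with height exactly \(r\). The final claim follows from \(\lambda_1=p\text{-}\exp\) (also part of Theorem~\ref{thm:isogeny-partition}). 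The largest height is attained at the level-\(1\) component containing the maximum of \(\mathbf h\), where it equals \(\max_i h_i\).

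To compute \(\dim(p^rU_{\mathbf h})\) I will work on the Nygaard-pair side via Theorem~\ref{thm:two-kernel-presentation}. The subobject \(p^rU\subseteq U\) is a positive domino; it is a quotient of the positive domino \(U\), and quotients of \(\mathrm{Dom}^+\) stay positive by Lemma~\ref{lem:positive-iff-dV-surjective}. Its Nygaard pair should be \((p^rK_0,\,p^rK_0\cap K_{-1},\iota,\nu)\). I need to justify this: \(\ker d\) on \(p^rU^0\) is \(p^rU^0\cap K_0\), and I will check that it equals \(p^rK_0\) using \(U^0=K_0+VU^0\) and \(K_0\cap V^nU^0=V^nK_{-n}\) from Lemma~\ref{lem:kernel-ladder}. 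In the normalized cyclic pair this is explicit. The module \(p^rK_0\) has basis \(p^{h_i-s}\overline f_i\) with coheight \(1\le s\le h_i-r\), so it is the cyclic pair attached to the height function \((h_i-r)_+\) with indices re-centered.

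The decisive simplification is that \(p^rU_{\mathbf h}\) itself looks like a \(U_{\mathbf h'}\), where \(\mathbf h'\) is \(\mathbf h\) cut at level \(r\). More precisely, restricting \(\nu\) (which preserves coheight, as in the proof of Proposition~\ref{prop:cyclic-torsion-layers-type}) yields a direct sum over the components \(I\) of \(\{\mathbf h\ge r+1\}\) of the interval pairs with height \(h|_I-r\). I will then compute the dimension by Definition~\ref{def:domino-numerics}: \(\dim=\sum_t m_t\). Proposition~\ref{prop:cyclic-torsion-layers-type}, applied blockwise (it only uses the interval structure, not cyclicity), gives \(\dim U_{\mathbf h'}=\sum_{q\ge1}\#\pi_0(\{\mathbf h'\ge q\})\). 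Components of \(\{\mathbf h'\ge q\}\) correspond bijectively to components \(I\) of \(\{\mathbf h\ge q\}\) whose maximum is at least \(q+r\). This gives the displayed formula for \(d_r\).

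The main obstacle is the identification of the Nygaard pair of \(p^rU\), specifically that \(\ker(dV)\) on \(p^rU^0\) equals \(p^rK_0\cap K_{-1}\), computed coheight-wise, and that this is again a Nygaard pair with the shifted heights. I would verify it directly on basis elements using the explicit formula \(\nu(p^{h_i-s}a\overline f_i)=p^{h_{i-1}-s}\sigma^{-1}(a)\overline f_{i-1}\). As a fallback, I would instead compute \(\dim(p^rU)=\dim_k(p^rU^1)[F]\) via the \(F\)-kernel description.
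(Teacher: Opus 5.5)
Your reduction via Theorem~\ref{thm:isogeny-partition} to the formula \(d_r(U_{\mathbf h})=\#\{I:\operatorname{ht}(I)\ge r+1\}\) is the same as the paper's, and that target formula is correct. The computation of \(d_r\), however, does not go through.

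\textbf{The key identification fails.} You claim that \(\ker d\) on \(p^rU^0\) equals \(p^rK_0\). This is false. The operator \(V|_{K_{-1}}=\nu\) preserves coheight but can carry an element of \(p^rK_0\) into \(K_0\setminus p^rK_0\). Take \(\mathbf h=(h_0,h_1,h_2,h_3)=(0,1,2,1)\). Then \(pK_0=k\,p\overline f_2\), yet
\[
\overline f_1=\nu(p\overline f_2)=p\,V\overline f_2\in pU^0\cap K_0 .
\]
So \((p^rK_0,\,p^rK_0\cap K_{-1})\) is not \(\nu\)-stable and is not a Nygaard pair. Consequently \(p^rU_{\mathbf h}\) is not \(U_{(\mathbf h-r)_+}\): in this example \(pU_{\mathbf h}\simeq U_2\), not \(U_1\). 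The actual kernel is the \(\nu\)-saturation \(\sum_{n\ge0}\nu^n(p^rK_0\cap K_{-n})\), which one can check directly.

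\textbf{The count inherits the error, and the asserted bijection hides it.} The formula \(\dim U_{(\mathbf h-r)_+}=\sum_{q\ge1}\#\pi_0(\{\mathbf h\ge q+r\})\) counts all arcs at levels \(>r\). The map from components of \(\{\mathbf h\ge q+r\}\) to components of \(\{\mathbf h\ge q\}\) with maximum \(\ge q+r\) is only surjective. It fails to be injective exactly when arcs branch.
\begin{itemize}
\item For \(\mathbf h=(0,1,2,1,2,1)\), your route gives \(d_1=2\). In fact \(pU_{\mathbf h}\simeq U_4\), so \(d_1=1=\#\{I:\operatorname{ht}(I)\ge2\}\).
\item In Example~\ref{ex:nested-arcs-isogeny}, your route gives \(d_1=4\) instead of \(3\), and hence the wrong partition.
\end{itemize}
Branching is precisely the phenomenon the proposition has to handle, so this gap is essential, not cosmetic.

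\textbf{Two ways to repair it.}
\begin{itemize}
\item Work with the saturated kernel. Coheightwise, \(\ker(d|_{p^rU^0})\) is spanned by the classes at an index \(i\) of a level-\(s\) arc \(I\) for which some \(j\ge i\) in \(I\) has \(h_j\ge s+r\). Then \(\ker(dV|_{p^rU^0})\) drops exactly the left-endpoint classes. Since \(p^rU\) is positive, Proposition~\ref{prop:ekedahl-pair-weighs-degree} gives one unit of dimension per arc with \(\operatorname{ht}(I)\ge r+1\).
\item Follow the paper. Invert \(V\) and \(p\)-complete, so that \(d_q\) becomes \(\lgth(p^qT)\) over a skew DVR with residue ring \(k_\sigma((V))\). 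Each arc gives one basis vector \(x_I\) of the localized graded module, and \(p\) acts there as the parent map. Hence \(d_q\) counts the arcs having a descendant \(q\) levels deeper.
\end{itemize}
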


\begin{proof}
Let \(W_\sigma((V)):=W_\sigma[[V]][V^{-1}]=\widehat R^0[V^{-1}]\) and
\(T=U_{\mathbf h}^0[V^{-1}]\). Its \(p\)-adic completion
\(W_\sigma((V))^\wedge_p\) is a complete skew discrete valuation ring with
uniformizer \(p\) and residue division ring
\(W_\sigma((V))^\wedge_p/p\simeq k_\sigma((V))\). Since every domino is
killed by a power of \(p\), the \(W_\sigma((V))\)-module structure on
\(U^0[V^{-1}]\) extends uniquely to its completion. Exactness of localization
and the type filtration give
\(\lgth_{W_\sigma((V))^\wedge_p}(U^0[V^{-1}])=\dim(U)\): both sides are additive, and
each elementary domino \(U_j\) contributes one. Applying this to
\(p^qU_{\mathbf h}\), whose localized degree-zero term is \(p^qT\), gives
\(\lgth_{W_\sigma((V))^\wedge_p}(p^qT)=d_q(U_{\mathbf h})\) for all \(q\ge0\).

Here the Nygaard pair enters through Theorem~\ref{thm:two-kernel-presentation}:
\(U_{\mathbf h}^0=K_0+VU_{\mathbf h}^0\), so \(K_0\) generates \(T\) after
inverting \(V\), and \(\nu\) is the restriction of \(V\) to \(K_{-1}\).
Localizing the torsion filtration used in the preceding proof therefore gives
an associated graded module over \(k_\sigma((V))\). For an arc
\(I\subseteq\{\mathbf h\ge r\}\), the
formula for \(\nu\) in Step~2 identifies the classes
\(p^{h_i-r}e_i\), \(i\in I\), up to units after \(V\) is inverted. Denote the
resulting basis vector by \(x_I\). Multiplication by \(p\) sends these classes
to \(p^{h_i-(r-1)}e_i\). Thus, if \(I'\) is the component of
\(\{\mathbf h\ge r-1\}\) containing \(I\), then
\(x_I\mapsto(\mathrm{unit})x_{I'}\). A level-one arc maps to zero. Hence the image of
its \(q\)-th power is spanned by the arcs admitting a descendant \(q\) levels
deeper, namely those with \(\operatorname{ht}(I)>q\). The standard filtration
formula for a finite-length module over a skew discrete valuation ring is
\[
  \lgth_{W_\sigma((V))^\wedge_p}(p^qT)
  =\dim_{k_\sigma((V))}
    \operatorname{im}\bigl(\operatorname{gr}(p)^q\bigr),
\]
where \(\operatorname{gr}(p)\) is the parent map above. Consequently,
\[
  d_q(U_{\mathbf h})=\lgth_{W_\sigma((V))^\wedge_p}(p^qT)
  =\#\{I:\operatorname{ht}(I)>q\}.
\]
Therefore Theorem~\ref{thm:isogeny-partition} yields
\(\lambda(U_{\mathbf h})^\vee_r=d_{r-1}-d_r
=\#\{I:\operatorname{ht}(I)=r\}\), and
\(p\text{-}\exp(U_{\mathbf h})=\lambda_1
=\max_I\operatorname{ht}(I)=\max_i h_i\).
\end{proof}

\begin{remark}[The weighted forest]\label{rem:forest-picture}
Ordered by containment, the arcs form a rooted forest: a component
\(I\subseteq\{\mathbf h\ge r\}\), \(r\ge2\), is joined to the unique component
\(I'\subseteq\{\mathbf h\ge r-1\}\) containing it. The vertex \(I\) carries
the type \(|I|\). To describe the edge weight, lift each root arc to an
interval in \(\mathbb Z\), use the induced lifts for its descendants, and let
\(m(I)\) be the right endpoint. The edge \(I\to I'\) then represents the
extension of \(U_{|I|}\) by \(U_{|I'|}\) with class
\(V^{m(I')-m(I)-1}\): the \(\nu\)-string for \(I\) begins exactly
\(m(I')-m(I)-1\) positions from the corresponding end of the \(I'\)-string,
and the coordinates of Theorem~\ref{thm:2d-classification} translate this
offset into the indicated power of \(V\). Although
Theorem~\ref{thm:2d-classification} allows an arbitrary polynomial of degree
at most \(|I'|-|I|-2\), the cyclic construction forces this
monomial.
Lemma~\ref{lem:cyclic-unit-normalization} therefore shows that
the weighted forest determines \(U_{\mathbf h}\).

After the edge weights are forgotten, multiplication by \(p\) is the parent
map of Proposition~\ref{prop:cyclic-isogeny-partition}, whose Jordan type is
\(\lambda(U_{\mathbf h})\) and whose longest part is
\(\lambda_1=\max_i h_i\). In
Example~\ref{ex:nested-arcs-isogeny}, two arcs have a common parent, so the
isogeny partition cannot be computed by assigning one part to each root.
\end{remark}

\begin{proof}[Proof of Theorem~\ref{thm:cyclic-matrix}]
Step~1 is the exterior-square decomposition of the cyclic \(F\)-crystal
\(H^1_{\mathrm{crys}}(A/W)\).
Proposition~\ref{prop:HW-filtration} and
Theorem~\ref{thm:geometric-ekedahl-pair-partial-v} give Step~2 blockwise, while
Lemma~\ref{lem:cyclic-unit-normalization} removes the units. These constructions
commute with finite direct sums, so Propositions~\ref{prop:cyclic-torsion-layers-type}
and~\ref{prop:cyclic-isogeny-partition} give Step~3. Finally, each index \(i\)
belongs to exactly \(h_i^{(s)}\) superlevel arcs. Hence
\(\deg(U_A^{0,2})=\sum_{s,i}h_i^{(s)}\), while
\(p\text{-}\exp(U_A^{0,2})=\lambda_1=\max_{s,i}h_i^{(s)}\).
\end{proof}

\begin{example}[Nested arcs and the isogeny partition]
\label{ex:nested-arcs-isogeny}
Consider the cyclic height function \(\mathbf h\) on \(\mathbb Z/20\mathbb Z\)
with a unique zero and, in cyclic order after it, the values
\[
  (1,1,1,1,2,3,4,4,4,3,2,2,3,3,2,1,1,1,1),
\]
chosen so that two superlevel arcs have a common parent. The left diagram
omits the zero. Its columns are
the summands \((W/p^{h_i})e_i\) of \(K_0=\ker d\), and each arrow is the partial-\(V\)
string on one graded piece of the \(p\)-torsion filtration of
Proposition~\ref{prop:cyclic-torsion-layers-type}: the \(r\)-th piece is
spanned by the classes \(p^{h_i-r}e_i\) along the arcs of \(\{\mathbf h\ge r\}\),
with one string and one elementary summand \(U_{|I|}\) for each arc \(I\).

\begin{center}
\begin{tikzpicture}[x=0.34cm,y=0.36cm,line cap=round,line join=round,
    baseline={(current bounding box.center)}]
  \def\block#1#2{\draw[fill=black!8,draw=black!45] (#1,#2) rectangle ++(1,1);}
  \foreach \x/\h in {0/1,1/1,2/1,3/1,4/2,5/3,6/4,7/4,8/4,9/3,10/2,11/2,12/3,13/3,14/2,15/1,16/1,17/1,18/1}
    \foreach \y in {1,...,\h} {
      \pgfmathtruncatemacro{\yy}{\y-1}
      \block{\x}{\yy}
    }

  \foreach \x/\lab in {0/1,1/2,2/3,3/4,4/5,5/6,6/7,7/8,8/9,9/10,10/11,11/12,12/13,13/14,14/15,15/16,16/17,17/18,18/19}
    \node[font=\scriptsize] at (\x+0.5,-0.35) {\lab};

  \draw[->,very thick]
    (18.50,0.55) -- (17.50,0.55) -- (16.50,0.55) -- (15.50,0.55) --
    (14.50,1.55) -- (13.50,2.55) -- (12.50,2.55) -- (11.50,1.55) --
    (10.50,1.55) -- (9.50,2.55) -- (8.50,3.55) -- (7.50,3.55) --
    (6.50,3.55) -- (5.50,2.55) -- (4.50,1.55) --
    (3.50,0.55) -- (2.50,0.55) -- (1.50,0.55) -- (0.50,0.55);
  \draw[->,thick]
    (14.50,0.50) -- (13.50,1.50) -- (12.50,1.50) -- (11.50,0.50) --
    (10.50,0.50) -- (9.50,1.50) -- (8.50,2.50) -- (7.50,2.50) --
    (6.50,2.50) -- (5.50,1.50) -- (4.50,0.50);
  \draw[->,thick]
    (9.50,0.50) -- (8.50,1.50) -- (7.50,1.50) -- (6.50,1.50) --
    (5.50,0.50);
  \draw[->,thick]
    (13.50,0.50) -- (12.50,0.50);
  \draw[->,thick]
    (8.50,0.50) -- (7.50,0.50) -- (6.50,0.50);

\end{tikzpicture}
\qquad
\begin{tikzpicture}[x=0.95cm,y=0.95cm,line cap=round,line join=round,
    baseline={(current bounding box.center)},every node/.style={font=\small}]
  \node (n19) at (0,3.6)     {\(19\)};
  \node (n11) at (0,2.4)     {\(11\)};
  \node (n5)  at (-0.85,1.2) {\(5\)};
  \node (n2)  at (0.85,1.2)  {\(2\)};
  \node (n3)  at (-0.85,0)   {\(3\)};
  \draw (n19) -- (n11) node[midway,right=1pt,font=\scriptsize] {\(V^{3}\)};
  \draw (n11) -- (n5)  node[midway,above left=-2pt,font=\scriptsize] {\(V^{4}\)};
  \draw (n11) -- (n2)  node[midway,above right=-2pt,font=\scriptsize] {\(V^{0}\)};
  \draw (n5)  -- (n3)  node[midway,right=1pt,font=\scriptsize] {\(V^{0}\)};
\end{tikzpicture}
\end{center}

There are five arcs, of lengths \(19,11,5,3,2\) and heights \(4,3,2,1,1\),
nested as in the tree on the right. By Theorem~\ref{thm:cyclic-matrix},
\[
  J(U_{\mathbf h})=(2,3,5,11,19),
  \qquad
  \lambda(U_{\mathbf h})^\vee=(2,1,1,1),
  \qquad
  \lambda(U_{\mathbf h})=(4,1),
\]
with \(\deg(U_{\mathbf h})=\sum_ih_i=40\) and
\(p\text{-}\exp(U_{\mathbf h})=\max_ih_i=4\). The
weights \(V^3,V^4,V^0,V^0\) shown on the tree edges are the pure-\(V\)-power
extension classes of Remark~\ref{rem:forest-picture}. The two arcs at level
\(3\) lie in a single arc at level \(2\). Thus a single root can contribute
more than one part to \(\lambda(U_{\mathbf h})\).
\end{example}

For \(g=2\), the algorithm has the following two specializations.

\begin{example}[The two abelian-surface cases]
\label{ex:superspecial-g2}
\label{ex:supergeneral-g2}
For a superspecial abelian surface \(A\),
\(\boldsymbol\varepsilon=(0,1,0,1)\), the two blocks give
\(\mathbf h^{(1)}=(0,0,0,0)\), \(\mathbf h^{(2)}=(1,0)\). Hence there is a single arc, of length
\(1\), and \(U_A^{0,2}\cong U_1\). For a cyclic supergeneral surface \(A\),
\(\boldsymbol\varepsilon=(0,0,1,1)\), the blocks give
\(\mathbf h^{(1)}=(1,0,0,1)\), \(\mathbf h^{(2)}=(0,0)\). The positive cyclic arc has length \(2\), so
\(U_A^{0,2}\cong U_2\). In both cases \(\lambda=(1)\), \(p\text{-}\exp=1\),
\(\widehat{\mathrm{Br}}_A\cong\widehat{\mathbb G}_a\), and
\(\mathrm{Br}^{\mathrm{perf}}_A\cong\mathbb G_a^{\mathrm{perf}}\). For abelian
surfaces, the elementary type is the usual Artin invariant, giving the well-known
values \(\sigmaArt=1\) and \(\sigmaArt=2\), respectively.
\end{example}

\subsection{Supergeneral and superspecial cases}\label{subsec:supergeneral-superspecial}

This subsection treats two extremal strata of the supersingular locus
\(\mathcal S_g\). A \emph{superspecial} abelian variety is isomorphic over
\(\overline{k}\) to a product of supersingular elliptic curves, whereas the
\emph{supergeneral} locus is the open dense stratum
\(\mathcal S_{g,a=1}\) defined by \(a(A)=1\). By
Lemma~\ref{lem:supersingular-brauer-unipotent}, the formal and perfect Brauer
groups of a supersingular \(A\) are respectively the formal and syntomic realizations
\(\widehat G(U_A^{0,2})\) and \(G^{\mathrm{perf}}(U_A^{0,2})\). We begin
with a cyclic family in \(\mathcal S_{g,a=1}\) to which the algorithm of
Theorem~\ref{thm:cyclic-matrix} applies directly.

\begin{theorem}[A cyclic supergeneral family]\label{thm:supergeneral}
Let \(g\ge1\). There exists a principally polarized supersingular abelian
\(g\)-fold \((A,\lambda)\) over \(k\) whose \(p\)-divisible group has
Dieudonn\'e module \(R^0/R^0(F^g-V^g)\), hence \(a(A)=1\). For any such \(A\),
\[
  U_A^{0,2}
  \cong
  \bigoplus_{i=1}^{g-1} U_{i+1,\,i+3,\,\ldots,\,2g-1-i}.
\]
Consequently,
\(\widehat{\mathrm{Br}}_A\cong\bigoplus_{r=1}^{g-1}\widehat W_r\) and
\(\mathrm{Br}^{\mathrm{perf}}_A\cong\bigoplus_{r=1}^{g-1}W_r^{\mathrm{perf}}\).
\end{theorem}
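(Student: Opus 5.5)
The plan is to compute \(U_A^{0,2}\) blockwise with the cyclic matrix algorithm (Theorem~\ref{thm:cyclic-matrix}), and then to recognize each block as a distinguished domino by Theorem~\ref{thm:distinguished}.

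\emph{Existence.} The Dieudonn\'e module \(D=R^0/R^0(F^g-V^g)\) is the minimal supergeneral Dieudonn\'e module, and it carries a principal quasi-polarization. I would obtain this from the standard description of \(\mathcal S_{g,a=1}\), or by writing down an alternating form on the basis below directly. Oort's theory, namely that every quasi-polarized supersingular Dieudonn\'e module is realized by a polarized supersingular abelian variety over \(k=\bar k\), then produces \((A,\lambda)\). Since \(a(A)=\dim_k D/(FD+VD)=1\), the variety lies in the supergeneral stratum. I expect this existence step to be the main obstacle. It is the one genuinely external input: one must produce an explicit self-dual alternating pairing on \(D\) satisfying \(\langle Fx,y\rangle=\sigma\langle x,Vy\rangle\). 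What I would try first is the pairing pairing \(F^i\) against \(V^{g-i}\) up to signs, then checking the relation \(F^g=V^g\).

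\emph{Cyclic word and heights.} Take the basis \(V^{g-1},\dots,V,1,F,\dots,F^{g-1}\) of \(D\). Here \(F\) sends \(F^i\mapsto F^{i+1}\) and \(F^{g-1}\mapsto F^g=V^g\), which is a unit step into the \(V\)-chain, while \(F(V^i)=pV^{i-1}\). Hence \(D\), and likewise its dual \(H^1_{\mathrm{crys}}(A/W)\), which has the same shape, is supersingular cyclic in the sense of Definition~\ref{def:cyclic-crystal}. Its valuation word is \(g\) consecutive zeros followed by \(g\) consecutive ones. In block \(s\) with \(1\le s\le g-1\), one has \(v_j=\varepsilon_j+\varepsilon_{j+s}\). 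This gives \(g-s\) values equal to \(0\), then \(s\) equal to \(1\), then \(g-s\) equal to \(2\), then \(s\) equal to \(1\). So the normalized height function \(\mathbf h^{(s)}\) is a trapezoid: a bottom plateau at \(0\), a rise of \(g-s\) unit steps, a top plateau at height \(g-s\) of \(s+1\) points, and a symmetric descent. For \(s=g\), every \(v_j=1\), so \(\mathbf h^{(g)}=0\) and this block contributes nothing. As a check, the case \(g=2\) reproduces \(\mathbf h^{(1)}=(1,0,0,1)\) from Example~\ref{ex:supergeneral-g2}.

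\emph{Reading off the dominoes.} For \(1\le q\le g-s\), the superlevel set \(\{\mathbf h^{(s)}\ge q\}\) is a single arc of length \(s+1+2(g-s-q)\). These arcs are nested, and their lengths run through \(s+1,s+3,\dots,2g-s-1\). Since the Nygaard pair is a direct sum over blocks (Step~2), \(U_A^{0,2}\) is a direct sum of the block dominoes \(U_{\mathbf h^{(s)}}\). By Proposition~\ref{prop:cyclic-torsion-layers-type}, each \(U_{\mathbf h^{(s)}}\) has type \((s+1,s+3,\dots,2g-s-1)\) and dimension \(g-s\). By Proposition~\ref{prop:cyclic-isogeny-partition}, its \(p\)-exponent is \(\max_i h^{(s)}_i=g-s\), which equals its dimension and is therefore maximal. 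Theorem~\ref{thm:distinguished} then identifies \(U_{\mathbf h^{(s)}}\) with \(U_{s+1,s+3,\dots,2g-1-s}\); this is the summand indexed by \(i=s\). Finally, Lemma~\ref{lem:supersingular-brauer-unipotent} identifies \(\widehat{\mathrm{Br}}_A\) and \(\mathrm{Br}^{\mathrm{perf}}_A\) with the two realizations of \(U_A^{0,2}\). Both realizations commute with direct sums, and Corollary~\ref{cor:distinguished-props} gives \(\widehat W_{g-s}\) and \(W^{\mathrm{perf}}_{g-s}\) for the summand indexed by \(s\). Reindexing by \(r=g-s\) yields the stated Brauer groups.
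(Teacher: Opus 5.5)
Your proposal is correct and follows the paper's proof essentially step for step. The paper likewise gets existence by citing a principal quasi-polarization of \(R^0/R^0(F^g-V^g)\) \cite[Example~3.3]{NV07} and its realization by a principally polarized abelian variety \cite[Proposition~5.3]{Har10}, then uses the valuation word of \(g\) zeros and \(g\) ones, the trapezoidal height functions \(\mathbf h^{(s)}\) with single nested arcs of lengths \(s+1,s+3,\ldots,2g-1-s\) and maximal \(p\)-exponent \(g-s\), and concludes via Theorem~\ref{thm:distinguished} and Corollary~\ref{cor:distinguished-props}. One small slip: your listed basis omits \(V^g=F^g\) and so has only \(2g-1\) vectors, although your next sentence correctly uses \(V^g\) as the image of \(F^{g-1}\).
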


\begin{proof}
Put \(M_g:=R^0/R^0(F^g-V^g)\). Since \(F^{2g}=p^g\) on \(M_g\),
Dieudonn\'e theory identifies it with a height-\(2g\), dimension-\(g\)
\(p\)-divisible group isoclinic of slope \(1/2\). This group admits a
principal quasi-polarization by \cite[Example~3.3]{NV07} and is realized by a
principally polarized abelian variety \((A,\lambda)\) by
\cite[Proposition~5.3]{Har10}. The resulting \(A\) is supersingular.

Use the ordered basis \(e_1,\ldots,e_{2g}\) of
\(H^1_{\mathrm{crys}}(A/W)\simeq R^0/R^0(F^g-V^g)\) represented by
\[
  1,F,\ldots,F^{g-1},V^g,V^{g-1},\ldots,V,
\]
where \(V^g=F^g\). Then
\(M_g/(FM_g+VM_g)\simeq k\cdot\overline e_1\), since modulo \(p\) the subspace
\(FM_g+VM_g\) is generated by \(\overline e_2,\ldots,\overline e_{2g}\). Hence every
abelian variety with Dieudonn\'e module \(M_g\) has \(a\)-number \(1\). In this
basis the Frobenius is cyclic, with valuation word
\([\underbrace{0,\ldots,0}_{g},\underbrace{1,\ldots,1}_{g}]\).

Apply Step~1 of Theorem~\ref{thm:cyclic-matrix} to
\(H^2_{\mathrm{crys}}(A/W)=\bigwedge^2H^1_{\mathrm{crys}}(A/W)\), obtaining
cyclic blocks \(B^{(1)},\ldots,B^{(g)}\). For \(1\le s\le g-1\), put
\(m=g-s\). The recurrence in Step~2 gives, up to cyclic rotation, the
normalized height function
\(\mathbf h^{(s)}=(m,m-1,\ldots,1,\underbrace{0,\ldots,0}_{s+1},1,\ldots,m-1,
\underbrace{m,\ldots,m}_{s})\), so for \(1\le r\le m\) the entries with \(h_i^{(s)}\ge r\)
form a single cyclic arc of length \(2g-s-2r+1\). Thus the domino \(U_s\) of
this block has type sequence \((s+1,s+3,\ldots,2g-1-s)\) and
\(p\text{-}\exp(U_s)=m\). Theorem~\ref{thm:distinguished} therefore gives
\(U_s\cong U_{s+1,s+3,\ldots,2g-1-s}\). The last block has
\(\mathbf h^{(g)}=0\) and contributes nothing. Summing over the blocks yields
\[
  U_A^{0,2}\cong
  \bigoplus_{s=1}^{g-1}U_{s+1,\,s+3,\,\ldots,\,2g-1-s}.
\]
The formulas for the two Brauer groups follow from the realizations of
distinguished dominoes in Corollary~\ref{cor:distinguished-props}.
\end{proof}

\begin{theorem}[Superspecial abelian varieties]\label{thm:superspecial}
Let \(A/k\) be a superspecial abelian \(g\)-fold, \(g\ge2\). Then
\[
  U_A^{0,2}\cong U_1^{\oplus\binom{g}{2}}.
\]
Consequently,
\(\widehat{\mathrm{Br}}_A\cong\widehat{\mathbb G}_a^{\oplus\binom{g}{2}}\) and
\(\mathrm{Br}^{\mathrm{perf}}_A\cong
\bigl(\mathbb G_a^{\mathrm{perf}}\bigr)^{\oplus\binom{g}{2}}\).
\end{theorem}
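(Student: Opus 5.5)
The plan is to run the cyclic matrix algorithm of Theorem~\ref{thm:cyclic-matrix}. The only subtlety is that the Dieudonn\'e module of a superspecial abelian variety is cyclic only for \(g=1\). I would instead use a decomposition into direct sums of cyclic pieces. Over \(k=\bar k\), a superspecial \(A\) is isomorphic to \(E^g\) for a supersingular elliptic curve \(E\). Hence \(H^1_{\mathrm{crys}}(A/W)\simeq D^{\oplus g}\), where \(D=H^1_{\mathrm{crys}}(E/W)\) has basis \(e,e'\) with \(\varphi(e)=e'\) and \(\varphi(e')=pe\) (after normalizing units). Equivalently, \(D\simeq R^0/R^0(F-V)\), so \(D\) is cyclic with valuation word \((0,1)\).

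Next I would decompose \(H^2_{\mathrm{crys}}(A/W)=\bigwedge^2(D^{\oplus g})\) as
\[
  \bigoplus_{a}\bigwedge^2D_a\;\oplus\;\bigoplus_{a<b}D_a\otimes D_b .
\]
Each summand is stable under \(\Phi\), so it is an \(F\)-crystal summand. Since the Hodge--Witt filtration of Proposition~\ref{prop:HW-filtration} and the Nygaard pair of Theorem~\ref{thm:geometric-ekedahl-pair-partial-v} are functorial and additive, and Theorem~\ref{thm:two-kernel-presentation} is an exact equivalence, \(U_A^{0,2}\) is the direct sum of the positive dominoes attached to the blocks. The Step~2 analysis of Theorem~\ref{thm:cyclic-matrix} was phrased for an arbitrary cyclic \(\Phi\)-block, so it applies here as well.

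The blocks are handled one at a time.
\begin{itemize}
\item \(\bigwedge^2D_a\) has rank one with \(\Phi(e\wedge e')=-p\,e\wedge e'\). This gives \(v=1\), so \(\mathbf h=(0)\) and the block contributes nothing.
\item \(D_a\otimes D_b\) has basis \(e_a e_b,\ e'_ae'_b,\ e_ae'_b,\ e'_ae_b\). Frobenius permutes these in two cycles of length two: \(e_ae_b\mapsto e'_ae'_b\mapsto p^2e_ae_b\) has valuations \((0,2)\), and \(e_ae'_b\mapsto p\,e'_ae_b\mapsto p\,e_ae'_b\) has valuations \((1,1)\).
\item The recurrence \(h_{i+1}-h_i=v_i-1\) with \(\min h=0\) gives \(\mathbf h=(0,1)\) for the first cycle, which is a single arc of length \(1\) and so contributes \(U_1\). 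The second cycle gives \(\mathbf h=(0,0)\) and contributes nothing.
\end{itemize}
This matches the surface case of Example~\ref{ex:superspecial-g2}.

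Summing over the \(\binom g2\) pairs \(a<b\) gives \(U_A^{0,2}\cong U_1^{\oplus\binom g2}\). By Proposition~\ref{prop:cyclic-torsion-layers-type} and Lemma~\ref{lem:cyclic-unit-normalization}, each block's domino is \(U_1\) itself, not merely of type \((1)\); alternatively, Lemma~\ref{lem:pure-type-split} shows directly that a pure-type domino is split. As a consistency check, the dimension \(\binom g2\) agrees with Corollary~\ref{cor:domino-supersingular}.

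For the Brauer groups, Lemma~\ref{lem:supersingular-brauer-unipotent} identifies \(\widehat{\mathrm{Br}}_A\) and \(\mathrm{Br}^{\mathrm{perf}}_A\) with the realizations \(\widehat G(U_A^{0,2})\) and \(G^{\mathrm{perf}}(U_A^{0,2})\). These realizations are additive, and the distinguished case \(n=1\) of Corollary~\ref{cor:distinguished-props} gives \(\widehat G(U_1)=\widehat{\mathbb G}_a\) and \(G^{\mathrm{perf}}(U_1)=\mathbb G_a^{\mathrm{perf}}\).

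The main obstacle is justifying that the blockwise Nygaard-pair computation is legitimate for an \(F\)-crystal that is a direct sum of cyclic blocks rather than globally cyclic. This requires that the intersection formula for \(\mathrm{Fil}^1_{\mathrm{HW}}\) and the construction of \(K_{-1}\) via \(V_K\) both commute with \(\Phi\)-stable direct sums. Both follow at once from their defining formulas, but the point should be stated explicitly.
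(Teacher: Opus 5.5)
Your proposal is correct and follows the paper's own proof. Künneth reduces \(H^1_{\mathrm{crys}}(A/W)\) to \(g\) copies of the word \((0,1)\); the exterior square splits into \(g\) blocks \([1]\), \(\binom{g}{2}\) blocks \([1,1]\) and \(\binom{g}{2}\) blocks \([0,2]\), of which only the last contribute (one length-one arc, hence one \(U_1\), each); Steps~2--3 of Theorem~\ref{thm:cyclic-matrix} are applied to arbitrary cyclic \(\Phi\)-blocks; and Corollary~\ref{cor:distinguished-props} gives the realizations.

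One aside in your proposal is wrong but harmless: the superspecial Dieudonn\'e module is cyclic for every \(g\), not only for \(g=1\). It has word \((0,1,0,1,\ldots,0,1)\); this presentation satisfies \(FM=VM\), and Example~\ref{ex:superspecial-g2} uses it for \(g=2\). So the algorithm could be applied directly, although your blockwise route is the one the paper takes.
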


\begin{proof}
Since \(k\) is algebraically closed, \(A\) is isomorphic to a product of \(g\)
supersingular elliptic curves. The K\"unneth formula gives
\(H^1_{\mathrm{crys}}(A/W)=\bigoplus_{i=1}^gR^0/R^0(F-V)\), with valuation word
\([0,1]\) on each summand. Its exterior square decomposes into \(g\) blocks of
valuation sequence \([1]\), \(\binom{g}{2}\) blocks of sequence \([1,1]\), and
\(\binom{g}{2}\) blocks of sequence \([0,2]\). The first two sequences have
zero height function. Each \([0,2]\) block has height function \((1,0)\) and
therefore contributes one copy of \(U_1\). Steps~2 and~3 of
Theorem~\ref{thm:cyclic-matrix} apply to any cyclic \(\Phi\)-block of a
slope-one isocrystal, so they give the stated decomposition, and
Corollary~\ref{cor:distinguished-props}
gives its formal and syntomic realizations.
\end{proof}

For a positive domino, both its degree and dimension can be read directly from
the two kernels of its Nygaard pair.

\begin{proposition}[Degree and dimension formula]
\label{prop:ekedahl-pair-weighs-degree}
Let \(U=[U^0\xrightarrow dU^1]\in\operatorname{Dom}^{+}\). Then
\[
  \lgth_W\ker(d)=\deg(U),
  \qquad
  \dim(U)=\lgth_W\ker(d)-\lgth_W\ker(dV).
\]
\end{proposition}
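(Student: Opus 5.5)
The plan is to read both identities off the kernel-length formula \eqref{eq:kernel-lengths} of Remark~\ref{rem:kernel-lengths}, which holds for every domino. Write \(m_j=m_j(U)\) for the multiplicities of the type sequence. Since \(U\in\operatorname{Dom}^{+}\), Definition~\ref{def:positive-diagonal-domino} gives \(m_j=0\) for \(j\le0\), so every type entry satisfies \(j\ge1\).

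First I would take \(r=0\) in \eqref{eq:kernel-lengths}. This gives
\[
  \lgth_W\ker(d)=\sum_j m_j\max(0,j)=\sum_{j\ge1} m_j\,j=\deg(U),
\]
because all \(j\) are positive. As a cross-check, the cokernel formula at \(r=0\) gives \(\lgth_W\operatorname{coker}(d)=\sum_j m_j\max(0,-j)=0\). This is consistent with the surjectivity of \(dV=d\circ V\) from Lemma~\ref{lem:positive-iff-dV-surjective}, and it recovers \(\deg(U)=\chi(U)\).

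Next I would take \(r=1\):
\[
  \lgth_W\ker(dV)=\sum_j m_j\max(0,j-1)=\sum_{j\ge1}m_j(j-1)=\deg(U)-\sum_j m_j=\deg(U)-\dim(U).
\]
The last equality uses \(n=\sum_j m_j=\dim(U)\) from Definition~\ref{def:domino-numerics}. Subtracting this from the first identity gives \(\dim(U)=\lgth_W\ker(d)-\lgth_W\ker(dV)\).

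The only point needing care is that \eqref{eq:kernel-lengths} is stated for the kernels of \(dV^r\colon U^0\to U^1\) on the domino itself, which is exactly the setting here. Its proof relies on additivity along the type filtration, which was already justified in Remark~\ref{rem:kernel-lengths}. So no step here is a real obstacle; the main thing to get right is the positivity input \(j\ge1\), which removes the \(\max\) in both computations.
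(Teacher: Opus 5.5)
Your proposal is correct and matches the paper's proof: both specialize the kernel-length formula \eqref{eq:kernel-lengths} at \(r=0\) and \(r=1\), and use positivity of all type entries to remove the \(\max\) terms. The only difference is bookkeeping: you compute \(\lgth_W\ker(dV)=\deg(U)-\dim(U)\) and then subtract, whereas the paper takes the difference termwise, \(\max(0,j)-\max(0,j-1)=1\) for \(j\ge1\).
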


\begin{proof}
All type factors of \(U\) are positive, so
Remark~\ref{rem:kernel-lengths} gives
\(\lgth_W\ker(d)=\sum_jm_j\max(0,j)=\sum_jjm_j=\deg(U)\) and
\[
  \lgth_W\ker(d)-\lgth_W\ker(dV)
  =\sum_jm_j\bigl(\max(0,j)-\max(0,j-1)\bigr)
  =\sum_{j\ge1}m_j
  =\dim(U).
\]
\end{proof}

For a supersingular abelian \(g\)-fold \(A\),
Corollary~\ref{cor:domino-supersingular} gives
\(\dim(U_A^{0,2})=\binom{g}{2}\), and
Proposition~\ref{prop:diagonal-ordinary-degree-two-domino} gives
\(U_A^{0,2}\in\operatorname{Dom}^{+}\). Thus
\(\sigmaArt(A)\ge\binom{g}{2}\).

Positivity imposes the only obstruction to realizing a distinguished domino
in bidegree \((0,2)\). Indeed,
Proposition~\ref{prop:diagonal-ordinary-degree-two-domino} gives
\(U_A^{0,2}\in\Dom^+\) for every abelian variety \(A\), so its type entries are
positive. The following construction realizes every distinguished domino
with this property.

\begin{corollary}[Geometric realization]\label{cor:geometric-realization}
Every distinguished domino \(U_{j,j+2,\ldots,j+2n-2}\), \(j\ge1\),
appears as a direct summand of the domino part of the de Rham--Witt cohomology
of some principally polarized abelian variety over \(k\).
\end{corollary}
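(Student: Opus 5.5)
The plan is to realize each distinguished domino \(U_{j,j+2,\ldots,j+2n-2}\) with \(j\ge1\) inside the cyclic supergeneral family of Theorem~\ref{thm:supergeneral}. By that theorem, the principally polarized \(g\)-fold \(A\) with Dieudonn\'e module \(R^0/R^0(F^g-V^g)\) has summands \(U_{s+1,s+3,\ldots,2g-1-s}\) for \(1\le s\le g-1\). The summand indexed by \(s\) has first type \(s+1\) and length \(g-s\). Matching \(s+1=j\) and \(g-s=n\) forces \(s=j-1\) and \(g=n+j-1\). This requires \(s\ge1\), that is \(j\ge2\). For such \(j\), Theorem~\ref{thm:supergeneral} with \(g=n+j-1\) produces the required summand \(U_{j,\ldots,j+2n-2}\), since then \(2g-1-s=j+2n-2\).

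The main obstacle is the case \(j=1\), because no summand of the supergeneral family has first type \(1\). I would try the following routes, in this order.

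\textbf{First route: a block of \(B^{(g)}\).} In the cyclic algorithm of Theorem~\ref{thm:cyclic-matrix}, the block \(B^{(g)}\) has length \(g\). Odd-length arcs starting at \(1\) could come from height functions such as \((1,2,\ldots,n,\ldots)\) on blocks whose arcs have lengths \(2n-1,2n-3,\ldots,1\). Concretely, I would look for a valuation word \(\boldsymbol\varepsilon\) of a supersingular cyclic \(F\)-crystal such that one block has height function rising by \(1\) and then falling by \(1\), with a plateau of length one at the top. Its superlevel arcs then have lengths \(2n-1,2n-3,\ldots,1\), which is the type \((1,3,\ldots,2n-1)\). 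Its \(p\)-exponent is then \(\max h=n\) by Proposition~\ref{prop:cyclic-isogeny-partition}. Theorem~\ref{thm:distinguished} then identifies the block's domino with \(U_{1,3,\ldots,2n-1}\).

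Realizing such a word amounts to requiring \(\varepsilon_j+\varepsilon_{j+s}\) to equal \(2\) along a run, then \(1\) once, then \(0\) along a run. A word of the shape \(0^a1^b\) with a suitable shift \(s\) and unequal run lengths might achieve this. I would also need a principal quasi-polarization on that Dieudonn\'e module, as in the citations used for Theorem~\ref{thm:supergeneral}.

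\textbf{Second route: products.} Alternatively, I would use a product \(A=A_1\times A_2\). By K\"unneth, \(H^2\) of the product contains the block \(H^1(A_1)\otimes H^1(A_2)\). Its \(F\)-crystal is a tensor product of cyclic ones, hence a sum of cyclic blocks. Taking \(A_1\) to be a supersingular elliptic curve with word \((0,1)\), and \(A_2\) supergeneral with word \(0^g1^g\), the tensor blocks have valuations \(\varepsilon'_i+\varepsilon_j\). This gives the sequence \(0^g1^g\) shifted by one and glued with \(1^g2^g\). The height function then rises by \(1\) for \(g\) steps and falls for \(g\) steps. I would compute its arcs and hope to obtain odd lengths \(2g-1,\ldots,1\) on a suitable block, which gives \(n=g\).

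\textbf{Conclusion.} In either route, I would apply Theorem~\ref{thm:cyclic-matrix} blockwise. The algorithm returns the domino as a direct sum over blocks, so the chosen block contributes a direct summand. The \(p\)-exponent computation then pins that summand down via Theorem~\ref{thm:distinguished}, as in the proof of Theorem~\ref{thm:supergeneral}. The step I expect to be hardest is the exact bookkeeping of cyclic block lengths and heights in the \(j=1\) construction.
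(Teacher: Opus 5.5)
\paragraph{The case \(j\ge2\) is correct.} Your argument here is exactly the paper's: take \(g=n+j-1\) and read off the summand indexed by \(s=j-1\) in Theorem~\ref{thm:supergeneral}.

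\paragraph{The case \(j=1\) has a genuine gap.} Neither of your routes produces \(U_{1,3,\ldots,2n-1}\) for \(n\ge2\).

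In the second route, the Frobenius on \(H^1(E)\otimes H^1(A_g)\) is \(\varphi\otimes\varphi\). It advances \emph{both} indices at every step, so the elliptic-curve index alternates. The orbits on \(\mathbb Z/2\times\mathbb Z/2g\) therefore have length \(2g\). There is no block whose valuations are \(0^g1^g\) glued to \(1^g2^g\). The actual height functions have average slope \(\pm\tfrac12\). For example, when \(g=3\) the two cross blocks give \(\mathbf h=(2,1,1,0,1,1)\) and \((1,1,0,0,0,1)\). That is a type-\((1,5)\) domino of \(p\)-exponent \(2\), plus a copy of \(U_3\). The decisive obstruction is the \(a\)-number: \(a(E\times A_g)=2\). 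Theorem~\ref{thm:extremal-exponent} then bounds the \(p\)-exponent of the whole \(U^{0,2}_{E\times A_g}\) by \(g-1\), and Remark~\ref{rem:exponent-sharpness} shows this bound is attained. But \(U_{1,3,\ldots,2g-1}\) has \(p\)-exponent \(g\), so it cannot be a direct summand.

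In the first route, the concrete suggestion also fails. A word \(0^a1^b\) of length \(2g\) with \(\sum\varepsilon_i=g\) forces \(a=b=g\). Then \(B^{(s)}\) has valuations \(0^{g-s}1^s2^{g-s}1^s\), whose top plateau has \(s+1\ge2\) points. This is exactly why the supergeneral summands start at type \(s+1\ge2\). Your valuation pattern has a separate error. A run of \(2\)'s, then one \(1\), then a run of \(0\)'s gives a two-point plateau at the top. That produces arcs \(2n,2n-2,\ldots,2\), i.e.\ type \((2,4,\ldots,2n)\). To get odd arcs you need \(2^n\) immediately followed by \(0^n\).

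\paragraph{The missing idea.} Take the product \(A_n\times A'_n\) of two principally polarized cyclic supergeneral \(n\)-folds, with the product polarization.
\begin{itemize}
\item Choose cyclic bases \(e_r\), \(f_r\) whose words \(0^n1^n\) are aligned. The K\"unneth term \(H^1(A_n)\otimes H^1(A'_n)\) then contains the diagonal block spanned by the \(e_r\otimes f_r\), with valuation sequence \(0^n2^n\).
\item Its height function is the tent \((n,n-1,\ldots,1,0,1,\ldots,n-1)\). It has exactly one arc of length \(2n-2r+1\) at each level \(1\le r\le n\).
\item Steps~2--3 of Theorem~\ref{thm:cyclic-matrix} therefore give type \((1,3,\ldots,2n-1)\) and \(p\)-exponent \(n\).
\item Theorem~\ref{thm:distinguished} identifies this block's summand with \(U_{1,3,\ldots,2n-1}\).
\end{itemize}
In the language of your first route, this is essentially the half-block \(B^{(g)}\), with \(g=2n\), for the doubled word \((0^n1^n)^2\).
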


\begin{proof}
Suppose first that \(j\ge2\), and set \(g=n+j-1\) and \(i=j-1\).
Choose a principally polarized member \(A_g\) of the cyclic supergeneral family
in Theorem~\ref{thm:supergeneral}. Its decomposition contains the direct
summand \(U_{i+1,i+3,\ldots,2g-1-i}
  =
  U_{j,j+2,\ldots,j+2n-2}\).

For \(j=1\), choose two principally polarized cyclic supergeneral \(n\)-folds
\((A_n,\lambda_{A_n})\) and \((A'_n,\lambda_{A'_n})\), and give
\(A=A_n\times A'_n\) the product principal polarization. The K\"unneth
decomposition of \(H^2_{\mathrm{crys}}(A/W)\) contains
\[
  H^1_{\mathrm{crys}}(A_n/W)
  \otimes_W
  H^1_{\mathrm{crys}}(A'_n/W).
\]
Choose cyclic bases \(e_1,\ldots,e_{2n}\) and \(f_1,\ldots,f_{2n}\), each having
Frobenius valuation word
\[
  [\,\underbrace{0,\ldots,0}_{n},
     \underbrace{1,\ldots,1}_{n}\,]
\]
up to cyclic rotation. The diagonal tensor block spanned by
\(e_r\otimes f_r\), \(r\in\mathbb Z/2n\mathbb Z\), is cyclic with valuation
sequence
\[
  [\,\underbrace{0,\ldots,0}_{n},
     \underbrace{2,\ldots,2}_{n}\,].
\]
The corresponding height function has one arc of length \(2n-2r+1\) at each
level \(1\le r\le n\). Steps~2 and~3 of
Theorem~\ref{thm:cyclic-matrix}, applied to this block, therefore give type
sequence \((1,3,\ldots,2n-1)\) and \(p\)-exponent \(n\). By
Theorem~\ref{thm:distinguished}, the corresponding domino summand is
\(U_{1,3,\ldots,2n-1}\), completing the proof.
\end{proof}

\subsection{Explicit bounds on the \texorpdfstring{\(p\)}{p}-exponent and \texorpdfstring{\(\sigmaArt\)}{sigma_Art}}
\label{subsec:extremal-exponent}

Throughout this subsection \(A/k\) is a supersingular abelian \(g\)-fold,
\(g\ge2\). Put \(M=H^1_{\mathrm{crys}}(A/W)\),
\(N:=\bigwedge_W^2M\simeq H^2_{\mathrm{crys}}(A/W)\),
\(\Phi:=\bigwedge^2F\), and let \(\Theta:=p^{-1}\Phi\) be the normalized
Frobenius. The isocrystal \(N[1/p]\) has slope \(1\). For any
\(\Phi\)-stable lattice \(\Lambda\) in a slope-one isocrystal, let
\(L(\Lambda):=\bigcap_{r\ge0}\Phi^{-r}(p^r\Lambda)=\bigcap_{r\ge0}\Theta^{-r}\Lambda\)
be its Hodge--Witt filtration lattice (Proposition~\ref{prop:HW-filtration}),
and call \(e(\Lambda):=p\text{-}\!\exp(\Lambda/L(\Lambda))\) its \emph{lattice
exponent}. Write
\[
  a:=a(A)=\dim_kM/(FM+VM)
\]
for the \(a\)-number of \(A\), and let \(\bar F,\bar V\) be the maps induced by
\(F,V\) on \(M/pM\simeq H^1_{\mathrm{dR}}(A/k)\). Since
\(\ker\bar F=\operatorname{im}\bar V\) and \(\ker\bar V=\operatorname{im}\bar F\),
both of dimension \(g\), we also have
\[
  \dim_k\bigl(\operatorname{im}\bar F\cap\operatorname{im}\bar V\bigr)
  =2g-\dim_k\bigl(\operatorname{im}\bar F+\operatorname{im}\bar V\bigr)=a,
\]
the form in which the \(a\)-number is used below. If \(\mu_{\min,\Lambda}(r)\) is the smallest elementary-divisor valuation of
the \(W\)-linearization of \(\Phi^r\), then Smith normal form gives
\begin{equation}\label{eq:lattice-exponent}
  e(\Lambda)=\min\{\,m:\Phi^r(\Lambda)\subseteq p^{r-m}\Lambda\ \text{for all }r\ge0\,\}
        =\max_{r\ge0}\{\,r-\mu_{\min,\Lambda}(r)\,\}.
\end{equation}
Put \(L:=L(N)\). The degree-two reconstruction
(Theorem~\ref{thm:geometric-ekedahl-pair-partial-v}) identifies
\(\ker d\subset U_A^{0,2}\) with \(N/L\). Together with
Proposition~\ref{prop:ekedahl-pair-weighs-degree} and the next lemma, this gives
\[
  p\text{-}\!\exp(U_A^{0,2})=e(N),
  \qquad
  \sigmaArt(A)=\deg(U_A^{0,2})=\lgth_W(N/L).
\]
We first bound the exponent. We then bound the degree-two Artin invariant from
below and, assuming a principal polarization, from above.

\begin{lemma}
\label{lem:first-kernel-controls-exponent}
Let \(U=[U^0\xrightarrow dU^1]\in\operatorname{Dom}^{+}\). Then
\(p\text{-}\!\exp(U)=p\text{-}\!\exp(\ker d)\).
\end{lemma}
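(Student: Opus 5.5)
The inequality \(p\text{-}\!\exp(\ker d)\le p\text{-}\!\exp(U)\) is immediate, since \(\ker d\subseteq U^0\) and \(p\text{-}\!\exp(U)\) is defined by \(p^nU^0=0\). The work is the reverse inequality: I will show that if \(p^n\ker d=0\), then \(p^nU^0=0\). The plan is to use the positivity hypothesis through Lemma~\ref{lem:positive-iff-dV-surjective}, which makes \(dV\colon U^0\to U^1\) surjective. I then propagate annihilation from \(\ker d\) to all of \(U^0\) along the \(V\)-adic filtration, exactly as in Step~1 of the proof of Lemma~\ref{lem:kernel-ladder}.

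First I record \(U^0=\ker d+VU^0\). Given \(x\in U^0\), surjectivity of \(dV\) gives \(y\) with \(dVy=dx\), so \(x-Vy\in\ker d\). Iterating, \(U^0=\ker d+V\ker d+\cdots+V^{m-1}\ker d+V^mU^0\) for every \(m\). Since \(U\) is a domino, it is profinite and complete (Remark~\ref{rem:domino-intrinsic-characterization}), so \(U^0\) is \(V\)-adically complete. Hence every \(x\in U^0\) is a convergent sum \(\sum_{r\ge0}V^rk_r\) with \(k_r\in\ker d\).

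Next, \(V\) commutes with \(p\), so \(p^nV^rk_r=V^r(p^nk_r)=0\) for each \(r\). The remaining point, which I expect to be the only real obstacle, is passing \(p^n\) through the infinite sum. I handle it with the finite truncations: \(p^nx\in p^n\bigl(\sum_{r<m}V^r\ker d\bigr)+V^mU^0=V^mU^0\) for every \(m\). Since \(U^0\) is \(V\)-adically separated, \(\bigcap_mV^mU^0=0\), and therefore \(p^nx=0\). Separatedness holds because \(U^0\to\varprojlim U^0/V^mU^0\) is an isomorphism: the type filtration reduces this to the elementary \(U_j\), where \((U_j)^0=\prod kV^n\).

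This gives \(p\text{-}\!\exp(U)\le p\text{-}\!\exp(\ker d)\), and combined with the first paragraph the two exponents are equal. As a check, I will verify the statement on \(U_j\) with \(j\ge1\): there \(\ker d=k^j\) is nonzero and killed by \(p\), matching \(p\text{-}\!\exp(U_j)=1\). This is the only place positivity is essential. For \(j\le0\) one has \(\ker d=0\) and the statement fails, consistent with the need for \(dV\) to be surjective.
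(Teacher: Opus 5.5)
Your proof is correct and follows the paper's argument. Both derive \(U^0=\ker d+VU^0\) from the surjectivity of \(dV\); the paper cites Step~1 of Lemma~\ref{lem:kernel-ladder}, which is exactly your computation. Both then iterate to get \(p^nU^0\subseteq V^mU^0\) for every \(m\), and conclude from the \(V\)-adic separatedness that coherence provides. The detour through convergent sums is unnecessary, since your finite-truncation step already gives the conclusion.
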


\begin{proof}
Lemma~\ref{lem:kernel-ladder} gives \(U^0=\ker d+VU^0\), while coherence makes
\(U^0\) \(V\)-adically separated
(Proposition~\ref{prop:coherence-criterion}). If \(p^mU^0=0\), then
\(p^m\ker d=0\). Conversely, suppose \(p^m\ker d=0\). Writing \(x=a+Vy\), with
\(a\in\ker d\), gives \(p^mx=V(p^my)\), and hence
\(p^mU^0\subseteq V(p^mU^0)\). Iterating this inclusion gives
\(p^mU^0\subseteq V^r(p^mU^0)\subseteq V^rU^0\) for every \(r\ge0\), hence
\(p^mU^0=0\).
\end{proof}

The next lemma records two consequences of the definition. First,
\(L(\Lambda)\) is monotone in \(\Lambda\). Second, for \(m\ge0\), the
conditions \(\Phi^r(\Lambda)\subseteq p^{r-m}\Lambda\) for all \(r\ge0\) are
equivalent to the single inclusion
\(\Phi^m(\Lambda)\subseteq L(\Lambda)\). Hence \(e(\Lambda)\) is the least such
\(m\). Since \(\Phi(L(\Lambda))\subseteq pL(\Lambda)\), that inclusion also
implies \(\Phi^{m+s}(\Lambda)\subseteq p^sL(\Lambda)\) for every \(s\ge0\).

\begin{lemma}[Lattice depth]
\label{lem:exponent-depth}
Let \(\Lambda'\subseteq\Lambda\) be \(\Phi\)-stable lattices in the same
slope-one isocrystal, and let \(m\ge0\).
\begin{enumerate}
\item \(L(\Lambda')\subseteq L(\Lambda)\).
\item \(e(\Lambda)\le m\) if and only if \(\Phi^m(\Lambda)\subseteq L(\Lambda)\).
\end{enumerate}
\end{lemma}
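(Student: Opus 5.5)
The plan is to work directly from the definition \(L(\Lambda)=\bigcap_{r\ge0}\Theta^{-r}\Lambda\), where \(\Theta=p^{-1}\Phi\) is invertible on the slope-one isocrystal. Everything below takes place inside that isocrystal, so the preimages \(\Theta^{-r}\) are honest subsets.

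\emph{Part (1).} Since \(\Lambda'\subseteq\Lambda\), for each \(r\) any \(x\) with \(\Theta^r x\in\Lambda'\) also has \(\Theta^r x\in\Lambda\). Hence \(\Theta^{-r}\Lambda'\subseteq\Theta^{-r}\Lambda\) termwise, and intersecting over \(r\ge0\) gives \(L(\Lambda')\subseteq L(\Lambda)\). No further input is needed.

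\emph{Part (2).} First rewrite \(\Phi^r(\Lambda)\subseteq p^{r-m}\Lambda\) as \(\Theta^r(\Lambda)\subseteq p^{-m}\Lambda\), using \(\Phi^r=p^r\Theta^r\). Then take \(e(\Lambda)\le m\) to mean that this inclusion holds for all \(r\ge0\). This is exactly how \eqref{eq:lattice-exponent} describes the least admissible exponent. Concretely, \(p^m\) kills \(\Lambda/L(\Lambda)\) iff \(p^m\Lambda\subseteq L(\Lambda)\) iff \(\Theta^r(p^m\Lambda)\subseteq\Lambda\) for all \(r\), which is the same condition. The task is therefore to show that this family of conditions is equivalent to the single inclusion \(\Phi^m(\Lambda)\subseteq L(\Lambda)\), i.e. \(\Theta^{s}\Phi^m(\Lambda)\subseteq\Lambda\) for all \(s\ge0\).

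Unwinding this, \(\Theta^s\Phi^m=p^m\Theta^{m+s}\). So the target condition reads \(\Theta^{m+s}(\Lambda)\subseteq p^{-m}\Lambda\) for all \(s\ge0\), which is the given family restricted to \(r\ge m\).

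\emph{The main point} is that the conditions with \(r<m\) are automatic, so restricting to \(r\ge m\) loses nothing. For \(r<m\), the \(\Phi\)-stability of \(\Lambda\) gives \(\Theta^r\Lambda=p^{-r}\Phi^r\Lambda\subseteq p^{-r}\Lambda\subseteq p^{-m}\Lambda\). Hence the full family is equivalent to its tail \(r\ge m\), which is the condition \(\Phi^m(\Lambda)\subseteq L(\Lambda)\). This proves (2).

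The only care needed is in identifying \(e(\Lambda)\) (the \(p\)-exponent of \(\Lambda/L(\Lambda)\)) with the family of conditions, as done above.
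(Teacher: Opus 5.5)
Your proof is correct and follows essentially the paper's route. For part (1) you use termwise monotonicity of the intersection. For part (2) you split the conditions \(\Phi^r(\Lambda)\subseteq p^{r-m}\Lambda\) into the tail \(r\ge m\), which is exactly \(\Phi^m(\Lambda)\subseteq L(\Lambda)\), and the range \(r<m\), which is automatic by \(\Phi\)-stability. The only difference is that you derive the equivalence of \(e(\Lambda)\le m\) with this family directly from the definition of \(L(\Lambda)\), whereas the paper cites the first equality of \eqref{eq:lattice-exponent}.
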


\begin{proof}
The definition gives \(\Phi(L(\Lambda))\subseteq pL(\Lambda)\).

For (1), if \(x\in L(\Lambda')\), then \(x\in\Lambda\) and
\(\Phi^r(x)\in p^r\Lambda'\subseteq p^r\Lambda\) for all \(r\ge0\), so
\(x\in L(\Lambda)\).

For (2), if \(\Phi^m(\Lambda)\subseteq L(\Lambda)\), then
\(\Phi^r(\Lambda)\subseteq\Phi^{r-m}(L(\Lambda))\subseteq p^{r-m}\Lambda\) for
\(r\ge m\), while for \(r<m\) the containment
\(\Phi^r(\Lambda)\subseteq p^{r-m}\Lambda\) follows from
\(\Phi^r(\Lambda)\subseteq\Lambda\subseteq p^{r-m}\Lambda\). Hence
\(e(\Lambda)\le m\) by \eqref{eq:lattice-exponent}. Conversely, if
\(e(\Lambda)\le m\), then
\(\Phi^s(\Phi^m(\Lambda))=\Phi^{s+m}(\Lambda)\subseteq p^s\Lambda\) for all
\(s\ge0\), which says \(\Phi^m(\Lambda)\subseteq L(\Lambda)\).
\end{proof}

\subsubsection*{\normalfont\bfseries A sharp lower bound for the \texorpdfstring{\(p\)}{p}-exponent}

\begin{lemma}[Lower bound for the \(p\)-exponent]
\label{lem:rank-estimate-lower-bound}
\(p\text{-}\!\exp(U_A^{0,2})
\ge 1+\left\lfloor\frac{g-2}{a}\right\rfloor
=\left\lceil\frac{g-1}{a}\right\rceil\).
\end{lemma}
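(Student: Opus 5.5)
The plan is to find a single \(r\) for which \(\Phi^r\) still has a unit elementary divisor, and then feed it into the lattice-exponent formula. By the discussion preceding Lemma~\ref{lem:first-kernel-controls-exponent}, \(p\text{-}\!\exp(U_A^{0,2})=e(N)\), where \(N=\bigwedge^2_WM\) and \(\Phi=\bigwedge^2F\). By \eqref{eq:lattice-exponent}, \(e(N)\ge r-\mu_{\min,N}(r)\) for every \(r\ge0\). So it suffices to exhibit \(r=1+\lfloor (g-2)/a\rfloor\) with \(\mu_{\min,N}(r)=0\). Equivalently, we need \(\Phi^r(N)\not\subseteq pN\). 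Since \(\sigma\) is bijective, the Frobenius twist in the \(W\)-linearization does not affect whether an elementary divisor is a unit, so we can argue directly with images of semilinear maps modulo \(p\).

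\emph{Reduction to \(H^1\).} If \(x,y\in M\) have \(F^r x\), \(F^r y\) linearly independent in \(M/pM\), then \(\Phi^r(x\wedge y)=F^rx\wedge F^ry\) is nonzero in \(N/pN=\bigwedge^2_k(M/pM)\). Hence \(\mu_{\min,N}(r)=0\) as soon as
\[
  \dim_k X_r\ge 2,\qquad X_r:=(F^rM+pM)/pM\subseteq M/pM .
\]

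\emph{Rank estimate.} I will show \(\dim_k X_r\ge g-(r-1)a\) for \(r\ge1\), by induction on \(r\).
\begin{itemize}
\item Base case: \(X_1=\operatorname{im}\bar F\) has dimension \(g\).
\item Inductive step: since \(F(F^rM+pM)=F^{r+1}M+pFM\), reduction mod \(p\) gives \(X_{r+1}=\bar F(X_r)\). Therefore
\[
  \dim X_{r+1}=\dim X_r-\dim\bigl(X_r\cap\ker\bar F\bigr)=\dim X_r-\dim\bigl(X_r\cap\operatorname{im}\bar V\bigr).
\]
For \(r\ge1\) we have \(X_r\subseteq\operatorname{im}\bar F\), so \(X_r\cap\operatorname{im}\bar V\subseteq\operatorname{im}\bar F\cap\operatorname{im}\bar V\). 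The latter has dimension \(a\) by the identity recorded at the start of this subsection. Hence each step loses at most \(a\) dimensions.
\end{itemize}

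\emph{Conclusion.} Take \(r=1+\lfloor(g-2)/a\rfloor\). Then \((r-1)a\le g-2\), so \(\dim X_r\ge2\), which gives \(\mu_{\min,N}(r)=0\) and \(e(N)\ge r\). The identity \(1+\lfloor(g-2)/a\rfloor=\lceil(g-1)/a\rceil\) is elementary: write \(g-1=qa+s\) with \(0\le s<a\) and check the cases \(s=0\) and \(s>0\).

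The main point to verify carefully is the inductive identity \(X_{r+1}=\bar F(X_r)\) together with the containment \(X_r\subseteq\operatorname{im}\bar F\). Together these are what make the loss at each step bounded by \(a\), rather than by the full \(\dim\ker\bar F=g\). Everything else is formal, given \eqref{eq:lattice-exponent} and the identification \(p\text{-}\!\exp(U_A^{0,2})=e(N)\).
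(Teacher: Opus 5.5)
Your proposal is correct and follows the paper's proof essentially step for step. Both arguments reduce to showing \(\operatorname{rank}\bar F^r\ge2\), which the paper phrases through elementary divisors of \(\bigwedge^2F^r\) and you phrase through an explicit nonzero \(F^rx\wedge F^ry\) modulo \(p\). Both then bound the rank loss at each step by \(a\), using \(\operatorname{im}\bar F^{r-1}\cap\ker\bar F\subseteq\operatorname{im}\bar F\cap\operatorname{im}\bar V\).
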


\begin{proof}
Let \(\mu_1(r)\le\mu_2(r)\le\cdots\le\mu_{2g}(r)\) be the elementary-divisor
valuations of the \(W\)-linearization of \(F^r\) on \(M\). Those of
\(\bigwedge^2F^r\) are the pairwise sums, so
\(\mu_{\min,N}(r)=\mu_1(r)+\mu_2(r)\). Therefore
\(p\text{-}\!\exp(U_A^{0,2})=e(N)
=\max_{r\ge0}\{\,r-\mu_{\min,N}(r)\,\}\). If \(\bar F^r\) has rank at least two,
then the reduction modulo \(p\) of the \(W\)-linearization of
\(\Phi^r=\bigwedge^2F^r\), namely \(\bigwedge^2\bar F^r\), is nonzero. Thus its
first elementary divisor is a unit, so \(\mu_{\min,N}(r)=0\) and hence
\(e(N)\ge r-\mu_{\min,N}(r)=r\). It therefore remains to control how quickly the rank of
\(\bar F^r\) can decrease. This is where the \(a\)-number enters.

Recall from the start of this subsection that
\(\dim_k\operatorname{im}\bar F=g\) and
\(\dim_k(\operatorname{im}\bar F\cap\ker\bar F)=a\). For \(r\ge2\), the
kernel of \(\bar F:\operatorname{im}\bar F^{r-1}\to\operatorname{im}\bar F^r\) is
\(\operatorname{im}\bar F^{r-1}\cap\ker\bar F\), which is contained in this
\(a\)-dimensional space. Hence
\[
  \operatorname{rank}(\bar F^{r-1})-\operatorname{rank}(\bar F^r)
  =\dim_k(\operatorname{im}\bar F^{r-1}\cap\ker\bar F)\le a.
\]
Starting from \(\operatorname{rank}(\bar F)=g\), induction gives
\(\operatorname{rank}(\bar F^r)\ge g-(r-1)a\).
The largest \(r\) for which the right-hand side is at least two is
\(r_0=1+\lfloor(g-2)/a\rfloor\). To compare the two forms of the bound, write
\(g-2=qa+b\) with \(0\le b<a\). Then
\(1+\lfloor(g-2)/a\rfloor=q+1\), while
\((g-1)/a=q+(b+1)/a\) with \(0<(b+1)/a\le1\). Hence
\(r_0=\lceil(g-1)/a\rceil\). Therefore
\(\operatorname{rank}(\bar F^{r_0})\ge2\), so \(e(N)\ge r_0\), as required.
\end{proof}

\begin{remark}[Sharpness of the lower bound]
For sharpness, distribute the total dimension as evenly as possible among
\(a\) cyclic supergeneral factors. Namely, write \(g=aq+b\), where
\(0\le b<a\), choose \(g_1,\ldots,g_a\in\{q,q+1\}\) with sum \(g\), and set
\(A=\prod_iA_{g_i}\). Here \(A_m\) is a principally polarized member of the
family in Theorem~\ref{thm:supergeneral}. Since each factor has \(a\)-number
one and \(M/(FM+VM)\) is additive, \(a(A)=a\).

Put \(M_m=H^1_{\mathrm{crys}}(A_m/W)\). The K\"unneth decomposition consists
of the diagonal terms \(\bigwedge^2M_{g_i}\) and the cross terms
\(M_{g_i}\otimes_WM_{g_j}\). Since \(L(\Lambda)\) respects finite direct sums,
the exponent of \(A\) is the maximum of their exponents. The diagonal term of
a dimension-\(m\) factor has exponent \(m-1\) by
Theorem~\ref{thm:supergeneral}.

It remains to compute the cross terms. If \(\mu_{\min,m}(r)\) is the smallest
elementary-divisor valuation of \(F^r\) on \(M_m\), then the Frobenius
valuation word
\([\underbrace{0,\ldots,0}_{m},\underbrace{1,\ldots,1}_{m}]\) gives
\(\mu_{\min,m}(r)=cm+\max\{0,t-m\}\) for \(r=2mc+t\), \(0\le t<2m\).
Tensor-product valuations are pairwise sums, so \eqref{eq:lattice-exponent}
gives
\[
  e(M_m\otimes_WM_m)=m,\qquad e(M_{m+1}\otimes_WM_m)=m.
\]
For the second equality, \(0\le r/2-\mu_{\min,j}(r)\le j/2\) for every \(j\), so the
relevant integer is at most \(m+\tfrac12\), hence at most \(m\), with equality
at \(r=m\). Thus a diagonal term of size \(m\) contributes \(m-1\), whereas a
cross term between equal or consecutive sizes contributes the smaller size.

If \(a=1\), the exponent is \(g-1\). If \(a>1\), taking the maximum gives
\(q\) for \(b=0,1\) and \(q+1\) for \(b\ge2\). These values equal
\(\lceil(g-1)/a\rceil\). Thus the lower bound is sharp.
\end{remark}

\subsubsection*{\normalfont\bfseries An upper bound for the \texorpdfstring{\(p\)}{p}-exponent}

Let \(M^{\mathrm{sp}}\subset M[1/p]\) be the smallest superspecial
Dieudonn\'e lattice containing \(M\), whose existence follows from
\cite[Lemma~1.3]{Li89}. Following \cite[Theorem~3.8, proof]{GSY25}, we call
this lattice the \emph{superspecial envelope} of \(M\), and put
\(N^{\mathrm{sp}}:=\bigwedge_W^2M^{\mathrm{sp}}\). Define
\[
  \mathrm{Fil}_{\mathrm{sp}}^iM:=M\cap F^iM^{\mathrm{sp}},\qquad
  \mathrm{Fil}_{\mathrm{sp}}^iN
    :=\bigwedge\nolimits_W^2\mathrm{Fil}_{\mathrm{sp}}^iM,\qquad
  s_i:=\dim_k\bigl(\mathrm{Fil}_{\mathrm{sp}}^iM/
                          \mathrm{Fil}_{\mathrm{sp}}^{i+1}M\bigr),
\]
and define the \emph{superspecial depth} of \(M\) by
\(\delta:=\min\{r\ge0:F^rM^{\mathrm{sp}}\subseteq M\}\). Thus
\(\mathrm{Fil}_{\mathrm{sp}}^0M=M\),
\(\mathrm{Fil}_{\mathrm{sp}}^0N=N\), and \(\delta\) is the nilpotency
index of \(F\), equivalently of \(V\),
on \(M^{\mathrm{sp}}/M\). In particular, \(\delta=0\) exactly when
\(A\) is superspecial. The following consequences of Li's filtration control
\(\delta\).

\begin{lemma}[Li's superspecial-envelope filtration]
\label{lem:li-superspecial-envelope}
We have \(1\le s_0<s_1<\cdots<s_{\delta}=g\) and
\(a\le g-\delta\), and each quotient
\(\mathrm{Fil}_{\mathrm{sp}}^iM/\mathrm{Fil}_{\mathrm{sp}}^{i+1}M\) is
annihilated by \(F\) and \(V\).
\end{lemma}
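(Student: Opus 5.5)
The plan is to derive everything from Li's filtration construction \cite[Lemma~1.3]{Li89} together with elementary linear algebra on \(M^{\mathrm{sp}}/M\). We will use one basic fact: on a superspecial Dieudonn\'e lattice one has \(FM^{\mathrm{sp}}=VM^{\mathrm{sp}}\). Consequently \(F^iM^{\mathrm{sp}}=V^iM^{\mathrm{sp}}\) is an \(F,V\)-stable lattice, and \(\mathrm{Fil}^i_{\mathrm{sp}}M=M\cap F^iM^{\mathrm{sp}}\) is a Dieudonn\'e submodule of \(M\).

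\emph{Annihilation.} For \(x\in\mathrm{Fil}^i_{\mathrm{sp}}M\) we have \(Fx\in M\) and \(Fx\in F^{i+1}M^{\mathrm{sp}}\), so \(Fx\in\mathrm{Fil}^{i+1}_{\mathrm{sp}}M\). The same argument applies to \(V\), because \(VF^iM^{\mathrm{sp}}=F^{i+1}M^{\mathrm{sp}}\). Since \(p=FV\), each graded piece is then a \(k\)-vector space, which justifies the definition of \(s_i\) as a \(k\)-dimension.

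\emph{Dimensions and strict increase.} Next, \(\mathrm{Fil}^\delta_{\mathrm{sp}}M=F^\delta M^{\mathrm{sp}}\) by definition of \(\delta\). Moreover \(F^{i+1}M^{\mathrm{sp}}\subset \mathrm{Fil}^{i}\) has index equal to \(\dim F^iM^{\mathrm{sp}}/F^{i+1}M^{\mathrm{sp}}=g\) once \(i\ge\delta\), so \(s_\delta=g\). For \(i<\delta\) we have \(s_i\le g\). To compare \(s_i\) with \(s_{i+1}\), consider the map \(F\) (or equivalently \(V\)) from \(\mathrm{gr}^i\) to \(\mathrm{gr}^{i+1}\). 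After dividing by \(F\), this map is an injection of \(\mathrm{gr}^i\) into \(F^{i}M^{\mathrm{sp}}/F^{i+1}M^{\mathrm{sp}}\), and \(F\) carries that space isomorphically to \(F^{i+1}M^{\mathrm{sp}}/F^{i+2}M^{\mathrm{sp}}\). Hence \(F\colon \mathrm{gr}^i\to\mathrm{gr}^{i+1}\) is injective, giving \(s_i\le s_{i+1}\).

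The main obstacle is proving strictness, i.e. that this injection is not surjective for \(i<\delta\). I would argue by minimality of the envelope. Suppose \(F\mathrm{gr}^i=\mathrm{gr}^{i+1}\) with \(i<\delta\). Then \(\mathrm{Fil}^{i+1}=F\,\mathrm{Fil}^i+\mathrm{Fil}^{i+2}\), and iterating this equality, together with \(\mathrm{Fil}^{\delta}=F^\delta M^{\mathrm{sp}}\), expresses \(F^{\delta-i-1}\)-shifts of \(M^{\mathrm{sp}}\) inside \(F\,\mathrm{Fil}^i\). From this we would either get \(F^{\delta-1}M^{\mathrm{sp}}\subset M\), contradicting the minimality of \(\delta\), or we could replace \(M^{\mathrm{sp}}\) by a smaller superspecial lattice \(F^{-1}(\ldots)\) containing \(M\), contradicting the minimality of the envelope. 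For \(s_0\ge1\), note that \(M\not\subset FM^{\mathrm{sp}}\), since otherwise \(F^{-1}M\subset M^{\mathrm{sp}}\) would be a smaller envelope-type lattice, or \(\delta=0\) if \(M\) is superspecial.

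\emph{\(a\)-number bound.} We have \(FM+VM\subseteq\mathrm{Fil}^1_{\mathrm{sp}}M\). Conversely, \(F\) and \(V\) map \(\mathrm{Fil}^i\) onto enough of \(\mathrm{Fil}^{i+1}\) that \(\mathrm{Fil}^{1}/(FM+VM)\) is controlled by the cokernels, which have total dimension \(s_{i+1}-s_i\). A count then gives \(a=\dim M/(FM+VM)\le s_0+\sum(\text{defects})\). The strict chain \(1\le s_0<\cdots<s_\delta=g\) forces \(s_0\le g-\delta\). Making the identity \(a\le s_0\) precise, via \(M/(FM+VM)\twoheadleftarrow\mathrm{gr}^0\) with \(FM+VM\supseteq\mathrm{Fil}^1\), is the remaining delicate step, and I would check it using Li's explicit description.
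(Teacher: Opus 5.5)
Your annihilation argument is correct, and it is the same argument the paper gives for the last assertion; the paper cites \cite{GSY25} and \cite{Li89} for the chain and the \(a\)-bound. The injectivity of \(F\colon\mathrm{gr}^i\to\mathrm{gr}^{i+1}\), giving \(s_i\le s_{i+1}\) and \(s_\delta=g\), is also fine.

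\emph{Strictness.} This step is not yet a proof. From \(F\,\mathrm{gr}^i=\mathrm{gr}^{i+1}\) you cannot iterate \(\mathrm{Fil}^{i+1}=F\,\mathrm{Fil}^i+\mathrm{Fil}^{i+2}\), since nothing is known at level \(i+1\), and \(F\) alone gives no contradiction. The missing idea is to use \(V\) as well.

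Put \(S=M^{\mathrm{sp}}\) and \(\tau=F^{-1}V\), which is a bijection of \(S/FS\) because \(FS=VS\). Let \(\overline W_i\subseteq S/FS\) be the image of \(F^{-i}\mathrm{Fil}^i_{\mathrm{sp}}M\). Then \(\dim\overline W_i=s_i\), and \(F\) and \(V\) give \(\overline W_i+\tau\overline W_i\subseteq\overline W_{i+1}\).

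The key claim is that \(\tau\overline W_i\neq\overline W_i\) for \(i<\delta\). Suppose instead \(\tau\overline W_i=\overline W_i\).
\begin{enumerate}
\item The lattice \(S':=F^{-i}\mathrm{Fil}^i_{\mathrm{sp}}M+FS\) lies between \(FS\) and \(S\) and satisfies \(\tau S'=S'\). It is therefore a superspecial Dieudonn\'e lattice.
\item It contains \(M\), because \(F^iM\subseteq\mathrm{Fil}^i_{\mathrm{sp}}M\).
\item Minimality of the envelope forces \(S'=S\), that is, \(F^iS=\mathrm{Fil}^i_{\mathrm{sp}}M+F^{i+1}S\).
\item Iterate this, using \(F\,\mathrm{Fil}^i_{\mathrm{sp}}M\subseteq\mathrm{Fil}^i_{\mathrm{sp}}M\) and \(F^nS\subseteq\mathrm{Fil}^i_{\mathrm{sp}}M\) for \(n\ge\max(i,\delta)\). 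This gives \(F^iS\subseteq M\), contradicting \(i<\delta\).
\end{enumerate}
The claim yields \(s_{i+1}\ge s_i+1\) and \(s_0\ge1\). Your \(s_0\) argument should use the smaller superspecial lattice \(FS\supseteq M\), not \(F^{-1}M\).

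\emph{The \(a\)-number bound.} This step aims at a false inequality. You proved \(FM+VM\subseteq\mathrm{Fil}^1_{\mathrm{sp}}M\), which gives \(a\ge s_0\). Your later claim \(FM+VM\supseteq\mathrm{Fil}^1_{\mathrm{sp}}M\) would force \(a=s_0\), and this fails in general.

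Here is a counterexample with \(g=3\).
\begin{itemize}
\item Let \(S=S_E^{\oplus3}\), where \(S_E=Wx\oplus Wy\) with \(Fx=Vx=y\) and \(Fy=Vy=px\). Then \(\tau\) acts on \(S/FS=k^3\) by \(\sigma^{-2}\) coordinatewise.
\item Take \(M=FS+Wm\) with \(m=x_1+[c]x_2+[c']x_3\), where \(1,c,c'\) are linearly independent over \(\mathbb F_{p^2}\).
\item Superspecial lattices between \(FS\) and \(S\) are preimages of \(\mathbb F_{p^2}\)-rational subspaces, so \(M^{\mathrm{sp}}=S\). Hence \(\delta=1\) and \((s_0,s_1)=(1,3)\).
\item But \(Fm\) and \(Vm\) are independent modulo \(pS\), so \(a=2>s_0\).
\end{itemize}

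The correct count filters \(M/(FM+VM)\) by the images of \(\mathrm{Fil}^j_{\mathrm{sp}}M\).
\begin{itemize}
\item The \(j=0\) piece has dimension \(s_0\).
\item For \(1\le j\le\delta\), the \(j\)-th piece has dimension \(s_j\) minus the dimension of the image of \((FM+VM)\cap\mathrm{Fil}^j_{\mathrm{sp}}M\) in \(\mathrm{gr}^j\). That image contains \(F\,\mathrm{gr}^{j-1}+V\,\mathrm{gr}^{j-1}\), which has dimension at least \(s_{j-1}+1\) by the key claim.
\item For \(j>\delta\) the pieces vanish, since \(\mathrm{Fil}^j_{\mathrm{sp}}M=F\,\mathrm{Fil}^{j-1}_{\mathrm{sp}}M\subseteq FM\).
\end{itemize}
Summing gives \(a\le s_0+\sum_{j=1}^{\delta}(s_j-s_{j-1}-1)=g-\delta\).

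Strictness \(s_j>s_{j-1}\) by itself would only give \(a\le g\). The extra \(1\) at each step comes exactly from \(F\,\mathrm{gr}^{j-1}\neq V\,\mathrm{gr}^{j-1}\). With both repairs, your route becomes a self-contained proof of what the paper cites.
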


\begin{proof}
The chain and the bound are proved in \cite[Theorem~3.8, proof]{GSY25}, using
Li's strict-growth lemma \cite[Lemma~1.6]{Li89} and length formula
\cite[p.~337(vi)]{Li89}. The inclusions
\(F(\mathrm{Fil}_{\mathrm{sp}}^iM),
V(\mathrm{Fil}_{\mathrm{sp}}^iM)
\subseteq\mathrm{Fil}_{\mathrm{sp}}^{i+1}M\) give the last assertion.
\end{proof}

The general upper bound below follows by moving from
\(\mathrm{Fil}_{\mathrm{sp}}^0N\) through \(\delta\) Frobenius
iterates to \(\mathrm{Fil}_{\mathrm{sp}}^{\delta}N\), and then using
one further iterate there. To improve this to \(e(N)\le \delta\), it
suffices by Lemma~\ref{lem:exponent-depth}(2) to prove
\(\Phi^{\delta}N\subseteq L(N)\). When \(\delta\ge g-2\),
the strict growth of the numbers \(s_i\) forces one of two
endpoint configurations. If the image of \(M\) in
\(M^{\mathrm{sp}}/FM^{\mathrm{sp}}\) is one-dimensional, the inclusion
\(N\subseteq L(N^{\mathrm{sp}})\), together with
\(\Phi^{\delta}N^{\mathrm{sp}}
=\mathrm{Fil}_{\mathrm{sp}}^{\delta}N\subseteq N\), gives the desired
containment by iteration. If instead
\(\dim_k(\mathrm{Fil}_{\mathrm{sp}}^{\delta-1}M/
\mathrm{Fil}_{\mathrm{sp}}^{\delta}M)=g-1\), it is enough that
\(\mathrm{Fil}_{\mathrm{sp}}^{\delta}N\subseteq
L(\mathrm{Fil}_{\mathrm{sp}}^{\delta-1}N)\), since
\(\Phi^{\delta}N
\subseteq\mathrm{Fil}_{\mathrm{sp}}^{\delta}N\) and
\(L(\mathrm{Fil}_{\mathrm{sp}}^{\delta-1}N)\subseteq L(N)\). The
next lemma proves the two endpoint inclusions.

\begin{lemma}[Exterior-square comparison for superspecial lattices]
\label{lem:superspecial-exterior-square-comparison}
Let \(S\) be a superspecial Dieudonn\'e lattice of rank \(2g\). For every
Dieudonn\'e lattice \(\Lambda\subset S[1/p]\), write
\(N_\Lambda:=\bigwedge_W^2\Lambda\), with \(\Phi=\bigwedge^2F\). Then
\(L(N_S)=FS\wedge S\). In particular, \(e(N_S)=1\) for \(g\ge2\). The two
endpoint criteria are:
\begin{enumerate}
\item if \(\Lambda\subseteq S\) and
\(\dim_k\Lambda/(\Lambda\cap FS)\le1\), then
\(N_\Lambda\subseteq L(N_S)\).
\item if \(S\subseteq\Lambda\), \(V(\Lambda/S)=0\), and
\(\dim_k(\Lambda/S)\ge g-1\), then \(N_S\subseteq L(N_\Lambda)\).
\end{enumerate}
\end{lemma}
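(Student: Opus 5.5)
The plan is to put \(S\) into an explicit normal form, read \(L(N_S)\) off a monomial basis, and prove both endpoint criteria by reducing \(\bigwedge^2\) of a lattice to the wedges of a few generators.

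\emph{Normal form.} Since \(k=\bar k\) and \(S\) is superspecial, \(S\) is a direct sum of \(g\) copies of the Dieudonn\'e module of a supersingular elliptic curve, \(R^0/R^0(F-V)\). So \(S\) has a \(W\)-basis \(e_1,\dots,e_g,f_1,\dots,f_g\) with \(Fe_i=f_i\) and \(Ff_i=pe_i\). Put \(T:=FS=VS\), with basis \(f_i,pe_i\), so that \(pS\subset T\subset S\) and \(\dim_kS/T=g\). I expect the only real bookkeeping to lie here: fixing this basis and tracking the \(\sigma\)-semilinearity. Since the normalized \(F\) acts by permutation matrices times powers of \(p\), semilinearity does not affect which lattices are stable, and \(F^2=p\) on \(S\) up to semilinearity. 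Consequently \(\Phi^2=p^2\) on \(N_S\), and also on \(N_{\Lambda'}\) for \(\Lambda'=p^{-1}T\).

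\emph{Computing \(L(N_S)\).} The lattice \(N_S\) has the monomial basis \(e_i\wedge e_j\), \(e_i\wedge f_j\), \(f_i\wedge f_j\). On it \(\Phi\) acts monomially:
\[
  e\wedge e\mapsto f\wedge f,\qquad e\wedge f\mapsto p\,f\wedge e,\qquad f\wedge f\mapsto p^2\,e\wedge e .
\]
Hence \(\Phi^{-1}(pN_S)\) is spanned by \(p\,e\wedge e\), \(e\wedge f\) and \(f\wedge f\), which is exactly \(T\wedge S\). This gives \(L(N_S)\subseteq T\wedge S\).

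For the reverse inclusion, compute \(\Phi(T\wedge S)=FT\wedge FS=pS\wedge T\subseteq p(T\wedge S)\). By the maximality characterization in Proposition~\ref{prop:HW-filtration}, \(T\wedge S\subseteq L(N_S)\), so \(L(N_S)=FS\wedge S\).

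For the exponent, \(\Phi(N_S)=T\wedge T\subseteq T\wedge S=L(N_S)\), so Lemma~\ref{lem:exponent-depth}(2) gives \(e(N_S)\le1\). When \(g\ge2\) the class of \(e_1\wedge e_2\) is nonzero in \(N_S/L(N_S)\), so \(L(N_S)\ne N_S\) and \(e(N_S)=1\).

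\emph{Criterion (1).} Write \(\Lambda=(\Lambda\cap T)+Wx\) for a single element \(x\); this is possible because \(\dim_k\Lambda/(\Lambda\cap T)\le1\). Then \(\bigwedge^2\Lambda\) is generated by two kinds of wedges. The wedges \((\Lambda\cap T)\wedge(\Lambda\cap T)\) lie in \(T\wedge T\subseteq T\wedge S\). The wedges \(x\wedge(\Lambda\cap T)\) lie in \(S\wedge T\). The remaining wedge \(x\wedge x\) vanishes. Hence \(N_\Lambda\subseteq T\wedge S=L(N_S)\).

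\emph{Criterion (2).} Put \(\Lambda':=p^{-1}T=p^{-1}VS\). The hypothesis \(V\Lambda\subseteq S\) gives \(p\Lambda=FV\Lambda\subseteq FS=T\), so \(S\subseteq\Lambda\subseteq\Lambda'\). Since \(\dim_k\Lambda'/S=g\) and \(\dim_k\Lambda/S\ge g-1\), we can write \(\Lambda'=\Lambda+Wy\) for a single element \(y\).

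We must show \(\Phi^r(N_S)\subseteq p^rN_\Lambda\) for all \(r\ge0\). Because \(\Phi^2=p^2\) on \(N_S\), it suffices to treat \(r=0\) and \(r=1\). The case \(r=0\) is \(S\subseteq\Lambda\). For \(r=1\),
\[
  \Phi(N_S)=T\wedge T=p^2\,(\Lambda'\wedge\Lambda'),
\]
so we need \(p\,(\Lambda'\wedge\Lambda')\subseteq N_\Lambda\). Now \(\Lambda'\wedge\Lambda'=\Lambda\wedge\Lambda+y\wedge\Lambda\). The first term is already in \(N_\Lambda\). For the second, \(py\in T\subseteq S\subseteq\Lambda\), so \(p\,y\wedge\Lambda\subseteq N_\Lambda\). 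This proves \(N_S\subseteq L(N_\Lambda)\).
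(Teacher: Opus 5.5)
Your proof is correct and follows essentially the paper's route: a single Frobenius step, $\Phi^{-1}(pN_S)=FS\wedge S$, cuts out $L(N_S)$; criterion (1) is the vanishing of $\bigwedge^2$ of an at most one-dimensional image in $S/FS$; and criterion (2) is the same codimension-one count, since your $\Lambda'=p^{-1}FS=\Lambda+Wy$ is the paper's $\dim_k(FS/p\Lambda)\le1$ rescaled by $p$. The one real difference is that you compute $\Phi^{-1}(pN_S)$ in an explicit basis, which relies on the standard classification of superspecial Dieudonn\'e modules over $\bar k$, whereas the paper obtains it without a basis from $\ker\bar F=FS/pS$ and the identities (E1)--(E2), using only $FS=VS$ and $F^2S=pS$.
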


\begin{proof}
We use two standard exterior-algebra identities:
\begin{enumerate}
\item[\((\mathrm{E}1)\)] if \(U\subseteq E\), then the quotient map induces
\(\bigwedge^2E/(U\wedge E)\simeq\bigwedge^2(E/U)\).
\item[\((\mathrm{E}2)\)] for a linear map \(f:E\to E'\) of vector spaces,
\(\ker(\bigwedge^2f)=(\ker f)\wedge E\), and the same formula holds for a
semilinear map after twisting its source.
\end{enumerate}

Superspeciality gives \(FS=VS\) and \(F^2S=pS\), hence
\(\Phi^2N_S=p^2N_S\). For \(x\in N_S\), it follows that
\(x\in L(N_S)\) if and only if
\(\Phi x\in pN_S\). Modulo \(p\), the relation
\(\ker\bar F=FS/pS\) gives
\(\ker(\bigwedge^2\bar F)=(FS/pS)\wedge(S/pS)\) by
\((\mathrm{E}2)\). Since \(pS\subseteq FS\), \((\mathrm{E}1)\) identifies its
inverse image in \(N_S\) with \(FS\wedge S\), proving
\(L(N_S)=FS\wedge S\). The same fact gives
\(N_S/L(N_S)\simeq\bigwedge_k^2(S/FS)\). Since
\(\dim_k(S/FS)=g\), this quotient is killed by \(p\) and is nonzero for
\(g\ge2\). Thus \(e(N_S)=1\).

For (1), the image of \(N_\Lambda\) in
\(N_S/L(N_S)\simeq\bigwedge_k^2(S/FS)\) lies in the exterior square of the
image of \(\Lambda\) in \(S/FS\). This image has dimension at most one, so its
exterior square vanishes.

For (2), the periodicity \(F^2S=pS\) reduces the defining conditions for
\(N_S\subseteq L(N_\Lambda)\) to \(N_{FS}\subseteq pN_\Lambda\). Now
\(V(\Lambda/S)=0\) gives \(p\Lambda=F(V\Lambda)\subseteq FS\), and
multiplication by \(p\) identifies \(\Lambda/S\) with \(p\Lambda/pS\). Hence
\[
  \dim_k(FS/p\Lambda)
  =\dim_k(FS/pS)-\dim_k(\Lambda/S)\le g-(g-1)=1 .
\]
Thus \(\bigwedge_k^2(FS/p\Lambda)=0\). Fact
\((\mathrm{E}1)\) then gives
\[
  N_{FS}=(p\Lambda)\wedge FS
  \subseteq(p\Lambda)\wedge\Lambda=pN_\Lambda
\]
as required.
\end{proof}

\begin{theorem}
\label{thm:extremal-exponent}
Assume \(g\ge3\), and put \(E:=p\text{-}\!\exp(U_A^{0,2})\). Then
\[
   E\le \delta+1\le g-a+1,
\]
and if \(\delta\ge g-2\), then \(E\le \delta\).
Consequently, \(a=2\) implies \(E\le g-2\), while \(a=1\) if and only if
\(E=g-1\).
\end{theorem}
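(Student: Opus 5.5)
We work throughout with \(E=e(N)\), as identified just before Lemma~\ref{lem:first-kernel-controls-exponent}, and with the characterization \(e(\Lambda)\le m\iff\Phi^m(\Lambda)\subseteq L(\Lambda)\) of Lemma~\ref{lem:exponent-depth}(2).

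\textbf{General bound \(E\le\delta+1\).} By the definition of \(\delta\), \(F^\delta M^{\mathrm{sp}}\subseteq M\), so \(F^\delta M\subseteq M\cap F^\delta M^{\mathrm{sp}}=\mathrm{Fil}^\delta_{\mathrm{sp}}M\). Taking exterior squares gives \(\Phi^\delta N\subseteq\mathrm{Fil}^\delta_{\mathrm{sp}}N\). Li's filtration (Lemma~\ref{lem:li-superspecial-envelope}) gives \(s_\delta=g\). We then plan to show \(\mathrm{Fil}^\delta_{\mathrm{sp}}M=F^\delta M^{\mathrm{sp}}\), which is itself a superspecial lattice \(S\). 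This should follow because \(F^\delta M^{\mathrm{sp}}\subseteq M\) together with the dimension count forces equality.

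Lemma~\ref{lem:superspecial-exterior-square-comparison} then gives \(\Phi(N_S)\subseteq L(N_S)\), since \(FS\wedge FS\subseteq FS\wedge S\). Combined with Lemma~\ref{lem:exponent-depth}(1), which gives \(L(N_S)\subseteq L(N)\), we obtain
\[
\Phi^{\delta+1}N\subseteq\Phi(N_S)\subseteq L(N).
\]
This proves \(E\le\delta+1\). The second inequality \(\delta+1\le g-a+1\) is exactly \(a\le g-\delta\) from Lemma~\ref{lem:li-superspecial-envelope}.

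\textbf{Improved bound when \(\delta\ge g-2\).} The \(s_i\) are strictly increasing integers with \(1\le s_0<\cdots<s_\delta=g\). When \(\delta=g-1\) this forces \(s_i=i+1\). When \(\delta=g-2\) exactly one step jumps by \(2\).

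First we treat \(s_0=1\). Here \(M/(M\cap FM^{\mathrm{sp}})\) is one-dimensional, so case (1) of Lemma~\ref{lem:superspecial-exterior-square-comparison}, applied with \(S=M^{\mathrm{sp}}\), gives \(N\subseteq L(N^{\mathrm{sp}})\). Hence
\[
\Phi^\delta N\subseteq\Phi^\delta L(N^{\mathrm{sp}})\subseteq L(\Phi^\delta N^{\mathrm{sp}}).
\]
Here the second inclusion uses that \(\Phi\) commutes with the intersection defining \(L\), which we verify via \(\Theta\). Since \(\Phi^\delta N^{\mathrm{sp}}=\mathrm{Fil}^\delta_{\mathrm{sp}}N\subseteq N\), monotonicity gives \(\Phi^\delta N\subseteq L(N)\), and so \(E\le\delta\).

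In the remaining configuration (\(\delta=g-2\) with the jump at the last step, i.e.\ \(s_{\delta-1}=g-1\)), we apply case (2) of Lemma~\ref{lem:superspecial-exterior-square-comparison} with \(S=\mathrm{Fil}^\delta_{\mathrm{sp}}M\) and \(\Lambda=\mathrm{Fil}^{\delta-1}_{\mathrm{sp}}M\). The hypothesis \(V(\Lambda/S)=0\) is supplied by Lemma~\ref{lem:li-superspecial-envelope}. This gives \(\mathrm{Fil}^\delta_{\mathrm{sp}}N\subseteq L(\mathrm{Fil}^{\delta-1}_{\mathrm{sp}}N)\subseteq L(N)\), and therefore \(\Phi^\delta N\subseteq L(N)\).

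\textbf{Main obstacle.} The hardest step is the case analysis showing that \(\delta\ge g-2\) always falls into one of these two endpoint configurations. When the jump occurs in the middle, \(s_0=1\) holds anyway, so the first case applies. When the jump occurs at the start, \(s_0=2\), and one must check that the jump is then at the last step. We will enumerate these configurations carefully, falling back to a direct elementary-divisor argument if needed.

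\textbf{Consequences.} If \(a=2\), then \(\delta\le g-2\). Either \(\delta\le g-3\), so \(E\le\delta+1\le g-2\), or \(\delta=g-2\), so \(E\le\delta=g-2\). If \(a=1\), Lemma~\ref{lem:rank-estimate-lower-bound} gives \(E\ge g-1\), while \(\delta\le g-1\) gives \(E\le g-1\) through either the \(\delta\ge g-2\) bound or \(\delta+1\). Hence \(E=g-1\). Conversely, if \(a\ge2\) then \(E\le g-a+1\le g-1\), with equality only when \(a=2\), which is excluded by the \(a=2\) bound \(E\le g-2\). This uses \(g\ge3\).
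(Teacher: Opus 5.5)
Your proposal is correct and follows the paper's proof essentially step for step. It uses the same passage \(\Phi^\delta N\subseteq\mathrm{Fil}^\delta_{\mathrm{sp}}N=\bigwedge^2F^\delta M^{\mathrm{sp}}\) followed by one further iterate at the superspecial endpoint, the same two endpoint configurations settled by parts (1) and (2) of Lemma~\ref{lem:superspecial-exterior-square-comparison}, and the same derivation of the \(a\)-number consequences; your phrasing of the \(s_0=1\) case as \(\Phi^\delta L(N^{\mathrm{sp}})\subseteq L(\Phi^\delta N^{\mathrm{sp}})\) is just the paper's estimate \(\Phi^{\delta+s}x\in p^s\Phi^\delta N^{\mathrm{sp}}\subseteq p^sN\) restated. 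The step you flag as the main obstacle is only pigeonhole: \(\delta+1\ge g-1\) strictly increasing integers in \([1,g]\) ending at \(g\) either have \(s_0=1\) or are exactly \(2,3,\ldots,g\), whence \(s_{\delta-1}=g-1\) (your ``jump at the last step'' mislabels this case), and \(\mathrm{Fil}^\delta_{\mathrm{sp}}M=F^\delta M^{\mathrm{sp}}\) follows directly from \(F^\delta M^{\mathrm{sp}}\subseteq M\), with no dimension count needed.
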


\begin{proof}
We first move from \(\mathrm{Fil}_{\mathrm{sp}}^0N=N\) to the superspecial
endpoint \(\mathrm{Fil}_{\mathrm{sp}}^{\delta}N\). This costs
\(\delta\) Frobenius iterates, followed in general by one more at the
endpoint.
Since \(F(\mathrm{Fil}_{\mathrm{sp}}^iM)
\subseteq\mathrm{Fil}_{\mathrm{sp}}^{i+1}M\), we have
\(\Phi(\mathrm{Fil}_{\mathrm{sp}}^iN)
\subseteq\mathrm{Fil}_{\mathrm{sp}}^{i+1}N\), hence
\(\Phi^\delta N
\subseteq\mathrm{Fil}_{\mathrm{sp}}^{\delta}N\). The lattice
\(\mathrm{Fil}_{\mathrm{sp}}^{\delta}M
=F^{\delta}M^{\mathrm{sp}}\) is superspecial because
\(F^{\delta}M^{\mathrm{sp}}
=V^{\delta}M^{\mathrm{sp}}\), so
Lemma~\ref{lem:superspecial-exterior-square-comparison}
and Lemma~\ref{lem:exponent-depth}(2) give
\(\Phi(\mathrm{Fil}_{\mathrm{sp}}^{\delta}N)
\subseteq L(\mathrm{Fil}_{\mathrm{sp}}^{\delta}N)\). Therefore
\[
  \Phi^{\delta+1}N
  \subseteq \Phi(\mathrm{Fil}_{\mathrm{sp}}^{\delta}N)
  \subseteq L(\mathrm{Fil}_{\mathrm{sp}}^{\delta}N)
  \subseteq L(N)
\]
by monotonicity (Lemma~\ref{lem:exponent-depth}(1)), and
Lemma~\ref{lem:exponent-depth}(2) gives \(e(N)\le \delta+1\).
Finally, Lemma~\ref{lem:li-superspecial-envelope} gives
\(\delta+1\le g-a+1\).

Assume now that \(\delta\ge g-2\). We show that the preceding estimate improves
to \(e(N)\le\delta\). The sequence
\(1\le s_0<\cdots<s_{\delta}=g\) then has at least \(g-1\) terms. Hence either
\(s_0=1\), or it is \(2,3,\ldots,g\), in which case
\(s_{\delta-1}=g-1\). These are exactly the two configurations in
Lemma~\ref{lem:superspecial-exterior-square-comparison}(1) and (2).

In the first case, since \(\dim_kM/(M\cap FM^{\mathrm{sp}})=1\),
Lemma~\ref{lem:superspecial-exterior-square-comparison}(1), applied with
\(S=M^{\mathrm{sp}}\) and \(\Lambda=M\), gives
\(N\subseteq L(N^{\mathrm{sp}})\), while
\(\Phi^{\delta}N^{\mathrm{sp}}
=\mathrm{Fil}_{\mathrm{sp}}^{\delta}N\subseteq N\). Thus, for
\(x\in N\) and \(s\ge0\),
\(\Phi^{\delta+s}x
\in p^s\Phi^{\delta}N^{\mathrm{sp}}\subseteq p^sN\). Hence
\(\Phi^{\delta}N\subseteq L(N)\), and
Lemma~\ref{lem:exponent-depth}(2) gives \(e(N)\le \delta\).

In the second case, apply
Lemma~\ref{lem:superspecial-exterior-square-comparison}(2) with
\(S=\mathrm{Fil}_{\mathrm{sp}}^{\delta}M\) and
\(\Lambda=\mathrm{Fil}_{\mathrm{sp}}^{\delta-1}M\). The lattice
\(S\) is superspecial, \(V(\Lambda/S)=0\) by
Lemma~\ref{lem:li-superspecial-envelope}, and
\(\dim_k(\Lambda/S)=g-1\). Thus
\(\mathrm{Fil}_{\mathrm{sp}}^{\delta}N
\subseteq L(\mathrm{Fil}_{\mathrm{sp}}^{\delta-1}N)\). Since
\(\Phi^\delta N
\subseteq\mathrm{Fil}_{\mathrm{sp}}^{\delta}N
\subseteq L(\mathrm{Fil}_{\mathrm{sp}}^{\delta-1}N)\subseteq L(N)\),
Lemma~\ref{lem:exponent-depth}(2) gives \(e(N)\le \delta\).

This proves the two estimates involving \(\delta\). We finish with
their consequences for the \(a\)-number.

If \(a=2\), then \(\delta\le g-2\) by
Lemma~\ref{lem:li-superspecial-envelope}. The general estimate
\(E\le \delta+1\) gives \(E\le g-2\) when \(\delta<g-2\),
while the sharpened estimate \(E\le \delta\) does so when
\(\delta=g-2\).

If \(a=1\), then \(\delta\le g-1\), while
Lemma~\ref{lem:rank-estimate-lower-bound} gives \(E\ge g-1\). If
\(\delta\le g-3\), then \(E\le \delta+1\le g-2\), a
contradiction, so \(\delta\ge g-2\) and
\(E\le \delta\le g-1\). Hence \(E=g-1\).

Conversely, if \(E=g-1\), the estimate \(E\le g-a+1\) forces \(a\le2\),
and \(a=2\) is excluded by the preceding case. Hence \(a=1\).
\end{proof}

\begin{remark}[Sharpness of the upper bound at \(a=2\)]\label{rem:exponent-sharpness}
The upper bound \(g-2\) for \(a=2\) is attained for every \(g\ge3\). Let
\(A=A_{g-1}\times E\) be the product of a cyclic supergeneral \((g-1)\)-fold
\(A_{g-1}\) (Theorem~\ref{thm:supergeneral}) with a supersingular elliptic
curve. Since the Dieudonn\'e module of a product is the direct sum of those
of its factors, \(a(A)=a(A_{g-1})+a(E)=2\).
Moreover, \(U_{A_{g-1}}^{0,2}\) is a K\"unneth direct summand of \(U_A^{0,2}\), so
\(p\text{-}\!\exp(U_A^{0,2})\ge p\text{-}\!\exp(U_{A_{g-1}}^{0,2})=g-2\) by
Theorem~\ref{thm:supergeneral}. Equality holds by
Theorem~\ref{thm:extremal-exponent}.
\end{remark}

\subsubsection*{\normalfont\bfseries A lower bound for \texorpdfstring{\(\sigmaArt\)}{sigma_Art}}

For a positive domino \(U\), Proposition~\ref{prop:ekedahl-pair-weighs-degree}
gives \(\deg(U)-\dim(U)=\lgth_W\ker(dV)\). Applying this identity to
\(U_A^{0,2}\), whose dimension is \(\binom{g}{2}\) by
Corollary~\ref{cor:domino-supersingular}, gives the basic lower bound
\(\sigmaArt(A)\ge\binom{g}{2}\). The next theorem refines it by controlling
\(\ker(dV)\) in terms of the \(a\)-number.

\begin{theorem}
\label{thm:sigma-a-number-lower-bound}
Let \(A/k\) be a supersingular abelian \(g\)-fold, \(g\ge2\), and put
\(a=a(A)\). Then
\[
  \sigmaArt(A)\ge g(g-1)-\binom{a}{2}.
\]
In particular, if \(A\) is not superspecial, then
\(\sigmaArt(A)\ge\binom{g}{2}+g-1\).
\end{theorem}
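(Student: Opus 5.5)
The plan is to compute \(\sigmaArt(A)=\lgth_W(N/L)\) by filtering \(N/L\) through the first two steps of the Hodge--Witt intersection, where \(N=\bigwedge^2_WM\) and \(L=L(N)\) as in the displayed identity before Lemma~\ref{lem:first-kernel-controls-exponent}. Put \(\Theta=p^{-1}\Phi\) and define
\[
  L_r:=\{x\in N:\Theta^s x\in N\ \text{for }0\le s\le r\},
\]
so that \(N=L_0\supseteq L_1\supseteq L_2\supseteq\cdots\supseteq L\) and \(L=\bigcap_rL_r\). Then
\[
  \sigmaArt(A)\ \ge\ \lgth_W(N/L_1)+\lgth_W(L_1/L_2),
\]
and I will show that the first term equals \(\binom g2\) and the second is at least \(\binom g2-\binom a2\). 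Their sum is \(g(g-1)-\binom a2\).

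\emph{First layer.} We have \(L_1=\Phi^{-1}(pN)\). The map \(x\mapsto\Phi x\bmod p\) identifies \(N/L_1\) with the image of \(\bigwedge^2\bar F\) on \(\bigwedge^2(M/pM)\), up to a Frobenius twist which does not affect \(k\)-dimensions. This image is \(\bigwedge^2(\operatorname{im}\bar F)\). Since \(\dim_k\operatorname{im}\bar F=g\), it follows that \(\lgth_W(N/L_1)=\binom g2\). (This layer alone gives the basic bound \(\sigmaArt(A)\ge\binom g2\) already noted.)

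\emph{Second layer.} The map \(x\mapsto\Theta x\) sends \(L_1\) into \(N\), and by definition the preimage of \(L_1\) is \(L_2\). Hence \(\Theta\) induces an injection
\[
  L_1/L_2\hookrightarrow N/L_1,
\]
and \(\lgth_W(L_1/L_2)\) is at least the dimension of the image of any subset of \(L_1\). I take the submodule \(VM\wedge M\subseteq N\). It lies in \(L_1\), because \(\Phi(Vx\wedge y)=p\,x\wedge Fy\), so \(\Theta(Vx\wedge y)=x\wedge Fy\). Therefore \(\Theta(VM\wedge M)=M\wedge FM\). Composing with the identification \(N/L_1\cong\bigwedge^2(\operatorname{im}\bar F)\) from the first layer, the image of \(M\wedge FM\) in \(N/L_1\) becomes
\[
  \operatorname{im}\bar F\wedge\operatorname{im}\bar F^{2}\subseteq\bigwedge^2(\operatorname{im}\bar F).
\]

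\emph{Dimension count.} Put \(W=\operatorname{im}\bar F\) and \(W'=\operatorname{im}\bar F^2\subseteq W\). By the identity \(\dim_k(\operatorname{im}\bar F\cap\ker\bar F)=a\) recorded at the start of this subsection, \(\dim W'=g-a\). By the exterior-algebra fact (E1) used in the proof of Lemma~\ref{lem:superspecial-exterior-square-comparison}, we have \(\bigwedge^2W/(W'\wedge W)\cong\bigwedge^2(W/W')\), which has dimension \(\binom a2\). Hence
\[
  \dim_k(W'\wedge W)=\binom g2-\binom a2,
\]
and so \(\lgth_W(L_1/L_2)\ge\binom g2-\binom a2\). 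Adding the two layers proves the theorem.

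For the consequence: if \(A\) is not superspecial then \(a\le g-1\). Indeed \(\delta\ge1\), and \(a\le g-\delta\) by Lemma~\ref{lem:li-superspecial-envelope}. Since \(g(g-1)-\binom a2\) is decreasing in \(a\) on this range, substituting \(a=g-1\) gives
\[
  g(g-1)-\binom{g-1}{2}=\binom g2+g-1.
\]

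\emph{Main obstacle.} The delicate point is the second-layer identification: checking that the composite \(\Theta\) followed by \(\bar\Phi\) really lands as \(\operatorname{im}\bar F\wedge\operatorname{im}\bar F^2\) once the semilinear twists are tracked, and that no cancellation modulo \(L_1\) reduces the dimension. I would verify this by working in \(M/pM\) with bases adapted to the chain \(\operatorname{im}\bar F^2\subseteq\operatorname{im}\bar F\). Twists only change coefficients by \(\sigma\), so the dimensions are preserved.
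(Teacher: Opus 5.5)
Your proof is correct. It reaches the bound by different bookkeeping from the paper's.

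\textbf{The paper's route.} The paper writes \(\sigmaArt(A)=\binom g2+\lgth_W\ker(dV)\). This imports \(\dim U_A^{0,2}=\binom g2\) from Ekedahl's domino-number formula (Corollary~\ref{cor:domino-supersingular}). It also uses the crystalline model \(\ker(dV)\simeq\Lambda/(L\cap\Lambda)\), with \(\Lambda=V_K^{-1}N\), from Theorem~\ref{thm:geometric-ekedahl-pair-partial-v}. It then bounds \(\lgth_W\ker(dV)\) by the generator count \(\dim_k\ker(dV)/p\ker(dV)\), which surjects onto \(\bigwedge^2\bar F(\ker\bigwedge^2\bar V)=\operatorname{im}\bar F^2\wedge\operatorname{im}\bar F\).

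\textbf{Your route.} You use only \(\sigmaArt(A)=\lgth_W(N/L)\) and stay inside the lattice.
\begin{itemize}
\item Your first layer \(N/L_1\) is \(\ker d/V\ker(dV)\cong U^0/VU^0\) under the Nygaard pair. It recomputes \(\dim U_A^{0,2}=\binom g2\) directly from \(\bigwedge^2\bar F\), so you need no Hodge--Witt or domino-number input.
\item Your second layer \(L_1/L_2\cong\ker(dV)/(\ker(dV)\cap VU^0)\) is a quotient of the paper's \(\ker(dV)/p\ker(dV)\), since \(px=VFx\).
\end{itemize}

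\textbf{The common core.} The decisive step is the same in both. \(\Theta\) maps \(L_1=N\cap\Theta^{-1}N\) bijectively onto \(\Lambda=N\cap\Theta N\). It intertwines your map \(x\mapsto\Phi\Theta x\bmod p\) with the paper's map \(y\mapsto\Phi y\bmod p\). Your \(M\wedge FM=\Theta(VM\wedge M)\) already reduces onto \(\operatorname{im}\bar F\wedge M/pM=\ker\bigwedge^2\bar V=\Lambda/pN\). So both arguments evaluate the same image \(\operatorname{im}\bar F\wedge\operatorname{im}\bar F^2\), with the same dimension count.

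\textbf{What each buys.} Your version is a self-contained statement about \(\lgth_W(N/L(N))\) for the exterior square of a Dieudonn\'e lattice. The paper's version is shorter given the domino formalism, and it states the refinement intrinsically as a bound on \(\deg-\dim=\lgth_W\ker(dV)\).

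\textbf{The obstacle you flag.} The ``main obstacle'' you raise does not arise. You already showed that \(x\mapsto\Phi x\bmod p\) induces an injection \(N/L_1\hookrightarrow\bigwedge^2(M/pM)\) with kernel exactly \(L_1\), so no cancellation modulo \(L_1\) can occur. The semilinear twists do not change \(k\)-dimensions.
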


\begin{proof}
Proposition~\ref{prop:ekedahl-pair-weighs-degree} and
Corollary~\ref{cor:domino-supersingular} give
\(\sigmaArt(A)=\binom{g}{2}+\lgth_W\ker(dV)\).
The \(\mathrm{BT}_1\) \(A[p]\) has contravariant Dieudonn\'e module
\(H^1_{\mathrm{dR}}(A/k)=M/pM\), with Frobenius and Verschiebung
\(\bar F\) and \(\bar V\). Since
\(\lgth_W\ker(dV)\ge
\dim_k\bigl(\ker(dV)/p\ker(dV)\bigr)\), it suffices to bound the latter
dimension. We first construct a surjection
\(\ker(dV)/p\ker(dV)\twoheadrightarrow
\bigwedge^2\bar F\bigl(\ker(\bigwedge^2\bar V)\bigr)\), and then compute
the dimension of its target from the \(\mathrm{BT}_1\).
Put \(V_K:=p\Phi^{-1}\), suppressing Frobenius
twists, and let
\[
  \Lambda:=V_K^{-1}N
  =\{x\in N:(\bigwedge\nolimits^2V)(x)\in pN\}.
\]
The construction of \(K_{-1}\) in
Theorem~\ref{thm:geometric-ekedahl-pair-partial-v} gives
\(\ker(dV)\simeq\Lambda/(L\cap\Lambda)\). Since
\(V_K=p^{-1}\bigwedge^2V\), one has \(pN\subseteq\Lambda\), and reduction
modulo \(p\) gives
\(\Lambda/pN=\ker(\bigwedge^2\bar V)\).
The map obtained by reducing \(\Lambda\) modulo \(pN\) and then applying
\(\bigwedge^2\bar F\) kills \(p\Lambda+(L\cap\Lambda)\): the first summand
dies under reduction, while the second does so because
\(\Phi(L)\subseteq pL\). Since
\(\ker(dV)/p\ker(dV)\simeq
\Lambda/(p\Lambda+(L\cap\Lambda))\), the universal property of the quotient
gives the factorization below:
\[
\begin{tikzcd}[column sep=large]
  \Lambda
    \arrow[r, two heads, "{\bmod\,pN}"]
    \arrow[d, two heads]
  & \ker(\bigwedge^2\bar V)
    \arrow[d, two heads, "{\bigwedge^2\bar F}"] \\
  \ker(dV)/p\ker(dV)
    \arrow[r, dashed, two heads]
  & \bigwedge^2\bar F\bigl(\ker(\bigwedge^2\bar V)\bigr).
\end{tikzcd}
\]

We use two standard exterior-algebra facts. For a linear map \(f:E\to E'\),
one has \(\ker(\bigwedge^2f)=(\ker f)\wedge E\). The same formula holds for
a semilinear map after twisting. If \(E''\subseteq E\), then
\(E''\wedge E\) is the kernel of \(\bigwedge^2E\to\bigwedge^2(E/E'')\), and
hence has dimension
\(\binom{\dim E}{2}-\binom{\dim(E/E'')}{2}\).

Applying the first fact to \(\bar V\) and using the \(\mathrm{BT}_1\) relation
\(\ker\bar V=\operatorname{im}\bar F\) gives
\[
  \bigwedge\nolimits^2\bar F\bigl(\ker(\bigwedge\nolimits^2\bar V)\bigr)
  =\operatorname{im}(\bar F^2)\wedge\operatorname{im}\bar F.
\]
Moreover \(\operatorname{im}\bar F/\operatorname{im}(\bar F^2)\) has dimension
\(a\), since \(\dim_k\operatorname{im}\bar F=g\) and
\(\ker\bar F\cap\operatorname{im}\bar F\) has dimension \(a\).
The second fact therefore shows that the last displayed space has dimension
\(\binom{g}{2}-\binom{a}{2}\).
It follows that
\(\lgth_W\ker(dV)\ge\binom{g}{2}-\binom{a}{2}\), and hence
\(\sigmaArt(A)\ge g(g-1)-\binom{a}{2}\). If \(A\) is not superspecial,
then \(a\le g-1\), which gives the final assertion.
\end{proof}

\begin{corollary}
\label{thm:minimal-degree-superspecial}
Let \(A/k\) be a supersingular abelian \(g\)-fold, \(g\ge2\). Then \(A\) is
superspecial if and only if \(\sigmaArt(A)=\binom{g}{2}\). In this case
\[
  U_A^{0,2}\cong U_1^{\oplus\binom{g}{2}}.
\]
\end{corollary}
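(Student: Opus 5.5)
The ``if'' direction is already in hand. If \(A\) is superspecial, Theorem~\ref{thm:superspecial} gives \(U_A^{0,2}\cong U_1^{\oplus\binom{g}{2}}\). Each \(U_1\) has degree \(1\), so \(\sigmaArt(A)=\deg(U_A^{0,2})=\binom{g}{2}\). The work is therefore in the converse, together with the structural statement that \(\sigmaArt(A)=\binom{g}{2}\) forces the domino to be \(U_1^{\oplus\binom{g}{2}}\).

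For the converse I would use the final assertion of Theorem~\ref{thm:sigma-a-number-lower-bound}. If \(A\) is not superspecial, then \(\sigmaArt(A)\ge\binom{g}{2}+g-1>\binom{g}{2}\) because \(g\ge2\). Contrapositively, \(\sigmaArt(A)=\binom{g}{2}\) forces \(A\) to be superspecial. The inequality \(a\le g-1\) for non-superspecial \(A\) is the standard characterization of superspecial varieties as those with \(a=g\), which that theorem already used.

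For the structural claim I would argue intrinsically, so that it does not depend on going back through Theorem~\ref{thm:superspecial}. By Proposition~\ref{prop:diagonal-ordinary-degree-two-domino}, \(U:=U_A^{0,2}\) lies in \(\operatorname{Dom}^{+}\). By Corollary~\ref{cor:domino-supersingular}, its dimension is \(\binom{g}{2}\). Its type sequence therefore consists of \(\binom{g}{2}\) integers \(j_i\ge1\), and their sum \(\deg(U)=\binom{g}{2}\) forces every \(j_i=1\). Then Lemma~\ref{lem:pure-type-split} gives \(U\cong U_1^{\oplus\binom{g}{2}}\).

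The main point to watch is only that positivity is available, so that each type entry is at least \(1\). Proposition~\ref{prop:diagonal-ordinary-degree-two-domino} supplies this. No genuine obstacle remains.
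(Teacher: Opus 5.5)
Your argument for the equivalence is exactly the paper's: Theorem~\ref{thm:superspecial} gives the forward direction, and the final assertion of Theorem~\ref{thm:sigma-a-number-lower-bound} gives the converse. The paper also reads the decomposition off Theorem~\ref{thm:superspecial}. Your separate derivation of it is correct as well: all type entries are positive, the dimension and degree both equal \(\binom{g}{2}\), so every entry is \(1\), and Lemma~\ref{lem:pure-type-split} finishes. Its small advantage is that it does not rely on the cyclic-matrix computation.
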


\begin{proof}
The forward implication and the displayed decomposition follow from
Theorem~\ref{thm:superspecial}. Conversely, if \(A\) is not superspecial, then
Theorem~\ref{thm:sigma-a-number-lower-bound} gives
\(\sigmaArt(A)\ge\binom{g}{2}+g-1\).
\end{proof}

\begin{remark}[A numerical gap in \texorpdfstring{\(\sigmaArt\)}{sigma_Art}]
\label{rem:sigma-gap-above-superspecial}
No value of \(\sigmaArt(A)\) lies strictly between \(\binom{g}{2}\) and
\(\binom{g}{2}+g-1\). Thus \(\sigmaArt(A)\ne4\) for \(g=3\), while
\(\sigmaArt(A)\notin\{7,8\}\) for \(g=4\).
\end{remark}

\subsubsection*{\normalfont\bfseries An upper bound for \texorpdfstring{\(\sigmaArt\)}{sigma_Art}}

We now assume that \(A\) is principally polarized. The polarization makes
\(\sigmaArt(A)=\lgth_W(N/L)\) half the length of a discriminant group and
allows its exponent to be controlled by
\(e:=p\text{-}\!\exp(U_A^{0,2})=e(N)\). Recall that
\(L=\bigcap_{r\ge0}\Theta^{-r}N\). In what follows, dual lattices are taken
with respect to the pairing constructed in the next lemma. Thus
\(\Lambda^\vee:=\{x\in N[1/p]:\langle x,\Lambda\rangle\subseteq W\}\).

\begin{lemma}\label{lem:polarization-isometry}
The principal polarization induces a perfect symmetric \(W\)-bilinear form
\(\langle\ ,\,\rangle\) on \(N\) for which \(\Theta\) is a
\(\sigma\)-semilinear isometry: \(\langle\Theta x,\Theta y\rangle
=\sigma\langle x,y\rangle\), where \(\sigma\) is the Witt-vector Frobenius.
Consequently, for
every \(W\)-lattice \(\Lambda\subset N[1/p]\) and \(r\in\mathbb Z\),
\[
  (\Theta^{r}\Lambda)^\vee=\Theta^{r}(\Lambda^\vee) .
\]
\end{lemma}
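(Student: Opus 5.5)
The plan is to build the form from the crystalline realization of the principal polarization. A principal polarization \(\lambda\colon A\to A^\vee\) gives, via the Weil pairing on the \(p\)-divisible group, a perfect alternating \(W\)-bilinear form \(\psi\) on \(M=H^1_{\mathrm{crys}}(A/W)\). It satisfies \(\psi(Fx,Fy)=p\,\sigma\psi(x,y)\), equivalently \(\psi(Fx,y)=\sigma\psi(x,Vy)\); perfectness is exactly the principality of \(\lambda\). On \(N=\bigwedge^2_WM\) we then take the induced form
\[
  \langle x_1\wedge x_2,\,y_1\wedge y_2\rangle
  :=\det\begin{pmatrix}\psi(x_1,y_1)&\psi(x_1,y_2)\\ \psi(x_2,y_1)&\psi(x_2,y_2)\end{pmatrix}.
\]
This is symmetric: swapping the two arguments transposes the matrix, and each entry changes sign, which does not change a \(2\times2\) determinant. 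It is perfect on \(\bigwedge^2M\) because \(\psi\) is perfect on \(M\). Concretely, in a symplectic basis of \(M\) the induced Gram matrix on the wedge basis is a signed permutation matrix, since a perfect form induces perfect forms on all exterior powers.

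The isometry property comes next. Multiplicativity of \(\bigwedge^2\) and of the determinant gives \(\langle\Phi x,\Phi y\rangle=p^2\sigma\langle x,y\rangle\) on decomposable tensors. Both sides are \(\sigma\)-sesquilinear in the appropriate sense, so the identity extends to all of \(N\). Dividing by \(p^2\) gives \(\langle\Theta x,\Theta y\rangle=\sigma\langle x,y\rangle\) on \(N[1/p]\), where \(\Theta=p^{-1}\Phi\) is bijective because the isocrystal has slope one.

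The duality formula for \(r=1\) follows from this. Suppose \(z\in N[1/p]\). Since \(\Theta\) is bijective on \(N[1/p]\), write \(z=\Theta w\). Then \(\langle z,\Theta\Lambda\rangle=\sigma\langle w,\Lambda\rangle\). Because \(\sigma\) is an automorphism of \(W\) preserving \(W\), this lies in \(W\) if and only if \(\langle w,\Lambda\rangle\subseteq W\). Hence \((\Theta\Lambda)^\vee=\Theta(\Lambda^\vee)\). The statement for \(r=-1\) follows by applying this to \(\Theta^{-1}\Lambda\), and the general \(r\in\mathbb Z\) follows by induction.

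I expect the main obstacle to be normalizing the first step. We must fix the convention (contravariant versus covariant Dieudonn\'e module, and possible Frobenius twists) under which the polarization form satisfies \(\psi(Fx,Fy)=p\sigma\psi(x,y)\) with \(\psi\) perfect on \(M\) itself, rather than on \(M\) paired with a twist. I would cite the standard fact that a principal quasi-polarization of a \(p\)-divisible group of height \(2g\) corresponds to a perfect alternating form with \(\psi(Fx,y)=\sigma\psi(x,Vy)\). From this identity, \(\psi(Fx,Fy)=\sigma\psi(x,VFy)=p\sigma\psi(x,y)\), and the rest is formal.
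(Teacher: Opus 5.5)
Your proposal is correct and follows essentially the same route as the paper: you use the determinant pairing built from the perfect alternating form \(\psi\) with \(\psi(Fx,Fy)=p\,\sigma\psi(x,y)\), pick up the factor \(p^2\) under \(\Phi\), and deduce the duality formula from the isometry property. The only cosmetic difference is that the paper treats all \(r\) at once via \(\langle\Theta^{-r}x,\Lambda\rangle=\sigma^{-r}\langle x,\Theta^{r}\Lambda\rangle\), whereas you prove \(r=\pm1\) and induct.
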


\begin{proof}
The polarization induces a perfect alternating form \(\psi:M\times M\to W\)
with \(\psi(Fx,Fy)=p\,\sigma\psi(x,y)\). See, e.g.,
\cite[Section~1.10]{Oort00}. Set
\(\langle x_1\wedge x_2,\,y_1\wedge y_2\rangle:=\det(\psi(x_i,y_j))_{i,j}\).
The determinant pairing is symmetric and perfect: the induced map
\(N\to N^\vee\) is, under the canonical identification, the exterior square
of the isomorphism \(M\simeq M^\vee\) defined by \(\psi\). Moreover, each entry of the determinant
acquires the factor \(p\) under \(F\), so
\(\langle\Phi x,\Phi y\rangle=p^2\sigma\langle x,y\rangle\). Dividing by
\(p^2\) gives \(\langle\Theta x,\Theta y\rangle=\sigma\langle x,y\rangle\).
Finally,
\(\langle x,\Theta^{r}\Lambda\rangle\subseteq W\) if and only if
\(\langle\Theta^{-r}x,\Lambda\rangle=\sigma^{-r}\langle x,\Theta^{r}\Lambda\rangle
\subseteq W\), which proves the duality formula.
\end{proof}

\begin{theorem}[Upper bound for \(\sigmaArt\)]
\label{thm:sigma-discriminant-bound}
There is a self-dual chain
\(p^{e}N\subseteq L\subseteq N\subseteq L^\vee\subseteq p^{-e}N\). Moreover,
\(L^\vee/L\) is killed by \(p^e\) and has length \(2\,\sigmaArt(A)\).
Consequently,
\[
  \sigmaArt(A)\;\le\;\left\lfloor\frac{e\,g(2g-1)}{2}\right\rfloor
  \;\le\;\frac{(g-1)\,g(2g-1)}{2},
\]
where the second inequality uses \(e\le g-1\) for \(g\ge3\)
(Theorem~\ref{thm:extremal-exponent}). Using instead the \(a\)-dependent bound
for \(e\) from the same theorem gives
\(\sigmaArt(A)\le\lfloor(g-a+1)\,g(2g-1)/2\rfloor\), with \(g-a+1\) improved to
\(g-2\) when \(a=2\).
\end{theorem}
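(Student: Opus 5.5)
The plan is to combine the lattice identities already recorded with the isometry of Lemma~\ref{lem:polarization-isometry}. First, the chain. The inclusion \(L\subseteq N\) holds by definition. Lemma~\ref{lem:exponent-depth}(2), together with the remark preceding it, gives \(\Phi^{e}N\subseteq L\); but what we need is \(p^eN\subseteq L\). For this I would argue directly: for \(x\in N\) and \(r\ge0\), \(\Theta^r(p^ex)=p^{e-r}\Phi^r x\). For \(r\le e\) this lies in \(N\) since \(\Phi N\subseteq N\). For \(r\ge e\), the characterization \(e(N)=\min\{m:\Phi^r N\subseteq p^{r-m}N\ \forall r\}\) from \eqref{eq:lattice-exponent} gives \(\Phi^rN\subseteq p^{r-e}N\), so again \(\Theta^r(p^ex)\in N\). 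Hence \(p^eN\subseteq\bigcap_r\Theta^{-r}N=L\).

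Next, duality. Since \(N\) is self-dual by perfectness of the form, dualizing \(p^eN\subseteq L\subseteq N\) gives \(N\subseteq L^\vee\subseteq p^{-e}N\). Then \(p^eL^\vee\subseteq N\) and, I claim, \(p^eL^\vee\subseteq L\). To see this, note that \(L=\bigcap_r\Theta^{-r}N\) and Lemma~\ref{lem:polarization-isometry} give \(L^\vee=\sum_r\Theta^{-r}N\), a sum of lattices using \((\Theta^{-r}N)^\vee=\Theta^{-r}N\). It then suffices to show \(p^e\Theta^{-r}N\subseteq L\) for each \(r\ge0\), i.e.\ \(\Theta^s(p^e\Theta^{-r}N)\subseteq N\) for all \(s\). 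For \(s\ge r\) this follows from the first step. For \(s<r\) it reduces to \(p^e\Theta^{-t}N\subseteq N\) for \(t>0\), which is dual to \(p^eN\subseteq\Theta^{-t}N\), i.e.\ \(p^e\Theta^tN\subseteq N\) — again the first step. I expect this interchange of intersections and sums under duality, and the bookkeeping with negative powers of \(\Theta\), to be the main care point. An alternative is to note that \(L^\vee\) is \(\Theta^{-1}\)-stable and contains \(N\), and to run the same estimate. Either way, \(p^e\) kills \(L^\vee/L\).

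For the length, self-duality of \(N\) gives \(\lgth(L^\vee/N)=\lgth(N/L)\) by the standard discriminant argument: the pairing identifies \(L^\vee/N\) with the Pontryagin dual \(\operatorname{Hom}(N/L,K/W)\), which has the same length. Hence \(\lgth(L^\vee/L)=2\lgth_W(N/L)=2\sigmaArt(A)\), using the identification \(\sigmaArt(A)=\lgth_W(N/L)\) stated before Lemma~\ref{lem:first-kernel-controls-exponent}.

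Finally, the bound. \(L^\vee/L\) is a quotient of \(p^{-e}N/L\) and a submodule of \(p^{-e}N/p^eN\). A cleaner route: \(L^\vee/L\) is a finite \(W\)-module killed by \(p^e\), generated by at most \(\operatorname{rank}N=\binom{2g}{2}=g(2g-1)\) elements, since it is a subquotient of a rank-\(g(2g-1)\) lattice. Its length is therefore at most \(e\,g(2g-1)\), so \(\sigmaArt(A)\le e\,g(2g-1)/2\), and integrality gives the floor. Substituting \(e\le g-1\), \(e\le g-a+1\), and \(e\le g-2\) for \(a=2\) from Theorem~\ref{thm:extremal-exponent} yields the remaining inequalities.
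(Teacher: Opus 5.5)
Your proposal is correct and follows essentially the same route as the paper: \(p^eN\subseteq L\) read off from the definition of \(e\), the uniform bound \(p^e\Theta^tN\subseteq N\) for all \(t\in\mathbb Z\) (the case \(t<0\) by duality), the description \(L^\vee=\sum_{r\ge0}\Theta^{-r}N\), the identification \(L^\vee/N\simeq\operatorname{Hom}_W(N/L,K/W)\) giving length \(2\,\sigmaArt(A)\), and the generator count bounding \(\lgth_W(L^\vee/L)\) by \(e\,g(2g-1)\). The care point you flag is settled exactly as the paper does it. Since \(p^eN\subseteq L\), the partial intersections \(\bigcap_{r\le R}\Theta^{-r}N\) form a decreasing chain of lattices between \(p^eN\) and \(N\), so they stabilize. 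Hence the finite identity that the dual of an intersection is the sum of the duals already gives \(L^\vee=\sum_{r\ge0}\Theta^{-r}N\).
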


\begin{proof}
The inclusions \(p^{e}N\subseteq L\subseteq N\) follow from the definitions.
The pairing of Lemma~\ref{lem:polarization-isometry} is perfect, so \(N^\vee=N\).
Dualizing gives \(N\subseteq L^\vee\subseteq p^{-e}N\). It also gives
\(L^\vee/N\simeq\operatorname{Hom}_W(N/L,W[1/p]/W)\), and hence
\(\lgth_W(L^\vee/L)=2\lgth_W(N/L)=2\,\sigmaArt(A)\).

The inclusion \(p^eN\subseteq L=\bigcap_{t\ge0}\Theta^{-t}N\) is equivalent to
\(p^e\Theta^tN\subseteq N\) for every \(t\ge0\). Dualizing these inclusions
gives the same statement for \(t<0\). Hence
\(p^e\Theta^tN\subseteq N\) for every \(t\in\mathbb Z\).
The partial intersections defining \(L\) stabilize between \(p^eN\) and \(N\),
so Lemma~\ref{lem:polarization-isometry} gives
\[
  L^\vee=\Bigl(\bigcap_{r\ge0}\Theta^{-r}N\Bigr)^\vee
        =\sum_{r\ge0}\Theta^{-r}N.
\]
For \(r,s\ge0\), the uniform bound with \(t=s-r\) gives
\(p^e\Theta^{-r}N\subseteq\Theta^{-s}N\). Thus every summand of \(p^eL^\vee\)
lies in every \(\Theta^{-s}N\), proving \(p^eL^\vee\subseteq L\).

Finally, \(L^\vee/L\) is generated by \(\operatorname{rank}_W(L^\vee)=g(2g-1)\)
elements and killed by \(p^{e}\), so
\(2\,\sigmaArt(A)=\lgth_W(L^\vee/L)\le e\,g(2g-1)\).
\end{proof}

\begin{remark}[A principal polarization obstruction]
Consider the rank-\(6\) supersingular cyclic \(F\)-crystal with Frobenius
valuation word \((0,0,1,0,1,1)\). For this cyclic representative, the three
exterior-square blocks \(B^{(1)},B^{(2)},B^{(3)}\) have valuation sequences
\[
  \mathbf v^{(1)}=(0,1,1,1,2,1),\qquad
  \mathbf v^{(2)}=(1,0,2,1,1,1),\qquad
  \mathbf v^{(3)}=(0,1,2).
\]
The recurrence \(h_{i+1}-h_i=v_i-1\), followed by the normalization
\(\min_i h_i=0\), gives the corresponding height functions
\[
  (1,0,0,0,0,1),\qquad (1,1,0,1,1,1),\qquad (1,0,0).
\]
The blockwise construction of Theorem~\ref{thm:cyclic-matrix} therefore gives
an associated positive domino \(U\) with \(J(U)=(1,2,5)\), so \(\deg(U)=8\) and
\(p\text{-}\!\exp(U)=1\). If this \(F\)-crystal admitted a principal
quasi-polarization, the determinant pairing of
Lemma~\ref{lem:polarization-isometry} and the proof of
Theorem~\ref{thm:sigma-discriminant-bound} would give
\(\deg(U)\le\lfloor3(2\cdot3-1)/2\rfloor=7\), a contradiction. Thus this
\(F\)-crystal is \emph{not} principally quasi-polarizable.
\end{remark}

\appendix
\addtocontents{toc}{\protect\setcounter{tocdepth}{1}}
\section{Dominoes and unipotent invariants}
\label{app:unipotent}

Throughout this appendix \(k\) is algebraically closed. Cartier theory
gives the formal realization of a domino, while its degree-\(1\) term gives a
weight-one syntomic realization represented by a perfect unipotent group. We
prove that these groups have the same isogeny partition, compute them in the
principal examples of Section~3, and relate them to \(p\)-primary Brauer
groups.

\subsection{Formal and syntomic realizations}
\label{sec:formal-syntomic-realizations}

Cartier theory attaches to a domino \(U\) a smooth unipotent formal group
\(\widehat G(U)\) whose covariant Cartier module is \(U^0\). In the geometric
case \(U=U_X^{0,2}\), this is the domino contribution to the Artin--Mazur
formal Brauer group: when \(\widehat{\mathrm{Br}}_X=\Phi_X^2\) is
representable, its Cartier module is \(H^2(X,W\mathcal O_X)\)
\cite[Corollary~I.4.3]{AM77}.

The degree-one term has a parallel realization in weight-one syntomic
cohomology. Let \(\mathrm{Perf}_k\) be the \'etale site of perfect
\(k\)-algebras. For \(S\in\mathrm{Perf}_k\), put
\(X_S=X\times_k\operatorname{Spec}S\) and define
\(\mathbf H^3_{\mathrm{syn}}(X,\mathbb Z_p(1))(S)
:=H^3_{\mathrm{syn}}(X_S,\mathbb Z_p(1))\). Illusie--Raynaud
show that the pro-sheaves \(W_\bullet\Omega_X^1\) and
\(W_\bullet\Omega^1_{X,\log}\) have degree-two pro-algebraic cohomology
functors on \(\mathrm{Perf}_k\), denoted by
\(\mathbf H^2(X,W_\bullet\Omega_X^1)\) and
\(\mathbf H^2(X,W_\bullet\Omega^1_{X,\log})\). Their \(k\)-points are the
inverse limits of finite-level cohomology. For example,
\(\mathbf H^2(X,W_\bullet\Omega_X^1)(k)
=\varprojlim_n H^2(X,W_n\Omega_X^1)\), and likewise for logarithmic forms.

These descriptions are related by natural isomorphisms
\[
  \mathbf H^3_{\mathrm{syn}}(X,\mathbb Z_p(1))
  \simeq
  \mathbf H^2(X,W_\bullet\Omega^1_{X,\log})
  \simeq
  \mathbf H^2(X,W_\bullet\Omega_X^1)^{F=1}.
\]
The first isomorphism follows from the Bhatt--Morrow--Scholze comparison
\(R\lambda_*\mathbb Z_p(1)\simeq W\Omega^1_{\log}[-1]\), where \(\lambda\)
maps the quasisyntomic site of \(X\) to its pro-\'etale site
\cite[Theorem~1.15(1) and Corollary~8.21]{BMS19}. For the second, Illusie--Raynaud
identify \(W_\bullet\Omega^1_{X,\log}\) with the kernel of \(1-F\) on
\(W_\bullet\Omega_X^1\) and prove that \(1-F\) is surjective on degree-one
cohomology. The associated long exact sequence therefore identifies the
logarithmic degree-two term with the \(F\)-fixed part
\cite[IV, formulas~(3.1.2) and (3.2.3)--(3.2.4), and Theorem~3.3(a)]{IR83}.

Ekedahl gives a derived version of this same Illusie--Raynaud construction on
coherent Raynaud complexes and proves that it has amplitude \([0,0]\). Hence
it preserves short exact sequences of coherent \(R\)-modules
\cite[Lemma~III.1.3(i)]{ekedahl2}. For \(S\in\mathrm{Perf}_k\), let \(R_S\)
be obtained from the presentation of \(R\) in \S\ref{sec:raynaud-ring}
by replacing \(W(k)\) with \(W(S)\) and \(\sigma\) with its Witt Frobenius.
The map \(k\to S\) induces \(R\to R_S\). For a domino \(U\), set
\(G^{\mathrm{perf}}(U)(S):=((R_S\otimes_RU)^1)^{F=1}\). This is represented
by a connected perfect unipotent group of dimension \(\dim(U)\)
\cite[Lemma~IV.3.8(b)]{IR83}. This is the \emph{syntomic realization} of
\(U\). For \(U=U_X^{0,2}\), it is the domino contribution to the functor
above.

\begin{proposition}[Elementary syntomic realizations
  {\cite[Lemma~IV.3.7]{IR83}}]
\label{prop:elementary-perfect-realization}
For each \(j\in\mathbb Z\), the functor on perfect \(k\)-algebras
\(S\mapsto\bigl((R_S\otimes_R U_j)^1\bigr)^{F=1}\) is represented by
\(\mathbb G_a^{\mathrm{perf}}\). Thus the syntomic realization of every
elementary domino is canonically \(\mathbb G_a^{\mathrm{perf}}\).
\end{proposition}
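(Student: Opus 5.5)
We compute \(S\mapsto((R_S\otimes_RU_j)^1)^{F=1}\) directly from the explicit structure of \(U_j\) in Definition~\ref{def:coherent}, first reducing to \(j=0\). \(U_j=\widehat R/\widehat R(F,dV^{j-1})\) is a cyclic \(\widehat R\)-module, so \(R_S\otimes_RU_j\) is the analogous quotient of the completed ring \(\widehat R_S\), provided we check that base change commutes with the \(V\)-adic completion. Since \(U_j\) is killed by \(p\), this is straightforward: \(W(S)/p=S\) because \(S\) is perfect. The upshot is
\[
  (R_S\otimes_RU_j)^1=\prod_{n\ge j}S\,dV^n ,
\]
with \(F\) acting by \(s\,dV^n\mapsto s^p\,dV^{n-1}\) for \(n>j\) and killing \(S\,dV^j\). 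Here \(F\) is the Witt Frobenius on coefficients composed with the index shift coming from \(F\,dV^n=dV^{n-1}\).

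We then solve \(Fx=x\) for \(x=\sum_{n\ge j}s_n\,dV^n\). Comparing coefficients of \(dV^{n-1}\) for \(n-1\ge j\) gives \(s_{n-1}=s_n^p\), so \(s_n=s_{n+1}^p\) for all \(n\ge j\). Because \(S\) is perfect, each \(s_n\) for \(n>j\) is uniquely determined by \(s_j\) via \(s_n=s_j^{p^{-(n-j)}}\), and \(s_j\) itself is arbitrary. The top coefficient imposes no condition, since \(F\) kills \(S\,dV^j\) while \(dV^j\) is hit from \(dV^{j+1}\). The assignment \(x\mapsto s_j\) is therefore a bijection
\[
  \bigl((R_S\otimes_RU_j)^1\bigr)^{F=1}\;\simeq\;S .
\]
It is additive, because Frobenius and its inverse are additive in characteristic \(p\), and natural in \(S\). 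Hence the functor is represented by \(\operatorname{Spec}k[t^{p^{-\infty}}]=\mathbb G_a^{\mathrm{perf}}\), and the identification is canonical once the generator \(dV^j\) is fixed up to the scalars \(\operatorname{Aut}(U_j)=k^\times\).

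The step we expect to be the main obstacle is justifying the description of \((R_S\otimes_RU_j)^1\), namely that the tensor product does not lose completeness. We would handle it by applying Ekedahl's amplitude \([0,0]\) statement to the presentation of \(U_j\), or more simply by noting that at each finite truncation level \(R_n\otimes U_j\) is a finite \(k\)-vector space with the evident basis, and then passing to the limit. This is exactly how the pro-algebraic structure in \cite[Lemma~IV.3.7]{IR83} is set up, so we would cite it for this point.
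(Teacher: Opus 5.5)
Your proposal is correct and follows the paper's proof essentially verbatim: you identify \((R_S\otimes_RU_j)^1\) with \(\prod_{n\ge j}S\,dV^n\) using that \(U_j\) is killed by \(p\), you solve the recursion \(s_n=s_{n+1}^p\) using perfectness of \(S\), and you read off \(S\simeq\mathbb G_a^{\mathrm{perf}}(S)\) from the coefficient of \(dV^j\). Your remark that the base change must be read as a completed (pro-)tensor product is a point the paper passes over silently, and your finite-level argument handles it; the announced reduction to \(j=0\) is never used and can be dropped.
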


\begin{proof}
Let \(S\) be a perfect \(k\)-algebra with Frobenius \(\sigma_S\). Since \(U_j\)
is killed by \(p\), extension of scalars gives
\((R_S\otimes_R U_j)^1=\prod_{n\ge j}S\,dV^n\), and \(F\) sends
\(b\,dV^{n+1}\) to \(\sigma_S(b)\,dV^n\). Hence
\(x=\sum_{n\ge j}b_n\,dV^n\) is fixed by \(F\) precisely when
\(b_n=\sigma_S(b_{n+1})\) for every \(n\ge j\). Since \(S\) is perfect, the
recursion has the unique solution \(b_{j+m}=\sigma_S^{-m}(a)\), where
\(a:=b_j\). Hence
\[
  a\longmapsto a\,dV^j+\sigma_S^{-1}(a)dV^{j+1}+\sigma_S^{-2}(a)dV^{j+2}+\cdots
\]
is a functorial isomorphism from \(S\) to
\(\bigl((R_S\otimes_R U_j)^1\bigr)^{F=1}\), which proves the claim.
\end{proof}

\subsection{The isogeny partition}

For a domino \(U\), its image \(p^rU\) is coherent, profinite, and concentrated
in degrees \(0\) and \(1\). Since \(p\) commutes with the Raynaud operators,
\(V^{-\infty}Z^0(p^rU)=0\) and
\(F^\infty B^1(p^rU)=p^rF^\infty B^1(U)=p^rU^1\). Hence \(p^rU\) is again a
domino by Remark~\ref{rem:domino-intrinsic-characterization}. We may therefore
set \(d_r(U):=\dim(p^rU)=\dim_k(p^rU^0/Vp^rU^0)
=\dim_k((p^rU^1)[F])\). Moreover, \(d_r(U)=0\) for \(r\gg0\).

\begin{theorem}[Isogeny partition]\label{thm:isogeny-partition}
Let \(U\) be a domino. Write the isogeny decompositions
\[
  \widehat G(U)
  \sim_{\mathrm{isog}}
  \prod_{i=1}^a\widehat W_{\lambda_i},
  \qquad
  G^{\mathrm{perf}}(U)
  \sim_{\mathrm{isog}}
  \prod_{j=1}^bW_{\mu_j}^{\mathrm{perf}},
\]
with partitions \(\lambda=(\lambda_1\ge\cdots\ge\lambda_a)\) and
\(\mu=(\mu_1\ge\cdots\ge\mu_b)\). Then \(\lambda=\mu\). More precisely, if this
partition is denoted by \(\lambda(U)\), then
\begin{equation}\label{eq:isogeny-conjugate-partition}
  \lambda(U)^\vee_r=d_{r-1}(U)-d_r(U),
  \qquad r\ge1.
\end{equation}
If \(U\neq0\), then \(\lambda_1(U)=p\text{-}\!\exp(U)\).
\end{theorem}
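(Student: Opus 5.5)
The plan is to compute both partitions separately from the invariant sequence \(d_r(U)\) and check that each satisfies \eqref{eq:isogeny-conjugate-partition}; then \(\lambda=\mu\) follows at once. The common principle is that for a commutative unipotent group \(G\) over \(k=\bar k\) that is isogenous to \(\prod_i W_{\lambda_i}\), one has \(\dim(p^rG)=\#\{i:\lambda_i>r\}\), since \(p^r W_m\) is isogenous to \(W_{m-r}\) and dimension is an isogeny invariant. Hence \(\lambda^\vee_r=\dim(p^{r-1}G)-\dim(p^rG)\). The same holds for formal groups with \(\widehat W_m\). It therefore suffices to prove
\[
  \dim\widehat G(p^rU)=\dim p^r\widehat G(U)=d_r(U),\qquad
  \dim G^{\mathrm{perf}}(p^rU)=\dim p^rG^{\mathrm{perf}}(U)=d_r(U).
\]

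For the formal side I would use Cartier theory. The covariant Cartier module of \(\widehat G(U)\) is \(U^0\), and multiplication by \(p^r\) on the group corresponds to \(p^r\) on \(U^0\), so the image \(p^r\widehat G(U)\) has Cartier module \(p^rU^0=(p^rU)^0\). The dimension of a smooth formal group equals \(\dim_k\) of its Cartier module modulo \(V\). This gives \(\dim p^r\widehat G(U)=\dim_k(p^rU^0/Vp^rU^0)=d_r(U)\), where \(p^rU\) is a domino as noted just before the theorem.

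For the syntomic side, the functor \(G^{\mathrm{perf}}\) is exact on short exact sequences of dominoes, by Ekedahl's amplitude \([0,0]\) result. Applying it to \(0\to U[p^r]\to U\xrightarrow{p^r}p^rU\to0\) shows that \(p^r G^{\mathrm{perf}}(U)=G^{\mathrm{perf}}(p^rU)\) as a quotient. Here I first need \(U[p^r]\) to be a domino, or at least coherent with \(G^{\mathrm{perf}}\) defined; this follows from closure of coherent modules under kernels together with the intrinsic characterization in Remark~\ref{rem:domino-intrinsic-characterization}. Then \(\dim G^{\mathrm{perf}}(p^rU)=\dim(p^rU)=d_r(U)\) by the dimension statement of \cite[Lemma~IV.3.8(b)]{IR83}. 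Alternatively, exactness and Proposition~\ref{prop:elementary-perfect-realization} give additivity of dimension along the type filtration, with each \(U_j\) contributing one.

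Finally, \(\lambda_1=\#\{r\ge1:d_{r-1}>d_r\}\) and \(d_r\) is nonincreasing, so \(\lambda_1\) is the least \(r\) with \(d_r=0\). Now \(d_r(U)=0\) if and only if \(p^rU^0=0\), because a nonzero domino has positive dimension. This is the \(p\)-exponent.

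I expect the main obstacle to be the structural input that these unipotent groups really are isogenous to products of Witt groups, and that \(\dim(p^rG)\) is additive and isogeny invariant in the perfect setting. For perfect unipotent groups, "dimension" must be interpreted via the underlying perfected variety. I would first try to cite Serre's classification of commutative unipotent quasi-algebraic groups up to isogeny, and to reduce the isogeny-invariance to counting via the Dieudonné-type module \(p^rU\) itself.
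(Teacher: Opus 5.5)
Your proposal follows the paper's proof. The formal side identifies \(\operatorname{im}[p^r]\) with the group of the Cartier module \(p^rU^0\), and the syntomic side uses exactness of the \(F\)-fixed realization to get \(\operatorname{im}[p^r]\simeq G^{\mathrm{perf}}(p^rU)\). Both images have dimension \(d_r(U)\), and the Zink and Serre Witt decompositions are then compared by taking first differences.

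One formula needs correcting. A block \(W_m\) contributes \(\max(m-r,0)\) to \(\dim(p^rG)\), not \(1\). So
\(\dim(p^rG)=\sum_i\max(\lambda_i-r,0)\), not \(\#\{i:\lambda_i>r\}\). With the formula as you wrote it, the first difference \(\dim(p^{r-1}G)-\dim(p^rG)\) would be \(\#\{i:\lambda_i=r\}\), not \(\lambda^\vee_r\), and your count for \(\lambda_1\) would give the number of distinct parts instead. Your own justification, \(p^rW_m\sim W_{m-r}\), already gives the correct sum formula. Once that is substituted, the rest of the argument, including \(\lambda_1(U)=p\text{-}\!\exp(U)\), goes through as written.
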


\begin{proof}
Cartier theory and its homomorphism normal form identify
\(\operatorname{im}[p]^r_{\widehat G(U)}\) with the formal group attached to
\(p^rU^0\) \cite[Theorems~3.28, 5.1--5.2]{Zin84}. Its dimension is therefore
\(d_r(U)\). On the syntomic side, exactness of the \(F\)-fixed realization and
Proposition~\ref{prop:elementary-perfect-realization} similarly give
\(\operatorname{im}[p]^r_{G^{\mathrm{perf}}(U)}
\simeq G^{\mathrm{perf}}(p^rU)\), again of dimension \(d_r(U)\).

Zink and Serre decompose the formal and perfect groups, respectively, into
Witt groups up to isogeny
\cite[Theorem~5.38]{Zin84} and
\cite[Chapter~VII, Section~2, no.~10, Theorem~1]{Serre88}.
A Witt block of length \(n\) contributes \(\max\{n-r,0\}\) to the dimension
of the image of \([p]^r\). Hence
\(d_r(U)=\sum_i\max\{\lambda_i-r,0\}=\sum_j\max\{\mu_j-r,0\}\) for all \(r\ge0\).
Taking first differences gives
\(d_{r-1}(U)-d_r(U)=\#\{i:\lambda_i\ge r\}=\#\{j:\mu_j\ge r\}\), which is
\eqref{eq:isogeny-conjugate-partition}. In particular, \(\lambda=\mu\). Finally, a nonzero domino
has positive dimension, so \(d_r(U)=0\) iff \(p^rU=0\), and the least such
\(r\) is both \(\lambda_1\) and \(p\text{-}\!\exp(U)\).
\end{proof}

\subsection{Explicit realizations}

We now compute the two realizations for the principal families of Section~3.
For a two-dimensional domino they encode complementary features of the
extension polynomial. For a distinguished domino they are Witt groups of the
same length.

For \(n\ge0\), let \(\mathcal F_n\) be the two-dimensional unipotent formal
group whose covariant Cartier module is
\((k_\sigma[[V]]e_0\oplus k_\sigma[[V]]e_1)/
(pe_1,\;pe_0-V^{n+1}e_1)\), and put
\(\mathcal F_\infty:=\widehat{\mathbb G}_a^{\,2}\).
Under the standard passage from Manin's Dieudonn\'e convention to covariant
Cartier modules, the finite \(\mathcal F_n\) are his modules \(M(n)\), which are
pairwise non-isomorphic \cite[Chapter~III, Section~8, Theorem~3.13]{Manin63}. In this
normalization \(\mathcal F_0\simeq\widehat W_2\). Moreover,
\(\mathcal F_\infty\) is killed by \(p\), whereas no finite \(\mathcal F_n\) is.

On the syntomic side, put \(\mathbb G:=\mathbb G_a^{\mathrm{perf}}\). Serre
identifies \(\operatorname{Ext}^1(\mathbb G,\mathbb G)\) with
\(\operatorname{End}(\mathbb G)\simeq k_\sigma[V^{\pm1}]\), whose units are
\(k^\times V^{\mathbb Z}\). Write \(G_h\) for the extension corresponding to
\(h\in k_\sigma[V^{\pm1}]\). In Serre's connecting-map normalization,
\(W_2^{\mathrm{perf}}\) corresponds to \(h=1\). If \(P_E\) denotes the
operator defined for a framed extension by
\(p\widetilde a=\iota(P_E(a))\), where \(\iota\) identifies the kernel with
\(\mathbb G\), then \(p=VF\) on \(W_2\) gives
\(P_E=hV^{-1}\). Thus the Serre parameter is \(P_EV\). Two such extensions
are isomorphic exactly when their parameters lie in the same two-sided
\(k^\times V^{\mathbb Z}\)-orbit, and
\(G_h\simeq W_2^{\mathrm{perf}}\) exactly when \(h\) is a unit
\cite[Section~8.5, Lemma~3]{Ser60}.

The following theorem computes both realizations from the extension
polynomial. The formal side records its \(V\)-adic order, whereas the syntomic
side records the full polynomial up to multiplication on both sides by units.

\begin{theorem}[Two-dimensional unipotent realizations]
\label{thm:2d-unipotent-realizations}
Let \(U_{j_1,j_2;f}\) be the two-dimensional domino associated with
\(f(V)=\sum_{q=0}^{j-2}c_qV^q\in k_\sigma[V]_{\le j-2}\), where
\(j:=j_2-j_1\ge2\), and abbreviate it to \(U_f\). Set
\(\operatorname{ord}_V(0)=\infty\) and put
\(f_{\mathrm{syn}}:=\sum_{q=0}^{j-2}
\sigma^{-(j-q)}(c_q)V^{j-q-1}
\in k_\sigma[V^{\pm1}]\). Then
\(\widehat G(U_f)\simeq\mathcal F_{\operatorname{ord}_V(f)}\) and
\(G^{\mathrm{perf}}(U_f)\simeq G_{f_{\mathrm{syn}}}\). In particular,
\(\widehat G(U_f)\simeq\widehat W_2\) if and only if
\(\operatorname{ord}_V(f)=0\), while
\(G^{\mathrm{perf}}(U_f)\simeq W_2^{\mathrm{perf}}\) if and only if \(f\) is
a nonzero monomial. Both realization isomorphism classes depend only on \(f\),
not on the type pair \((j_1,j_2)\).
\end{theorem}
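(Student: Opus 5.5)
By Nygaard modification we reduce to \(j_1=0\), \(j_2=j\), so \(U_f=U_{0,j;f}\) as in \eqref{eq:polynomial-Yoneda-extension}. We then compute each realization directly from this explicit presentation, using that both realizations are exact: \(\widehat G\) via Cartier theory, and \(G^{\mathrm{perf}}\) via Ekedahl's amplitude-\([0,0]\) statement. Hence each realization turns \(0\to U_j\to U_f\to U_0\to0\) into an extension of \(\widehat{\mathbb G}_a\) by \(\widehat{\mathbb G}_a\), respectively of \(\mathbb G\) by \(\mathbb G\) (Proposition~\ref{prop:elementary-perfect-realization}). What remains is to identify the extension parameters with Manin's \(n\) and Serre's \(h\).

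\textbf{Formal side.} The degree-zero term \(U_f^0\) is generated over \(\widehat R^0\) by \(\widetilde u_0\) and \(u_j\), with \(pu_j=0\). Using \(p=VF\) we get \(p\widetilde u_0=V[f](V)u_j\). Write \(f=V^{n}f_0\) with \(f_0(0)\neq0\) and \(n=\operatorname{ord}_V(f)\); then \(f_0\) is a unit in \(k_\sigma[[V]]\). Replacing the generator \(u_j\) by \(e_1:=f_0(V)u_j\) and setting \(e_0:=\widetilde u_0\), we obtain \(pe_0=V^{n+1}e_1\) and \(pe_1=0\). The completeness of \(U^0\) in the \(V\)-adic topology shows that these are the only relations, so \(U_f^0\) is exactly the Cartier module of \(\mathcal F_n\). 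If \(f=0\), the extension splits and we get \(\mathcal F_\infty\).

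\textbf{Syntomic side.} Here we need to compute \(P_E\). Take \(a\in\mathbb G(S)\), viewed as an \(F\)-fixed vector \(\sum\sigma_S^{-m}(a)dV^m\) in \((R_S\otimes U_0)^1\). Lift it to \(\widetilde a=\sum\sigma^{-m}[a]\,dV^m\widetilde u_0\) in \(U_f^1\), correct the lift so that it is \(F\)-fixed, and compute \(p\widetilde a\). Using \(p\,dV^m=dV^{m}VF\) and \(F\widetilde u_0=[f](V)u_j\), we find that \(p\widetilde a\) is an explicit \(F\)-fixed vector in \(U_j^1\). Reading off its \(dV^j\)-coefficient as a \(\sigma\)-power series in \(a\) gives \(P_E(a)\). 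Shifting indices from \(dV^{m+q+1}\) down to \(dV^j\) accounts for the exponents \(j-q-1\) and the twists \(\sigma^{-(j-q)}\); we then get \(P_EV=f_{\mathrm{syn}}\). This bookkeeping of Frobenius twists and the normalization of the lift \(\widetilde a\) is the main obstacle. I would first carry it out for \(f=1\), \(j=2\), checking against Example~\ref{ex:U0U2ext} that one obtains \(W_2^{\mathrm{perf}}\), i.e.\ \(h=1\); this fixes the sign and twist conventions. The general case then follows by \(k\)-linearity in the coefficients \(c_q\).

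\textbf{Consequences.} \(\mathcal F_n\simeq\widehat W_2\) if and only if \(n=0\), by Manin's classification. \(G_h\simeq W_2^{\mathrm{perf}}\) if and only if \(h\) is a unit of \(k_\sigma[V^{\pm1}]\), i.e.\ a nonzero monomial; and \(f_{\mathrm{syn}}\) is a monomial exactly when \(f\) is. Finally, neither \(\operatorname{ord}_V(f)\) nor the class of \(f_{\mathrm{syn}}\) up to two-sided units depends on \(j\): changing \(j\) multiplies \(f_{\mathrm{syn}}\) by a power of \(V\) and twists its coefficients by \(\sigma\), and both operations stay within the same two-sided \(k^\times V^{\mathbb Z}\)-orbit.
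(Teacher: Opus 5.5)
Your proposal is correct and follows the paper's proof essentially step for step. You reduce to \(U_{0,j;f}\) by Nygaard modification, substitute \(e_1=f_0(V)u_j\) in the degree-zero Cartier module, and use the Teichm\"uller lift \(\sum_m[\sigma_S^{-m}(a)]\,dV^m\widetilde u_0\). That lift is in fact already \(F\)-fixed because \(Fd\widetilde u_0=0\), so no correction is needed.

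The one slip is in the normalization you predict. Reading off the \(dV^j\)-coefficient gives \(P_E=f_{\mathrm{syn}}\), not \(P_EV=f_{\mathrm{syn}}\). So the Serre parameter is \(f_{\mathrm{syn}}V\), and for \(f=1\), \(j=2\) it is \(h=V^2\) rather than \(h=1\). This is harmless, since \(G_h\) depends only on the two-sided \(k^\times V^{\mathbb Z}\)-orbit of \(h\). Also, the dependence on the \(c_q\) is additive and \(\sigma^{-(j-q)}\)-semilinear, not \(k\)-linear. So you should carry out the computation for all \(q\) at once, as the paper does, rather than extrapolate from the case \(f=1\), \(j=2\).
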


\begin{proof}
By a Nygaard modification it suffices to treat the normalized domino
\(U_{0,j;f}\) of type \((0,j)\) defined in
\eqref{eq:polynomial-Yoneda-extension}. The
modification changes the realizations only by invertible Frobenius twists.
Let \(e_j\) generate \(U_j^0\), and let \(\widetilde e_0\) lift the generator
of \(U_0^0\). The degree-zero part of the Yoneda presentation
\eqref{eq:polynomial-Yoneda-extension} gives the covariant Cartier module
\[
  \frac{k_\sigma[[V]]\widetilde e_0\oplus k_\sigma[[V]]e_j}
       {(pe_j,\;p\widetilde e_0-Vf(V)e_j)}.
\]
If \(f=0\), this is the Cartier module of \(\mathcal F_\infty\). Otherwise write
\(f=V^ng\) with \(n=\operatorname{ord}_V(f)\) and
\(g\in k_\sigma[[V]]^\times\). Replacing \(e_j\) by \(ge_j\) gives
the Cartier module of \(\mathcal F_n\), proving the formal assertion.

For the syntomic side, Proposition~\ref{prop:elementary-perfect-realization}
identifies the kernel and quotient with \(\mathbb G\). Over a perfect
\(k\)-algebra \(S\) with Witt Frobenius \(\sigma_S\), use the fixed vectors
\[
  \widetilde\psi_0(a)
  :=
  \sum_{m\ge0}[\sigma_S^{-m}(a)]dV^m\widetilde e_0,
  \qquad
  \psi_j(a)
  :=
  \sum_{r\ge0}[\sigma_S^{-r}(a)]dV^{j+r}e_j.
\]
Here \(\widetilde\psi_0(a)\) lifts \(a\in\mathbb G(S)\). The relations
\(p\widetilde e_0=V[f](V)e_j\) and \(dV^se_j=0\) for \(s<j\) give
\[
  p\widetilde\psi_0(a)
  =
  \psi_j\!\left(
    \sum_{q=0}^{j-2}
      \sigma_S^{-(j-q)}(c_q)\,
      \sigma_S^{-(j-q-1)}(a)
  \right)
  =
  \psi_j\bigl(f_{\mathrm{syn}}(a)\bigr).
\]
Thus the \(p\)-map of the framed extension is \(f_{\mathrm{syn}}\), so its
Serre parameter is \(f_{\mathrm{syn}}V\). Since this lies in the same two-sided
unit orbit as \(f_{\mathrm{syn}}\), we obtain
\(G^{\mathrm{perf}}(U_f)\simeq G_{f_{\mathrm{syn}}}\). The element
\(f_{\mathrm{syn}}\) is a unit exactly when \(f\) is a nonzero monomial. Under
a frame change it is multiplied on the two sides by units of
\(k_\sigma[V^{\pm1}]\), so the result depends only on the unframed domino.
Increasing \(j\) by one replaces
\(f_{\mathrm{syn}}\) by \(Vf_{\mathrm{syn}}\), while the formal realization
is unchanged. This proves the final assertion.
\end{proof}

\begin{example}
\label{ex:formal-syntomic-complementarity-03}
For type \((0,3)\), Theorem~\ref{thm:2d-unipotent-realizations} shows that
the extensions represented by \(1\) and \(1-V\) have the same formal realization
\(\widehat W_2\) but distinct syntomic realizations \(W_2^{\mathrm{perf}}\)
and \(G_{V^2-V}\). By contrast, those represented by \(1\) and \(V\) have the same
syntomic realization \(W_2^{\mathrm{perf}}\), but their formal realizations
are \(\mathcal F_0\simeq\widehat W_2\) and \(\mathcal F_1\). Thus neither
realization alone determines the domino.
\end{example}

The distinguished family is more rigid: both realizations are Witt groups of
the same length.

\begin{corollary}
\label{cor:distinguished-props}
For \(J=(j,j+2,\ldots,j+2n-2)\), the two realizations of the distinguished
domino \(U_J\) are \(\widehat G(U_J)\simeq\widehat W_n\) and
\(G^{\mathrm{perf}}(U_J)\simeq W_n^{\mathrm{perf}}\).
\end{corollary}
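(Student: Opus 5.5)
The plan is to compute both realizations directly from the presentation \(U_J=\widehat R/\widehat R(F^n,dV^{j-1})\), after a Nygaard modification reducing to \(j=0\). Such a modification changes the realizations only by invertible Frobenius twists, as in the proof of Theorem~\ref{thm:2d-unipotent-realizations}. So we work with \(U^{(n)}=\widehat R/\widehat R(F^n,Fd)\).

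For the formal side, the first step is to identify the degree-zero term. Since the relation \(Fd\) lives in degree one, the relations in degree zero are generated by \(F^n\) together with the degree-zero consequences of \(Fd\). These consequences vanish: \(\widehat R^1\cdot Fd=0\) up to elements of degree two, which are absent. Hence \((U^{(n)})^0=\widehat R^0/\widehat R^0F^n\). This quotient is a \(V\)-torsion-free, \(V\)-adically complete Cartier module in which \(p^n=V^nF^n\) vanishes. As a \(W_\sigma[[V]]\)-module it is free of rank one over \(W_n[[V]]\) on the generator \(1\); I will check this from the normal form in \(\widehat R\), as in Example~\ref{ex:U0U2ext}. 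The action \(F=pV^{-1}\) is then forced. This is exactly the covariant Cartier module \(W_{n,\sigma}[[V]]\) of \(\widehat W_n\) in Zink's theory, so \(\widehat G(U_J)\simeq\widehat W_n\). As a consistency check, Theorem~\ref{thm:isogeny-partition} gives \(\lambda(U_J)=(n)\), because \(d_r(U_J)=n-r\): each \(p^rU_J\) is the distinguished domino of length \(n-r\).

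For the syntomic side, I will compute \(G^{\mathrm{perf}}(U_J)(S)=((R_S\otimes_RU_J)^1)^{F=1}\) directly. By the normal form, the degree-one term is \(\prod_{m\ge0}W_n(S)\,dV^m\oplus\bigoplus_{0\le i<n-1}(\text{finite part } W_{n-1-i}(S)F^id)\), with the \(F^id\) terms truncated. For example, when \(n=2\) this is \(\prod_{m\ge1}W_2\,dV^m\oplus k\,d\). The action of \(F\) shifts \(dV^{m+1}\mapsto dV^m\) and \(F^id\mapsto F^{i+1}d\), twisting coefficients by \(\sigma\). An \(F\)-fixed vector is therefore determined by its coefficient \(a\in W_n(S)\) at \(d\), with the recursion \(b_{m}=\sigma^{-m}(a)\) along the \(dV^m\) tail, exactly as in Proposition~\ref{prop:elementary-perfect-realization}. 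The key point is to check that the \(F^id\) part imposes no extra condition, and that no \(p\)-divisibility constraint restricts \(a\). This gives a functorial group isomorphism \(W_n(S)\to G^{\mathrm{perf}}(U_J)(S)\), since addition is Witt addition on the coefficients. Hence \(G^{\mathrm{perf}}(U_J)\simeq W_n^{\mathrm{perf}}\).

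The main obstacle is pinning down the exact normal form of \((U^{(n)})^1\) in the presence of the relation \(Fd=0\). That relation kills \(F^id\) only partially, since \(p\,F^{i}d\) and \(F^{i+1}d\) interact through \(dF=pFd\). If the direct computation becomes messy, I would fall back on an inductive argument on \(n\) instead. The exact sequence \(0\to T_2\to U_J\to U_{j}\to0\) from the proof of Theorem~\ref{thm:distinguished}, together with exactness of both realizations (Zink, and Ekedahl's amplitude \([0,0]\)), expresses each realization as an extension of \(\mathbb G_a\) by the length-\((n-1)\) Witt group. Using \(\lambda(U_J)=(n)\) from Theorem~\ref{thm:isogeny-partition}, this extension must be isogenous to \(W_n\). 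I would then upgrade isogeny to isomorphism by noting that an extension of \(\mathbb G_a\) by \(W_{n-1}\) whose \(p^{n-1}\)-image is one-dimensional is \(W_n\) exactly when the adjacent extension class is a unit. In the formal case the relevant class is the constant \(1\), and in the syntomic case it is a unit \(V^{0}\) in Serre's parametrization, by Theorem~\ref{thm:2d-unipotent-realizations} applied to the top two-dimensional subquotient with \(f=1\).
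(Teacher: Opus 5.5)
Your reduction to \(j=0\) and the identification \((U^{(n)})^0=\widehat R^0/\widehat R^0F^n\) match the paper. This quotient is the Cartier module \(\mathbb E_k/\mathbb E_kF^n\) of \(\widehat W_n\), which already settles the formal side. The explicit normal forms you write down are wrong in both degrees, however, and in degree one the error breaks the argument.

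In degree zero the normal form is \(W_{n,\sigma}[[V]]x\oplus\bigoplus_{m=1}^{n-1}W_{n-m}F^mx\); compare the summand \(kF\) in Example~\ref{ex:U0U2ext}. So the module is not free of rank one over \(W_n[[V]]\). Indeed \(W_n[[V]]\) alone is not a Cartier module, since \(p\notin VW_n[[V]]\) and \(F=pV^{-1}\) is undefined. The check you promise would fail, although the conclusion stands.

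In degree one, \(F^idx=F^{i-1}(Fdx)=0\) for \(i\ge1\), so there are no \(F^id\) summands. Iterating \(Vd=pdV\) gives \(p^{m+1}dV^mx=pV^mdx=V^{m+1}Fdx=0\). Hence the coefficient of \(\eta_m=dV^mx\) lies in \(W_{\min(n,m+1)}(S)\), not in \(W_n(S)\). Your \(n=2\) example is right, but your general formula is not. In particular the coefficient at \(d\) lies only in \(S\). Since \(F\eta_0=0\) and \(F(a\eta_{m+1})=\sigma_S(a)\eta_m\), the fixed-point condition reads \(b_m=\sigma_S(b_{m+1})\bmod p^{\min(n,m+1)}\), and \(b_0\) does not determine \(b_1\in W_2(S)\). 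The point you flagged to check, that no \(p\)-divisibility constraint restricts \(a\), is exactly what fails.

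The repair is the paper's argument. Parametrize fixed vectors by the \(\eta_{n-1}\)-coefficient \(w\in W_n(S)\), the first index where the full \(W_n\) appears. Then \(b_{n-1+s}=\sigma_S^{-s}(w)\) for \(s\ge0\), and \(b_m=\sigma_S^{\,n-1-m}(w)\bmod p^{m+1}\) for \(m<n-1\).

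Your inductive fallback is a genuinely different route, but its decisive step is only asserted. The partition \(\lambda(U_J)=(n)\) gives an isogeny to \(W_n^{\mathrm{perf}}\), and an isogeny is not enough. The group \(G_{V^2-V}\) of Example~\ref{ex:formal-syntomic-complementarity-03} has isogeny partition \((2)\) but is not \(W_2^{\mathrm{perf}}\). Everything therefore rests on your claim that an extension of \(\mathbb G_a^{\mathrm{perf}}\) by \(W_{n-1}^{\mathrm{perf}}\), whose quotient by \(VW_{n-1}^{\mathrm{perf}}\) is \(W_2^{\mathrm{perf}}\), is itself \(W_n^{\mathrm{perf}}\).

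That claim is true. For instance, via Dieudonn\'e modules of perfect unipotent groups, such an extension is cyclic exactly when its adjacent class is a unit. But proving it needs input from Serre's theory that you do not supply, and it is more work than the corrected direct computation.

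A minor slip: for type gap \(2\) and \(f=1\), Theorem~\ref{thm:2d-unipotent-realizations} gives \(f_{\mathrm{syn}}=V\), with Serre parameter \(V^2\), rather than \(V^0\). It is still a unit, so nothing else changes.
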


\begin{proof}
By a Nygaard modification it suffices to treat \(J=(0,2,\ldots,2n-2)\).
Put \(U^{(n)}=U_{0,2,\ldots,2n-2}=\widehat R/\widehat R(F^n,Fd)\), and
let \(x\) be the image of \(1\). The degree-zero normal form in the proof of
Theorem~\ref{thm:distinguished} identifies \((U^{(n)})^0\) with the Cartier
module of \(\widehat W_n\).

For the syntomic realization, let \(S\) be a perfect \(k\)-algebra, set
\(\eta_m=dV^m x\) and \(r_m=\min\{n,m+1\}\). The degree-one normal form gives
\((R_S\otimes_RU^{(n)})^1\simeq\prod_{m\ge0}W_{r_m}(S)\eta_m\), with
\(F\eta_0=0\) and \(F(a\eta_{m+1})=\sigma_S(a)\eta_m\). Thus an \(F\)-fixed
element is determined by its \(\eta_{n-1}\)-coefficient \(w\in W_n(S)\). The
inverse sends \(w\) to
\(\sum_{m\ge0}(\sigma_S^{\,n-1-m}(w)\bmod p^{r_m})\eta_m\). Negative powers
of \(\sigma_S\) are defined because \(S\) is perfect. This gives
\(G^{\mathrm{perf}}(U^{(n)})\simeq W_n^{\mathrm{perf}}\). Finally, Nygaard
modification changes the identification only by an invertible Frobenius
twist, so the result holds for every \(J\).
\end{proof}

\subsection{Brauer groups}

The common input for the prime-to-\(p\) and \(p\)-primary parts of the Brauer
group is the Kummer sequence. For every prime \(\ell\) and \(n\ge1\), the
sequence \(1\to\mu_{\ell^n}\to\mathbb G_m\xrightarrow{\ell^n}\mathbb
G_m\to1\) is exact in the fppf topology and gives
\[
  0\longrightarrow \operatorname{Pic}(X)/\ell^n
  \longrightarrow H^2_{\mathrm{fppf}}(X,\mu_{\ell^n})
  \longrightarrow \operatorname{Br}(X)[\ell^n]
  \longrightarrow0.
\]
If \(\ell\ne p\), this is already an \(\acute{e}\)tale sequence. Passing to
\(\ell\)-adic cohomology gives
\[
  0\longrightarrow(\mathbb Q_\ell/\mathbb Z_\ell)^{b_2-\rho}
  \longrightarrow\operatorname{Br}(X)[\ell^\infty]
  \longrightarrow
  H^3_{\mathrm{\acute et}}(X,\mathbb Z_\ell(1))_{\mathrm{tors}}
  \longrightarrow0;
\]
Here \(b_2:=\dim_{\mathbb Q_\ell}H^2_{\mathrm{\acute et}}
(X,\mathbb Q_\ell)\) and \(\rho:=\operatorname{rank}\operatorname{NS}(X)\).
See \cite[\S1, equations~(1)--(2) and the following remark]{SZ12}.

For \(\ell=p\), one instead works with the fppf Kummer system and forms the weight-one
syntomic complex
\(R\Gamma(X,\mathbb Z_p(1)):=R\varprojlim_n
R\Gamma_{\mathrm{fppf}}(X,\mu_{p^n})\). In the appendix to
Skorobogatov--Petrov's work on the \(p\)-primary Brauer group, Petrov combines
the resulting exact sequences with structure results of Illusie and
Illusie--Raynaud to obtain
\[
  \mathrm{Br}(X)[p^\infty]
  \cong
  (\mathbb Q_p/\mathbb Z_p)^{r_2-\rho}
  \oplus H^3_{\mathrm{syn}}(X,\mathbb Z_p(1))[p^\infty],
\]
where \(r_2\) is the slope-\(1\) rank. The
second summand has finite \(p\)-exponent
\cite[Appendix~A, Theorem~A.1 and equations~(7)--(10)]{SP25}. The \(F\)-fixed construction of
Subsection~\ref{sec:formal-syntomic-realizations} represents this second
summand on the perfect site by a perfect quasi-algebraic group, denoted
\(\mathrm{Br}^{\mathrm{perf}}_X\). Thus
\(\mathrm{Br}^{\mathrm{perf}}_X(k)
=H^3_{\mathrm{syn}}(X,\mathbb Z_p(1))[p^\infty]\), and its
connected unipotent part is \(G^{\mathrm{perf}}(U_X^{0,2})\). The next lemma
identifies the formal and perfect Brauer groups in the supersingular abelian
case.

\begin{lemma}[Brauer groups of a supersingular abelian variety]
\label{lem:supersingular-brauer-unipotent}
Let \(A/k\) be a supersingular abelian \(g\)-fold. Then \(\mathrm{Br}(A)\) is
\(p\)-primary, and the formal and perfect Brauer groups are connected and
unipotent. They are naturally isomorphic to the two realizations of the
bidegree-\((0,2)\) domino:
\[
  \widehat{\mathrm{Br}}_A\simeq\widehat G(U_A^{0,2}),
  \qquad
  \mathrm{Br}^{\mathrm{perf}}_A\simeq G^{\mathrm{perf}}(U_A^{0,2}).
\]
If \(g\ge2\), the group
\(\mathrm{Br}(A)=\mathrm{Br}^{\mathrm{perf}}_A(k)\) has \(p\)-exponent
\(p\text{-}\!\exp(U_A^{0,2})\).
\end{lemma}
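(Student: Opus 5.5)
The plan has four steps: kill the non-\(p\) and non-unipotent pieces using slopes, identify the remaining unipotent parts with the two realizations, and then read off the exponent from Theorem~\ref{thm:isogeny-partition}.

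\emph{Step 1: \(\mathrm{Br}(A)\) is \(p\)-primary.} For \(\ell\ne p\) I would use the displayed \(\ell\)-adic sequence. Supersingularity gives \(b_2=\rho\): the Tate conjecture for divisors on abelian varieties over the algebraic closure of a finite field, or simply because \(\operatorname{NS}(A)\otimes\mathbb Z_\ell\) fills \(H^2\) for supersingular \(A\) (Shioda). Next, \(H^3_{\mathrm{\acute et}}(A,\mathbb Z_\ell)\) is torsion-free, since \(\ell\)-adic cohomology of an abelian variety is the exterior algebra on \(H^1\). Together these kill \(\mathrm{Br}(A)[\ell^\infty]\).

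\emph{Step 2: the \(p\)-part.} I would apply Petrov's decomposition \cite{SP25}. Since \(H^2_{\mathrm{crys}}\) has pure slope \(1\), the slope-one rank is \(r_2=b_2\). Combined with \(\rho=b_2\), this makes the divisible summand vanish, so \(\mathrm{Br}(A)=\mathrm{Br}^{\mathrm{perf}}_A(k)\).

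\emph{Step 3: identifying the groups with the realizations.} On the formal side, \(H^2(A,W\mathcal O_A)\) has slope-\([0,1)\) part zero, so \(\widehat{\mathrm{Br}}_A\) is unipotent and pro-representable. Its Cartier module is \(H^2(A,W\mathcal O_A)\) by \cite{AM77}. This module is \(U_A^{0,2,0}\) modulo the core, and the core is \(C_A^{0,2}=0\) as shown at the start of \S\ref{sec:supersingular}. Hence \(H^2(A,W\mathcal O_A)\) is exactly \((U_A^{0,2})^0\), and \(\widehat{\mathrm{Br}}_A\simeq\widehat G(U_A^{0,2})\).

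On the syntomic side, the \(F\)-fixed construction on \(\mathbf H^2(A,W_\bullet\Omega^1)\) must be decomposed along the d\'evissage of \(H^2(A,W\Omega^1_A)\). That module is an extension of \(F^\infty B^{1,2}=(U_A^{0,2})^1\) by the core \(C^{1,2}\) and the domino part in degree \(1\) (and \(T^{1,2}\) may be nonzero). I would use Ekedahl's exactness of the \(F\)-fixed functor \cite[Lemma~III.1.3(i)]{ekedahl2} to split the realization into three contributions:
\begin{itemize}
\item[(a)] The core \(C^{1,2}\) is a Dieudonn\'e module of slope \(1\) (pure, by supersingularity). It contributes the \(F=1\) part of a slope-zero twist, which is \'etale and is absorbed into \(\mathrm{NS}\otimes\mathbb Z_p\), the divisible part, so it contributes no unipotent piece.
\item[(b)] The image of \(d\) from degree \(0\), i.e.\ \((U_A^{0,2})^1\), gives \(G^{\mathrm{perf}}(U_A^{0,2})\).
\item[(c)] The quotient \(H^2(W\Omega^1)/F^\infty B\) contributes through the next differential. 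This part would need \(1-F\) to be an isomorphism on it; I would argue this via \(V\)-topological nilpotence, as in \cite[IV.3]{IR83}.
\end{itemize}
Connectedness then follows because \(G^{\mathrm{perf}}\) is connected of dimension \(\dim U\).

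\emph{Step 4: the exponent statement.} This is immediate from Theorem~\ref{thm:isogeny-partition}. The \(p\)-exponent of \(G^{\mathrm{perf}}(U)(k)\) equals \(\lambda_1\), since \(W_n^{\mathrm{perf}}(k)\) has exponent \(n\) and isogenies preserve the maximal part. This gives \(p\text{-}\exp(U_A^{0,2})\); the hypothesis \(g\ge2\) only ensures that \(U_A^{0,2}\ne0\).

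\emph{Main obstacle.} I expect Step 3 on the syntomic side to be the hardest: matching \(\mathrm{Br}^{\mathrm{perf}}_A\) exactly with \(G^{\mathrm{perf}}(U_A^{0,2})\) rather than merely up to \'etale or finite pieces, i.e.\ showing that contributions (a) and (c) produce nothing in \(H^3_{\mathrm{syn}}[p^\infty]\). My first attempt would be the long exact sequence of \(1-F\) for the filtration \(0\to F^\infty B\to H^2(W\Omega^1)\to\text{quotient}\to0\), using surjectivity of \(1-F\) on \(H^1\)-level terms (Illusie--Raynaud).
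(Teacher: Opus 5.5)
Your Steps 1, 2, 4 and the formal half of Step 3 agree with the paper in substance. Computing the exponent from \(\lambda_1=p\text{-}\!\exp\) in Theorem~\ref{thm:isogeny-partition} is a legitimate alternative to the paper's direct identification \(p^r\mathrm{Br}(A)\simeq G^{\mathrm{perf}}(p^rU_A^{0,2})(k)\), because the exponent of a connected group is an isogeny invariant.

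The genuine gap is contribution (a) on the syntomic side, which is exactly where connectedness of \(\mathrm{Br}^{\mathrm{perf}}_A\) is decided. The degree-one core \(C^{1,2}\) sits in \(E_1^{1,2}=H^2(A,W\Omega^1_A)\), which abuts to \(H^3_{\mathrm{crys}}(A/W)\), not \(H^2\). Rationally it is the slope-\([1,2)\) part of \(H^3_{\mathrm{crys}}\), so for supersingular \(A\) it has slope \(3/2\), and \(F=p^{-1}\varphi\) has slope \(1/2\) on it. It is unrelated to \(\mathrm{NS}(A)\otimes\mathbb Z_p\) and to the divisible summand \((\mathbb Q_p/\mathbb Z_p)^{r_2-\rho}\), which come from degree two, so the mechanism you describe in (a) does not exist. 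The correct slope actually helps: \(F\) is topologically nilpotent on the free part of the core, so \(1-F\) is bijective there.

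More seriously, you never treat the \emph{torsion} of \(C^{1,2}\). A finite-length Dieudonn\'e module can have a part on which \(F\) is bijective, and its \(F\)-fixed points form a finite \'etale group. By \cite[Theorem~IV.3.3(b)]{IR83} this is precisely the component group of \(\mathrm{Br}^{\mathrm{perf}}_A\), and it would add \(p\)-torsion to \(\mathrm{Br}(A)\) that \(U_A^{0,2}\) does not see. Your sentence ``connectedness then follows because \(G^{\mathrm{perf}}\) is connected'' is therefore circular: it only shows that the \(U_A^{0,2}\)-piece is connected.

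The missing input is the one the paper uses. \(H^3_{\mathrm{crys}}(A/W)\) is torsion-free, and survival of the core (Proposition~\ref{prop:survival-core}) together with the abutment slope bounds \cite[Theorem~II.3.4]{IR83} force the \(F\)-fixed torsion of the degree-one core to vanish; see also \cite[Theorem~1.5]{Yang26}. Hence \(\pi_0(\mathrm{Br}^{\mathrm{perf}}_A)=0\). The paper has already recorded, before the lemma, that the connected unipotent part of \(\mathrm{Br}^{\mathrm{perf}}_X\) is \(G^{\mathrm{perf}}(U_X^{0,2})\), so this vanishing is all its proof needs.

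With that input your filtration argument also closes. In (c), though, the reason \(1-F\) is bijective on \((U_A^{1,2})^0\) is that \(F\) is \emph{nilpotent} on the degree-zero term of any domino, not \(V\)-topological nilpotence. Indeed \(F\) kills each \((U_j)^0\), and the type filtration is finite.
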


\begin{proof}
For supersingular \(A\), one has
\(r_2=b_2=\rho(A)=g(2g-1)\), by the pure slope \(1\) of \(H^2_{\mathrm{crys}}\)
and the Rosati description of \(\operatorname{NS}(A)_{\mathbb Q}\). Thus the
divisible terms in both Kummer descriptions above vanish. For \(\ell\ne p\),
the group \(H^3_{\mathrm{\acute et}}(A,\mathbb Z_\ell(1))\) is torsion-free,
since \(H^3_{\mathrm{\acute et}}(A,\mathbb Z_\ell)
\simeq\bigwedge^3H^1_{\mathrm{\acute et}}(A,\mathbb Z_\ell)\). Hence
\(\mathrm{Br}(A)[\ell^\infty]=0\).
By
\cite[Theorem~IV.3.3(b)]{IR83}, the component group of
\(\mathrm{Br}^{\mathrm{perf}}_A\) is the \(p\)-primary torsion in the
\(F\)-fixed degree-one core. Since \(H^3_{\mathrm{crys}}(A/W)\) is
torsion-free, survival of the core and the abutment slope bounds make this
group zero \cite[Theorem~II.3.4]{IR83}. See also
\cite[Theorem~1.5]{Yang26}. Hence \(\mathrm{Br}^{\mathrm{perf}}_A\) is connected
unipotent and equals \(G^{\mathrm{perf}}(U_A^{0,2})\).

On the formal side, \(H^2(A,W\mathcal O_A)[1/p]=0\), so its finite free core
vanishes. Thus \((U_A^{0,2})^0=H^2(A,W\mathcal O_A)\), and Cartier theory
gives \(\widehat{\mathrm{Br}}_A\simeq\widehat G(U_A^{0,2})\). Finally, the
syntomic identification in the proof of Theorem~\ref{thm:isogeny-partition}
gives
\(p^r\mathrm{Br}(A)\simeq G^{\mathrm{perf}}(p^rU_A^{0,2})(k)\), which
vanishes exactly when \(p^rU_A^{0,2}=0\).
\end{proof}

There is a complementary group-theoretic construction. For
\(\pi:X\to\operatorname{Spec}k\), Bragg--Olsson prove that each fppf sheaf
\(R^2\pi_*\mu_{p^n}\) is representable \cite[Corollary~1.6]{BO21}. Let
\(\mathcal U_{X,n}\) be its reduced identity component. The transition maps
are injective and these groups stabilize for large \(n\). Following
\cite[Section~3.2]{GSY25}, write \(\mathcal U_X\) for the stable group. Its formal
completion at the identity is the unipotent part of the Artin--Mazur formal
Brauer group, while its perfection is \emph{isogenous} to the connected unipotent
part of the \(p\)-primary Brauer group
\cite[Lemma~3.5 and the following paragraph]{GSY25}. For a
Mazur--Ogus variety these are precisely the groups realized by
\(\widehat G(U_X^{0,2})\) and \(G^{\mathrm{perf}}(U_X^{0,2})\), respectively.
On the syntomic side, the domino description recovers both the isogeny class
and the group \(G^{\mathrm{perf}}(U_X^{0,2})\) itself.
At the same time, the coherent \(R\)-module \(U_X^{0,2}\) records integral
extension data invisible in the common isogeny partition of the two
realizations.

\addtocontents{toc}{\protect\setcounter{tocdepth}{2}}
\section{Ekedahl's three \texorpdfstring{\(t\)}{t}-structures and their hearts}
\label{app:ekedahl-three-hearts}

This appendix first defines Ekedahl's three \(t\)-structures on \(D_c^b(R)\)
(Postnikov, diagonal, and \(F\)-gauge) and recalls his derived equivalence between
coherent $R$-complexes and coherent \(F\)-gauge complexes. It then proves the
three-heart criterion
\(\mathcal E=\operatorname{Mod}_c(R)\cap\Delta\cap\mathcal G\) and transports
Ekedahl modules across the equivalence, where they become coherent \(F\)-gauges
with Hodge--Tate weights in \([0,2]\), one step beyond the weights \([0,1]\) of
classical Dieudonn\'e theory.

The Postnikov \(t\)-structure on \(D_c^b(R)\) has heart
\(\operatorname{Mod}_c(R)\), the category of coherent $R$-modules
(\S\ref{sec:raynaud-ring}). The diagonal \(t\)-structure, with heart \(\Delta\),
was recalled in Definition~\ref{def:diagonal-t-structure}: Ekedahl proves that
the filtration \(\{\widetilde\tau_{\leq i}\}_{i\in\mathbb Z}\) is \emph{radical}
\cite[Theorem~I.1.1]{ekedahl3}, and his general
\emph{radical-filtration theorem}
\cite[Theorem~0.1.4]{ekedahl3} then produces the diagonal
\(t\)-structure \cite[Definition~I.1.2]{ekedahl3} and shows that it
commutes with the Postnikov one (Remark~\ref{rem:ordinary-diagonal-commute}).
The intersection of these first two hearts is described concretely in
\cite[Section~I.1]{ekedahl3}: it consists of the coherent \(R\)-modules
\(M\) satisfying
\begin{equation}\label{eq:appendix-diagonal-criterion}
        M^i=0\quad(i\notin\{0,1\}),
        \qquad
        F^\infty B^1(M)=M^1 .
\end{equation}

The third \(t\)-structure is defined intrinsically on the Raynaud side: following
\cite[Definition~I.3.1]{ekedahl3}, call \(M\in\Delta\)
\emph{\(\mathbf s\)-\(1\)-torsion} if \(H^*(\mathbf s(M))\) is a torsion \(W\)-module and
\(H^0(\mathbf s(M))=0\), where \(\mathbf s\) is the simple functor of
Remark~\ref{rem:simple-functor}. By \cite[Theorem~I.4.5]{ekedahl3}, every
\(M\in\Delta\) admits a largest
\(\mathbf s\)-\(1\)-torsion quotient. Let \(G_2(M)\subseteq M\) be the kernel of the
projection onto it. Then \(G_2\) is an idempotent radical on \(\Delta\), and the
\(t\)-structure associated to it by the radical-filtration theorem is the
\emph{\(F\)-gauge \(t\)-structure}
\[
  (G^{\leq 0},G^{\geq 0}),
  \qquad
  \mathcal G:=G^{\leq 0}\cap G^{\geq 0},
\]
of \cite[Definition~II.1.1]{ekedahl3}. In particular, for
\(M\in\Delta\),
\[
  M\in\mathcal G
  \quad\Longleftrightarrow\quad
  G_2(M)=M
  \quad\Longleftrightarrow\quad
  M\ \text{has no nonzero \(\mathbf s\)-\(1\)-torsion quotient.}
\]
An amplitude criterion in terms of \(\mathbf s(M)\) and
\(R_1\otimes^{\mathbf L}_RM\) is given in
\cite[Proposition~II.1.2]{ekedahl3}.

Since the \(F\)-gauge \(t\)-structure arises from a radical filtration on the
diagonal heart, it commutes with the diagonal \(t\)-structure. It does
\emph{not} commute with the Postnikov \(t\)-structure in general
\cite[Chapter~VII, Section~3]{ekedahl3}, so commutation of \(t\)-structures
is not a transitive relation. Belonging to the triple intersection
\(\operatorname{Mod}_c(R)\cap\Delta\cap\mathcal G\) therefore imposes genuine
simultaneous conditions from all three \(t\)-structures.

The name of this third \(t\)-structure comes from a second, equivalent
description of \(D_c^b(R)\) in terms of \(F\)-gauges.

An \emph{\(F\)-gauge} is a graded module
\(N=\bigoplus_{r\in\mathbb Z}N^r\) over \(W[u,t]/(ut-p)\), i.e.\ a graded
\(W\)-module
\[
\begin{tikzcd}[column sep=2.7em]
\cdots
  \arrow[r, shift left=.6ex, "t"]
& N^{i+1}
  \arrow[l, shift left=.6ex, "u"]
  \arrow[r, shift left=.6ex, "t"]
& N^{i}
  \arrow[l, shift left=.6ex, "u"]
  \arrow[r, shift left=.6ex, "t"]
& N^{i-1}
  \arrow[l, shift left=.6ex, "u"]
  \arrow[r, shift left=.6ex, "t"]
& \cdots
  \arrow[l, shift left=.6ex, "u"]
\end{tikzcd}
\]
with \(W\)-linear \(u\) raising and \(t\) lowering the degree and
\(ut=tu=p\), together with a \(\sigma\)-linear identification
\(\tau:N^\infty\xrightarrow{\ \sim\ }N^{-\infty}\) between the colimits along
\(u\) and along \(t\). We say that its \emph{Hodge--Tate weights} lie in
\([m,n]\) if \(t\) is an isomorphism in degrees \(<m\) and \(u\) is an isomorphism
in degrees \(>n\).\footnote{Ekedahl calls this condition \emph{level} \([m,n]\)
\cite[Definition~II.2.1]{ekedahl3}. Under the identification with prismatic
\(F\)-gauges recalled in Remark~\ref{rem:prismatic-fgauge} below, it agrees with
the Hodge--Tate weights of \cite[Remark~5.3.14]{BhattFgauges}.} Only the ranges
\([0,1]\) and \([0,2]\) occur here.

\begin{definition}[Ekedahl's functor \(S\)]
For an \(R\)-module \(M\), with \(\sigma_*\) denoting the Frobenius twist, the
adjacent components of \(S(M)\) and their transition maps are
\cite[Definition~II.3.1]{ekedahl3}:
\begin{equation}
\label{eq:ekedahl-S-component}
\begin{tikzcd}[column sep=1.8em, row sep=2.5em]
S(M)^{r+1}={}
  & \cdots
      \arrow[r, "\sigma_*d"]
      \arrow[d, shift left=.6ex, "p"]
  & \sigma_*M^{r-1}
      \arrow[r, "\sigma_*d"]
      \arrow[d, shift left=.6ex, "p"]
  & \sigma_*M^r
      \arrow[r, "dV"]
      \arrow[d, shift left=.6ex, "V"]
  & M^{r+1}
      \arrow[r, "d"]
      \arrow[d, shift left=.6ex, "1"]
  & \cdots
      \arrow[d, shift left=.6ex, "1"]
\\
S(M)^r={}
  & \cdots
      \arrow[r, "\sigma_*d"]
      \arrow[u, shift left=.6ex, "1"]
  & \sigma_*M^{r-1}
      \arrow[r, "dV"]
      \arrow[u, shift left=.6ex, "1"]
  & M^r
      \arrow[r, "d"]
      \arrow[u, shift left=.6ex, "F"]
  & M^{r+1}
      \arrow[r, "d"]
      \arrow[u, shift left=.6ex, "p"]
  & \cdots
      \arrow[u, shift left=.6ex, "p"]
\end{tikzcd}
\end{equation}
The downward arrows are \(t\), the upward arrows are \(u\), and each vertical
pair composes to \(p\). At the two ends,
\(S(M)^{-\infty}=\mathbf s(M)\) and
\(S(M)^{\infty}=\sigma_*\mathbf s(M)\), with \(\tau\) the identity of the
underlying complex.
\end{definition}

A complex of \(F\)-gauges is \emph{coherent} if it is complete with
mod-\(p\) reduction of finite-dimensional cohomology
\cite[Definition~II.5.1]{ekedahl3}. In particular, coherence already
builds in \(p\)-completeness, so no separate completion hypothesis is needed.
Every bounded coherent $R$-complex has bounded internal degrees, and
\eqref{eq:ekedahl-S-component} is independent of the chosen weight range. Hence
the equivalences of \cite[Theorem~II.5.3]{ekedahl3}, one for each interval
\(I\), are
compatible under enlarging \(I\) and assemble into a single equivalence
\[
  S:
  D_c^b(R)
  \xrightarrow{\ \sim\ }
  D_c^b(F\text{-gauge}),
\]
between bounded coherent $R$-complexes and bounded coherent \(F\)-gauge
complexes, with quasi-inverse \(L\widehat t\)
\cite[Proposition~II.4.7(ii)]{ekedahl3}. We will not need the
formula for \(L\widehat t\). Moreover, \(S\) carries the \(F\)-gauge
\(t\)-structure to the componentwise Postnikov \(t\)-structure on \(F\)-gauge
complexes \cite[Proposition~II.5.4]{ekedahl3}. Concretely, for
\(M\in\operatorname{Mod}_c(R)\),
\[
  M\in\mathcal G
  \quad\Longleftrightarrow\quad
  S(M)\ \text{is a coherent \(F\)-gauge placed in degree \(0\).}
\]

\begin{remark}[Interpretation via the stacky approach]
\label{rem:prismatic-fgauge}
Bhatt proves that the syntomification \(k^{\mathrm{Syn}}\) is a noetherian
regular \(p\)-adic formal stack and identifies prismatic \(F\)-gauges over
\(k\) with \(D_{\mathrm{qc}}(k^{\mathrm{Syn}})\)
\cite[Remark~4.1.5 and Definition~4.2.1]{BhattFgauges}. Thus its bounded coherent
subcategory is well-defined, and the explicit \(F\)-gauge description
\cite[Remarks~4.2.4 and 4.2.8]{BhattFgauges}, together with Ekedahl's equivalence,
gives
\[
  D_c^b(R)
  \simeq D_c^b(F\text{-gauge})
  \simeq D_c^b(k^{\mathrm{Syn}}).
\]
\end{remark}

\begin{remark}[The \(F\)-gauge of a two-term \(R\)-module]
For \(M=[\,M^0\xrightarrow dM^1\,]\in\operatorname{Mod}_c(R)\),
equation~\eqref{eq:ekedahl-S-component} specializes to the following diagram:
\begin{equation}\label{eq:two-term-gauge-components}
\begin{tikzcd}[column sep=3.3em, row sep=2.2em]
{} & S(M)^2 & S(M)^1 & S(M)^0 & {}
\\[-1.3em]
\cdots
  \arrow[r, shift left=.7ex, "p"]
& \sigma_*M^0
  \arrow[l, shift left=.7ex, "1"]
  \arrow[r, shift left=.7ex, "p"]
  \arrow[d, "\sigma_*d"]
& \sigma_*M^0
  \arrow[l, shift left=.7ex, "1"]
  \arrow[r, shift left=.7ex, "V"]
  \arrow[d, "dV"]
& M^0
  \arrow[l, shift left=.7ex, "F"]
  \arrow[r, shift left=.7ex, "1"]
  \arrow[d, "d"]
& \cdots
  \arrow[l, shift left=.7ex, "p"]
\\
\cdots
  \arrow[r, shift left=.7ex, "p"]
& \sigma_*M^1
  \arrow[l, shift left=.7ex, "1"]
  \arrow[r, shift left=.7ex, "V"]
& M^1
  \arrow[l, shift left=.7ex, "F"]
  \arrow[r, shift left=.7ex, "1"]
& M^1
  \arrow[l, shift left=.7ex, "p"]
  \arrow[r, shift left=.7ex, "1"]
& \cdots
  \arrow[l, shift left=.7ex, "p"]
\end{tikzcd}
\end{equation}
The two rows lie in cohomological degrees \(0\) and \(1\). The rightward arrows
are \(t\), the leftward arrows are \(u\), and \(S(M)^r=S(M)^2\) for \(r\ge2\),
while \(S(M)^r=S(M)^0\) for \(r\le0\). The limits are
\(S(M)^\infty=\sigma_*\mathbf s(M)\) and
\(S(M)^{-\infty}=\mathbf s(M)\), with
\(\tau\) the identity on the underlying complex.

Hence \(S(M)\) is coherent with Hodge--Tate weights in \([0,2]\)
\cite[Theorem~II.5.3]{ekedahl3} and lies in degree \(0\) if and only if
\(dV\) is surjective.
\end{remark}

Combining this description of the first two hearts with the component formula
now identifies their intersection with the third.

\begin{theorem}[Three-heart criterion]
\label{thm:three-heart-criterion}
Let \(M\in\operatorname{Mod}_c(R)\). Then \(M\in\Delta\cap\mathcal G\) if and
only if \(M^i=0\) for \(i\notin\{0,1\}\) and \(dV:M^0\to M^1\) is surjective.
Consequently, the Ekedahl modules of Definition~\ref{def:ekedahl-module} are
exactly the objects of the three-heart intersection:
\[
  \mathcal E
  =
  \operatorname{Mod}_c(R)\cap\Delta\cap\mathcal G
  \subset D_c^b(R).
\]
In particular, \(\mathcal E\) is closed under extensions and direct summands,
with exact structure inherited from \(D_c^b(R)\).
\end{theorem}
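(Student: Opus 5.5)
The plan is to combine the concrete description \eqref{eq:appendix-diagonal-criterion} of \(\operatorname{Mod}_c(R)\cap\Delta\) with the \(F\)-gauge criterion ``\(M\in\mathcal G\) iff \(S(M)\) is a coherent \(F\)-gauge placed in degree \(0\)'', specialized to two-term modules by the diagram \eqref{eq:two-term-gauge-components}. We prove the two implications separately and then read off \(\mathcal E\) and its closure properties.

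\emph{Forward direction.} Suppose \(M\in\operatorname{Mod}_c(R)\cap\Delta\cap\mathcal G\). By \eqref{eq:appendix-diagonal-criterion}, \(M\) is supported in degrees \(0,1\) and satisfies \(F^\infty B^1(M)=M^1\). Since \(M\in\mathcal G\), the complex \(S(M)\) has cohomology only in degree \(0\). Reading \eqref{eq:two-term-gauge-components} componentwise, the component \(S(M)^1\) is the two-term complex \(\sigma_*M^0\xrightarrow{dV}M^1\). Its degree-one cohomology is \(\operatorname{coker}(dV)\), so this cokernel vanishes.

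\emph{Converse.} Suppose \(M\) is a two-term coherent module with \(dV\) surjective. Since \(dV=d\circ V\), the map \(d\) is surjective, which gives \(F^\infty B^1(M)=M^1\). Hence \(M\in\Delta\) by \eqref{eq:appendix-diagonal-criterion}. Next we check every component of \(S(M)\):
\begin{itemize}
\item for \(r\ge2\), the differential is \(\sigma_*d\), which is surjective;
\item for \(r=1\), the differential is \(dV\), surjective by hypothesis;
\item for \(r\le0\), the differential is \(d\), again surjective.
\end{itemize}
So \(S(M)\) has \(H^1=0\) componentwise and lies in degree \(0\). Its coherence as an \(F\)-gauge is supplied by the remark, which invokes \cite[Theorem~II.5.3]{ekedahl3}. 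Therefore \(M\in\mathcal G\).

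\emph{The identity \(\mathcal E=\operatorname{Mod}_c(R)\cap\Delta\cap\mathcal G\).} This follows by comparing the criterion just proved with Definition~\ref{def:ekedahl-module}. Each heart is closed under extensions and direct summands inside \(D^b_c(R)\), since hearts of \(t\)-structures are extension-closed and additive-summand-closed. Hence so is their intersection, and the exact structure is the one inherited from triangles whose terms lie in all three hearts.

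The main obstacle is the step where we pass from ``\(S(M)\) lies in degree \(0\)'' to a componentwise cokernel condition. This requires that the Postnikov \(t\)-structure on \(F\)-gauge complexes be computed degreewise on the graded components, and that no \(u\)/\(t\) structure interferes. It also requires care with the Frobenius twist \(\sigma_*\): surjectivity of \(\sigma_*d\) and of \(dV\) on a twisted source is unaffected, because \(\sigma\) is bijective on a perfect field. We would check both points directly against \cite[Proposition~II.5.4]{ekedahl3}.
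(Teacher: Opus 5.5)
Your proposal is correct and follows essentially the same route as the paper: you use the diagonal criterion \eqref{eq:appendix-diagonal-criterion} for the first two hearts, then read \eqref{eq:two-term-gauge-components} componentwise via the \(t\)-exactness of \(S\) to turn membership in \(\mathcal G\) into surjectivity of \(dV\), noting that \(dV=d\circ V\) surjective already gives \(F^\infty B^1(M)=M^1\). As in the paper, closure of \(\mathcal E\) under extensions and direct summands comes from each of the three hearts having these properties.
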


\begin{proof}
By \eqref{eq:appendix-diagonal-criterion}, the first two hearts impose the
two-term condition together with \(F^\infty B^1(M)=M^1\). For a two-term
module, the \(t\)-exactness of \(S\) and
\eqref{eq:two-term-gauge-components} show that membership in \(\mathcal G\) is
equivalent to the surjectivity of \(dV\). This surjectivity also gives
\(d(M^0)=M^1\), hence \(F^\infty B^1(M)=B^1(M)=M^1\). The stated description
of \(\mathcal E\) now follows from Definition~\ref{def:ekedahl-module}. Closure
under extensions and direct summands follows from the same property for each
heart.
\end{proof}

\begin{remark}[The \(F\)-gauge of a Nygaard pair]
\label{rem:ekedahl-pair-gauge}
Let \(M\in\mathcal E^{\mathrm{nft}}\) have Nygaard pair
\((K_0,K_{-1},\iota,\nu)=\mathbf K(M)\), so that \(K_0=\ker(d)\),
\(K_{-1}=\ker(dV)\), and \(\nu=V|_{K_{-1}}\)
(Theorem~\ref{thm:two-kernel-presentation}). Taking degree-\(0\) cohomology in
\eqref{eq:two-term-gauge-components}, and suppressing Frobenius twists, gives
the \(F\)-gauge with Hodge--Tate weights in \([0,2]\)
\[
\begin{tikzcd}[column sep=3.4em]
\cdots
  \arrow[r, shift left=.6ex, "p"]
& K_0
  \arrow[l, shift left=.6ex, "\sim"]
  \arrow[r, shift left=.6ex, "p"]
& K_{-1}
  \arrow[l, shift left=.6ex, "\iota"]
  \arrow[r, shift left=.6ex, "\nu"]
& K_0
  \arrow[l, shift left=.6ex, "\nu^{-1}p"]
  \arrow[r, shift left=.6ex, "\sim"]
& \cdots
  \arrow[l, shift left=.6ex, "p"]
\end{tikzcd}
\]
with \(N^2=K_0,\ N^1=K_{-1},\ N^0=K_0\). Its limiting components are
\(N^\infty=\sigma_*K_0\) and \(N^{-\infty}=K_0\), with
\(\tau:\sigma_*K_0\xrightarrow{\sim}K_0\) the identity on the underlying
module. In \(F\)-gauge degrees outside \([0,2]\),
the outward maps are isomorphisms and the inward maps are multiplication by \(p\). The
arrow \(p:K_0\to K_{-1}\) uses
\(pK_0\subseteq\iota(K_{-1})\), while \(\nu^{-1}p\) is the Raynaud Frobenius:
\(Fx\) is the unique element of \(K_{-1}\) satisfying \(\nu(Fx)=px\). The
arrows \(\nu\) and \(\nu^{-1}p\) carry the suppressed twist. Thus \(\nu\)
encodes both the \(F\)-gauge
gluing and the Raynaud Frobenius.
\end{remark}

An \(F\)-gauge with Hodge--Tate weights in \([0,1]\) is an \(R^0\)-module
\cite[Section~II.2]{ekedahl3}. Imposing coherence recovers the classical
Dieudonn\'e modules of Definition~\ref{def:coherent}, which are finitely
generated over \(W\) with \(V\) topologically nilpotent. The functor \(S\)
sends every Ekedahl module to a coherent \(F\)-gauge with Hodge--Tate weights in
\([0,2]\).
By \cite[Theorem~I.1.12(i)]{ekedahl3} and
\eqref{eq:ekedahl-S-component}, an Ekedahl module \(M\) is nft precisely when
\(S(M)\) contains no nonzero finite-torsion sub-\(F\)-gauge with Hodge--Tate
weights in \([0,1]\) or \([1,2]\). For such \(M\), both \(\iota\) and
\(\nu=V|_{K_{-1}}\) are injective (Lemma~\ref{lem:kernel-ladder}).
Imposing the torsion condition cuts out the positive dominoes:
\[
  \operatorname{Dom}^{+}
  =\mathcal E^{\mathrm{nft}}\cap\mathcal E^{\mathrm{tor}}
  \subset\mathcal E^{\mathrm{nft}}\subset\mathcal E.
\]
Here the torsion condition means that \(C(M)\) is \(W\)-torsion. Since nft makes
\(C(M)\) finite free, it is equivalent to \(C(M)=0\). Equivalently, this is the
torsion condition on the \(F\)-gauge side: taking \(H^0\) in
\eqref{eq:two-term-gauge-components} gives
\[
  H^0(S(M)^r)\cong
  \begin{cases}
    \ker d, & r\leq0,\\
    \ker(dV), & r=1,\\
    \sigma_*\ker d, & r\geq2.
  \end{cases}
\]
Since
\(\ker d/C(M)=\ker(\bar d:\Dom(M)^0\to\Dom(M)^1)\) has finite \(W\)-length
and \(C(M)\) is finite free, \(\ker d\) is finitely generated over \(W\).
The inclusion \(\ker(dV)\subseteq\ker d\), which follows from \(FdV=d\),
then shows that every \(H^0(S(M)^r)\) is finitely generated over \(W\).
For a finitely generated \(W\)-module, being \(W\)-torsion is equivalent to
having finite length. It follows that \(C(M)\) is \(W\)-torsion if and only if
\(H^0(S(M)^r)\) has finite length over \(W\) for every \(r\in\mathbb Z\).
Consequently, positive dominoes are characterized on the \(F\)-gauge side as
the torsion objects among the \(F\)-gauges with Hodge--Tate weights in \([0,2]\)
arising from nft
Ekedahl modules. Through the stacky interpretation of
Remark~\ref{rem:prismatic-fgauge}, this identifies \(\operatorname{Dom}^+\)
with a concrete torsion subcategory of \(D_c^b(k^{\mathrm{Syn}})\). In future
work, we study a broader torsion subcategory of \(D_c^b(k^{\mathrm{Syn}})\)
from the perspective of Harder--Narasimhan filtrations.

\bibliographystyle{alpha}
\bibliography{refs}

\end{document}